\newtheorem{theorem}{Theorem}[section]
\newtheorem{lemma}[theorem]{Lemma}
\newtheorem{proposition}[theorem]{Proposition}
\theoremstyle{definition}
\newtheorem{remark}[theorem]{Remark}
\newtheorem{definition}[theorem]{Definition}
\newtheorem{assumption}[theorem]{Assumption}
\def\RR{\mathbb{R}}
\def\R{\mathbb{R}}
\def\ZZ{\mathbb{Z}}
\newcommand{\bN}{\mathbf{N}}
\newcommand{\Z}{\mathbb{Z}}
\newcommand{\tF}{\tilde{F}}
\newcommand{\cN}{{\cal N}}
\newcommand{\LL}{\mathscr L}
\newcommand{\tX}{\tilde{X}}
\def\NN{\mathbb{N}}\def\N{\mathbb{N}}
\def\PP{\mathbb{P}}\def\Pr{\mathbb{P}}
\def\EE{\mathbb{E}} \newcommand{\E}{\mathbb{E}}
\def\Var{\mathbb{V}\mathrm{ar}}
\def\Cov{\mathbb{C}\mathrm{ov}}
\def\1{{\bf 1}}
\def\al{\alpha}
\def\ep{\varepsilon}
 \newcommand{\toD}{\overset{{\cal D}}\longrightarrow}
 \newcommand{\toL}{\overset{L^1}\longrightarrow}
\newcommand{\eps}{\varepsilon}
\renewcommand{\emptyset}{\varnothing}
\newcommand{\fmax}{f_{\rm max}}
\renewcommand{\eta}{{\cal P}}
\def\cC{\mathcal{C}}
\def\cK{\mathcal{K}}
\def\cX{\mathcal{X}}
\newcommand{\X}{\mathcal{X}}
\newcommand{\cY}{\mathcal{Y}}
\newcommand{\A}{\mathcal{A}}
\def\cL{\mathcal{L}}
\def\cH{\mathcal{H}} 
\def\bX{\mathbf{X}}
\def\dtv{d_{\mathrm{TV}}}
\def\dk{{d_\mathrm{K}}}
\def\scr{\mathscr}
\newcommand{\bea}{\begin{eqnarray}}
\newcommand{\eea}{\end{eqnarray}}
\newcommand{\bean}{\begin{eqnarray*}}
\newcommand{\eean}{\end{eqnarray*}}
\numberwithin{equation}{section}
\DeclareMathOperator\diam{diam}
\DeclareMathOperator\dist{dist}
\DeclareMathOperator\cross{Cross}
\def\Bin{\mathrm{Bin}}
\newcommand{\Po}{{\mathcal P}}
\renewcommand{\P}{{\cal P}}
\begin{document}

\title{\bf On the components of random geometric graphs in the dense limit}
\author{Mathew D. Penrose and Xiaochuan Yang}

\author{ Mathew D. Penrose$^{1,2}$ and Xiaochuan Yang$^{1,3}$  \\
{\normalsize{\em University of Bath and Banque Internationale \`a
Luxembourg S.A.}} }

 \footnotetext{ Supported by EPSRC grant EP/T028653/1 }

         \footnotetext[2]{Corresponding author: Department of Mathematical Sciences, University of Bath,
 Bath BA2 7AY, UK: \texttt{m.d.penrose@bath.ac.uk} }
        \footnotetext[3]{Banque Internationale \`a Luxembourg S.A.,
69 Route d'Esch,
L-2953 Luxembourg:
                \texttt{xiaochuan.j.yang@gmail.com} }

\date{\today}

\maketitle

\begin{abstract}
	Consider the geometric graph on $n$ independent uniform random points in a connected compact region $A$ of $\R^d, d \geq 2$, with $C^2$ boundary, or in the unit square, with distance parameter $r_n$.  Let $K_n$ be the number of components of this graph, and $R_n$ the number of vertices not in the giant component.  Let $S_n$ be the number of isolated vertices.  We show that if $r_n$ is chosen so that $n(r_n)^d$ tends to infinity but slowly enough that $\E[S_n]$ also tends to infinity, then $K_n$, $R_n$ and $S_n$ are all asymptotic to $\mu_n$ in probability as $n \to \infty$ where (with $|A|$, $\theta_d$ and $|\partial A|$ denoting the volume of $A$, of the unit $d$-ball,  and the  perimeter of $A$ respectively) $\mu_n := ne^{-\pi n (r_n)^d/|A|}$ if $d=2$ and $\mu_n := ne^{-\theta_d n (r_n)^d/|A|} + (\theta_{d-1})^{-1} |\partial A| (r_n)^{1-d} e^{- \theta_d n (r_n)^d/(2|A|)}$ if $d\geq 3$.  We also give variance asymptotics and central limit theorems for $K_n$ and $R_n$ in this limiting regime when $d \geq 3$, and  for Poisson input with $d \geq 2$.  We extend these results (substituting $\E[S_n]$ for $\mu_n$) to a class of non-uniform distributions on $A$.
\end{abstract}

%

\tableofcontents


\section{Introduction}
\label{secintro}

Given a compact set $A $ with a nice boundary
in Euclidean space $\R^d$, $ d \geq 2$,
the {\em random geometric graph} (RGG) based on a random point set
$\cX \subset A$ is
the graph $G(\cX,r)$ with vertex set $\X$ and edges between each pair
of points distant at most $r$ apart, in the Euclidean metric, for a specified 
distance parameter $r >0$. Such graphs are important in a variety
of applications (see \cite{Pen03}), including
modern topological data analysis (TDA), where the topological
properties of the graph are used to help understand the topology
of $A$.

In this paper we consider the
{\em number of components} of the graph $G$, denoted $K(G)$, where
$G=G(\X,r)$ with $\X $ a random sample of $n$ points in $A$ (denoted $\X_n$)
or the corresponding Poisson process (denoted $\eta_n$, and defined 
more formally later). 
In particular,
we investigate asymptotic properties for large $n$ with
 $r=r(n)$ specified and decaying to zero according
to a certain limiting regime (see \eqref{e:supcri}, 
\eqref{e:supcriupper} below).
Our results add significantly to the existing literature about the
limit theory of Betti numbers, an area that
has received intensive recent attention in 
TDA. Indeed, the number of components of $G(\cX,r)$ is the 0-th Betti number of the occupied Boolean set $\cup_{x\in\cX}  B_{r/2}(x)$,
where $B_{r}(x)$ or $B(x,r)$ denotes the closed Euclidean ball of radius $r$
centred on $x$. 
Given the sample $\X$, keeping track of $K(G(\X,r))$ while varying $r$
corresponds to the 0-th persistent homology, which leads to sparse topological descriptors in a 2D persistence diagram. See \cite{BK18,BK22} for related
geometric models of TDA. 


We  are also concerned with the {\em giant component} - the component of 
$G(\cX, r)$ with the largest order. For the graphs
we consider, most of the vertices lie in the giant component, so for more
detailed information we consider the total number of vertices $R(G(\cX,r))$ 
that are {\em not} in the giant component of $G(\cX,r)$.  
To be precise,
given a finite graph $G$ of order $n$,
list the orders of its components
in decreasing order as $L_1(G),L_2(G), \ldots, L_{K(G)}(G)$. Set
$R(G) := n-L_1(G)$.

We shall consider the limiting behaviour of
$K_n:= K(G(\X_n,r_n))$, $K'_n:= K(G(\eta_n,r_n))$,
$R_n:= R(G(\X_n,r))$
and $R'_n := R(G(\eta_n,r_n))$ as $n \to \infty$ with $r_n$ specified
for all $n \geq 1$.
 Let $\theta_d$ 
 denote the volume of the unit radius ball in $\RR^d$,
 i.e. $\theta_d := \pi^{d/2}/\Gamma(1+d/2)$.
 For points uniformly distributed in $A$
 (which we call the {\em uniform case}), 
the main limiting regime for $r_n$ that we consider here is to assume that
as $n \to \infty$,
\bea
	\label{e:supcri}
nr_n^d\to + \infty;
\\
	\label{e:supcriupper}
\gamma_n := n (\theta_d/\lambda(A)) 
r_n^d - (2-2/d) ( \log n - \1_{\{d \geq 3\}} \log \log n) \to  -\infty, 
\eea
where $\lambda$ denotes the Lebesgue measure on $\RR^d$
(note that throughout this paper, we adopt the convention
that if a symbol has both a subscript 
and a superscript, then the subscript is to be read first, so
$r_n^d$ means $(r_n)^d$.).
We call this the {\em intermediate} or 
{\em mildly dense} regime because the average
vertex degree is of order $\Theta(nr_n^d)$ and therefore grows
to infinity as $n$ becomes large, but only slowly in this regime.

 Other limiting regimes of $r$ are better  understood. 
	In the {\em thermodynamic regime} where $nr_n^d\to a \lambda(A)$
	with $a \in (0,\infty)$
	as $n \to \infty$, it holds 
	as $n \to \infty$ 
		that
\begin{align}
\frac{K_n}{n}\overset{\Pr}\longrightarrow c(a); ~~~~~~~~~ 
\frac{R_n}{n}\overset{\Pr}\longrightarrow c'(a),
\label{0616a}
\end{align}
where $c(a)\in (0,1)$ is given explicitly in \cite[Theorem 13.25]{Pen03},
		and $c'(a) \in (0,1]$ is given less explicitly in
		\cite[Theorem 11.9]{Pen03}. If $a $ lies below
		a certain percolation threshold $a_c : = a_c(d) \in (0,\infty)$
		then $c'(a) =1$.
		 Central limit theorems for $K_n$ and
		 for
		$R_n$ in this regime are also proved in \cite{Pen03}
(these results hold for $K'_n$ and $R'_n$ as well as for $K_n$ and $R_n$). 

		In the {\em sparse regime} where $nr_n^d\to 0$,
		the average vertex degree goes to 0 and we
		still have \eqref{0616a}  with $c(0) = c'(0) =1$.
		This can be deduced from 
		the fact that   $c(a)\to 1$ as $a\to 0$ (which can be
		deduced from the formula in \cite{Pen03}), along with 
  coupling arguments.


On the other hand, if
$\gamma_n \to + \infty$,
and $\partial A$ is smooth or $A$ is a convex polygon,
it follows from \cite[Theorem 1.1]{PY21} that 
with probability tending to 1 as $n \to \infty$,
$G(\X_n,r_n) $ is fully connected 
so that $K_n=1$ and $R_n=0$.
We here call this limiting regime the {\em connectivity regime}
(in \cite{Pen03} this terminology was used slightly differently).

As well as the mildly dense regime  (\ref{e:supcri}), \eqref{e:supcriupper},
in this paper we also consider the case where $\gamma_n $ is bounded away from $-\infty$ and
$+\infty$ as $n \to \infty$; we call this the {\em critical regime
for connectivity}. Thus we consider
the whole range of possible limiting behaviours for $r$ in
between the  thermodynamic and connectivity regimes.

 In TDA one is interested in understanding (for a fixed sample $\cX_n$)
 the number of components of $G(\cX_n,r)$ in the whole range of values from
 $r=0$, right
 up to the connectivity threshold (i.e. the smallest $r$ such
 that $G(\X_n,r)$ is connected).
 Therefore it seems well worth trying to understand 
 $K_n$ in the mildly dense regime, as well as in other regimes.
 Likewise, studying $R_n$ in this regime helps
 us understand the rate at which  the giant component
 swallows up the whole vertex set as $r$ approaches the
 connectivity threshold.

 Our main results for the uniform case refer to constants
 $\mu_n$ defined by
 \bea
	\mu_n := n e^{-n \theta_d 
	r_n^d/\lambda(A)} + \theta_{d-1}^{-1} |\partial A|
	r_n^{1-d} e^{-n \theta_d 
	r_n^d/(2 \lambda(A))} \1\{d \geq 3\},
	\label{e:defI'}
	\eea
where $\partial A$ denotes the topological boundary of $A$ and
	$|\partial A|$ denotes the $(d-1)$-dimensional
	Hausdorff measure of $\partial A$.
	
	We say $A$
	has a $C^2$ boundary (for short, $\partial A \in C^2$) 
	if for each $x \in \partial A$ there exists
a neighbourhood $U$ of $x$ and a real-valued function $\phi$ that is
defined on an open set in $\R^{d-1}$ and twice continuously differentiable,
such that $\partial A \cap U$,  after a rotation,
is the graph of the function $\phi$.
If we assume only that $\phi$ is Lipschitz-continuously differentiable
we say that 
$A$ {\em has a $C^{1,1}$ boundary} (for short,
$\partial A \in C^{1,1}$). Thus if $\partial A \in C^2$
then $\partial A \in C^{1,1}$.


We can now present our main results for the uniform case.
	In all of our results we assume either 
	that $d=2$ and $A= [0,1]^2$, or 
	that $d \geq 2$ and
	$A \subset \R^d$ is compact and connected
	with $\partial A \in C^2$   
	 and 
	 $A = \overline{A^o}$, where  for
	any $D \subset \R^d$ we let 
 $D^o$ denote the interior of $D$ and  $\overline{D}$ the closure of $D$.  
	Also we assume $r_n \in (0,\infty)$ is given for all $n \geq 1$.
	Let $N(0,1)$ denote a standard normal random variable,
 and for $t \in (0,\infty)$  
 let $Z_t$ be a Poisson random variable with mean $t$.
	Let $\toD$, respectively $\toL$,
	denote convergence in distribution, respectively in the $L^1$ norm.
	Define 
	\begin{align}
		\sigma_A := |\partial A|/(\lambda(A)^{1-1/d});
		~~~
	c_{d,A} := \theta_{d-1}^{-1} (\theta_d/(2-2/d))^{1-1/d} \sigma_A.
		\label{e:defsigmac}
	\end{align}
	The ratio $\sigma_A$ is sometimes called the {\em isoperimetric ratio} of $A$.

\begin{theorem}[Basic results for the uniform case]
	\label{t:basicunif}
	Let $\xi_n$ denote either $K_n-1 $ or $ R_n$,
	and let $\xi'_n$ denote either $K'_n-1$ or $R'_n$.

	(a) Suppose $(r_n)_{n \geq 1}$ satisfy \eqref{e:supcri} and
	\eqref{e:supcriupper}. Then 
	in the uniform case, as $n \to \infty$  we have the
	convergence results: $\mu_n \to \infty$, and
	$(\xi_n/\mu_n) \toL 1 $, and $(\xi'_n/\mu_n) \toL 1$. Also
	$\mu_n^{-1} \Var [\xi'_n] \to 1$, and 
	$\mu_n^{-1/2} (\xi'_n - \E[\xi'_n]) \toD N(0,1)$.
	If $d \geq 3$ then
	$\mu_n^{-1} \Var [\xi_n] \to 1$, and 
	$\mu_n^{-1/2} (\xi_n - \E[\xi_n]) \toD N(0,1)$.

	(b) Suppose instead that $\gamma_n \to \gamma \in \R $
	as $n \to \infty$ (with $\gamma_n$ defined at \eqref{e:supcriupper}.) 
	Then as $n \to \infty$,
	$\xi_n \toD Z_{e^{-\gamma}} $ if $d =2$, and
	$\xi_n \toD Z_{c_{d,A} e^{-\gamma/2}} $ if $d \geq 3$,
	and likewise for $\xi'_n$.
\end{theorem}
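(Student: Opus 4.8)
The plan is to establish Theorem~\ref{t:basicunif} by a two-pronged strategy: first show that all four quantities $K_n-1$, $R_n$, $K'_n-1$, $R'_n$ are, up to negligible error, equal to the number of isolated vertices $S_n$ (respectively $S'_n$ for Poisson input), and then carry out a precise asymptotic analysis of $S_n$ and $S'_n$ via the method of moments and normal approximation. The heuristic behind the first step is that in the mildly dense regime, with high probability every component other than the giant one is a single isolated vertex, and the giant component contains everything except the isolated vertices; this should give $K_n - 1 = S_n$ and $R_n = S_n$ with probability tending to $1$. To make this rigorous I would follow the coupling/subgraph-counting approach of \cite{Pen03}: a component of size $k \geq 2$ requires a connected cluster of $k$ points with no other points within distance $r_n$ of the cluster, and a first-moment (Mecke/Palm) calculation shows the expected number of such clusters is $o(\mu_n)$ uniformly in $k\ge 2$. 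One must be careful about two vertex-type contributions to $\mu_n$ when $d \geq 3$ — interior vertices (weight $n e^{-n\theta r_n^d/\lambda(A)}$) and boundary vertices (weight $\sim r_n^{1-d} e^{-n\theta r_n^d/(2\lambda(A))}$) — and the threshold \eqref{e:supcriupper} with its $(2-2/d)$ and $\log\log n$ corrections is precisely calibrated so that $\mu_n \to \infty$ while the expected number of non-isolated small components (and of ``small'' components generally), as well as the probability that the second-largest component is not a single vertex, are $o(\mu_n)$.

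For the second step, I would compute the factorial moments of $S'_n$ under Poisson input using the multivariate Mecke formula: $\E[(S'_n)_{(j)}] = \int_{A^j} \prod_{i} e^{-n G_{r_n}(x_i)} \cdot (\text{pairwise non-adjacency indicators}) \, n^j \, dx$, where $G_r(x) = \lambda(A \cap B_r(x))$. Splitting $A$ into a bulk region and an $O(r_n)$-neighbourhood of $\partial A$, and using $\partial A \in C^2$ (or the explicit square geometry when $d=2$) to Taylor-expand $G_{r_n}(x)$ near the boundary, one gets $\E[S'_n] \sim \mu_n$ and, more generally, $\E[(S'_n)_{(j)}] \sim \mu_n^j$, which by the standard moment criterion gives $S'_n/\mu_n \toL 1$, $\mathrm{Var}[S'_n]/\mu_n \to 1$, and — since factorial moments matching those of a Poisson law that tends to infinity force asymptotic normality after centring and scaling by $\sqrt{\mu_n}$ — the CLT. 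The de-Poissonization to get the $\X_n$ statements uses a coupling between $\eta_n$ and $\X_n$ (or $\X_{m}$ for $m$ near $n$); for the $L^1$ and probability statements this is routine, and in part~(b) where $d=2$ is allowed the limit is an honest Poisson$(e^{-\gamma})$ or Poisson$(c_{d,A}e^{-\gamma/2})$, obtained by the same factorial-moment computation now converging to the moments of a fixed Poisson law rather than diverging.

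The main obstacle, and the reason $d=2$ is excluded from the CLT and variance statements for $\X_n$ (but not for $\eta_n$), is de-Poissonization of the \emph{variance} and the CLT. When $d=2$ the dominant term in $\mu_n$ is $n e^{-\pi n r_n^2/\lambda(A)}$, and the fluctuations of $S_n$ pick up a non-negligible contribution from the fluctuation of the total number of points: heuristically, conditioning the Poisson process to have exactly $n$ points removes a ``$-(\E[S_n])^2/n$-type'' correction to the variance that, in $d=2$, is of the same order as $\mu_n$ itself, so $\mathrm{Var}[S_n]$ is genuinely smaller than $\mathrm{Var}[S'_n]$ and the clean limit $\mathrm{Var}[S_n]/\mu_n \to 1$ fails; when $d \geq 3$ the boundary term in $\mu_n$ dominates and this de-Poissonization correction is lower order, so the $\X_n$ CLT survives. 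Making this dichotomy precise requires either a careful second-order Poincaré / Stein-type bound for $\eta_n$-functionals together with an Efron–Stein or add-one-cost analysis for the binomial process, or the classical de-Poissonization machinery of \cite{Pen03} (coupling $\eta_n$ with $\X_n$ and controlling the defect via a local CLT for the point count); tracking exactly which error terms are $o(\mu_n)$ in each dimension is the delicate part of the argument.
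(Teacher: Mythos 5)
Your overall architecture (reduce $K_n-1,R_n,K'_n-1,R'_n$ to the singleton counts $S_n,S'_n$, then analyse the singleton counts, then de-Poissonize) is the same as the paper's, but two of your key steps do not work as stated. First, the moment step: knowing $\E[(S'_n)_{(j)}]\sim\mu_n^j$ for each $j$ does \emph{not} give $\Var[S'_n]/\mu_n\to 1$ or a CLT, because $\Var[S'_n]=\E[(S'_n)_{(2)}]+\E[S'_n]-(\E[S'_n])^2$ and an error of relative size $o(1)$ in the second factorial moment contributes $o(\mu_n^2)$, which can swamp the target $\mu_n$ when $\mu_n\to\infty$; you need the pairwise correlation terms to be exponentially small relative to $\mu_n$ (this is the role of the $J_{1,n},J_{2,n}$ estimates via Lemma \ref{l:A1}), and the paper obtains the CLT not from moments but from a total-variation Poisson approximation $\dtv(S'_n,Z_{I_n})=O(e^{-\delta nr_n^d})$ (Lemma \ref{l:dTV}, from \cite{PY23}) combined with Berry--Esseen for Poisson variables and a quantitative Slutsky lemma. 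Second, and more importantly, for the variance asymptotics and CLT in part (a) a first-moment bound $\E[\xi_n-S_n]=o(\mu_n)$ is not enough: since $\E[\xi_n-S_n]=O((nr_n^d)^{1-d}I_n)$ can be far larger than $\sqrt{\mu_n}$ in much of the mildly dense regime, you cannot absorb the difference into the $\sqrt{\mu_n}$-scale fluctuations by Markov's inequality; one genuinely needs $\Var[\xi_n-S_n]=o(\mu_n)$, and establishing this (via the small/medium/large component decomposition, Poincar\'e and Efron--Stein inequalities, uniqueness of the giant component and the percolation-type estimates of Section \ref{ss:perc}) is the bulk of the paper's work in Section \ref{s:asympvar}; your proposal does not supply it.

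Your explanation of why $d=2$ is excluded from the binomial variance/CLT statements is also incorrect. The paper proves $\Var[S_n]=I_n(1+O(e^{-\delta nr_n^d}))$ for \emph{all} $d\ge 2$ (Proposition \ref{p:var_iso_bin}); the de-Poissonization correction to the singleton-count variance is of order $\mu_n\cdot O\big(\mu_n nr_n^{2d}\big)$, which is exponentially small relative to $\mu_n$ even in $d=2$, so the limit $\Var[S_n]/\mu_n\to 1$ does not fail there. The actual reason for the restriction is methodological: for binomial input the variance of the count of small non-singleton components is bounded via Efron--Stein, yielding only $O((nr_n^d)^{2-d}I_n)$ (Proposition \ref{p:varRsmall}), which is negligible relative to $I_n$ only when $d\ge 3$, whereas the Poisson case admits a direct second-moment computation giving $O((nr_n^d)^{1-d}I_n)$ for all $d\ge 2$. (Also, your claim that for $d\ge 3$ the boundary term of $\mu_n$ dominates is not generally true; which term dominates depends on the growth rate $b$ of $n\theta r_n^d/\log n$, cf.\ Remark \ref{rk:unif1}.)
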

Note that if $d \geq 3$ and $\lim_{n \to \infty}(n \theta_d r_n^d /
\log n) = b
\in [0,\infty)$,
then if $b/\lambda(A) < 2/d$ the first term in the right hand side of
\eqref{e:defI'}  dominates, while if $2/d < b/\lambda(A) < 2-2/d$
then the second term in the right hand side of \eqref{e:defI'}
 dominates but we still have $\gamma_n \to -\infty$ from
\eqref{e:supcriupper}.

In Section \ref{s:stateresults} 
we shall provide a more detailed version of Theorem \ref{t:basicunif}:
we shall give estimates on the
rates of convergence, and also
generalize to allow for non-uniformly distributed points in $A$.

To the best of our knowledge, the only previous results on $K_n$ and
$R_n$ in the mildly dense regime 
are  by Ganesan \cite{Gan13} in the special case of $d=2$ and $A=[0,1]^2$, where he proved that there exists a constant $c >0$ such that as $n \to \infty$, 
\begin{align}
\PP[ K_n\le n e^{ - c nr_n^2} ]  \to 1;
~~~~~~
\PP[ R_n\le n e^{ - c nr_n^2} ]  \to 1.
	\label{e:Ganesan}
\end{align}
In other words, the proportionate number of components
and the proportionate number of vertices not in the largest component
decay exponentially in $nr_n^2$
but the exact exponent is not identified;
Ganesan's proof, while ingenious, does not provide much of a clue as to
the optimal value of $c$ satisfying (\ref{e:Ganesan}), or whether this
optimal value is the same for $K_n$ and for $R_n$. Moreover, his proof
of the second part of (\ref{e:Ganesan}) does not appear to generalize to higher
dimensions.

One possible reason why the mildly dense limiting regime was not previously
well understood is an apparently strong dependence between contributions
from different regions of space; one has to look a long way from a given vertex to tell whether it lies in the giant component. A second reason is the importance of boundary effects in this regime and the necessity of dealing with the curved boundary of $A$ quantitatively; note the factor of $|\partial A|$ in
the  definition of $\mu_n $ at \eqref{e:defI'}. Another reason
is that in contrast to the thermodynamic regime, it seems not
 to be possible to re-scale space to obtain a limiting Poisson process
to work with, as was often done in previous works on
these kinds of limit theorems, for example \cite{PY03}.
In Section \ref{ss:overview}
we shall provide an overview of the methods we develop to deal
with these issues.

Our results show that the phenomenon of exponential decay is common to all
dimensions and more general sets $A$, and we identify the optimal value of $c$
in \eqref{e:Ganesan}.
Furthermore, we prove a central limit theorem (CLT) for the fluctuations of
$K'_n$ and $R'_n$ (for all $d \geq 2$) and for
$K_n$, $R_n$ (for $d \geq 3$). 
Our CLT is `weakly quantitative' in the sense that we provide bounds
on the rate of convergence to normal,
although our bounds might not be optimal.

We expect that our approach can shed some light on the limiting behaviour of higher dimensional homology, and higher Betti numbers,
of random geometric complexes in the mildly dense regime for which
the correct scaling is so far not well understood; see the
last paragraph of 
\cite[Section 2.4.1]{BK22}. This is beyond the scope of this paper and we leave it for future work. 

This paper contains a lot of notation for the reader to keep track of.
To assist with this, we provide an index of notation as an  appendix.

\section{Statement of results}
\label{s:stateresults}

  We now  describe our setup more precisely.
Let $d \in \mathbb N$ and
$A\subset\RR^d$.
Throughout,
we make the following set of assumptions on the pair  $(d,A)$.
 
\begin{assumption}
	\label{a:WA}
	A is compact, connected and nonempty with
	$A = \overline{A^o}$.
Moreover, either  $d \geq 2 $ and  $ \partial A \in C^2$ or $d=2$ and
$A=[0,1]^2$.  
\end{assumption}
Let $f: \RR^d\to [0,\infty)$ a probability density function with support $A$.
Set $f_0:= \inf_{A} f(x)$, $f_1 := \inf_{\partial A}f$, and 
$\fmax:= \sup_{A} f(x)$.
We shall always assume
 that $f_0>0$ and that $f$ is continuous on $A$ 
(so in particular $\fmax < \infty$).
We refer to the special case where $f$ is constant on $A$ as the {\em uniform
case} but in general we allow possibly non-constant $f$.  
%
We use $\nu$ to denote the measure with density $f$, i.e. $\nu(dx) = f(x)dx$.
Clearly in the uniform case $f_0 = \lambda(A)^{-1}$. 

 Let
 $(X_1,X_2,\ldots)$ be a sequence of independent random vectors in
 $\RR^d$ with common density $f$, and for $n\in\NN$ set
 $\cX_n:=\{X_1,\ldots,X_n\}$,  which is a binomial point process.
 Also let $(Z_t)_{t >0}$ be a unit intensity Poisson counting process,
 independent of $(X_1,X_2,\ldots)$, so that 
 for $n \in [1,\infty)$,  
  $Z_n$ is a Poisson random variable with mean $n$,
 and set
 $\eta_n:=\cX_{Z_n}$. Then $\eta_n$
 is a Poisson point process with intensity measure $n \nu $.
We use $n$ to denote both the number of points in $\cX_n$ and the average number of points in a Poisson sample $\eta_n$ with the 
convention that $n \in \N$ in the former case and
$n\in [1,\infty)$ in the latter case.


 We are concerned with the quantities
$K_n:= K(G(\cX_n,r_n))$ and
$R_n:= R(G(\cX_n,r_n))$ and
 their Poisson counterparts 
$K'_n:=K(G(\eta_n,r_n)$) 
and $R'_n:=R(G(\eta_n,r_n)$),
with $r_n \in (0,\infty)$ specified for each $n$.




Given $g: (0,\infty)\to \R$,
and $h: (0,\infty)\to (0,\infty)$,
we write $g(x) = O(h(x))$ 
if we have $\limsup |g(x)|/h(x)<\infty$, and write $g(x)=o(h(x))$ if
$\limsup |g(x)|/h(x)=0$, $g(x) = \Omega(h(x))$ if $\liminf(g(x)/h(x)) >0$.
We write
$g(x) = \Theta(h(x))$ if both $g(x) = O(h(x))$ and $g(x)=\Omega(h(x))$, 
and  $g(x) \sim h(x) $ if $\lim(g(x) / h(x)) = 1$.
Here, the limit is taken either as $x\to 0$ or $x\to \infty$, to be specified in each appearance. 

To present quantitative CLTs, we recall that
for random variables $X,Y$,
the Kolmogorov distance $\dk$
and the total variation distance $\dtv$  
between them
are defined respectively by
\begin{align*}
\dk(X,Y) := \sup_{z\in\RR} |\PP[X\le z] - \PP[Y\le z]|; 
~~~
\dtv(X,Y) := \sup_{A\in \mathcal B(\RR)} |\PP[X\in A] - \PP[Y\in A]|,
\end{align*}
where the second supremum is taken over all Borel measurable subsets of $\RR$.
Note that
 convergence in the Kolmogorov distance implies convergence in
 distribution.

 \subsection{Results for general $f$}
 \label{ss:Resultsgenf}

We now give our results for the component count 
and the number of vertices not in the giant component
in the general case with $f$ not assumed necessarily to be constant on
$A$.  
For  general $f$, instead of $\mu_n$ defined at \eqref{e:defI'},
our results refer to constants $I_n$ defined by
	  \begin{align}
		I_n:= n \int_A \exp(-n \nu(B_{r_n}(x))) \nu(dx).
		\label{e:def:I_n}
	  \end{align}
		We define the 
		{\em critical regime for connectivity}
		to be when $r_n$ is chosen so that $I_n=\Theta(1)$ as 
		$n \to \infty$, and the
		\emph{mildly dense} regime to be when $r_n$ is chosen so that
(\ref{e:supcri}) holds but also $I_n \to \infty$ as $n \to \infty$.
As discussed later in Remark \ref{rk:unif1},
the latter condition turns out to be equivalent to \eqref{e:supcriupper} in
the uniform case.

\begin{theorem}[First order moment asymptotics for general $f$]
	\label{t:momLLN}
	Suppose $(d,A)$ satisfy Assumption \ref{a:WA}, 
	that $f$ is continuous on $A$ with $f_0 >0$,
	and that $r_n$ satisfies \eqref{e:supcri} and also $I_n \to \infty$
	as $n \to \infty$.
	Let $\xi_n$ denote any of $K_n-1$, $R_n$,
	$K'_n-1$ or $R'_n$, and let $\zeta_n$ be either $\xi_n$
	or $\xi_n +1$. Then as $n \to \infty$
	we have
	\begin{align}
		\E[\xi_n] = I_n (1+ O((nr_n^d)^{1-d}));
		 \label{e:meanasymp}
		\\
		\E[| 	(\zeta_n /I_n) -1 |] =  O((nr_n^d)^{1-d} + I_n^{-1/2}).
		\label{e:basicLLN}
		\end{align}
		In particular $(\zeta_n/I_n) \toL 1$.
	\end{theorem}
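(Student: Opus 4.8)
The plan is to compare each of $K_n-1$, $R_n$, $K'_n-1$, $R'_n$ with the number of \emph{isolated vertices} of the corresponding graph; we treat the binomial process in detail, the Poisson case being entirely analogous (and slightly simpler, via the Mecke equation). Write $S_n$ for the number of isolated vertices of $G(\cX_n,r_n)$ and, for a finite point set, let $V_n$ be the total number of vertices lying in connected components of order at least $2$ other than the largest component. Splitting the vertex set into the largest component, the isolated vertices, and the rest yields the deterministic sandwich
\[ S_n-\1\{G(\cX_n,r_n)\text{ has no edges}\}\ \le\ \xi_n\ \le\ S_n+V_n \]
for each of $\xi_n\in\{K_n-1,\,R_n\}$: when the largest component has order $\ge2$ one has $K_n-1=S_n+W_n$ and $R_n=S_n+V_n$ with $0\le W_n\le V_n$ ($W_n$ being the number of non-largest components of order $\ge2$), while on the complementary event $K_n-1=R_n=S_n-1$. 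Thus it suffices to (a) evaluate $\E[S_n]$; (b) show $\E[V_n]=O\big(I_n(nr_n^d)^{1-d}\big)$ and $\PP[G(\cX_n,r_n)\text{ has no edges}]=o\big(I_n(nr_n^d)^{1-d}\big)$; and (c) obtain the variance bound $\Var[\xi_n]=O\big(I_n+I_n^2(nr_n^d)^{2-2d}\big)$. One first records a consequence of the hypotheses: since $f_0>0$ and $\partial A\in C^2$ (or $A$ is the square), there is $c>0$ with $\nu(B_{r_n}(x))\ge c\,r_n^d$ for all $x\in A$ once $r_n$ is small, whence $I_n\le ne^{-cnr_n^d}$, so $I_n\to\infty$ forces $r_n\to0$ and $nr_n^d=O(\log n)$; consequently the error quantities $r_n^d$, $nr_n^{2d}$ and $1/n$ that arise below are all $o\big(I_n(nr_n^d)^{1-d}\big)$.

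For (a), the Mecke equation gives $\E[S_n']=I_n$ exactly for the Poisson isolated-vertex count, while $\E[S_n]=n\int_A(1-\nu(B_{r_n}(x)))^{n-1}\nu(dx)$, and expanding $(1-p)^{n-1}=e^{-np}\big(1+O(p+np^2)\big)$ with $p=\nu(B_{r_n}(x))=O(r_n^d)$ gives $\E[S_n]=I_n\big(1+O(r_n^d+nr_n^{2d})\big)=I_n\big(1+O((nr_n^d)^{1-d})\big)$. The event ``$G$ has no edges'' is contained in the event that a prescribed point of the process is isolated, so its probability is at most $\E[S_n]/n=O(I_n/n)$. Granting (b), $\E[\xi_n]$ then lies between $\E[S_n]-O(I_n/n)$ and $\E[S_n]+\E[V_n]$, which yields \eqref{e:meanasymp}.

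The substantive step is (b), and I expect it to be the main obstacle. With $N_k$ the number of components of order exactly $k$ and $m=m_n$ a cutoff, $\E[V_n]\le\sum_{k=2}^{m}k\,\E[N_k]+\E\big[\#\{\text{vertices in non-largest components of order}>m\}\big]$. For the first sum, $\E[N_k]$ is (by the Mecke equation, resp.\ a direct expectation computation) an integral over ordered $k$-tuples $(x_1,\dots,x_k)$ that span a connected subgraph and whose occupied set $\bigcup_j B_{r_n}(x_j)$ meets no further point of the process; bounding the connectedness indicator by the sum over its $k^{k-2}$ spanning trees and integrating out the leaves of a tree one at a time, the key geometric fact is that attaching one further radius-$r_n$ ball to the occupied set typically enlarges its volume by at least an amount of order $r_n^{d-1}s$, $s$ being the offset of the new point from the set, so that $\int e^{-cnr_n^{d-1}s}s^{d-1}\,ds\asymp(nr_n^{d-1})^{-d}$; combined with the factor $n$ from the expected count, each extra vertex costs a factor $O\big(n(nr_n^{d-1})^{-d}\big)=O\big((nr_n^d)^{1-d}\big)$. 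This gives $k\,\E[N_k]\le\frac{C^{k-1}}{(k-1)!}\,I_n(nr_n^d)^{(1-d)(k-1)}$ for $2\le k\le m$, and summing this geometrically decaying series gives $\sum_{k\ge2}k\,\E[N_k]=O\big(I_n(nr_n^d)^{1-d}\big)$. For the second term one uses that non-largest components cannot be large: an argument of uniqueness-of-the-giant / no-second-large-component type, in the spirit of those in \cite{Pen03} (thermodynamic regime) and \cite{PY21} (near connectivity), shows that for a suitably slowly growing $m_n$ the expected number of vertices in non-largest components of order exceeding $m_n$ is $o\big(I_n(nr_n^d)^{1-d}\big)$. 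The delicate point in the clustering estimate is that an added vertex may be ``buried'' deep inside the current occupied set and thus add no new volume; one must show such vertices are too rare --- their count being controlled by the already-accumulated excess volume --- to break the geometric decay in $k$, and one must separately handle the large non-largest components.

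Finally, for (c), the sandwich gives $\Var[\xi_n]\le2\Var[S_n]+2\,\E[V_n^2]+2\,\PP[G\text{ has no edges}]$. A computation of the covariance structure of the isolated-vertex indicators (negative correlation at interpoint distance $\le r_n$, and a positive contribution at distance in $(r_n,2r_n]$ that is controlled by a Laplace-type estimate comparing $\int e^{-\kappa n\nu(B_{r_n}(x))}\nu(dx)$ with $I_n/n$ for a suitable $\kappa>1$; precise asymptotics for isolated-vertex counts near connectivity are classical, cf.\ \cite{Pen03}) yields $\Var[S_n]=O\big(I_n+I_n^2(nr_n^d)^{2-2d}\big)$, and $\E[V_n^2]$ satisfies the same bound, by reusing the clustering estimate of (b) with an extra factor $k^2$ (or, for two clusters, with the clustering bound applied twice). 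Hence $\Var[\xi_n]=O\big(I_n+I_n^2(nr_n^d)^{2-2d}\big)$, and since
\[ \E\big[\,|\xi_n/I_n-1|\,\big]\ \le\ \frac{|\E[\xi_n]-I_n|}{I_n}+\frac{\sqrt{\Var[\xi_n]}}{I_n}\ =\ O\big((nr_n^d)^{1-d}+I_n^{-1/2}\big), \]
while $\zeta_n=\xi_n+O(1)$ and $I_n\to\infty$, we obtain \eqref{e:basicLLN}, from which $\zeta_n/I_n\toL1$ follows.
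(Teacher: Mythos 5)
Your overall architecture---compare each of $K_n-1$, $R_n$, $K'_n-1$, $R'_n$ with the singleton count $S_n$, show the discrepancy is $O((nr_n^d)^{1-d}I_n)$ in mean, and use concentration of the singleton count for the $L^1$ statement---is the same as the paper's. The genuine gap is in your step (b). Classifying the non-giant, non-singleton vertices by component \emph{order} $k$ and claiming $k\,\E[N_k]\le \frac{C^{k-1}}{(k-1)!}I_n(nr_n^d)^{(1-d)(k-1)}$ cannot work: that bound is false once $k$ is of order $nr_n^d$. Already in the uniform case, consider components all of whose $k$ points lie in a ball $B_{\eps r_n}(x)$ around an interior point $x$; the vacant region required is contained in $B_{(1+\eps)r_n}(x)$, so $\E[N_k]\gtrsim \frac{n\,(c_\eps nr_n^d)^{k-1}}{(k-1)!}\,e^{-(1+\eps)^d\theta f_0 nr_n^d}$, and since $I_n\le n$, the ratio of this lower bound to your claimed bound is at least $\big(c_\eps (nr_n^d)^d/C\big)^{k-1}e^{-(1+\eps)^d\theta f_0 nr_n^d}$, which tends to infinity for $k-1\approx nr_n^d$ because $\log(nr_n^d)\to\infty$. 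This is precisely the ``buried vertex'' phenomenon you flag: only the outermost points of a densely packed cluster buy new vacant volume of order $r_n^{d-1}s$, so the per-vertex factor $(nr_n^d)^{1-d}$ simply is not available, and no repair is offered. Moreover the tail $k>m_n$ cannot be delegated to uniqueness-of-the-giant arguments: those control components of large Euclidean \emph{diameter}, whereas a component of order $k>m_n$ may have diameter much smaller than $r_n$, so high-order, small-diameter clusters fall through both halves of your decomposition. The paper avoids order altogether and classifies non-singleton components by diameter: small diameter via a first-point/furthest-point decomposition in which at most two or three extra vertices are ever integrated out (Lemmas \ref{l:smallK} and \ref{l:V0eps}), medium diameter via a discretisation/added-volume argument (Lemmas \ref{l:medK} and \ref{l:E6}), and diameter larger than a constant multiple of $r_n$ via Peierls-type percolation estimates combined with uniqueness of the large component and a lower bound on the giant's diameter (Lemmas \ref{l:E9}, \ref{l:uniqueness}, \ref{l:smallgiant}); this yields Propositions \ref{p:J} and \ref{p:exp_R_bin}, which are what \eqref{e:meanasymp} rests on.

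A secondary point: your step (c) re-uses the same flawed clustering estimate to bound $\E[V_n^2]$, but nothing of that kind is needed for \eqref{e:basicLLN}. Once $\E[|\xi_n-S_n|]=O((nr_n^d)^{1-d}I_n)$ is in hand, the triangle inequality $\E|\zeta_n/I_n-1|\le \E|\zeta_n-S_n|/I_n+\E|S_n-\E[S_n]|/I_n+|\E[S_n]-I_n|/I_n+O(I_n^{-1})$ together with Cauchy--Schwarz and $\Var[S_n]=O(I_n)$ gives the result; this is how the paper argues, and a bound on $\Var[\xi_n]$ itself is only needed later for the CLT (where, for binomial input, it is achieved only for $d\ge3$). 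Your parts (a), the bound on the no-edge event, the deduction $nr_n^d=O(\log n)$ from $\liminf I_n>0$, and the sketch of $\Var[S_n]$ (the positive contribution from pairs at distance in $(r_n,2r_n]$ is exactly the paper's $J_{1,n}$ estimate, handled via Lemma \ref{l:A1}) are sound and consistent with the paper.
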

We can use the $L^1$ convergence in Theorem \ref{t:momLLN}, together with
an asymptotic analysis of $I_n$, to determine the optimal exponent $c$ 
in Ganesan's result \eqref{e:Ganesan}.
First we introduce some further notation.  Given $(r_n)_{n \geq 1}$ we define
	\bea
	b^+ := \limsup_{n \to \infty} (n \theta_d r_n^d/\log n); 
	~~~~~~~~~~~
	b^- := \liminf_{n \to \infty} (n \theta_d r_n^d/\log n).
	\label{e:defb}
	\eea
	and $b:=  b^+= b^-$ whenever $b^+=b^-$. Loosely speaking,
	$b$  is the logarithmic
	growth rate of the degree of a typical vertex, at least
	in the uniform case with $\lambda(A)=1$. 
	We identify two critical values for $b$, namely
		\begin{align}
			b_c := \max \Big( \frac{1}{f_0}, 
			\frac{2-2/d}{f_1} \Big); 
			~~~~~
			b'_c := \begin{cases}
				(d(f_0 - f_1/2))^{-1}  & {\rm ~if ~ }
				f_0 > f_1/2;
				\\
				+ \infty & {\rm ~ if} ~ f_0 \leq f_1/2
			\end{cases}
			\label{e:bcdef}
		\end{align}
		 (so in the uniform case $b_c = (2-2/d)/f_0$ and $b'_c
		= 2/(df_0)$, and hence $b'_c < b_c$ if $d \geq 3$).
		The following result shows $b_c$ is the critical value of
		the logarithmic growth rate $b$ above which
		$I_n \to 0$, and below which $I_n \to \infty$.
	\begin{proposition}
		\label{p:bc}
		If $b^+ < b_c$ then $I_n \to \infty$ as $n \to \infty$.
		Conversely, if $b^- > b_c$ then 
		$I_n \to 0$ as $n \to \infty$, and
		if $ \liminf_{n \to \infty} I_n >0 $ then $b^+ \leq b_c$.
	\end{proposition}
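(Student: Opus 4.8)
\noindent\emph{Proof plan.}\ The plan is to analyse $I_n=n\int_A e^{-n\nu(B_{r_n}(x))}\,\nu(dx)$ by splitting $A$ into the interior region $A^{\mathrm{in}}_n:=\{x\in A:\dist(x,\partial A)\ge r_n\}$ and the boundary collar $A^{\mathrm{bd}}_n:=A\setminus A^{\mathrm{in}}_n$, and writing $I_n=I^{\mathrm{in}}_n+I^{\mathrm{bd}}_n$ for the two corresponding pieces. The guiding principle is that $I^{\mathrm{in}}_n$ diverges or vanishes according as the logarithmic growth rate of $n\theta r_n^d$ lies below or above $1/f_0$, while $I^{\mathrm{bd}}_n$ does so according as that rate lies below or above $(2-2/d)/f_1$; since both pieces are nonnegative, $I_n\to\infty$ as soon as \emph{either} divergence condition holds --- which explains the $\max$ in the definition of $b_c$ --- whereas $I_n\to0$ requires \emph{both} vanishing conditions. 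Throughout one may assume $r_n\to0$: if $r_n\ge\ep_0$ along a subsequence then $\nu(B_{r_n}(x))\ge c(\ep_0)>0$ uniformly over $x\in A$ there, so $I_n\le ne^{-nc(\ep_0)}\to0$ along it (which disposes of the vanishing claims in that case), while the divergence hypothesis $b^+<b_c<\infty$ forces $r_n\to0$ directly.

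For the interior piece, $B_{r_n}(x)\subseteq A$ whenever $x\in A^{\mathrm{in}}_n$, so there $f_0\theta r_n^d\le\nu(B_{r_n}(x))\le\fmax\theta r_n^d$. The lower bound on $\nu$ gives $I^{\mathrm{in}}_n\le ne^{-f_0\theta nr_n^d}$, which tends to $0$ once $b^->1/f_0$. For a matching lower bound on $I_n$, suppose first that $\inf_A f$ is attained at some interior point $x_0$; by continuity $\nu(B_{r_n}(x))\le(f_0+\ep)\theta r_n^d$ for all $x$ in a fixed ball about $x_0$ once $r_n$ is small, so $I_n\ge c_\ep\,ne^{-(f_0+\ep)\theta nr_n^d}\to\infty$ provided $b^+<1/f_0$ and $\ep$ is chosen small. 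If instead $\inf_A f$ is attained only on $\partial A$, then $f_0=f_1$, and since $d\ge2$ we have $1/f_0=1/f_1\le(2-2/d)/f_1$, so the case $b^+<1/f_0$ then falls under the collar analysis below.

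For the collar, let $\phi(s)$ be the volume of the portion of the unit $d$-ball lying on the near side of a hyperplane at signed distance $s\in[-1,1]$ from the centre, so $\phi(0)=\theta/2$, $\phi$ is smooth and increasing, and $\phi'(0)=\theta_{d-1}$. A tubular-neighbourhood (Fermi) parametrisation of the collar --- available since $\partial A\in C^2$, or, in the square case, the flat description of $\partial A$ away from its finitely many corners --- gives, uniformly for $x$ at depth $t:=\dist(x,\partial A)\in[0,r_n)$, that $\lambda(B_{r_n}(x)\cap A)=r_n^d(\phi(t/r_n)+O(r_n))$ and that the volume element is $dx=(1+O(r_n))\,\mathcal{H}^{d-1}(dz)\,dt$ in the coordinates (nearest boundary point $z$, depth $t$). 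Writing $\omega$ for the modulus of continuity of $f$ on $A$, any point of $B_{r_n}(x)$ with $x$ in the collar lies within $2r_n$ of its nearest boundary point, hence $f\ge f_1-\omega(2r_n)$ there, so $\nu(B_{r_n}(x))\ge(f_1-\omega(2r_n))r_n^d(\phi(t/r_n)+O(r_n))$; combined with $\int_0^1 e^{-c\phi(s)}\,ds=O(e^{-c\theta/2}/c)$ as $c\to\infty$ (elementary, since $\phi(s)-\theta/2\gtrsim s$ near $0$), this yields $I^{\mathrm{bd}}_n=O\bigl(|\partial A|\,r_n^{1-d}e^{-f_1\theta nr_n^d/2}\bigr)=O\bigl(n^{1-1/d}e^{-f_1\theta nr_n^d/2}\bigr)$, using $r_n^d\ge c\log n/n$ (a consequence of $b^-\ge b_c>0$); this tends to $0$ once $b^->(2-2/d)/f_1$. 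For the reverse bound, fix $z_0\in\partial A$ --- a point of the relative interior of an edge, in the square case --- with $f(z_0)\le f_1+\ep$, and restrict $x$ to the part of the collar within a fixed radius of $z_0$ and of depth at most $\alpha r_n$; this set has Lebesgue measure $\gtrsim\alpha r_n$, and on it $\nu(B_{r_n}(x))\le(f_1+2\ep)(\theta/2+C\alpha+Cr_n)r_n^d$, so $I_n\ge c\,\alpha\,nr_n\,e^{-(f_1+2\ep)(\theta/2+C\alpha+Cr_n)\,nr_n^d}\to\infty$ provided $b^+<(2-2/d)/f_1$ and $\ep,\alpha$ are small (in the remaining case, where $nr_n^d$ is itself small, divergence is instead read off from $I^{\mathrm{in}}_n\ge\tfrac12 ne^{-\fmax\theta nr_n^d}$).

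Assembling these bounds gives the proposition: if $b^+<b_c$ then $b^+<1/f_0$ or $b^+<(2-2/d)/f_1$, and the corresponding lower bound gives $I_n\to\infty$; if $b^->b_c$ then both upper bounds apply and $I_n\to0$; and the final assertion is the contrapositive of the statement that $b^+>b_c$ forces $I_n\to0$ along a subsequence on which $n\theta r_n^d/\log n\to b^+$, which is immediate from the same upper bounds. The one genuinely delicate step is the collar estimate with its sharp constants --- that the operative density value there is $f_1=\inf_{\partial A}f$ rather than $f_0$, that the exponent carries the factor $\theta/2$, and that the prefactor of order $r_n^{1-d}$ comes with the constant $\theta_{d-1}^{-1}=1/\phi'(0)$; this requires the geometric expansions above to hold uniformly over the collar, and the separate disposal of the square's corners, where $\lambda(B_{r_n}(x)\cap A)$ is only bounded below by a smaller multiple of $r_n^d$ but the corner regions have measure $O(r_n^d)$ and so contribute negligibly to $I_n$.
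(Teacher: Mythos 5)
Your argument is correct and follows the same basic route as the paper's: split $I_n$ into a bulk term and a boundary-collar term, show the bulk is governed by the exponent $\theta f_0 n r_n^d$ and the collar by $\tfrac12\theta f_1 n r_n^d$ with prefactor of order $nr_n \asymp n^{1-1/d}$, dispose of the square's corners separately, and get the final assertion by a subsequence/contrapositive argument; the $\max$ in $b_c$ arises for exactly the reason you give. The differences are in execution. The paper channels the two lower bounds through Lemma \ref{l:Ilower} (the interior bound is imported from \cite{PY23}, while the boundary bound is precisely your ``collar lower bound near a point where $f\approx f_1$'', proved via the cap-volume estimate of Lemma \ref{l:A3}), and the upper bound through Lemma \ref{l:IUB}, which on the collar uses only the crude bound $\nu(B_{r_n}(x))\ge(1-\eps)\theta f_1 r_n^d/2$; your sharper Fermi-coordinate computation with the cap profile $\phi$ essentially re-derives Lemmas \ref{l:bdyvol} and \ref{thm:cov} (imported from \cite{HPY24} and used in the paper only later, for Proposition \ref{p:average3d+}), and is more precision than this proposition needs, since the strict gap $b^->b_c$ absorbs an $\eps$ (or $o(1)$) loss in the exponent. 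One genuine simplification on your side: because Lemma \ref{l:IUB} treats the corners of $[0,1]^2$ under the extra hypothesis $b^+\le\tfrac65 b_c$, the paper must invoke the monotonicity of $I_n$ in $r_n$ to reduce the vanishing direction to that case, whereas you avoid any truncation of $r_n$ by noting that the corner contribution $O(nr_n^2e^{-\pi f_0 nr_n^2/4})$ tends to $0$ on its own and need not be dominated by the bulk term for this purpose. Your handling of the case where $\inf_A f$ is attained only on $\partial A$ (so $f_0=f_1$ and the interior threshold is subsumed by the collar one) and of bounded $nr_n^d$ in the divergence direction is also sound.
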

	The next result arises from the fact that
	for $b < b'_c$ the main contribution to $I_n$,
	and hence to $K_n$ or $R_n$, comes
	from the interior of $A$, while for $ b'_c < b < b_c$
	the main contribution to $I_n$ comes from near the boundary of $A$.
	Given random variables $(Y_n)_{n \geq 1}$ we write
	$Y_n = o_\Pr(1)$ to mean 
	$Y_n \to 0$ in probability as $n \to \infty$. 
	\begin{theorem}
		\label{t:exprate}
		Under the conditions of Theorem \ref{t:momLLN},
		as $n \to \infty$
		we have
		\begin{align}
			\zeta_n & = 
			n \exp(-  \theta_d f_0 n r_n^d (1+ o_\Pr(1)))
			~~~~ & {\rm if}~~ b^+ \leq b'_c;
			\label{e:loginP1}
			\\
			\zeta_n & = 
			n^{1-1/d} \exp \Big(-  \theta_d f_1 n r_n^d 
			\big(\tfrac12+ o_\Pr(1)\big) \Big)
			~~~~ & {\rm if}~~ b^- \geq b'_c.
			\label{e:loginP2}
		\end{align}
		In particular, if $b^+ = b^- = b$ then
		$\zeta_n = n^{ 1-  \min(f_0 b,(1/d) + f_1b/2) +o_\Pr(1)}$
		.
	\end{theorem}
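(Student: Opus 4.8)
The plan is to combine the $L^1$ law of large numbers $\zeta_n/I_n\to1$ supplied by Theorem~\ref{t:momLLN} (which in particular gives convergence in probability) with a direct asymptotic analysis of the deterministic quantity $I_n=n\int_A e^{-n\nu(B_{r_n}(x))}\,\nu(dx)$. Since $nr_n^d\to\infty$, it is enough to prove the two \emph{deterministic} statements: $I_n=n\,e^{-\theta f_0 nr_n^d(1+o(1))}$ when $b^+\le b'_c$, and $I_n=n^{1-1/d}e^{-\theta f_1 nr_n^d(\frac12+o(1))}$ when $b^-\ge b'_c$. Indeed, writing $\zeta_n=I_n(1+o_\Pr(1))=I_n e^{o_\Pr(1)}$ and dividing the exponent $o_\Pr(1)$ by $\theta f_i nr_n^d\to\infty$ turns the $o(1)$ in these displays into $o_\Pr(1)$, giving \eqref{e:loginP1} and \eqref{e:loginP2}. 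The final assertion follows on inserting $nr_n^d=(b/\theta)(\log n)(1+o(1))$, which is what $b^+=b^-=b$ provides, and observing that $f_0 b\le\frac1d+\frac12 f_1 b$ precisely when $b\le b'_c$.

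To analyse $I_n$ I would split $A=D_n\sqcup L_n$ into the deep interior $D_n:=\{x\in A:\dist(x,\partial A)>r_n\}$ and the boundary layer $L_n:=A\setminus D_n$, and treat the two integrals by Laplace's method. On $D_n$ we have $B_{r_n}(x)\subset A$, so uniform continuity of $f$ gives $\nu(B_{r_n}(x))=\theta_d r_n^d(f(x)+o(1))$ uniformly; the bound $\nu(B_{r_n}(x))\ge\theta_d r_n^d(f_0-o(1))$ together with $\lambda(D_n)\le\lambda(A)$ gives the upper bound $n\,e^{-\theta f_0 nr_n^d(1+o(1))}$, while the matching lower bound comes from restricting the integral to a fixed ball around an interior point at which $f\le f_0+\eps$ (such a point exists for every $\eps>0$ since $A=\overline{A^o}$ and $f$ is continuous) and then letting $\eps\downarrow0$. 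On $L_n$ one uses the standard geometric estimate that, for $x$ at distance $t\in[0,r_n]$ from a $C^2$ piece of $\partial A$, $\lambda(B_{r_n}(x)\cap A)=r_n^d\bigl(v_d(t/r_n)+O(r_n)\bigr)$, where $v_d(\beta)$ is the volume of a unit ball with a spherical cap of height $1-\beta$ removed, so $v_d(0)=\frac12\theta_d$ and $v_d$ is increasing, while near a corner of $[0,1]^2$ one has $\lambda(B_{r_n}(x)\cap A)\ge\frac14\theta_d r_n^d(1-o(1))$; combined with $\inf_{B_{r_n}(x)}f\ge f_1-o(1)$ on $L_n$ this yields $\nu(B_{r_n}(x))\ge(\frac12-o(1))\theta_d f_1 r_n^d$ away from corners. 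Using $\lambda(L_n)=\Theta(r_n)$ and the identity $nr_n=n^{1-1/d}(nr_n^d)^{1/d}$ with $(nr_n^d)^{1/d}=e^{o(nr_n^d)}$, the contribution of the $C^2$ part of $L_n$ is $\le n^{1-1/d}e^{-\frac12\theta f_1 nr_n^d(1-o(1))}$, with a matching lower bound obtained by restricting to a fixed patch of $\partial A$ near a minimiser of $f|_{\partial A}$ and to depths $t\le\eps r_n$, then $\eps\downarrow0$. Finally one compares $\log n-\theta f_0 nr_n^d$ with $(1-\frac1d)\log n-\frac12\theta f_1 nr_n^d$: the former dominates to leading logarithmic order exactly when $n\theta r_n^d/\log n\le b'_c$, the latter when $n\theta r_n^d/\log n\ge b'_c$; the corner terms, which carry exponential rate $-\frac14\theta_d f(c)$ at a corner $c$, are negligible because the mildly dense hypothesis forces $b^+\le b_c$ (Proposition~\ref{p:bc}) and $b_c<1/(f_0-f(c)/4)$. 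Assembling these pieces yields the two displayed asymptotics for $I_n$.

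The main obstacle is the uniform control of $\nu(B_{r_n}(x))$ as $x$ ranges over the boundary layer $L_n$: one needs the volume of the truncated ball $B_{r_n}(x)\cap A$ to within a relative error that is $o(1)$ \emph{uniformly} in $x$, which is precisely where the $C^2$ regularity of $\partial A$ (respectively the explicit geometry of $[0,1]^2$, corners included) enters; this is the type of estimate developed in \cite{PY21,Pen03}. Once that estimate is in hand, the rest is essentially bookkeeping: tracking the polynomial-in-$r_n$ prefactors carefully enough to see that $nr_n$ and $r_n^{1-d}$ are both $n^{1-1/d}e^{o(nr_n^d)}$, and deciding which of the interior, boundary-face and corner contributions wins as a function of $b$ relative to the thresholds $b'_c$ and $b_c$, the mildly dense hypothesis being used only to discard the corner contributions.
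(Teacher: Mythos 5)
Your proposal is correct and follows essentially the same route as the paper: reduce to deterministic two-sided bounds on $I_n$ and transfer them to $\zeta_n$ via the convergence in probability of $\zeta_n/I_n$ supplied by Theorem \ref{t:momLLN}, with the threshold comparison at $b'_c$ (and the use of $b^+\le b_c$ from Proposition \ref{p:bc} to kill corner terms) handled exactly as in the paper. The interior/boundary-layer/corner Laplace analysis you sketch is precisely the content of Lemmas \ref{l:Ilower} and \ref{l:IUB}, which the paper's proof simply invokes rather than re-derives.
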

	If $d=2$ then since $f_1 \geq f_0$ we have $ d (f_0 -f_1/2)
	\leq f_0$ and $ b'_c \geq f_0^{-1} = b_c$.
Thus, if $nr_n^2 \to \infty$ and $I_n \to \infty$,
then \eqref{e:loginP1} applies and
Ganesan's result (\ref{e:Ganesan}) 
for $A=[0,1]^2$
holds whenever $c< \pi f_0$, and fails whenever
$c> \pi f_0$ (in the latter case the probabilities in (\ref{e:Ganesan})
tend to zero). A similar remark holds when $d \geq 3$, provided also
 $b^+ < b'_c$.

Next we give distributional results. 
The first one
says that in the critical regime for connectivity,
both $K_n-1$ and $R_n$ (along with $K'_n -1$ and $R'_n$)
are asymptotically Poisson.

\begin{theorem}[Poisson convergence in the connectivity regime for
	general $f$]
	\label{t:PoLim}
	Suppose that Assumption \ref{a:WA} applies,  
	$f$ is continuous on $A$ with $f_0 >0$, and  
	that $I_n = \Theta(1) $ as
		$n \to \infty$.
		Let $\xi_n$ denote any of  $K_n-1$, $R_n$ 
	$K'_n-1$ or $R'_n$. Then
	$\dtv(\xi_n, Z_{I_n}) = O((\log n)^{1-d})$
	as $n \to \infty$. In particular, if
	$\lim_{n \to \infty} I_n = c$ 
		for some $c \in (0,\infty)$, then
	$\xi_n \toD Z_c$ as $n \to \infty$.
\end{theorem}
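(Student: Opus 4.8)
The plan is to show that, under the hypothesis $I_n=\Theta(1)$, each of the four quantities $\xi_n$ coincides with the number of isolated vertices of the relevant graph with probability $1-O((\log n)^{1-d})$, and that this isolated-vertex count lies within $o((\log n)^{1-d})$ of $Z_{I_n}$ in total variation; the theorem then follows by the triangle inequality for $\dtv$. Write $W_n$ (resp.\ $W'_n$) for the number of isolated vertices of $G(\cX_n,r_n)$ (resp.\ $G(\eta_n,r_n)$). A preliminary observation used throughout is that $I_n=\Theta(1)$ forces $nr_n^d=\Theta(\log n)$: by Proposition \ref{p:bc}, $\limsup_n I_n<\infty$ gives $b^+\le b_c$ while $I_n\not\to\infty$ gives $b^+\ge b_c$, so $b^+=b_c$; and if $b^-<b_c$ along a subsequence, lower-bounding $I_n$ by the contribution to \eqref{e:def:I_n} of either a region where $f$ is near $f_0$ or a boundary neighbourhood where $f$ is near $f_1$ forces $I_n\to\infty$ there, a contradiction, so $b^-=b_c$ as well; since $f_0>0$ this yields $b_c\in(0,\infty)$ and hence $n\theta_d r_n^d\sim b_c\log n$. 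All powers of $\log n$ below arise by converting powers of $nr_n^d$ in this way.

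\textbf{Step 1: reduction to isolated vertices.} If every component of $G(\cX_n,r_n)$ other than the one of largest order is a single vertex, then $K_n-1=R_n=W_n$, and similarly in the Poisson case; hence $\{\xi_n\ne W_n\}$ (with $W_n$ read as $W'_n$ in the Poisson cases) is contained in the event that the graph has a component of order in $\{2,\dots,\lfloor n/2\rfloor\}$ together with the exponentially unlikely event that it has no component of order $>n/2$. For components of bounded order $k$ I would bound the expected number by a $k$-th moment integral: the remaining points must avoid $W:=\bigcup_i B_{r_n}(x_i)$, and the key geometric input is the lower bound $\lambda(W\cap A)\ge \lambda(B_{r_n}(x_1)\cap A)+c_d\,r_n^{d-1}\max_i|x_i-x_1|$ for a dimensional constant $c_d>0$, valid uniformly near a $C^2$ boundary (and, with a smaller constant, near a corner of the square). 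Integrating out relative positions gives, for $k=2$, bounds of order $n^2n^{-d}r_n^{-d(d-1)}e^{-nf_0\theta_d r_n^d}$ from the interior and $n^2n^{-d}r_n^{-d(d-1)}\,r_n^{1-d}\,e^{-nf_1\theta_d r_n^d/2}$ from the boundary layer, and substituting $n\theta_d r_n^d\sim b_c\log n$ with $b_c=\max(f_0^{-1},(2-2/d)f_1^{-1})$ shows both equal $O((\log n)^{1-d})$ (the interior term being of exactly this order when $d=2$, the boundary term when $d\ge 3$); for $k\ge3$ the analogous bounds are smaller by further powers. For components of order between a large constant and $n/2$ I would invoke the classical argument as in \cite[Chapter 13]{Pen03}: such a component is surrounded by a vacant region of Lebesgue measure $\Omega(r_n^d\log n)$, so a union bound makes its existence $o((\log n)^{1-d})$-unlikely, and the same argument gives a unique component of order exceeding $n/2$. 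Altogether $\Pr[\xi_n\ne W_n]=O((\log n)^{1-d})$.

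\textbf{Step 2: Poisson approximation and de-Poissonization.} Writing $W'_n=\sum_{x\in\eta_n}\1\{x\text{ isolated}\}$ and using that isolatedness of $x$ depends only on $\eta_n$ inside $B_{r_n}(x)$ --- so the events for points at distance $>2r_n$ are independent --- the Chen--Stein bound for sums of indicators over a Poisson process (via the Mecke formula, cf.\ \cite[Chapter 6]{Pen03}) gives $\dtv(W'_n,Z_{\E W'_n})\le\min(1,(\E W'_n)^{-1})(b_1+b_2)$, with $b_1,b_2$ integrals over pairs $(x,y)$, $|x-y|\le 2r_n$, of the one- and two-point Palm probabilities of both being isolated. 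Using $\lambda((B_{r_n}(x)\cup B_{r_n}(y))\cap A)\ge\lambda(B_{r_n}(x)\cap A)+c_d|x-y|r_n^{d-1}$ and $f_0>0$, and separating interior from boundary as in Step 1, one checks $b_1,b_2=o((\log n)^{1-d})$ (in fact a negative power of $n$ times a power of $\log n$); since $\E W'_n=I_n$ exactly by the Mecke formula, $\dtv(W'_n,Z_{I_n})=o((\log n)^{1-d})$. For $W_n$ I would use the coupling $\eta_n=\cX_{Z_n}$ with $\E|Z_n-n|=O(\sqrt n)$: the symmetric difference of $\cX_n$ and $\eta_n$ consists of $|Z_n-n|$ points, and adding one point to either process changes the isolated-vertex count by at most $1$ plus the number of already-isolated vertices within $r_n$ of it; since a uniformly placed point is isolated with probability $\Theta(I_n/n)=\Theta(1/n)$ and has $O((\log n)/n)$ isolated vertices within $r_n$ in expectation, this gives $\E|W_n-W'_n|=O(n^{-1/2})$, hence $\dtv(W_n,W'_n)\le\Pr[W_n\ne W'_n]\le\E|W_n-W'_n|=o((\log n)^{1-d})$, and therefore $\dtv(W_n,Z_{I_n})=o((\log n)^{1-d})$ as well.

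\textbf{Conclusion and main obstacle.} Combining Steps 1 and 2 via the triangle inequality gives $\dtv(\xi_n,Z_{I_n})=O((\log n)^{1-d})$; and if $I_n\to c\in(0,\infty)$ then $\dtv(\xi_n,Z_c)\le\dtv(\xi_n,Z_{I_n})+|I_n-c|\to 0$, so $\xi_n\toD Z_c$. The hard part is Step 1 --- specifically the moderate-order components together with the boundary estimate at the sharp rate $(\log n)^{1-d}$, which requires control of volumes of unions of $r_n$-balls uniformly near the curved $C^2$ boundary (and near the corners when $d=2$) precise enough to extract the factor $(\log n)^{1-d}$ rather than a larger power of $\log n$. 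The remaining ingredients are routine first- and second-moment computations, a coupling between $\cX_n$ and $\eta_n$, and the standard connectivity machinery of \cite{Pen03}.
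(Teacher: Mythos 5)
Your overall route coincides with the paper's: reduce each $\xi_n$ to the isolated-vertex count, apply a Chen--Stein/local-dependence bound to the Poisson singleton count (the paper's Lemma \ref{l:dTV}), de-Poissonize via the coupling $\eta_n=\cX_{Z_n}$ (done inside the paper's proof of Theorem \ref{t:PoLim} to cover $A=[0,1]^2$), and finish with the triangle inequality. The genuine gap is in your Step 1, where the reduction is organized by component \emph{order} and the middle case rests on a false geometric claim. A component of order between a large constant and $n/2$ need \emph{not} be surrounded by a vacant region of measure $\Omega(r_n^d\log n)$: since a ball of radius $r_n$ carries $\Theta(nr_n^d)=\Theta(\log n)$ points on average, a component with, say, $\Theta(\log n)$ vertices can have diameter smaller than $r_n$, and then the vacant shell has measure only $\Theta(r_n^d)$; in general the vacant measure scales with the diameter, not with $\log n$. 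Such many-point, small-diameter components fall through your case analysis entirely, and components of diameter $\Theta(r_n)$ are also not covered by your assertion. The working decomposition is by \emph{diameter}: small-diameter non-singleton components via the two-point-type moment bound, diameters of order $r_n$ via the vacant-shell estimate of Lemma \ref{l:A2} combined with a discretization over component shapes (Lemma \ref{l:medK}/\ref{l:E6}), and larger diameters via Peierls counting and uniqueness/existence of the big component (Lemmas \ref{l:E9}, \ref{l:ERmod}, \ref{l:uniqueness}, \ref{l:smallgiant}); the paper packages all this as Propositions \ref{p:J} and \ref{p:exp_R_bin}, giving $\Pr[\xi_n\neq S_n]=O((nr_n^d)^{1-d})$. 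With $nr_n^d=\Theta(\log n)$ these extra cases contribute only polynomially small (in $n$) probabilities, but none of that follows from the order-based vacant-volume assertion, and you yourself flag this as the hard step.

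A second, quantitative gap concerns general $f$: your $k=2$ bounds replace $e^{-n\nu(B_{r_n}(x_1))}$ by the worst-case exponentials $e^{-nf_0\theta r_n^d}$ (interior) and $e^{-nf_1\theta r_n^d/2}$ (boundary), and the claim that both are $O((\log n)^{1-d})$ does not follow when $f$ is non-constant. Under $I_n=\Theta(1)$ one can have $ne^{-n\theta f_0 r_n^d}\to\infty$ (e.g.\ $f$ with an isolated quadratic minimum, where the interior contribution to $I_n$ is $\asymp ne^{-n\theta f_0 r_n^d}/(nr_n^d)$), so for $d=2$ your interior term can be $\Theta(1)$ rather than $O((\log n)^{-1})$. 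The fix is to keep the factor $e^{-n\nu(B_{r_n}(x_1))}$ in the integral and bound the small-component count by $O((nr_n^d)^{1-d}I_n)$, exactly as in Lemma \ref{l:smallK} and Lemma \ref{l:V0eps}; your geometric input $\nu(W\cap A)\ge\nu(B_{r_n}(x_1))+c\,f_0 r_n^{d-1}\max_i\|x_i-x_1\|$ already permits this. Minor points: the de-Poissonization estimate should read $O(n^{-1/2}\log n)$ rather than $O(n^{-1/2})$ (harmless), and the ``exponentially unlikely'' absence of a component of order $>n/2$ needs a crossing-type estimate such as Lemma \ref{l:smallgiant}, which gives a stretched-exponential bound — also sufficient, but it is an ingredient, not a triviality.
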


Our next result demonstrates asymptotic
normality of $K'_n $ and of $R'_n$ for $d \geq 2$, and of $K_n$ and
$R_n$ for $d \geq 3$, in the whole of the mildly dense limiting
regime for $r$.

\begin{theorem}[Variance asymptotics and CLT for general $f$]
	\label{t:momCLT}
	Suppose that Assumption \ref{a:WA} applies,
	$f$ is continuous on $A$ with $f_0 >0$,
	and that $r_n$ satisfies \eqref{e:supcri} and also $I_n \to \infty$
	as $n \to \infty$.
	Let $\xi_n$ denote either $K_n-1$ or $R_n$;
	let $\xi'_n$ denote either 
	$K'_n-1$ or $R'_n$. Then as $n \to \infty$
	we have 
	\begin{align}
		 \Var[\xi'_n] &= I_n (1+ O((nr_n^d)^{(1-d)/2})); 
		 \label{e:VarPo}
		 \\
		 \dk(I_n^{-1/2} (\xi'_n - \E[\xi'_n]),
		N(0,1)) & = O((nr_n^d)^{(1-d)/3} + I_n^{-1/2}).
		\label{e:CLTPo}
	\end{align}
	If $d \geq 3$ then also
	\begin{align}
		\Var[\xi_n] & = I_n (1+ O(nr_n^d)^{1-d/2});
		\label{e:EVarsim}
		\\
		\dk(I_n^{-1/2} (\xi_n - \E[\xi_n]),
		N(0,1)) & = O((nr_n^d)^{(2-d)/3} + I_n^{-1/2}).
		\label{e:CLTbin}
	\end{align}
%
\end{theorem}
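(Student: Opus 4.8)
The plan is to prove the result for the Poisson input $\xi'_n$ first, using the Malliavin--Stein machinery for functionals of Poisson processes, and then transfer to the binomial input $\xi_n$ via de-Poissonization. The essential structural input is that both $K'_n-1$ and $R'_n$ are, up to a small error, well approximated by the count $S'_n$ of \emph{small clusters}: components of $G(\eta_n,r_n)$ whose diameter is at most some threshold $\kappa r_n$ (with $\kappa$ a fixed constant), or equivalently by a sum over points $x\in\eta_n$ of an indicator that $x$ lies in such a small cluster, divided by the cluster size. The reason this approximation is good is that, in the mildly dense regime, with high probability every component other than the giant one is small, and the expected number of such clusters is $I_n(1+o(1))$ by Theorem \ref{t:momLLN}; the discrepancy between $K'_n-1$, $R'_n$ and $S'_n$ is controlled by the estimates already established en route to \eqref{e:meanasymp}. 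So it suffices to prove \eqref{e:VarPo} and \eqref{e:CLTPo} with $\xi'_n$ replaced by a ``stabilizing'' functional $H_n(\eta_n)$ that counts small clusters with appropriate weights.

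First I would write $H_n(\eta_n) = \sum_{x\in\eta_n} \psi_n(x,\eta_n)$ where $\psi_n(x,\eta_n)$ is, e.g., the reciprocal of the size of the component of $x$ if that component is small, and $0$ otherwise. The key geometric fact is a \emph{stabilization/locality} property: whether $x$ lies in a small cluster, and the structure of that cluster, is determined by $\eta_n$ within distance $\kappa r_n$ of $x$, so the add-one cost $D_y H_n := H_n(\eta_n\cup\{y\}) - H_n(\eta_n)$ is supported, as a function of the extra point $y$, on a ball of radius $O(r_n)$ around the clusters affected, and is bounded by the number of points of $\eta_n$ in an $O(r_n)$-neighbourhood. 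Then the Poincaré (Mehler) inequality gives $\Var[H_n] \le \E\int |D_yH_n|^2\, n\nu(dy)$, and a matching lower bound comes from the fact that distinct isolated vertices contribute essentially independently; combined with the first-moment asymptotics this yields $\Var[\xi'_n] = I_n(1+o(1))$, and tracking the error terms through the integrals — each ``extra'' affected point costs a factor $\Pr[\text{a given point is isolated}]\approx I_n/n$ times a volume factor $\Theta(nr_n^d)$ — produces the $O((nr_n^d)^{(1-d)/2})$ correction in \eqref{e:VarPo}. For the CLT I would apply the second-order Poincaré / Malliavin--Stein bound for Poisson functionals (Last--Peccati--Schulte), which bounds $d_K(I_n^{-1/2}(\xi'_n-\E\xi'_n),N(0,1))$ by an explicit expression in the first and second difference operators $D_yH_n$ and $D^2_{y,z}H_n$; substituting the pointwise bounds on these differences and the moment bounds on $\eta_n$-point counts in $O(r_n)$-balls, and dividing by the correct power of $\Var\approx I_n$, gives the stated rate $O((nr_n^d)^{(1-d)/3} + I_n^{-1/2})$. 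The de-Poissonization step for $d\ge3$ uses a coupling of $\eta_n$ and $\cX_n$ together with the fact that the add-one/remove-one costs are small relative to $\sqrt{I_n}$ when $nr_n^d$ is large; the improved exponents $1-d/2$ and $(2-d)/3$ in \eqref{e:EVarsim}--\eqref{e:CLTbin} (versus $(1-d)/2$, $(1-d)/3$) reflect that the dominant contribution to $I_n$ now comes from boundary clusters, where the relevant ``extra point'' volume factor is $\Theta((nr_n^d)^{1/2}\cdot\text{boundary density})$ rather than $\Theta(nr_n^d)$, and that de-Poissonization introduces no further loss once one subtracts the leading-order linear part.

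The main obstacle I anticipate is not the Malliavin--Stein input itself but establishing the stabilization estimate with sufficient uniformity near the curved boundary $\partial A$: one must show that the event ``$x$ lies in a small cluster'' really is determined locally with only exponentially small leakage, \emph{and} get quantitative control on the tail of the cluster diameter (to justify truncating at $\kappa r_n$ with error negligible compared to $I_n$) uniformly over $x\in A$, including $x$ within $O(r_n)$ of $\partial A$ where the density of returning points is halved and the geometry of $A$ enters through the $C^2$ assumption. This is exactly where the factor $|\partial A|$ and the exponent $\theta f_1/2$ in \eqref{e:loginP2} come from, and handling it requires the boundary straightening (the $C^2$ chart $\phi$) together with a comparison of $\nu(B_{r_n}(x)\cap A)$ to half-ball volume with a controlled error; I would lean on whatever such boundary estimates were already developed for Theorem \ref{t:momLLN} and Proposition \ref{p:bc}, and the bulk of the new work here is to upgrade those \emph{expectation} bounds to bounds on \emph{second-order differences} that feed the variance and Berry--Esseen computations.
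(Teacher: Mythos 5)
Your overall skeleton — reduce $K'_n-1$ and $R'_n$ to a small-cluster/singleton count, show the remainder is negligible, and transfer the CLT by a Slutsky-type step — is the same as the paper's, and your Malliavin--Stein (second-order Poincar\'e) route for the Poisson CLT is a legitimate alternative to what the paper actually does: the paper proves a total-variation Poisson approximation for the singleton count $S'_n$ (Lemma \ref{l:dTV}, Proposition \ref{p:Sall}), converts it to a normal approximation via Berry--Esseen for $Z_{I_n}$, and then passes to $\xi'_n$ with the quantitative Slutsky bound of Lemma \ref{l:QSlut}; the rate $(nr_n^d)^{(1-d)/3}$ arises there as the cube root of $\Var[\xi'_n-S'_n]/I_n$, not from a Malliavin--Stein bound. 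Your plan could plausibly be made to work for \eqref{e:VarPo}--\eqref{e:CLTPo}, but you would still have to carry out the second-moment estimates for the medium- and large-diameter components (the paper's Section \ref{s:asympvar}, using Mecke computations, Poincar\'e/Efron--Stein, and percolation-type bounds); your remark that the discrepancy ``is controlled by the estimates already established en route to \eqref{e:meanasymp}'' is not right, because the expected number of medium components, while $o(I_n)$, need not be $o(1)$, so first-moment bounds alone do not give the variance control needed for Slutsky.

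The genuine gap is in the binomial part. The paper does not de-Poissonize: it bounds $\Var[K_{n,0,\rho}]$ and $\Var[R_{n,0,\rho}]$ for binomial input directly by the Efron--Stein inequality (Proposition \ref{p:varRsmall}), and it is precisely this step that produces the weaker factor $(nr_n^d)^{2-d}$ and forces the restriction $d\geq 3$ in \eqref{e:EVarsim}--\eqref{e:CLTbin}; the Poisson case does better because exact second moments are available via the Mecke formula. Your explanation of the exponents is therefore wrong on two counts: the binomial rates $1-d/2$ and $(2-d)/3$ are \emph{weaker} than the Poisson rates $(1-d)/2$ and $(1-d)/3$, not ``improved,'' and they have nothing to do with boundary clusters; and your assertion that de-Poissonization ``introduces no further loss'' is inconsistent with the statement you are trying to prove, and your plan never explains why $d\geq 3$ should be needed at all. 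If you want to keep a coupling/de-Poissonization route you would need quantitative bounds on the first and second moments of $\xi'_n-\xi_n$ under the coupling $\eta_n=\X_{Z_n}$ (add/remove-one costs are not supported only on isolated vertices: a removed point can isolate neighbours or split components), and you would have to show $\Var[\xi'_n-\xi_n]=o(I_n)$, which is essentially the same second-moment work the paper does by other means; as written, this step is asserted rather than argued.
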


\begin{remark}
	\label{rk:nonunif}
\begin{enumerate}
	\item
		In view of Theorem \ref{t:PoLim},
		our limiting regime for $r_n$ for Theorems  
	 \ref{t:momLLN},
		\ref{t:exprate}
		and 
		\ref{t:momCLT}
		(namely, $nr_n^d \to \infty$ and $I_n \to \infty$)
		covers the whole range of limiting regimes
		between the thermodynamic regime and the critical
		regime for connectivity.

	\item
		It should be possible to relax the
		condition $\partial A \in C^2$
		to $\partial A \in C^{1,1}$ in all of our results.
		This would involve similarly relaxing the conditions for
		\cite[Lemma 3.5]{Pen99} which is used in the proof of Lemma
\ref{l:uniqueness} here. This should be possible using ideas from the proof
		of Lemma \ref{l:E9} here. All of the other lemmas in which we use the $C^{2}$ condition hold under the weaker $C^{1,1}$ condition. 
\end{enumerate}
\end{remark}

\subsection{Results for the uniform case}
\label{ss:ResUnif}

 In the uniform case,  we can replace $I_n$ with 
 the quantity $\mu_n$ defined at \eqref{e:defI'}.
 Indeed, in Propositions 
 \ref{p:average2d} and \ref{p:average3d+}
 we shall show 
 that in the uniform case, $I_n = \mu_n(1+ O((nr_n^2)^{-1/2}))$ 
as $n \to \infty$
 if
 $d =2$ and
 $I_n = \mu_n \Big(1+ O\Big( \big(\frac{\log (nr_n^d)}{nr_n^d}\big)^2 \Big) \Big)$
as $n \to \infty$
 if $d \geq 3$. Therefore in the convergence results arising from
 Theorems \ref{t:momLLN}, \ref{t:PoLim}
 and \ref{t:momCLT} we can replace $I_n$ with $\mu_n$;
 this gives us Theorem \ref{t:basicunif}. The more quantitative
 versions of the results in Theorem \ref{t:basicunif}, 
 where we keep track of rates of
 convergence, go as follows.
\begin{theorem}[First order results for the uniform case]
	\label{t:PoLimUn}
	Suppose that Assumption \ref{a:WA} applies, and that
	$f \equiv f_0 1_A$ with $f_0=\lambda(A)^{-1}$.
	Let $\xi_n$ denote any of $K_n-1$, $R_n$, $K'_n-1$ or $R'_n$, 
	define $\gamma_n$ as at \eqref{e:supcriupper} 
	and define $\mu_n $ by \eqref{e:defI'}. 

	(a) If $(r_n)_{n \geq 1}$ satisfies
	$|\gamma_n |= O(1)$ as $n \to \infty$,
	then $\dtv(\xi_n,Z_{\mu_n}) = O((\log n)^{-1/2})$
	if $d=2$ and
	 $\dtv(\xi_n,Z_{\mu_n}) =
	 O\Big(\big(\frac{\log \log n}{\log n})^{2}\Big)$
	 if $ d \geq 3$.

	(b)
	If $\gamma_n \to \gamma \in \R$ as $n \to \infty$,
	then as $n \to \infty$,
	$\xi_n \toD Z_{e^{-\gamma}} $ if $d =2$ and
	$\xi_n \toD Z_{c_{d,A} e^{-\gamma/2}} $ if $d \geq 3$,
	where $c_{d,A}$ is defined at \eqref{e:defsigmac}.
%

	(c) 
	Suppose \eqref{e:supcri} and \eqref{e:supcriupper} hold. If $d=2$
	then as $n \to \infty$:
	\begin{align}
		\E[\xi_n] = \mu_n (1+ O((nr_n^2)^{-1/2}));
		 \label{e:EPoUn2}
		 \\
		 \E \Big[ \Big| \frac{\xi_n}{\mu_n} - 1 \Big|  \Big] 
		 = O ( (n r_n^2)^{-1/2} + \mu_n^{-1/2} ),
		 \label{e:L1Un2}
	\end{align}
	while if $d \geq 3$ then as $n \to \infty$:
	\begin{align}
		\E[\xi_n] = \mu_n
		\Big(1+ O\Big(\big( \frac{\log(nr_n^d)}{
			nr_n^d} \big)^{2}\Big) \Big);
			\label{e:EUn3+}
		\\
		 \E \Big[ \Big| \frac{\xi_n}{\mu_n} - 1 \Big|  \Big] 
		  = O \Big( \big( \frac{\log(n r_n^d)}{nr_n^d} \big)^{2} 
		  + \mu_n^{-1/2} \Big).
		 \label{e:L1BiUn3+}
	\end{align}
\end{theorem}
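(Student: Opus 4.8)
The plan is to derive Theorem~\ref{t:PoLimUn} as a corollary of the general-$f$ results (Theorems~\ref{t:momLLN}, \ref{t:PoLim}, \ref{t:momCLT}) by carrying out a careful asymptotic analysis of $I_n$ in the uniform case. The crucial preliminary step, which I expect to be the main technical obstacle, is to establish the two claimed approximations of $I_n$ by $\mu_n$: namely $I_n = \mu_n(1+O((nr_n^2)^{-1/2}))$ when $d=2$, and $I_n = \mu_n(1+O((\log(nr_n^d)/(nr_n^d))^2))$ when $d\geq 3$. These are exactly the statements of the as-yet-unstated Propositions~\ref{p:average2d} and \ref{p:average3d+}; I would prove them first. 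Starting from \eqref{e:def:I_n}, with $\nu(dx)=\lambda(A)^{-1}dx$, we have $I_n = n\lambda(A)^{-1}\int_A \exp(-n\lambda(A)^{-1}\lambda(B_{r_n}(x)\cap A))\,dx$. The content $\lambda(B_{r_n}(x)\cap A)$ equals the full ball volume $\theta r_n^d$ for $x$ in the ``deep interior'' (at distance $\geq r_n$ from $\partial A$), and is reduced near the boundary. The integral over the deep interior contributes $n e^{-n\theta r_n^d/\lambda(A)}(1+o(1))$, the first term of $\mu_n$. For the boundary layer of width $r_n$, I would use a tubular-neighbourhood / coarea parametrization $x \mapsto (\text{foot point on }\partial A, \text{signed distance }t)$, valid because $\partial A\in C^2$ (or handled directly by elementary geometry when $A=[0,1]^2$), write $\lambda(B_{r_n}(x)\cap A) = r_n^d \psi(t/r_n) + (\text{curvature correction of order } r_n^{d+1})$ where $\psi(s)$ is the volume of a unit ball intersected with a halfspace at distance $s$ from the center, integrate $\int_0^\infty e^{-n\lambda(A)^{-1}r_n^d\psi(s/r_n)}\cdots$ via Laplace's method / Watson's lemma (the exponent is linear in $nr_n^d$ near $s=0$ where $\psi(0)=\theta/2$, giving the $e^{-n\theta r_n^d/(2\lambda(A))}$ factor and the $r_n^{1-d}$ prefactor from the change of variables), and bound the curvature corrections. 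In $d=2$ the error is dominated by the $\psi'$-behaviour at $s=0$ and works out to $O((nr_n^2)^{-1/2})$; in $d\geq 3$ one gets the sharper $O((\log(nr_n^d)/(nr_n^d))^2)$ after tracking the next-order term in the boundary expansion.

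Once $I_n\sim\mu_n$ with these rates is in hand, parts (b) and (c) follow almost immediately. For part (c) with $d=2$: Theorem~\ref{t:momLLN} gives $\E[\xi_n]=I_n(1+O((nr_n^2)^{-1})) = \mu_n(1+O((nr_n^2)^{-1/2}))$ (the $(nr_n^d)^{1-d}=(nr_n^2)^{-1}$ error from the theorem being absorbed into the larger $(nr_n^2)^{-1/2}$ error from the $I_n$-vs-$\mu_n$ comparison), which is \eqref{e:EPoUn2}; and \eqref{e:basicLLN} of Theorem~\ref{t:momLLN} gives $\E[|\zeta_n/I_n - 1|] = O((nr_n^2)^{-1} + I_n^{-1/2})$, and combining with $I_n=\mu_n(1+O((nr_n^2)^{-1/2}))$ and $I_n\to\infty$ yields \eqref{e:L1Un2}. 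For $d\geq3$ the same two lines of Theorem~\ref{t:momLLN}, combined with the sharper comparison $I_n=\mu_n(1+O((\log(nr_n^d)/(nr_n^d))^2))$, give \eqref{e:EUn3+} and \eqref{e:L1BiUn3+}, after checking that $(nr_n^d)^{1-d}$ is $o((\log(nr_n^d)/(nr_n^d))^2)$ when $d\geq 3$. One must also verify that the hypotheses of those theorems hold, i.e. that \eqref{e:supcri} and \eqref{e:supcriupper} together are equivalent, in the uniform case, to $nr_n^d\to\infty$ and $I_n\to\infty$; this is the content of the cited Remark~\ref{rk:unif1} and follows from Proposition~\ref{p:bc} together with the asymptotics of $I_n$ just established.

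For part (a), I would apply Theorem~\ref{t:PoLim}: when $|\gamma_n|=O(1)$, the asymptotics of $\mu_n$ show that $\mu_n=\Theta(1)$, hence $I_n=\Theta(1)$ (using $I_n\sim\mu_n$), so Theorem~\ref{t:PoLim} gives $\dtv(\xi_n,Z_{I_n})=O((\log n)^{1-d})$. It then remains to replace $Z_{I_n}$ by $Z_{\mu_n}$: since $\dtv(Z_a,Z_b)\leq |a-b|$ for Poisson variables (or $\leq C|a-b|$ for bounded $a,b$), and $|I_n-\mu_n| = \mu_n\cdot O((nr_n^2)^{-1/2}) = O((\log n)^{-1/2})$ when $d=2$ (since $nr_n^2 = \Theta(\log n)$ in this regime) and $= O((\log\log n/\log n)^2)$ when $d\geq 3$, the triangle inequality for $\dtv$ gives the stated bounds; the $(\log n)^{1-d}$ term from Theorem~\ref{t:PoLim} is negligible compared to these. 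For part (b), when $\gamma_n\to\gamma\in\R$, a direct computation of $\lim_n\mu_n$ from \eqref{e:defI'} gives $e^{-\gamma}$ when $d=2$ and $c_{d,A}e^{-\gamma/2}$ when $d\geq3$ — here the first term of $\mu_n$ vanishes in the limit when $d\geq3$ because $n e^{-n\theta r_n^d/\lambda(A)}$ is of strictly smaller order than $r_n^{1-d}e^{-n\theta r_n^d/(2\lambda(A))}$ in this regime — so part~(a) and the continuity of $c\mapsto Z_c$ in distribution give $\xi_n\toD Z_{e^{-\gamma}}$ resp.\ $Z_{c_{d,A}e^{-\gamma/2}}$. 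The genuinely hard work is entirely in the sharp Laplace-type estimation of the boundary contribution to $I_n$ with the claimed error rates, especially squeezing out the $(\log(nr_n^d)/(nr_n^d))^2$ rate in $d\geq 3$, which requires controlling the second-order term in the boundary-layer volume expansion; everything after that is bookkeeping with the general theorems.
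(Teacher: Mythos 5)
Your proposal follows essentially the same route as the paper: it rests on first proving the quantitative comparisons $I_n=\mu_n(1+O((nr_n^2)^{-1/2}))$ ($d=2$) and $I_n=\mu_n(1+O((\log(nr_n^d)/(nr_n^d))^2))$ ($d\geq 3$) via a boundary-layer (tubular neighbourhood plus Laplace-type) analysis — exactly the content of Propositions \ref{p:average2d} and \ref{p:average3d+}, which the paper proves using Lemma \ref{l:bdyvol} and Lemma \ref{thm:cov} — and then transferring Theorems \ref{t:momLLN} and \ref{t:PoLim} from $I_n$ to $\mu_n$ with the same triangle-inequality and $\dtv(Z_s,Z_t)\le|s-t|$ bookkeeping, including the limit computation of $\mu_n$ for parts (a) and (b). The only slight imprecision is your attribution of the $d=2$ error to the behaviour of the half-space volume function at the boundary, whereas it is in fact dominated by the ratio of the boundary-layer contribution to the bulk term (which is $O((nr_n^2)^{-1/2})$ under \eqref{e:supcriupper}); this does not affect the correctness of the overall argument.
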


\begin{theorem}[Variance asymptotics and CLT for the uniform case]
\label{t:momCLTunif} 
	Suppose that Assumption \ref{a:WA} applies, and that
	$f \equiv f_0 1_A$ with $f_0=\lambda(A)^{-1}$.
	Suppose $r_n$ satisfies 
	\eqref{e:supcri} and \eqref{e:supcriupper},
	and define $\mu_n $ by \eqref{e:defI'}. 
	Let $\xi_n$ denote either $K_n$ or $R_n$, and
	let $\xi'_n$ denote either $K'_n$ or $R'_n$.
	If $d=2$ then as $n \to \infty$:
	\begin{align}
		 \Var[\xi'_n] &= \mu_n (1+ O((nr_n^2)^{-1/2}));
		 \label{e:VPoUn2}
		 \\
		 \dk(\mu_n^{-1/2} (\xi'_n - \E[\xi'_n]),
		N(0,1)) & = O((nr_n^2)^{-1/3} + \mu_n^{-1/2}).
		\label{e:CLTPoUn2}
	\end{align}
	If $d \geq 3$ then as $n \to \infty$:
	\begin{align}
		 \Var[\xi'_n] = \mu_n \Big(1+ O\Big(
		 (nr_n^d)^{(1-d)/2}  + \big( \frac{\log (nr_n^d)}{
			 nr_n^d} \big)^2 \Big) \Big);
			 \label{e:VarPoUn3+}
			 \\
		 \Var[\xi_n] = \mu_n \Big(1+ O\Big(
		 (nr_n^d)^{1-d/2}  + \big( \frac{\log (nr_n^d)}{
			 nr_n^d} \big)^2 \Big) \Big);
			 \label{e:VarBiUn3+}
			 \\
		 \dk(\mu_n^{-1/2} (\xi'_n - \E[\xi'_n]),
		N(0,1))  = O\Big( (nr_n^d)^{(1-d)/3} +
		\big( \frac{ \log (nr_n^d)}{nr_n^d} \big)^{4/3}
		+ \mu_n^{-1/2}\Big);
		\label{e:CLTPoUn3+}		 \\
		 \dk(\mu_n^{-1/2} (\xi_n - \E[\xi_n]),
		N(0,1))  = O\Big((nr_n^d)^{(2-d)/3} +
		\big( \frac{ \log (nr_n^d)}{nr_n^d} \big)^{4/3}
		+ \mu_n^{-1/2}\Big).
		\label{e:CLTbiUn3+}
	\end{align}
%
%
%
%
\end{theorem}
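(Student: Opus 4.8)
The plan is to deduce Theorem~\ref{t:momCLTunif} from the general-$f$ results of Theorem~\ref{t:momCLT} (which are already available to us) by substituting the sharp asymptotics of $I_n$ in the uniform case, exactly as the excerpt indicates just before the statement. So the real work is entirely in the asymptotic analysis of the integral $I_n = n \int_A \exp(-n\nu(B_{r_n}(x)))\,\nu(dx)$ when $\nu$ is normalized Lebesgue measure on $A$; once we have $I_n = \mu_n(1+ O((nr_n^2)^{-1/2}))$ for $d=2$ and $I_n = \mu_n(1+O((\log(nr_n^d)/(nr_n^d))^2))$ for $d\ge 3$ (the content of Propositions~\ref{p:average2d} and \ref{p:average3d+}, which I would prove as a separate block), the theorem follows by plugging these into \eqref{e:VarPo}, \eqref{e:CLTPo}, \eqref{e:EVarsim}, \eqref{e:CLTbin} and collecting error terms. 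For instance, $\Var[\xi'_n] = I_n(1+O((nr_n^d)^{(1-d)/2})) = \mu_n(1+O((nr_n^d)^{(1-d)/2}))(1+O((\log(nr_n^d)/(nr_n^d))^2))$, and since $\mu_n \to \infty$ one absorbs the lower-order cross terms to get \eqref{e:VarPoUn3+}; the Kolmogorov-distance bounds transform the same way, using that replacing $I_n$ by $\mu_n$ inside $d_K(I_n^{-1/2}(\xi'_n-\E\xi'_n),N(0,1))$ costs at most an extra $|I_n/\mu_n - 1|^{1/2}$-type term which is dominated by the stated errors.

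Concretely, the steps I would carry out are: (i) state and prove the two $I_n\sim\mu_n$ propositions — for $d=2$, split $A=[0,1]^2$ (or a $C^2$ region) into an interior part where $\nu(B_{r_n}(x)) = \pi r_n^2/\lambda(A)$ exactly (for $x$ at distance $>r_n$ from $\partial A$) contributing $n e^{-\pi n r_n^2/\lambda(A)}(1+O(r_n))$, and a boundary collar of width $r_n$ contributing a lower-order term $O(nr_n e^{-c nr_n^2})$ that is $O((nr_n^2)^{-1/2})$ relative to the main term because in $d=2$ the boundary exponent $\tfrac12\cdot\pi n r_n^2/\lambda(A)$ would have to beat the full interior exponent, which under \eqref{e:supcriupper} it does only by a polynomial factor — here one uses $b'_c\ge b_c$ in $d=2$; for $d\ge 3$ one does the analogous split but now the boundary collar \emph{is} of the same or larger order and a Laplace-type expansion of $\nu(B_{r_n}(x))$ as a function of $\dist(x,\partial A)$, using $\partial A\in C^2$ so that the boundary is locally a graph of a $C^2$ function, yields the $\theta_{d-1}^{-1}|\partial A| r_n^{1-d} e^{-n\theta r_n^d/(2\lambda(A))}$ term with relative error $O((\log(nr_n^d)/(nr_n^d))^2)$ coming from the curvature correction and from truncating the collar at the scale where the two exponential terms are comparable; (ii) verify that \eqref{e:supcriupper} is equivalent to $I_n\to\infty$ in the uniform case (Remark~\ref{rk:unif1}), so that the hypotheses of Theorem~\ref{t:momCLT} hold; (iii) substitute and simplify the error terms as above to obtain \eqref{e:VPoUn2}--\eqref{e:CLTbiUn3+}.

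I expect the main obstacle to be step~(i) in the $d\ge 3$ case: getting the boundary contribution to $I_n$ with a relative error as small as $O((\log(nr_n^d)/(nr_n^d))^2)$ requires a genuinely quantitative handling of the curved boundary. One must parametrize a neighbourhood of each $x\in\partial A$ by the graph of a $C^2$ function $\phi$, compute $\nu(B_{r_n}(x+t\mathbf{n}))$ for $x\in\partial A$, inward normal $\mathbf{n}$, and $0\le t\le r_n$, as $\tfrac12\theta_d r_n^d + (\text{linear-in-}t\text{ term}) + O(\text{curvature}\cdot r_n^{d+1})$, then integrate $n e^{-n\nu(\cdot)}$ over $t$ — a one-dimensional Laplace integral whose leading behaviour gives the $r_n^{1-d}$ prefactor — and finally integrate over $\partial A$, controlling the dependence of the constants on the local geometry uniformly via compactness of $\partial A$. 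The truncation issue (where to cut off the collar, and showing the "deep interior minus exact ball" and "collar beyond scale $\sim r_n\log(nr_n^d)/(nr_n^d)$" remainders are both negligible) is fiddly but routine once the local expansion is in place. Everything downstream of these propositions is elementary manipulation of $O(\cdot)$ terms, so no further difficulty is anticipated there.
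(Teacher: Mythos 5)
Your overall route is the same as the paper's: check that under \eqref{e:supcri}--\eqref{e:supcriupper} one has $I_n\to\infty$ (so Theorem \ref{t:momCLT} applies), then convert $I_n$ into $\mu_n$ using the uniform-case asymptotics $I_n=\mu_n(1+O((nr_n^2)^{-1/2}))$ for $d=2$ and $I_n=\mu_n(1+O((\log(nr_n^d)/(nr_n^d))^2))$ for $d\ge 3$, i.e.\ Propositions \ref{p:average2d} and \ref{p:average3d+}, which the paper (like you) proves as a separate block in Section \ref{s:Iunif}; your sketch of that block (local $C^2$ graph parametrization, one-dimensional Laplace integral over the collar, truncation at scale $\log(nr_n^d)/(nr_n^d)$) is consistent with the paper's argument, which outsources the geometric inputs to Lemma \ref{l:bdyvol} and Lemma \ref{thm:cov}. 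The variance statements then follow by multiplying the two relative-error factors, exactly as you say.

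The one step that does not work as written is your treatment of the Kolmogorov bounds. You assert that replacing the normalization $I_n^{-1/2}$ by $\mu_n^{-1/2}$ costs ``at most an extra $|I_n/\mu_n-1|^{1/2}$-type term which is dominated by the stated errors''. That exponent is not justified, and for $d\ge 3$ the resulting term is of order $\log(nr_n^d)/(nr_n^d)$, which is \emph{larger} than the $(\log(nr_n^d)/(nr_n^d))^{4/3}$ term appearing in \eqref{e:CLTPoUn3+}--\eqref{e:CLTbiUn3+}, and in admissible regimes (e.g.\ $nr_n^d=(\log n)^{0.9}$ with $d\ge 4$, where $\mu_n^{-1/2}$ is negligible) it is also not dominated by $(nr_n^d)^{(1-d)/3}+\mu_n^{-1/2}$; so your bound would not deliver the stated rates. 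The paper instead applies the quantitative Slutsky inequality (Lemma \ref{l:QSlut}) with $Y=(\mu_n^{-1/2}-I_n^{-1/2})(\xi'_n-\E[\xi'_n])$: since $\Var[Y]=((I_n/\mu_n)^{1/2}-1)^2\,I_n^{-1}\Var[\xi'_n]=O(|I_n/\mu_n-1|^{2})$, the rescaling cost is $(\Var[Y])^{1/3}=O(|I_n/\mu_n-1|^{2/3})$, which is precisely where the exponents $4/3$ (for $d\ge3$) and $-1/3$ in $(nr_n^2)^{-1/3}$ (for $d=2$) in the theorem come from. (A direct estimate $\sup_t|\Phi(t)-\Phi(ct)|=O(|c-1|)$ would even give cost $O(|I_n/\mu_n-1|)$.) With this correction your argument coincides with the paper's proof; the same device must also be used to pass from $\tilde I_n$ to $I_n$ in the binomial CLT \eqref{e:CLTbiUn3+}, as in the proof of \eqref{e:CLTbin}.
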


\begin{remark}
	\label{rk:unif1}
	\begin{enumerate}
		\item
		In the uniform case, we have $f_0=f_1$ and
		$b_c =
		(2-2/d)/f_0 $.

\item We can often simplify the expression \eqref{e:defI'} for $\mu_n$
depending on the logarithmic growth rate of $nr_n^d$.  Indeed,  if $d=2$
or $b^+ f_0 < 2/d$ then $\mu_n \sim ne^{-n \theta_d f_0 r_n^d}$,
while if $d \geq 3$ and $b^-f_0  >2/d$ then $\mu_n \sim \theta_{d-1}^{-1}
		|\partial A| r_n^{1-d} e^{-n \theta_d  f_0 r_n^d/2}$.

	\item
		From Theorem 
			\ref{t:PoLimUn}
			we see for the 
		uniform case that 
		in the whole of the mildly dense regime both $K_n$ and
		$R_n$ scale like $\mu_n$
		(and if  $d=2$ or $f_0 b^+ < 2/d$,
		like $n\exp(-nf_0 \theta_d r_n^d)$)
		in probability, rather than like a constant times $n$ 
			as given by \eqref{0616a} in the thermodynamic regime. 
\item
In the uniform case, if $nr_n^d \to \infty$ and $\limsup \gamma_n < \infty$
			as $n \to \infty$, then by Propositions 
 \ref{p:average2d} and \ref{p:average3d+}, $I_n \sim \mu_n$ as $n \to \infty$.
Hence by \eqref{e:mulim}, if $\gamma_n \to \gamma \in \R$ as  $n \to \infty$,
then $I_n \to e^{-\gamma}$ if $d=2$ and $I_n \to c_{d,A} e^{-\gamma/2}$
			 if $d\geq 3$.
Using this and the fact that $I_n$ is decreasing in $r_n$ while
$\gamma_n$ is increasing in $r_n$ we can deduce that $\gamma_n \to 
- \infty$ if and only if $I_n \to \infty$, as claimed earlier.
	\end{enumerate}
\end{remark}

	\subsection{Overview of proofs}
	\label{ss:overview}

The main insight behind our results is that
the dominant contribution
 both for $K_n-1$ and for $R_n$ 
comes from the  singletons, i.e. the isolated vertices.
Let $S_n$, respectively $ S'_n$, denote the number of singletons
of $G(\cX_n,r)$, resp. $G(\eta_n,r_n)$. 
Our starting point for a proof of Theorem \ref{t:momCLT}
is a similar collection of results for $S_n$ and $S'_n$,
of interest in their own right, which go as follows:
\begin{proposition}[Results on singletons]
	\label{p:Sall}
	Suppose $f$ is continuous on $A$ with $f_0 >0$,
	and that $r_n$ satisfies \eqref{e:supcri} and also $I_n \to \infty$
	as $n \to \infty$. Let $\zeta_n$ be either $S_n$ or $S'_n$.
	Then there exists $ \delta >0$ such that
	as $n \to \infty$ we have
	$\E[\zeta_n] = I_n (1+ O( e^{-\delta nr_n^d}))$, and
	$\Var[\zeta_n] = I_n(1+ O(e^{-\delta nr_n^d}))$, 
	and also 
	\begin{align}
		& \dk(I_n^{-1/2}(S'_n - I_n), N(0,1)) = O(e^{-\delta n r_n^d}
		+ I_n^{-1/2});
		\label{e:CLTS'}
		\\
		{\rm if} ~ \partial A \in C^2, ~~~& \dk(\tilde{I}_n^{-1/2}
		(S_n - \E[S_n]), N(0,1)) = O(e^{-\delta nr_n^d} + I_n^{-1/2}).
		\label{e:CLTS}
	\end{align}
\end{proposition}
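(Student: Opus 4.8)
The plan is to treat $S'_n$ (Poisson input) first, since there the number of singletons is a functional of a Poisson process to which the Malliavin--Stein machinery applies directly, and then transfer to $S_n$ via a de-Poissonization argument. The first step is the mean computation: by the Mecke formula, $\E[S'_n] = n\int_A \PP[\eta_n(B_{r_n}(x)\setminus\{x\}) = 0]\,\nu(dx) = n\int_A e^{-n\nu(B_{r_n}(x))}\,\nu(dx) = I_n$ exactly. For the variance, the two-point Mecke formula gives $\Var[S'_n] = I_n + n^2 \int_A\int_A \left(e^{-n\nu(B_{r_n}(x)\cup B_{r_n}(y))} - e^{-n\nu(B_{r_n}(x)) - n\nu(B_{r_n}(y))}\right)\1\{|x-y|>r_n\}\,\nu(dx)\,\nu(dy)$ plus a correction for the pairs with $|x-y|\le r_n$ (which cannot both be singletons). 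The key observation is that the integrand is nonzero only when $B_{r_n}(x)\cap B_{r_n}(y)\ne\emptyset$, i.e. $|x-y|\le 2r_n$, so the double integral is over a set of $\nu\otimes\nu$-measure $O(I_n \cdot n r_n^d \cdot \text{(typical } \nu(B_{r_n}))) $; more carefully, factoring $e^{-n\nu(B_{r_n}(x))}$ out, the inner integral over $y$ with $|x-y|\in(r_n,2r_n]$ contributes, after the substitution $y = x + r_n w$, a factor $\int e^{-n\nu(B_{r_n}(y)\setminus B_{r_n}(x))}\,dy$ which is comparable to $\nu(B_{r_n}(x))$ times a bounded constant, and one shows this is $O(e^{-\delta n r_n^d})$ relative to $I_n$ because the dominant contribution to $I_n$ comes from points where $n\nu(B_{r_n}(x))$ is large (of order $\log n$ in the connectivity regime). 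This requires a uniform lower bound of the form $\nu(B_{r_n}(x)) \ge c r_n^d$ (valid since $f_0 > 0$ and $\partial A \in C^2$ or $A$ is a square, so balls meet $A$ in a set of volume $\Omega(r_n^d)$), giving $\nu(B_{r_n}(x)) \ge c n r_n^d$ in the exponent; combined with $n r_n^d \to \infty$ this yields the stated $e^{-\delta n r_n^d}$ error.

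For the CLT for $S'_n$, the plan is to invoke a quantitative second-order Poincaré / Malliavin--Stein bound for Poisson functionals in the Kolmogorov distance (as in Last--Peccati--Schulte or Eichelsbacher--Thäle), which bounds $\dk(\sigma^{-1}(S'_n - \E S'_n), N(0,1))$ by an explicit expression in the first and second add-one (difference) operators $D_x S'_n$ and $D^2_{x,y} S'_n$. Here $D_x S'_n = \1\{\eta_n(B_{r_n}(x)) = 0\} - \sum_{z\in\eta_n}\1\{z \text{ is the unique }\eta_n\text{-point in }B_{r_n}(z), z\in B_{r_n}(x)\}\cdot(\ldots)$ — that is, adding a point $x$ turns $x$ into a new singleton iff its ball is empty, and destroys the singleton status of any existing point within distance $r_n$ of $x$. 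The crucial point is that $D_x S'_n$ is bounded (in absolute value by roughly the number of points in $B_{r_n}(x)$, which has exponential tails) and is supported, in the relevant sense, on $x$ with $n\nu(B_{r_n}(x))$ not too large — again an event of small probability that accounts for the $e^{-\delta n r_n^d}$ term — while the bulk of the mass comes from isolated contributions. Plugging the resulting moment bounds into the Malliavin--Stein estimate, with $\sigma^2 = \Var[S'_n] \sim I_n$, produces the $O(e^{-\delta n r_n^d} + I_n^{-1/2})$ rate: the $I_n^{-1/2}$ is the usual $\sigma^{-1}$ order of magnitude, the exponential term absorbs all the "clustering" corrections.

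Finally, for $S_n$ with binomial input and the bound \eqref{e:CLTS}: the standard route is a de-Poissonization. One writes $S_n$ in terms of the Poisson version by conditioning on $Z_n = n$, or better, uses an abstract normal-approximation bound for functionals of binomial point processes (e.g. the version of the second-order Poincaré inequality for i.i.d. input, due to Lachièze-Rey--Peccati or Last--Peccati--Schulte's binomial analogue). The difference operators are now the add-one-cost operators $D_i S_n = S_n(\cX_n \cup \{X'_i\}) - S_n(\cX_n)$ with an independent copy, and the analysis is parallel to the Poisson case; the extra ingredient is that the variance of $S_n$ picks up a "de-Poissonization" subtraction relative to $I_n$, which is why the statement is phrased with $\tilde I_n$ (the binomial variance, which the propositions elsewhere show also satisfies $\tilde I_n = I_n(1 + O(e^{-\delta n r_n^d}))$) and centres at $\E[S_n]$ rather than $I_n$. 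I expect the main obstacle to be the bookkeeping in the second-order difference operator bounds: one must show that $D^2_{x,y}S'_n$ (or its binomial analogue) is nonzero only when $B_{r_n}(x)$, $B_{r_n}(y)$ and some point of the process are mutually close, so that the triple integrals in the Malliavin--Stein bound are dominated by a small "overlap" region, and then match the resulting estimates against $I_n$ so that every error term is genuinely of smaller order than $I_n^{1/2}$ — this is where the uniform lower bound $\nu(B_{r_n}(x)) = \Omega(r_n^d)$ and the hypothesis $nr_n^d \to \infty$ do the real work, converting polynomial-in-$r_n$ losses into the claimed exponential-in-$nr_n^d$ gains.
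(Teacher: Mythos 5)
Your mean computation and the start of the variance computation (two-point Mecke formula) match the paper's route (Proposition \ref{p:Sn} and the estimate of the term called $J_{1,n}$ there), but the key quantitative step in your variance argument has a genuine gap. After factoring $e^{-n\nu(B_{r_n}(x))}$ out of the cross term over pairs with $r_n<\|y-x\|\le 2r_n$, what remains is $e^{-n\nu(B_{r_n}(y)\setminus B_{r_n}(x))}$, so the exponential gain over $I_n$ must come from a uniform lower bound of the form $\nu(B_{r_n}(y)\setminus B_{r_n}(x))\ge \delta r_n^d$ for all $x,y\in A$ with $\|y-x\|\ge r_n$, valid up to the boundary of $A$ (and near the corners when $A=[0,1]^2$, where it actually fails unless one symmetrizes and restricts to $x\prec y$). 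The bound you invoke instead, $\nu(B_{r_n}(x))\ge c\,r_n^d$, cannot do this job: that factor is exactly the one already present in the integrand of $I_n$, so it yields no gain; moreover your statement that the dominant contribution to $I_n$ comes from points where $n\nu(B_{r_n}(x))$ is large is backwards --- the integrand $e^{-n\nu(B_{r_n}(x))}$ is largest where $n\nu(B_{r_n}(x))$ is smallest, i.e.\ near $\partial A$, which is precisely where the set-difference estimate is delicate. That estimate is Lemma \ref{l:A1} of the paper, and providing it (so as to extend \cite{PY23} to the whole regime $I_n\to\infty$, including $A=[0,1]^2$) is the one genuinely new ingredient; without it your claimed $\Var[S'_n]=I_n(1+O(e^{-\delta nr_n^d}))$ is not established. (Your handling of the pairs with $\|y-x\|\le r_n$, where the crude bound $\nu(B_{r_n}(y))\ge c r_n^d$ does suffice, is fine.)

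On the distributional part, your route also differs from the paper's: the paper proves Poisson approximation in total variation, $\dtv(S'_n,Z_{I_n})=O(e^{-\delta nr_n^d})$ and (for $\partial A\in C^2$) $\dtv(S_n,Z_{\tilde I_n})=O(e^{-\delta nr_n^d})$ (Lemma \ref{l:dTV}, adapted from \cite{PY23}), and then deduces \eqref{e:CLTS'} and \eqref{e:CLTS} from the Berry--Esseen bound $\dk(t^{-1/2}(Z_t-t),N(0,1))=O(t^{-1/2})$, the triangle inequality, and $\dk\le\dtv$, together with Lemma \ref{l:diff_I_Itilde}. Your proposal to use Malliavin--Stein second-order Poincar\'e bounds (Poisson and binomial versions) is a plausible alternative for this local functional, but as written it is only a sketch: the moment bounds on the first and second difference operators that would have to be verified hinge on the same boundary set-difference estimate as above to make the clustering terms $O(e^{-\delta nr_n^d})$ relative to $I_n$, and the claimed rate is asserted rather than derived. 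Note also that it is exactly the binomial step that forces the hypothesis $\partial A\in C^2$ in \eqref{e:CLTS} (the paper there uses Lemma \ref{l:A1} without the $x\prec y$ restriction), a point your de-Poissonization sketch does not account for.
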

Proposition \ref{p:Sall} extends results  
in \cite{PY23}, 
where the same conclusions are derived 
under the extra condition $b^+ < 1/\max(f_0,d(f_0-f_1/2))$
rather than the weaker condition
$I_n \to \infty$ that we consider here.

To get from Proposition \ref{p:Sall} to Theorem \ref{t:momCLT},
let $\xi_n$ be either $K_n-1$ or $R_n$ and $\xi'_n$ be either
$K'_n -1$ or $R'_n$. We show that both the mean and the variance
of both $\xi_n-S_n$ and $\xi'_n -S'_n$, are asymptotically negligible
relative to $I_n$. To do this we deal separately with the contribution
to $\xi_n - S_n$ or $\xi'_n -S'_n$ from components with
Euclidean diameters that are 
categorized as `small', `medium' or `large' compared
to $r_n$, using different arguments for the three different categories.
This requires us to deal with a lot of different cases, as a result
of which Section
\ref{s:asympvar},
containing the second moment  estimates, is quite long
(the proofs of the first order results can be read without referring
to that section).
Once we have the moment estimates, we can derive the `quantitative' CLT for
$\xi_n$ or $\xi'_n$  from the one for $S_n$ or $S'_n$ by
using a quantitative version of Slutsky's
theorem. 

Our argument for small components has geometrical ingredients 
(presented in Section \ref{ss:geom})
and takes boundary effects into account.
The argument for large components involves discretization and path-counting
arguments seen in continuum percolation theory. The argument
for medium-sized components involves both geometry and discretization.

To derive our results with more explicit constants in the uniform case
(Theorems \ref{t:PoLimUn} and \ref{t:momCLTunif}) we need to
demonstrate asymptotic equivalence of $I_n$ and $\mu_n$. 
We do this in Section
\ref{s:Iunif}, by approximating the integrand for $I_n$ by a function
of distance to the boundary only, and using a result
from \cite{HPY24} (Lemma \ref{thm:cov} here)
to approximate the integral of such an integrand by a constant times
a one-dimensional integral.

The rest of the paper is organised as follows. 
After some preliminary lemmas in Section \ref{s:prelims},
	in Section \ref{s:sngltn}
we give an asymptotic analysis of $S_n$ and $S'_n$, and of
	of $I_n$,
	in particular proving Propositions 
	\ref{p:bc}
and	\ref{p:Sall}.

In Section \ref{s:moments} we give estimates of $\E[\zeta_n -S_n]$ and
$\E[\zeta'_n - S'_n]$, where $\zeta_n$ is either $K_n-1$ or $R_n$,
and $\zeta'_n$ is either $K'_n-1$ or $R'_n$,
and then conclude the proof of
Theorems \ref{t:momLLN},
\ref{t:exprate} and
	\ref{t:PoLim}.
In Section   \ref{s:asympvar}
we complete the proof of Theorems 
	\ref{t:momCLT} 
and \ref{t:momCLTunif}.

Compared to our  earlier paper \cite{PY23},
our asymptotic analysis of $I_n$ here in the uniform case (in Section 
\ref{s:Iunif}) requires a more careful treatment of boundary effects,  
since here we consider the whole of the intermediate limiting regime
\eqref{e:supcri}, \eqref{e:supcriupper} whereas the results in
\cite{PY23} are derived under an extra condition amounting to
	$b^+ < 1/\max(f_0, d(f_0-f_1/2))$; without this extra condition
	the boundary effects can dominate and take more work to deal with.
For our bounds on $ \xi_n - S_n$ or $\xi'_n - S'_n$, the methods of
\cite{PY23} are not much use because they deal only with clusters
of fixed order, whereas here we need to deal with all orders of cluster 
at once. Some of the ideas in  \cite{PY21} are of use for this,
but analysis of second order asymptotics of these 
quantitites is not required in the limiting regime of
\cite{PY21}, and to  deal with these in our situation
we have developed ways to represent these second moments as multiple integrals. 
These methods may well be useful in other contexts, such as a similar 
analysis of higher order Betti numbers in TDA, or of the number of components
of the vacant region $\R^d \setminus \cup_{x \in \cX_n} B_{r_n}(x)$,
  in the mildly dense regime.


\section{Preliminaries}
\label{s:prelims}

Throughout the rest of this paper we assume 
that $(d,A)$ satisfy Assumption \ref{a:WA}
and $f$ is continuous on $A$ with $f_0 >0$.
Also we assume $r_n \in (0,\infty) $
 is given for all $n \geq 1$.

Given $D, D' \subset \R^d$, we set $D \oplus D' := \{x+y:x \in D, y \in D'\} $,
the Minkowski sum of $D$ and $D'$. 
Let $o$ denote the origin in $\R^d$.
Let $\|\cdot \|$ denote
the Euclidean norm on $\R^d$.
Given $a>0$ we set $aD := \{ax:x \in D\}$. 
Also we set $D^{(-a)}:= \{x \in D:B(x,a) \subset D\}$.
Given $n \in \N$, we write $[n]$
for the set $\{1,\ldots,n\}$. 

We introduce an ordering $\prec$ on $A$: for $x,y \in A$,
if $\partial A \in C^2$ we
say $x \prec y$ if either $\dist(x,\partial A) < \dist (y,\partial A)$
(using the Euclidean distance) 
or $\dist(x,\partial A) = \dist(y,\partial A)$
and $x$ precedes $y$
(strictly) in the lexicographic ordering. If $A=[0,1]^2$
we say $x \prec y$ if the $\ell_1$ distance from $x$ to the nearest
corner of $A$ is less than that of $y$, or if these two distances 
are equal and $x$ precedes $y$ lexicographically.
In either case, given $x \in A$ we write $A_x := \{y \in A: x \prec y\}$.

For non-empty $U \subset \R^d$ set $\diam(U) := \sup_{x,y \in U}\{ \|x-y\|\}$,
and let
$\#(U)$ denote the number of elements of $U$.

\subsection{Geometrical and combinatorial tools}
\label{ss:geom}

\begin{definition}[Sphere condition]
	\label{d:SC}
	Suppose $\partial A \in C^2$.
        For $z \in \partial A$ let $\hat n_z$ be the unit normal to $\partial A$ at $z$ pointing inside $A$.

        Given $\tau \geq 0$,
        let us say $\tau $ satisfies the {\em sphere condition}
         for $A$ if, for all $x \in \partial A$,
        we have $B(x+ \tau \hat n_x , \tau) \subset A$
        and  $B(x -\tau \hat n_x, \tau) \cap A = \{x\}$.

Let $\tau(A)$ denote the supremum of the set of all $\tau$ satisfying
the sphere condition for $A$.
\end{definition}
\begin{lemma}[Sphere condition lemma]
        Suppose $\partial A \in C^2$. Then $\tau(A) >0$; that is,
        there exists a constant $\tau >0$ such that
        $\tau$ satisfies the sphere condition
         for $A$.
\end{lemma}
\begin{proof}
        See     \cite[Lemma 7]{sphere-condition-lp}.
\end{proof}

\begin{lemma}
	\label{l:slice}
	Suppose $\partial A \in C^2$.
	Let $\tau (A) $ be as in Definition \ref{d:SC},
	and suppose  $0 < r < \tau <   \tau(A)$.
	Let $x \in A \setminus A^{(-r)}$.
	Let $\pi(x)$ be the nearest point to $x$ in $\partial A$.
	Then for any $y \in B_r(x)$: if $(y-\pi(x)) \cdot \hat{n}_{\pi(x)} 
	> r^2/\tau$ then $y \in A$, and  if $(y-\pi(x)) \cdot \hat{n}_{\pi(x)}
	< - r^2/\tau$ then $y \notin A$.
\end{lemma}
\begin{proof}
	Without loss of generality $\pi(x)$
	is the origin $o $ and $\hat{n}_x = (0,0,\ldots, 1) =:e_d$, the 
	$d$-th coordinate vector.
	Then $x =  ae_d $ for  some $a  \in  [0, r ).$
	Let $\mathbb H := \{y \in  \mathbb R^d : y \cdot e_d \geq 0\}$,
	the upper half-space.
	Let $S := (B_\tau (\tau e_d ))^o$ and $S' := (B_\tau (-\tau e_d ))^o$. 
	Then $\partial A \cap B_r(x)$ is trapped between the balls $S$ and $S'$,
	and the set $B_r (x) \cap (A \triangle  \mathbb H)$
	is contained in $\mathbb R^d \setminus (S \cup S' )$. Therefore
	by some spherical geometry, it is
contained in a cylinder $C$ centred on $o$ of radius $r$ and height $2s$,
	as illustrated in Figure \ref{f:hourglass},
%
  \begin{figure}[!h]

\center
\includegraphics[width=5cm]{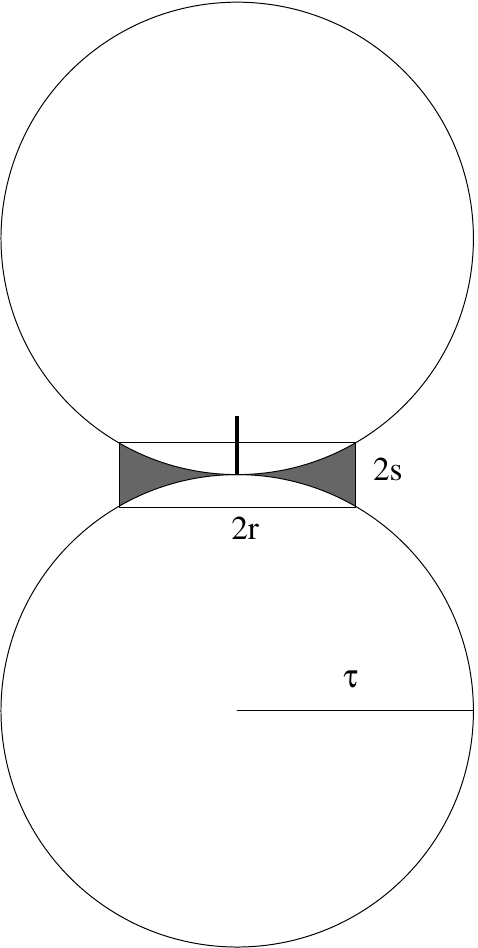}
                \caption{\label{f:hourglass}Illustration for proof of Lemma
		\ref{l:slice}. The circles meet at $o$, and $x$ lies on the vertical line segment (of length $r$).
		The set $B_r(x) \cap (A  \triangle
		\mathbb{H})$ is contained in the shaded region.
        }
\end{figure}
	with $s$ chosen so $s \leq r$  and $(\tau - s)^2 + r^2 = \tau^2$, so
	$2\tau s = r^2 + s^2 \leq 2r^2$, and hence $s \leq r^2 /\tau $.
	The required conclusion folows from this.
\end{proof}

For $x\in A$ let $a(x): = \dist (x,\partial A)$,
the Euclidean distance from $x$ to $\partial A$.
For $s \geq 0$ let $g(s) := \lambda(B_1(o) \cap ([0,s] \times \R^{d-1}))$.
For
$x \in A \setminus  A^{(-s)}$, the next lemma approximates
$| B_s(x) \cap A | $ by $
(\frac{1}{2}\theta_d + g(a(x)/s))s^d$,
which is the volume of the portion of $B_s(x)$
that is not cut off from $x$ by the 
tangent hyperplane to $\partial A$
at $\pi(x)$.

\begin{lemma}
        \label{l:bdyvol}
Suppose $d \geq 2$ and $\partial A \in C^2$.
	There is a constant $\tau(A) >0$, such that if
	$0 < s <  \tau(A)$, and $x \in A \setminus A^{(-s)}$, then
        \begin{equation}
                \label{eq:ball-volume-estimate}
                \left|
		\lambda(B_s(x) \cap A) - ((\theta_d/2) + g(a(x)/s))s^d
                \right|
                \leq
                \frac{2 \theta_{d-1} s^{d+1}}{\tau(A)}.
        \end{equation}
\end{lemma}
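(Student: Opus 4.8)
The plan is to reduce the estimate to a local computation in a neighbourhood of the nearest boundary point, using the $C^2$ regularity of $\partial A$ to control the deviation between $\partial A$ and its tangent hyperplane.

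\medskip

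\noindent\textbf{Setup.} Fix $x \in A \setminus A^{(-s)}$, so $a(x) = \dist(x,\partial A) < s$. Let $p \in \partial A$ be a (any) nearest point to $x$, and let $\nu_p$ be the inward unit normal to $\partial A$ at $p$, so that $x = p + a(x)\nu_p$. Let $H$ be the tangent hyperplane to $\partial A$ at $p$, and $H^+$ the closed half-space bounded by $H$ containing $x$ on its boundary-side (i.e. the side into which $\nu_p$ points). After a rigid motion we may take $p = o$, $\nu_p = e_1$, $H = \{0\}\times\R^{d-1}$, so that $x = a(x)e_1$ and $H^+ = [0,\infty)\times\R^{d-1}$. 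By definition of $g$ and a scaling, $\lambda(B_s(x)\cap H^+) = \lambda(B_1(o)\cap([0, a(x)/s]\times\R^{d-1}))s^d = g(a(x)/s)s^d$; and $\lambda(B_s(x)\cap H^-) $ (the other half) would give the $\frac12\theta_d$ term if $B_s(x)$ were entirely on the ``correct'' side of $\partial A$ near $p$ --- but that half-ball can poke out of $A$ through the boundary. So the target quantity $((\theta_d/2) + g(a(x)/s))s^d$ is exactly $\lambda(B_s(x)\cap H^+) + \lambda(B_s(x)\cap H^-)\wedge(\text{the full half-ball } \frac12\theta_d s^d)$, and we must compare this with $\lambda(B_s(x)\cap A)$.

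\medskip

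\noindent\textbf{Key step: sandwiching $A$ between two balls near $p$.} The one genuinely geometric input is that a $C^2$ boundary of a compact set satisfies a uniform interior and exterior ball condition: there is $\rho(A) > 0$ (depending only on $A$, via the $C^2$ norms of the local defining functions $\phi$ and a compactness/partition-of-unity argument over $\partial A$) such that at every $p\in\partial A$ the open balls $B(p + \rho e_1, \rho)$ and $B(p - \rho e_1, \rho)$ (interior and exterior osculating balls, in the coordinates above) satisfy $B(p+\rho e_1,\rho)\subset A^o$ and $B(p-\rho e_1,\rho)\cap A^o = \varnothing$. Equivalently, near $p$ the boundary $\partial A$ lies, in the $e_1$-coordinate, within $O(\|y\|^2)$ of the hyperplane $H$: writing points as $(t, y)\in\R\times\R^{d-1}$, any $\partial A$-point near $p$ has $|t| \le C_A\|y\|^2$ for some constant $C_A$, and moreover $A$ locally contains $\{t \ge C_A\|y\|^2\}$ and is disjoint from $\{t \le -C_A\|y\|^2\}$, within a fixed-size neighbourhood of $p$. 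I would take $\tau(A)$ small enough (a fixed fraction of $\rho(A)$ and of the size of that neighbourhood) that for $s < \tau(A)$ the whole ball $B_s(x)$ lies in the region where this quadratic sandwich is valid.

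\medskip

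\noindent\textbf{Volume comparison.} With the sandwich in hand, $B_s(x)\cap A$ differs from $B_s(x)\cap H^+$ only inside the ``lens'' $L := \{(t,y): |t| \le C_A\|y\|^2\}\cap B_s(x)$, and $\lambda(L) \le C_A' s\cdot (\text{area of the }(d-1)\text{-disc of radius }s) = C_A'\theta_{d-1}s^{d+1}$ up to constants --- here the $\|y\|^2$ vs.\ $t$ relation gives one extra power of $s$ beyond the naive $s^d$, which is the source of the $s^{d+1}$ error. Similarly one must account for: (i) the part of $B_s(x)\cap H^-$ that lies outside $A$ (bounded by the same lens), versus the term $\min(\frac12\theta_d s^d - \dots)$ --- when $a(x)/s$ is bounded away from $1$ the half-ball $B_s(x)\cap H^-$ pokes out of $A$ and its volume inside $A$ is $\frac12\theta_d s^d$ minus something of order $s^{d+1}$; when $a(x)$ is close to $s$, $B_s(x)$ barely reaches $\partial A$. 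In every case the discrepancy between $\lambda(B_s(x)\cap A)$ and $((\theta_d/2)+g(a(x)/s))s^d$ is bounded by a constant (depending only on $d$ and $C_A$, hence expressible as $2\theta_{d-1}/\tau(A)$ after possibly shrinking $\tau(A)$) times $s^{d+1}$, which is the claimed bound.

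\medskip

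\noindent\textbf{Main obstacle.} The routine-looking but genuinely load-bearing part is making the ``$C^2$ boundary $\Rightarrow$ uniform quadratic sandwich with a single constant $C_A$ and a single neighbourhood-size'' precise: one fixes a finite cover of $\partial A$ by the local coordinate charts from the definition of $\partial A \in C^2$, on each chart the graph function $\phi$ has bounded Hessian, and a Lebesgue-number / compactness argument produces the uniform $\rho(A)$, $C_A$, and the scale below which the sandwich holds. This is where essentially all the work is; the volume bookkeeping afterwards is elementary (one can even reduce it to the $d=1$ radial integral $\int_{-s}^{s}(\text{area of slice})\,dt$ and compare slice-by-slice). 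A secondary, purely notational nuisance is keeping track of the several regimes of $a(x)/s\in[0,1)$ so that the single bound $2\theta_{d-1}s^{d+1}/\tau(A)$ covers all of them; this is handled by always bounding the symmetric difference $(B_s(x)\cap A)\,\triangle\,(\text{model region})$ by the lens $L$ plus (when relevant) the spherical cap of $B_s(x)$ cut off by $H$ at signed height $-a(x)$, both of which have volume $O(s^{d+1})$ uniformly.
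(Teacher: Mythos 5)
Your route is sound, and since the paper does not actually prove this lemma but simply quotes \cite[Lemma 3.4]{HPY24}, your argument functions as a self-contained substitute along the standard lines: compactness plus $\partial A\in C^2$ gives a uniform two-sided ball condition with some radius $\rho=\rho(A)>0$, and in your coordinates at the nearest boundary point $p$ this yields the \emph{global} sandwich $\{t\ge \|y\|^2/\rho\}\cap B_s(x)\subset A$ and $\{t\le -\|y\|^2/\rho\}\cap B_s(x)\cap A=\emptyset$ once $2s<\rho$ (globality of the two tangent balls is what rules out other sheets of $\partial A$ entering $B_s(x)$ — worth saying explicitly). Consequently $(B_s(x)\cap A)\,\triangle\,(B_s(x)\cap H^+)$ is contained in the lens $\{|t|<\|y\|^2/\rho\}\cap B_s(x)$, whose volume is at most $2\rho^{-1}\theta_{d-1}s^{d+1}$, and taking $\tau(A)\le\rho$ gives exactly the constant in \eqref{eq:ball-volume-estimate}.

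Two slips in your bookkeeping need fixing, though neither is a conceptual gap. First, $\lambda(B_s(x)\cap H^+)$ is not $g(a(x)/s)s^d$: translating to the centre $x=a(x)e_1$, it equals $\lambda(B_s(o)\cap\{t\ge -a(x)\})=((\theta_d/2)+g(a(x)/s))s^d$, i.e.\ the half-space volume is already exactly the target quantity; your setup decomposition ``$\lambda(B_s(x)\cap H^+)+\lambda(B_s(x)\cap H^-)\wedge(\theta_d/2)s^d$'' is therefore wrong (it sums to $(\theta_d/2)s^d$). Second, the closing claim that the spherical cap of $B_s(x)$ cut off by $H$ at signed height $-a(x)$ has volume $O(s^{d+1})$ is false: that cap has volume $((\theta_d/2)-g(a(x)/s))s^d$, which is of order $s^d$ when $a(x)/s$ stays away from $1$, so the ``several regimes of $a(x)/s$'' discussion as written would not deliver the bound. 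Once the first point is corrected, no cap and no case analysis are needed: the single lens estimate, together with the exact identity for $\lambda(B_s(x)\cap H^+)$, finishes the proof.
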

\begin{proof}
	See \cite[Lemma 3.4]{HPY24}.
\end{proof}

\begin{lemma}
\label{l:A3}
Let $\eps > 0$.
Suppose $d \geq 2$ and $\partial A \in C^2$.
	There exists $ s_0>0$ depending on
	$d$, $A$ and $\eps$ such that if
	$s\in (0,s_0)$ and $y\in A$, $z \in \partial A$,
	then
\begin{align}
	 \lambda(A \cap B_s(y))\ge ((1/2) - \eps)
	\theta_d s^d;
	\label{e:A31}
	\\
	\lambda(A \cap B_s(z))\le ((1/2) + \eps)
	\theta_d s^d.
		\label{e:A32}
	\end{align}
		If instead $d=2$ and 
		$A= [0,1]^2$ there exists $s_0 > 0 $ such that
		if $y \in A, s \in (0,s_0)$ then $\lambda(A \cap B_s(y))
		\geq (\pi/4)s^2$.
\end{lemma}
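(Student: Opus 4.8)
The plan is to obtain all three estimates from Lemma~\ref{l:bdyvol}, using only the elementary observations that the function $g$ is nonnegative with $g(0)=0$ (immediate from its definition as $\lambda(B_1(o)\cap([0,s]\times\R^{d-1}))$), and that for any $s>0$ every boundary point $z\in\partial A$ satisfies $z\in A\setminus A^{(-s)}$, since $A=\overline{A^o}$ forces $B_s(z)\not\subset A$. Throughout, recall $\theta=\theta_d$.

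First I would prove \eqref{e:A31}. Fix $y\in A$ and $s<\tau(A)$. If $y\in A^{(-s)}$ then $B_s(y)\subset A$, so $\lambda(A\cap B_s(y))=\theta s^d\ge((1/2)-\eps)\theta s^d$. If instead $y\in A\setminus A^{(-s)}$, then Lemma~\ref{l:bdyvol} together with $g\ge 0$ gives
\begin{align*}
\lambda(A\cap B_s(y))\ \ge\ \big((\theta_d/2)+g(a(y)/s)\big)s^d-\frac{2\theta_{d-1}s^{d+1}}{\tau(A)}\ \ge\ \frac{\theta_d}{2}s^d-\frac{2\theta_{d-1}s^{d+1}}{\tau(A)}.
\end{align*}
Choosing $s_0:=\min\big(\tau(A),\,\eps\theta_d\tau(A)/(2\theta_{d-1})\big)$ makes the error term at most $\eps\theta_d s^d$ whenever $s<s_0$, which yields \eqref{e:A31}. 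For \eqref{e:A32}, fix $z\in\partial A$; then $a(z)=0$ and $z\in A\setminus A^{(-s)}$, so Lemma~\ref{l:bdyvol} with $g(0)=0$ gives, for $s<\tau(A)$,
\begin{align*}
\lambda(A\cap B_s(z))\ \le\ \big((\theta_d/2)+g(0)\big)s^d+\frac{2\theta_{d-1}s^{d+1}}{\tau(A)}\ =\ \frac{\theta_d}{2}s^d+\frac{2\theta_{d-1}s^{d+1}}{\tau(A)},
\end{align*}
which is at most $((1/2)+\eps)\theta_d s^d$ for $s<s_0$ with the same $s_0$ as above.

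For the square case $d=2$, $A=[0,1]^2$, I would take $s_0:=1/2$ and argue geometrically. Fix $y\in[0,1]^2$ and $s<1/2$. Among the four closed half-planes whose intersection equals $[0,1]^2$, at most one of the pair $\{x_1\ge 0\},\{x_1\le 1\}$ can meet $B_s(y)$ (otherwise $y_1<s$ and $1-y_1<s$ force $1<2s<1$), and likewise for the pair bounding $[0,1]^2$ in the $x_2$-direction; hence $B_s(y)\cap[0,1]^2=B_s(y)\cap H_1\cap H_2$, where $H_1$ is $\R^2$ or one of $\{x_1\ge 0\},\{x_1\le 1\}$, and $H_2$ is $\R^2$ or one of $\{x_2\ge 0\},\{x_2\le 1\}$, and $y\in H_1\cap H_2$. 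Since $y\in H_1$, the translate $H_1-y$ contains one of the closed half-planes $\{x_1\ge 0\}$ or $\{x_1\le 0\}$; denote it $H_1^0$, and define $H_2^0$ similarly. Then $B_s(y)\cap[0,1]^2-y\supseteq B_s(o)\cap H_1^0\cap H_2^0$, and the right-hand set is the intersection of $B_s(o)$ with a quadrant with corner at the origin, of area exactly $(\pi/4)s^2$. Thus $\lambda(A\cap B_s(y))\ge(\pi/4)s^2$.

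All the steps are routine; the only point needing a moment's care is the final paragraph's observation that a ball of radius below $1/2$ centred in the unit square can meet at most two of the square's sides, and those two sides necessarily share a corner, so that the local picture is always (a ball intersected with) a quadrant — which is what makes the clean translation argument available.
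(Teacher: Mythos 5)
Your proof is correct and follows essentially the same route as the paper's: both inequalities \eqref{e:A31} and \eqref{e:A32} are deduced from Lemma \ref{l:bdyvol} using $g \geq 0$ and $g(0)=0$ respectively, and the square case is handled by the elementary quarter-disc observation that the paper dismisses as obvious. You simply make explicit what the paper leaves implicit (the interior case $y \in A^{(-s)}$, the choice $s_0=\min(\tau(A),\eps\theta_d\tau(A)/(2\theta_{d-1}))$ absorbing the error term, and the translation argument for $[0,1]^2$), which is fine.
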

\begin{proof}
	The first inequality \eqref{e:A31} is easily deduced from Lemma
        \ref{l:bdyvol}
	since $g(\cdot) \geq 0$.
%
	The second inequality \eqref{e:A32} is also deduced from Lemma 
        \ref{l:bdyvol}
	since $g(0)=0$.

	The third inequality is obvious.
\end{proof}

\begin{lemma}\label{l:A1}
	There exist $\delta_1 \in (0, \theta_d/4)$ and
	$s_0 >0 $  depending on $d$ and $A$ such that if
	$s\in (0,s_0)$ and $x,y\in A$ with $x \prec y$, then
\begin{align}
	\lambda(A \cap B_s(y)\setminus B_s(x)) & \ge 2 \delta_1 s^{d}  
	 & {\rm if } ~ \|y-x\| & \ge s; 
	\label{e:volLB1}
	\\
	\lambda(A \cap B_s(y)\setminus B_s(x)) & \ge 2 \delta_1
	s^{d-1} \|y-x\| 
	 & {\rm if } ~ \|y-x\| &\le 3s,
	\label{e:volLB2}
\end{align} 
	and if  $\partial A \in C^2$ then \eqref{e:volLB1} still
	holds if we drop the condition $x \prec y$.
\end{lemma}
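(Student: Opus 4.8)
\textbf{Proof proposal for Lemma \ref{l:A1}.}

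The plan is to treat the two regimes $\|y-x\|\ge s$ and $\|y-x\|\le 3s$ essentially separately, the only interaction being the overlap $s\le\|y-x\|\le 3s$ where either bound may be invoked. The geometric heart of the matter is a lower bound on the volume of the lune $B_s(y)\setminus B_s(x)$ in free space $\R^d$, which I would then intersect with $A$. For \eqref{e:volLB1}: if $\|y-x\|\ge s$ then $B_s(y)$ contains a fixed-size cap (a ball of radius $cs$ for a dimensional constant $c$, say centred at the point of $B_s(y)$ furthest from $x$ along the ray through $x,y$) that is disjoint from $B_s(x)$; so $\lambda(B_s(y)\setminus B_s(x))\ge c' s^d$ with $c'=c'(d)$. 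For \eqref{e:volLB2}: if $\|y-x\|\le 3s$, the lune $B_s(y)\setminus B_s(x)$ has volume of order $s^{d-1}\|y-x\|$ — this is a standard computation (the region between two nearby parallel-ish spherical caps has thickness $\sim\|y-x\|$ in the $y-x$ direction and cross-sectional area $\sim s^{d-1}$); more carefully one can take the slab at distance between $\|y-x\|/4$ and $\|y-x\|/2$ past the midpoint, which lies in $B_s(y)\setminus B_s(x)$ and has volume $\gtrsim s^{d-1}\|y-x\|$ once $\|y-x\|\le 3s$ (so that $s\ge\|y-x\|/3$ and the relevant cross-sections are genuinely of order $s^{d-1}$). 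Call these free-space bounds $(\star)$.

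The next step is to pass from free space to $A$, i.e. to show a positive fraction of the lune lies inside $A$. Here the two boundary cases diverge. If $\partial A\in C^2$: by Lemma \ref{l:A3} (via Lemma \ref{l:bdyvol}), for $s<s_0$ and any $y\in A$ we have $\lambda(A\cap B_s(y))\ge((1/2)-\eps)\theta s^d$, and more importantly $A\cap B_s(y)$ occupies, up to $O(s)$ relative error, the half-ball cut off by the tangent hyperplane at the nearest boundary point. One shows the lune-cap constructed in $(\star)$ can be chosen to sit on the $A$-side of that hyperplane — for \eqref{e:volLB1} one has freedom to rotate the cap within $B_s(y)\setminus B_s(x)$ (the complement of $B_s(x)$ in $B_s(y)$ is large enough that it meets the inner half-ball in a set of volume $\gtrsim s^d$), and for \eqref{e:volLB2} the slab argument already produces a set of volume $\gtrsim s^{d-1}\|y-x\|$ of which at least half, by the tangent-hyperplane approximation plus the $O(s)$ error being lower order, lies in $A$. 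This is where the hypothesis $x\prec y$ can be dropped in the $C^2$ case: the ordering was only needed to guarantee $y$ is "no closer to the boundary than $x$", which forces the lune to protrude into the interior rather than out of $A$; but the $C^2$ tangent-hyperplane estimate controls $A\cap B_s(y)$ using only $y$, so no comparison with $x$ is needed. If instead $A=[0,1]^2$, one uses the crude bound $\lambda(A\cap B_s(y))\ge(\pi/4)s^2$ from Lemma \ref{l:A3} together with convexity of the square: the worst case is $y$ near a corner or edge, and one checks directly (finitely many configurations, using the $\prec$-ordering by $\ell_1$-distance to the nearest corner to ensure the lune opens away from the corner) that a fixed fraction of the free-space lune survives intersection with $[0,1]^2$; this is the one place where $x\prec y$ is genuinely used, since near a corner the bound can fail if $y$ is closer to the corner than $x$.

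The main obstacle, I expect, is the bookkeeping in the square case: unlike the $C^2$ case where one clean tangent-hyperplane approximation handles everything, the square has corners where two edges meet and the "half-ball" picture degenerates to a "quarter-ball", so one must verify that even a quarter of the free-space lune (or some fixed fraction) lies in $A$, and that the $\prec$-ordering rules out the bad configuration where $y$ sits in the corner and $x$ further out. A secondary technical point is making the constant $\delta_1$ and threshold $s_0$ genuinely uniform over all valid $(x,y,A)$ — but since $A$ is fixed and the free-space constants in $(\star)$ are purely dimensional, this reduces to choosing $s_0$ small relative to $\tau(A)$ from Lemma \ref{l:bdyvol} (and relative to $1$ in the square case) and then setting $\delta_1$ to be half the minimum of the resulting surviving-fraction constants. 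I would organize the write-up as: (i) state and prove $(\star)$ in $\R^d$; (ii) the $C^2$ case via Lemma \ref{l:bdyvol}, noting the ordering is not needed; (iii) the square case by the corner/edge case analysis using the ordering.
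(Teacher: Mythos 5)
Your plan for \eqref{e:volLB1} in the $C^2$ case is sound (it can be made rigorous by subtraction: $\lambda(A\cap B_s(y))\ge(\tfrac12-\eps)\theta s^d$ by Lemma \ref{l:A3}, while for $\|y-x\|\ge s$ the lens $B_s(x)\cap B_s(y)$ has volume strictly less than half of $\theta s^d$ by a dimensional constant, so no comparison with $x$ is needed — this is what the paper outsources to \cite[Lemma 5.9]{PY23}). The genuine gap is your treatment of \eqref{e:volLB2} in the $C^2$ case. You claim that "at least half" of the free-space lune lies in $A$ by the tangent-hyperplane approximation "using only $y$", and you conclude that the ordering $x\prec y$ can be dropped; but the lemma drops $\prec$ only for \eqref{e:volLB1}, and for \eqref{e:volLB2} the ordering is indispensable. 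Concretely, take $y\in\partial A$ and $x=y-hu$ with $u$ the outward unit normal at $y$ and $h=\|y-x\|\ll s$. Then the lune $B_s(y)\setminus B_s(x)$ is a thin crescent protruding \emph{outward}: a point $y+w$ of the lune satisfies $\langle w,u\rangle>(s^2-h^2-\|w\|^2)/(2h)$, while membership in $A$ forces $\langle w,u\rangle\le C\|w\|^2$, and combining the two confines $w$ to a spherical shell of thickness $O(h^2/s+hs)$, so $\lambda(A\cap B_s(y)\setminus B_s(x))=O(s^{d-2}h^2+s^{d}h)=o(s^{d-1}h)$, violating \eqref{e:volLB2} for any fixed $\delta_1$. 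The hypothesis $x\prec y$ (i.e.\ $a(x)\le a(y)$) is exactly what excludes this configuration by forcing $y-x$ to have essentially non-positive outward-normal component, and any proof of \eqref{e:volLB2} must use it quantitatively, together with curvature control; this is the content of \cite[Lemma 3.6]{HPY24}, which the paper cites, and which your sketch replaces by an unproved assertion whose stated justification never invokes the ordering at all.

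Two secondary points. First, your free-space construction for \eqref{e:volLB2} is wrong as described: the slab at distance between $\|y-x\|/4$ and $\|y-x\|/2$ beyond the midpoint of $[x,y]$, intersected with $B_s(y)$, is \emph{not} contained in $B_s(y)\setminus B_s(x)$ when $\|y-x\|\le s$ (points near the axis there are within distance $\tfrac34\|y-x\|\le s$ of $x$); the correct picture is an annular crescent hugging $\partial B_s(x)$ on the side of $y$, or one can simply compute $\lambda(B_s(y))-\lambda(B_s(x)\cap B_s(y))$. This is fixable but as written the set you exhibit does not do the job. Second, your square case is only a sketch, but its structure (corner/edge case analysis driven by the $\ell_1$-to-corner ordering) matches the paper's explicit construction there (a small ball inside the lune for \eqref{e:volLB1}, and a translated circular segment for \eqref{e:volLB2}), so with the details filled in that part would be fine; the paper also reduces \eqref{e:volLB2} on the range $s\le\|y-x\|\le3s$ to \eqref{e:volLB1}, as you do.
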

	 Note that when $A =[0,1]^2$ we do require
	 $x \prec y$ for \eqref{e:volLB1}; otherwise $y$ could be `jammed
	 into a corner' of $A$, for example when $x$ is near
	 $(2^{-1/2}s,2^{-1/2}s)$.
\begin{proof}
	Note first that it suffices to prove the second inequality
	\eqref{e:volLB2}
	for $\|y-x\| \leq s$, since it can be proved in the
	case $s \leq \|x-y\| \leq 3s$ by using the first inequality
	\eqref{e:volLB1} and changing $\delta_1$.

	In the case with $\partial A \in C^2$,
	\eqref{e:volLB1}
	comes from
	\cite[Lemma 10]{PY23} (Lemma 5.9 in the Arxiv version),
	which does not require the
	condition $x \prec y$, while 
	\eqref{e:volLB2}
	comes from
	\cite[Lemma 3.6]{HPY24}. 

	Now suppose $A = [0,1]^2$. Without
	 loss of generality, the nearest corner of $A$ to $x$ is the origin.
	 Writing $x= (x_1,x_2)$ and $y= (y_1,y_2)$, assume
	 also  without loss of generality that $x_2 \leq y_2$.
	 Also assume $y_1 \leq x_1$
	 (otherwise \eqref{e:volLB1}
	 and \eqref{e:volLB2} are
	 easy to see).

	 If $\|y-x\| \geq s$ then $y_2 \geq x_2 + 2^{-1/2} s$
	 (otherwise the condition $x \prec y$ fails).
	 Then the ball of radius $0.05s$ centred on $(y_1+0.05s, y_2 + 0.8s)$
	 is contained in $A \cap B_s(y)\setminus B_s(x)$, and \eqref{e:volLB1}
	 follows for this case. 
	 
	 For \eqref{e:volLB2}, 
	 we assume without loss of generality that $\|y-x\| \leq s$.
	 Consider the segment $S$ of $B(x,s)$ that is
	 cut off from $B(x,s)$ by the line parallel to $[x,y]$ and
	 at a distance  $2^{-1/2} s $ from $x$, away from the origin
	 (here $[x,y]$ denotes the convex hull of $\{x,y\}$).
	 Then as illustrated in Figure \ref{f:cornerdisks},
	 $S \oplus \{y-x\} \subset A$, and by Fubini's theorem
	 there is a constant $\delta_1 >0$ such that
	 $\lambda ((S \oplus \{y-x\}) \setminus S) \geq 2 \delta_1 s \|y-x\|$. 
	  \begin{figure}
\center
\includegraphics[width=0.49\textwidth, trim= 0 10 5 20]{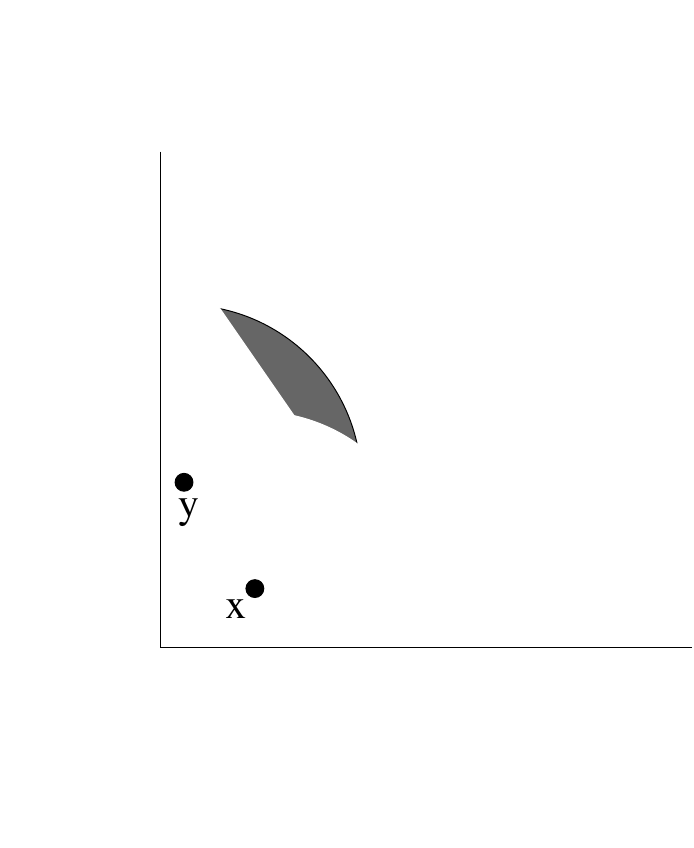}
\vskip -2.0cm
\caption{\label{f:cornerdisks}
		  The shaded region is $(S \oplus \{y-x\}) \setminus S$,
	  as described in the proof of Lemma \ref{l:A1}.  }
\end{figure}
\end{proof}

\begin{lemma}\label{l:A2}
	Let $0< \eps <K <\infty$. Then there exists
	$\delta_2 = \delta_2(d,A,\eps,K) > 0$ and
	$s_0 = s_0(d,A,\eps,K) > 0$,
	such that for all $s\in (0,s_0)$ and all compact $B\subset A$ with $\diam B \in [\eps s,Ks]$ and
	$x_0 \in B$ with $x_0 \prec y$ for all  $y \in B$,
	we have
\begin{align}
\lambda( (B\oplus B_s(o)) \cap A) \ge \lambda(B)
+ \lambda(B_{s}(x_0) \cap A ) + 2 \delta_2 s^d.
	\label{e:2110}
\end{align}	 
\end{lemma}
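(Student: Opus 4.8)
The plan is to show that the Minkowski enlargement $B \oplus B_s(o)$, when intersected with $A$, strictly exceeds the sum of $\lambda(B)$ and $\lambda(B_s(x_0) \cap A)$ by an amount of order $s^d$, the point being that $B$ has genuine extent (its diameter is comparable to $s$), so moving the ball $B_s(o)$ along $B$ sweeps out extra volume that is not already counted in $B_s(x_0) \cap A$. First I would reduce to a clean geometric picture: since $\diam B \in [\eps s, Ks]$, pick two points $u, v \in B$ with $\|u-v\| = \diam B \geq \eps s$, and consider the ``tube'' swept by $B_s(o)$ translated along the segment from $u$ to $v$ (or more simply along a sub-segment of it of length $\eps s/2$ starting from a suitable endpoint). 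The region $(B \oplus B_s(o))$ contains $B_s(u) \cup B_s(v)$ together with this swept tube, and the excess volume of $B_s(u) \cup B_s(v)$ over a single ball $B_s(\cdot)$ is bounded below by a constant times $s^{d-1} \cdot \|u-v\| \geq \eps s^d/2$ by an elementary computation on two overlapping balls (or, even more directly, by Lemma \ref{l:A1}'s estimate \eqref{e:volLB2} if one chooses $x_0$ appropriately relative to $u,v$).

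The subtlety is the intersection with $A$ and the boundary behaviour, which is exactly why the hypothesis ``$x_0 \prec y$ for all $y \in B$'' is imposed. I would argue as follows: because $x_0$ is the $\prec$-minimal point of $B$, it is (weakly) the closest point of $B$ to $\partial A$, so for any other $y \in B$ we have $B_s(y) \cap A$ at least as large as $B_s(x_0) \cap A$ up to the lower-order boundary corrections controlled by Lemma \ref{l:bdyvol} (in the $C^2$ case) or by the explicit corner analysis (in the square case). Concretely, take $y \in B$ with $\|y - x_0\| \geq \diam(B)/2 \geq \eps s/2$; then write
\begin{align*}
	\lambda((B \oplus B_s(o)) \cap A) &\geq \lambda\big((B_s(x_0) \cup B_s(y)) \cap A\big) + \lambda(B \setminus (B_s(x_0) \cup B_s(y)))
\end{align*}
where the last term is $\lambda(B)$ minus at most $2\theta s^d$ coming from the two balls covering part of $B$ — but $B$ has diameter at most $Ks$ so in fact $\lambda(B) \leq \theta(Ks/2)^d$ and one should instead only claim $(B \oplus B_s(o)) \cap A \supseteq B \cup ((B_s(x_0) \cup B_s(y)) \cap A)$ and bound the overlap: $\lambda(B \cap (B_s(x_0) \cup B_s(y)) \cap A) \leq \lambda(B)$, so inclusion-exclusion gives $\lambda((B\oplus B_s(o))\cap A) \geq \lambda(B) + \lambda((B_s(x_0)\cup B_s(y))\cap A) - \lambda(B)$, which is useless. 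The correct route: observe $B \subseteq B_s(x_0) \cap A$ already (since $\diam B \leq Ks$ — no, only if $s$ is small relative to... actually $\diam B \leq Ks$ does not force $B \subseteq B_s(x_0)$ unless $K \leq 1$).

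So the honest approach, which I now commit to, is to not try to contain $B$ in a ball. Instead: cover $B$ by finitely many (a bounded number $M = M(\eps, K, d)$ of) balls of radius $\eps s/4$ centred at points of $B$, chain-connected; the Minkowski sum $B \oplus B_s(o)$ then contains $\bigcup_{j} B_{s + \eps s/4}(z_j) \supseteq \bigcup_j B_s(z_j) \cup (\text{extra shell of volume} \geq \delta s^d)$ — cleaner still, just use that $B \oplus B_s(o) \supseteq (B_s(x_0) \oplus [x_0, y]) \cap (\text{...})$, i.e. the union of translates $B_s(x_0 + t(y-x_0)/\|y-x_0\|)$ for $t \in [0, \|y-x_0\|]$, which is a ``stadium'' of inradius $s$ and length $\geq \eps s/2$. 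By Lemma \ref{l:A1}, inequality \eqref{e:volLB2} applied with the roles arranged so that the translate farthest along is the ``$y$'', the stadium's intersection with $A$ exceeds $\lambda(B_s(x_0) \cap A)$ by at least $2\delta_1 s^{d-1} \cdot (\eps s / 2) = \delta_1 \eps s^d$; and separately $B \subseteq B \oplus B_s(o)$ but $B$ may overlap the stadium. To handle that overlap, note $B \subseteq$ (stadium) in fact fails too.

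Given these circular difficulties, the genuinely correct and simplest argument is: set $\rho := \diam(B) \in [\eps s, Ks]$, let $u, v \in B$ with $\|u-v\| = \rho$, and without loss of generality $x_0 = u$ (if not, translate the chain — but $x_0$ is prescribed; however $x_0 \prec v$ guarantees $x_0$ is no closer to ``a corner'' / no farther from $\partial A$ than $v$, so $\lambda(B_s(x_0) \cap A) \geq \lambda(B_s(v) \cap A) - o(s^d)$ is the wrong direction — we want an upper bound on $\lambda(B_s(x_0)\cap A)$). Actually we want: $\lambda((B\oplus B_s(o))\cap A) \geq \lambda(B) + \lambda(B_s(x_0)\cap A) + 2\delta_2 s^d$. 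Since $B_s(x_0) \oplus \{v - x_0\} = B_s(v)$ and the closed segment from $x_0$ to $v$ lies in $B \oplus (-B)$... The correct containment is $B \oplus B_s(o) \supseteq B \cup B_s(x_0) \cup B_s(v) \cup (\text{sausage from } x_0 \text{ to } v)$. Now $\lambda((B \cup \text{sausage} \cup B_s(v))\cap A) \geq \lambda(B) + \lambda(\text{sausage} \cap A \setminus B) + \dots$. I will therefore write the final proof as: the set $T := (B_s(x_0) \oplus [\,0, v - x_0\,]) \cup B$ is contained in $(B \oplus B_s(o)) \cap (\text{nothing extra})$, wait $B_s(x_0) \oplus [0, v-x_0] \subseteq B \oplus B_s(o)$ because $[0, v-x_0] = [x_0, v] - x_0 \subseteq B - x_0 \subseteq B \oplus B_s(o) - x_0$... no, $B \oplus B_s(o) - x_0 = (B - x_0) \oplus B_s(o)$ and $B_s(x_0) \oplus [0, v-x_0] = x_0 + (B_s(o) \oplus [0, v-x_0])$, need $B_s(o) \oplus [0,v-x_0] \subseteq (B - x_0) \oplus B_s(o)$, i.e. $[0, v-x_0] \subseteq B - x_0$, i.e. $[x_0, v] \subseteq B$ — false in general since $B$ need not be convex!

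Hence the final, robust argument must avoid assuming convexity of $B$: it will use only that $x_0, v \in B$ and $\|v - x_0\|$ is comparable to $s$ (available after replacing the diameter pair $u,v$ by the pair $x_0, v$ where $v$ maximizes $\|x_0 - \cdot\|$ over $B$, which is $\geq \rho/2 \geq \eps s /2$). Then $B \oplus B_s(o) \supseteq B_s(x_0) \cup B_s(v)$ and also $\supseteq B$. The key lower bound is
\[
	\lambda\big((B \oplus B_s(o)) \cap A\big) \geq \lambda\big((B_s(x_0) \cup B_s(v)) \cap A\big) + \lambda\big(B \cap A \setminus (B_s(x_0) \cup B_s(v))\big),
\]
and I claim the hypothesis $x_0 \prec$ everything in $B$, together with $\|x_0 - v\| \geq \eps s/2 \leq 3s$ (so \eqref{e:volLB2} of Lemma \ref{l:A1} applies to the pair, in the order $x_0 \prec v$), gives $\lambda((B_s(x_0)\cup B_s(v))\cap A) = \lambda(B_s(x_0)\cap A) + \lambda(A \cap B_s(v)\setminus B_s(x_0)) \geq \lambda(B_s(x_0)\cap A) + 2\delta_1 s^{d-1}\|x_0 - v\| \geq \lambda(B_s(x_0)\cap A) + \delta_1 \eps s^d$. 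For the second term I drop it if $B \subseteq B_s(x_0) \cup B_s(v)$, but in general $B \not\subseteq$ a union of two balls of radius $s$ when $\diam B$ could be up to $Ks$; so I instead subdivide: cover $B$ by $\leq M(\eps,K,d)$ balls $B_s(w_1), \dots, B_s(w_M)$ with $w_i \in B$ (possible since $\diam B \leq Ks$ and each radius is $s$), chain them via the $\prec$-order, and apply \eqref{e:volLB2} successively to get
\[
	\lambda\Big(\Big(\bigcup_{i=1}^M B_s(w_i)\Big) \cap A\Big) \geq \lambda(B_s(x_0) \cap A) + \sum (\text{positive increments}),
\]
where at least one increment is $\geq \delta_1 \eps s^d$ (coming from a step of length $\geq \eps s/(2M)$, or simply from including the step $x_0 \to v$). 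Since $B \subseteq \bigcup_i B_s(w_i) \subseteq B \oplus B_s(o)$, this yields \eqref{e:2110} with $\delta_2 := \delta_1 \eps / (4M)$ (absorbing constants), for $s$ small enough that Lemma \ref{l:A1} applies. \emph{The main obstacle} is bookkeeping the $\prec$-order so that Lemma \ref{l:A1}'s boundary-aware volume increments \eqref{e:volLB1}--\eqref{e:volLB2} can be chained without double-counting — in particular ensuring each new ball $B_s(w_{i+1})$ contributes a genuinely new slab of volume relative to the union of the previous ones, not merely relative to the single predecessor $B_s(w_i)$; this is where one uses that $x_0$ is $\prec$-minimal in $B$ so that the sets $A \cap B_s(w) \setminus B_s(x_0)$ behave monotonically, and a standard greedy/chaining argument controls the overlaps at the cost of the factor $M$.
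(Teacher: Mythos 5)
There is a genuine gap: your final argument never produces the $\lambda(B)$ term on the right-hand side of \eqref{e:2110}. Writing your two-ball decomposition out, it gives
\[
\lambda\big((B\oplus B_s(o))\cap A\big) \;\geq\; \lambda(B_s(x_0)\cap A) + \delta_1\eps s^d + \lambda(B) - \lambda\big(B\cap(B_s(x_0)\cup B_s(v))\big),
\]
so to reach \eqref{e:2110} you would need $\lambda(B\cap(B_s(x_0)\cup B_s(v)))\leq \delta_1\eps s^d - 2\delta_2 s^d$, which is false in general: $B$ contains $x_0$ and $v$ and may fill a volume of order $s^d$ inside those two balls (take $B$ a ball of radius $Ks/2$, say), and $\delta_1$ is a small constant. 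You noticed a problem but mis-diagnosed it as ``$B$ may not be covered by the two balls''; the real loss is the part of $B$ \emph{inside} the balls, which your decomposition drops. The covering-and-chaining fix has the same defect: chaining \eqref{e:volLB1}--\eqref{e:volLB2} along $B_s(w_1),\dots,B_s(w_M)$ only yields $\lambda\big((\bigcup_i B_s(w_i))\cap A\big)\geq \lambda(B_s(x_0)\cap A)+\delta_1\eps s^d$, and the observation $B\subseteq\bigcup_i B_s(w_i)$ does not add $\lambda(B)$ to that lower bound — the whole difficulty of the lemma is precisely that $B$ and $B_s(x_0)$ may overlap, so one must exhibit inside $(B\oplus B_s(o))\cap A$ a region \emph{disjoint from $B$} of volume at least $\lambda(B_s(x_0)\cap A)+2\delta_2 s^d$. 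Neither version of your argument constructs such a region, and the ``main obstacle'' you flag (bookkeeping overlaps between successive balls) is not the missing ingredient.

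For comparison, the paper does not prove this from scratch at all: for $\partial A\in C^2$ it invokes \cite[Lemma 2.5]{PY21}, and for the square it argues similarly away from the corners and cites \cite[Proposition 5.15]{Pen03} near a corner. A self-contained proof along your lines would need an additional geometric idea — for instance using that $x_0$ is $\prec$-minimal (hence essentially the point of $B$ closest to $\partial A$, or to a corner) to place a translate of $B_s(x_0)\cap A$, plus an extra cap of volume $\Theta(s^d)$ coming from $\diam B\geq \eps s$, strictly on the far side of $B$ so that disjointness from $B$ is guaranteed. As written, the proposal does not establish \eqref{e:2110}.
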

\begin{proof}
	In the case with $\partial A \in C^2$,
	we can use \cite[Lemma 2.5]{PY21}. 

	If instead $d=2$ and $A= [0,1]^2$, we can argue similarly for $x$ 
	not close to any corner of $A$. In the other case we can use
	\cite[Proposition 5.15]{Pen03}.
\end{proof}

We shall say that a set $\sigma \subset \Z^d$ is {\em $*$-connected}
if the set $\sigma \oplus [-\frac12 \frac12]^d$ is connected.
The following combinatorial result is well-known 
		(e.g. \cite[Lemma 9.3]{Pen03}).
		\begin{lemma} \label{l:Peierls}
			Let $n \in \N$. The number of $*$-connected
			subsets of $\Z^d$ with $n$ elements including $o$ is
			at most $(2^{3^d})^n$.
		\end{lemma}


\subsection{Probabilistic tools}
	\label{ss:ProbTools}

\begin{lemma}[Chernoff bounds]
\label{lemChern}
Suppose  $n \in \N$, $p \in (0,1)$, $t >0$ and $0 < k < n$. 

	(i) If $k \geq e^2 np$ then 
 $\Pr[ \Bin(n,p) \geq k ] \leq \exp\left( - (k/2)
	\log(k/(np))\right) \leq e^{-k}$.


	(ii) For all $t$ large, $\PP[ Z_t \ge t+ t^{3/4} ]\le
	\exp(- \sqrt{t}/9)$ and $\PP[ Z_t \le t - t^{3/4} ]\le  \exp(- \sqrt{t}/9)$.

	(iii) If $k \geq e^2 t$ then $\PP[Z_t \geq k ] \leq e^{-k}$.
\end{lemma}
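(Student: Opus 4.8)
The plan is to obtain all three bounds from the exponential Markov inequality (the Chernoff method), using the explicit moment generating functions $\E[e^{\lambda \Bin(n,p)}] = (1 + p(e^\lambda - 1))^n \le e^{np(e^\lambda - 1)}$ and $\E[e^{\lambda Z_t}] = e^{t(e^\lambda - 1)}$; the only work is choosing $\lambda$ and some elementary estimation.

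For (i), Markov's inequality applied to $e^{\lambda\Bin(n,p)}$ gives, for every $\lambda > 0$, $\Pr[\Bin(n,p) \ge k] \le e^{-\lambda k + np(e^\lambda - 1)}$. I would take $\lambda = \log(k/(np))$, which is positive since $k \ge e^2 np > np$, obtaining $\Pr[\Bin(n,p) \ge k] \le \exp(-k\log(k/(np)) + k - np)$. Since $k \ge e^2 np$ we have $\log(k/(np)) \ge 2$, so $k - np \le k \le (k/2)\log(k/(np))$; this absorbs the linear terms and yields the first claimed bound $\exp(-(k/2)\log(k/(np)))$, from which $\le e^{-k}$ follows using $\log(k/(np)) \ge 2$ once more. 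Part (iii) is the specialisation of (i) with $np = t$ fixed and $n \to \infty$, using $\Pr[Z_t \ge k] = \lim_n \Pr[\Bin(n, t/n) \ge k]$ for integer $k$ and noting the hypothesis $k \ge e^2 t$ makes (i) applicable for all large $n$; equivalently, one repeats the computation with $\lambda = \log(k/t) \ge 2$ to get $\exp(-k\log(k/t) + k - t) \le \exp(-2k+k) = e^{-k}$.

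For (ii) I would again use the exponential Markov inequality, but with a sub-optimal $\lambda$ of order $t^{-1/4}$, since optimisation is not needed. For the lower tail, $\Pr[Z_t \le t - t^{3/4}] \le e^{\lambda(t - t^{3/4}) + t(e^{-\lambda} - 1)}$, and with $\lambda = t^{-1/4}$ and the exact inequality $e^{-\lambda} \le 1 - \lambda + \lambda^2/2$ (valid for all $\lambda \ge 0$) the exponent is at most $-\lambda t^{3/4} + t\lambda^2/2 = -\tfrac12 t^{1/2}$, which is below $-\sqrt t/9$ for large $t$. For the upper tail, $\Pr[Z_t \ge t + t^{3/4}] \le e^{-\lambda(t + t^{3/4}) + t(e^\lambda - 1)}$, and here I would take $e^\lambda = 1 + t^{-1/4}$, the minimiser of the exponent, so that it equals $t^{3/4} - (t + t^{3/4})\log(1 + t^{-1/4})$; using $\log(1 + u) \ge u - u^2/2$ this is at most $-\tfrac12 t^{1/2} + \tfrac12 t^{1/4}$, again below $-\sqrt t / 9$ for $t$ large. (One could instead simply quote a standard Poisson concentration bound such as Bennett's inequality.)

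These bounds are entirely routine, so I do not anticipate a real obstacle. The only point needing a little care is the constant bookkeeping in (ii): in the upper tail a crude Taylor estimate such as $e^\lambda - 1 \le \lambda + \lambda^2$ is too lossy, because with $\lambda \asymp t^{-1/4}$ its quadratic term exactly cancels the linear gain, so one must either optimise $\lambda$ as above or retain the sharp coefficient $\tfrac12$ in the quadratic Taylor term.
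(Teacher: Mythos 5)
Your proposal is correct: all three exponents check out (in (i) the choice $\lambda=\log(k/(np))\ge 2$ gives exponent $-k\log(k/(np))+k-np\le -(k/2)\log(k/(np))\le -k$; in (ii) both tails come out as $-\tfrac12\sqrt t$ up to lower-order terms, comfortably below $-\sqrt t/9$; in (iii) the direct Poisson computation works for any real $k\ge e^2t$). The paper does not prove this lemma at all — it simply cites \cite[Lemmas 1.1, 1.2 and 1.4]{Pen03} — and those lemmas are themselves established by exactly the exponential-Markov (Chernoff) method you use, so your argument is essentially the standard one underlying the citation, just written out self-contained; your closing remark about needing the sharp $\tfrac12$ coefficient (or the optimal $\lambda$) in the upper Poisson tail is a fair and accurate caveat.
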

\begin{proof}
	See e.g. \cite[Lemmas 1.1, 1.2 and  1.4]{Pen03}.
\end{proof}

Let $\bN(\R^d)$ be the space of all finite subsets of $\R^d$, equipped with
the smallest $\sigma$-algebra ${\cal S}(\R^d)$ containing the
sets $\{\cX \in \bN(\R^d): |\cX \cap B|= m\}$ for all Borel $B \subset \R^d$
and all $m \in \N \cup \{0\}$.
Given $F:\bN(\R^d) \to \R$ and $x \in \R^d$, define
the {\em add-one cost}
$D_xF(\X): = F(\X \cup \{x\} ) - F(\X)$
for all $\X \in \bN(\R^d)$.
Also define 
$D_x^+F(\cX):= \max (D_xF(\cX),0)$ and
$D_x^-F(\cX):= \max (-D_xF(\cX),0)$, the positive and negative
parts of $D_xF(\X)$.
\begin{lemma}[Poincar\'e and Efron-Stein inequalities]
	\label{l:Poinc.ES}
	Suppose $F: \bN(\R^d) \to \R$ is measurable and $n >0$. If
	$\E[F(\eta_n)^2] < \infty$ then
\begin{align}
\label{e:poincare}
	\Var[F(\eta_n)] \le n \int_A  \EE[|D_xF(\eta_n)|^2]  \nu(dx).
\end{align}
	Also, if $n \in \N$ and $\E[F(\X_n)^2] < \infty$ then
\begin{align}\label{e:p4.2}
	\Var[F(\X_n)] \le  n \int_A \E[|D_x F(\X_{n-1})|^2 ] \nu(dx).
\end{align}
\end{lemma}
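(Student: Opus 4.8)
The plan is to obtain both \eqref{e:poincare} and \eqref{e:p4.2} from standard variance-decomposition arguments — the Poisson bound via the chaos (Fock space) expansion, the binomial bound via a martingale-difference (Efron--Stein) argument. Both are classical, so I would only sketch them and, where convenient, refer to the monograph literature on Poisson processes rather than reproduce the full derivation.

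For the Poincaré inequality \eqref{e:poincare}, write $\mu := n\nu$ for the intensity measure of $\eta_n$, and recall that any $F$ with $\E[F(\eta_n)^2] < \infty$ has an orthogonal chaos expansion $F(\eta_n) = \E[F(\eta_n)] + \sum_{k \ge 1} I_k(f_k)$, where $I_k$ denotes the $k$-fold integral against the compensated measure $\eta_n - \mu$ and $f_k \in L^2(\mu^k)$ is the symmetric kernel $f_k(x_1,\dots,x_k) = \tfrac{1}{k!}\,\E\big[(D^k_{x_1,\dots,x_k}F)(\eta_n)\big]$, with $D^k$ the $k$-fold iterated add-one operator. Orthonormality of the chaoses gives $\Var[F(\eta_n)] = \sum_{k\ge1} k!\,\|f_k\|_{L^2(\mu^k)}^2$. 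Applying the same representation to the functional $\eta_n \mapsto D_xF(\eta_n)$, whose $(k-1)$-st chaos kernel is (up to the combinatorial factor $k$) the section $f_k(x,\cdot)$ of $f_k$, one computes $\int_A \E[|D_xF(\eta_n)|^2]\,\mu(dx) = \sum_{k\ge1} k\cdot k!\,\|f_k\|_{L^2(\mu^k)}^2$. Since $k \ge 1$ in every term, this last sum dominates $\Var[F(\eta_n)]$, which is \eqref{e:poincare} after substituting $\mu(dx) = n\nu(dx)$; and if $\E[F(\eta_n)^2] = \infty$ the inequality is vacuous.

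For the Efron--Stein bound \eqref{e:p4.2}, I would use the tensorization-of-variance argument. Let $\E_i$ and $\Var_i$ denote conditional expectation and variance given $(X_j)_{j \ne i}$. Decomposing $F(\X_n) - \E[F(\X_n)]$ into martingale differences along the filtration $\mathcal F_i := \sigma(X_1,\dots,X_i)$, orthogonality and Jensen's inequality applied to the conditional expectations give the subadditivity bound $\Var[F(\X_n)] \le \sum_{i=1}^n \E[\Var_i[F(\X_n)]]$. For fixed $i$, write $\X_n = (\X_n \setminus\{X_i\}) \cup \{X_i\}$; since $F(\X_n\setminus\{X_i\})$ is $(X_j)_{j\ne i}$-measurable, $\Var_i[F(\X_n)] = \Var_i[D_{X_i}F(\X_n\setminus\{X_i\})] \le \E_i[|D_{X_i}F(\X_n\setminus\{X_i\})|^2]$. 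Taking expectations and using that $\X_n \setminus\{X_i\}$ is distributed as $\X_{n-1}$ and is independent of $X_i$, which has law $\nu$, each of the $n$ terms equals $\int_A \E[|D_xF(\X_{n-1})|^2]\,\nu(dx)$; summing over $i \in [n]$ gives \eqref{e:p4.2}.

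Neither argument presents a serious obstacle; the only points requiring attention are the joint measurability of $(x,\cX)\mapsto D_xF(\cX)$ on $\R^d \times \bN(\R^d)$, which is immediate from the measurability of $F$, and the finiteness of the integrals, which holds automatically under the stated second-moment hypotheses (and if the right-hand side is infinite the bound is trivial). The mildest subtlety is simply matching the add-one cost operator $D_x$ used here with the difference operator appearing in the Fock space formula, which is exactly the $k=1$ case of $D^k$.
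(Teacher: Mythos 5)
Your proposal is correct, and both halves land on the same classical inequalities the paper relies on, but the routes differ slightly. For the Poisson bound \eqref{e:poincare} the paper simply cites the Poincar\'e inequality \cite[Theorem 18.7]{LP18}, whereas you re-derive it from the Wiener--It\^o chaos (Fock space) expansion; your computation ($\Var[F]=\sum_{k\ge1}k!\,\|f_k\|^2$ versus $\int \E[|D_xF|^2]\,d\mu=\sum_{k\ge1}k\cdot k!\,\|f_k\|^2$, with the section identity for the kernels of $D_xF$) is exactly the standard proof behind that citation, so nothing is gained or lost beyond self-containedness, and your remark that an infinite right-hand side makes the bound vacuous disposes of the only integrability issue. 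For the binomial bound \eqref{e:p4.2} the paper invokes the symmetrized ``replace-one'' form of Efron--Stein from \cite{BLB}, $\Var[\tF_n(\bX_n)]\le\tfrac12\sum_i\E[(\tF_n(\bX_n)-\tF_n(\bX_{n+1}^{(i)}))^2]$, and then rewrites each replacement difference as a difference of two add-one costs over the $(n-1)$-point configuration, using $(a+b)^2\le 2a^2+2b^2$ and exchangeability; you instead start from the tensorization (conditional-variance) form $\Var[F(\X_n)]\le\sum_i\E[\Var_i[F(\X_n)]]$ and observe that, given $(X_j)_{j\ne i}$, only the add-one cost $D_{X_i}F(\X_n\setminus\{X_i\})$ fluctuates. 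Your version is arguably the cleaner of the two: it reaches the constant $n$ in \eqref{e:p4.2} in one step, without the $(a+b)^2\le2a^2+2b^2$ splitting, whereas the paper's route needs the conditional i.i.d.\ structure of the two add-one costs (or exchangeability used carefully) to avoid picking up a spurious factor of $2$. The only point worth making explicit in your write-up is that, because $\nu$ has a density, the points $X_1,\dots,X_n$ are a.s.\ distinct, so $\X_n\setminus\{X_i\}$ is a.s.\ an $(n-1)$-point set distributed as $\X_{n-1}$ and independent of $X_i$; with that noted, your argument is complete.
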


\begin{proof}
	The first assertion \eqref{e:poincare} is the Poincar\'e inequality 
	\cite[Theorem 18.7]{LP18}.
	For the second assertion \eqref{e:p4.2},
	we use Efron and Stein's jackknife estimate for
the variance of functions of iid random variables.
Let $\tF_n: \RR^{dn} \to\RR$ be
	given by $\tF_n((x_1,\ldots,x_n)) = F(\{x_1,\ldots,x_n\})$
	for all $x_1,\ldots,x_n \in \R$.
	Then $\tF_n$ is 
	measurable. The Efron-Stein inequality
	(see e.g. \cite{BLB})
	says that
\begin{align}
	\Var[\tF_n(\bX_n)] \le \frac{1}{2} \sum_{i=1}^n \EE[( 
	\tF_n(\bX_n) - \tF_n(\bX_{n+1}^{(i)}))^2 ] 
	\label{e:Efron-Stein}
\end{align} 
where $\bX_n:= (X_1,\ldots,X_n)$ and
$\bX_n^{(i)} := (X_1,\ldots,X_{i-1},X_{i+1}, \ldots, X_n)$. 

	We write 
	 $\tF_n(\bX_n)-\tF_n(\bX_{n+1}^{(i)})= 
	 \tF_n(\bX_n) - \tF_{n-1}(\bX_{n}^{(i)}) -( 
	 \tF_n(\bX_{n+1}^{(i)}) - \tF_{n-1}(\bX_{n}^{(i)}) ) $. By the bound 
	$(a+b)^2\le 2a^2+2b^2$ (which comes from Jensen's inequality), 
	and (\ref{e:Efron-Stein}), and the exchangeability
	of $(X_1,\ldots,X_{n+1})$,
\begin{align*}
	\Var(F(\X_n)) =	\Var[\tF_n(\bX_n)] \le  n \EE[ (\tF_n(\bX_n) - \tF_{n-1}
	(\bX_{n-1}) )^2],
\end{align*}
	and \eqref{e:p4.2} follows.
\end{proof}

\begin{lemma}[Quantitative version of Slutsky's theorem]
	\label{l:QSlut}
	Suppose $X$ and $Y$ are random variables on the
	same probability space with $\E[Y]=0$ and $\Var [Y] < \infty$.
	Then $\dk(X+Y, N(0,1)) \leq 3 (\dk(X,N(0,1)) + (\Var[Y])^{1/3})$.
\end{lemma}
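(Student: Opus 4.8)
\emph{Proof plan.} The plan is to run the classical truncation argument behind Slutsky's theorem, made quantitative by Chebyshev's inequality and by the fact that the standard normal distribution function is Lipschitz.

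First I would let $N$ be a standard normal random variable and fix $z \in \R$; the aim is to bound $|\Pr[X+Y \le z] - \Pr[N \le z]|$ uniformly in $z$. If $\Var[Y]=0$ then $Y=0$ almost surely and the inequality is immediate, so assume $\Var[Y]>0$ and set $\eps := (\Var[Y])^{1/3}$. On the event $\{|Y| \le \eps\}$ we have $X-\eps \le X+Y \le X+\eps$, whence
\begin{align*}
	\Pr[X \le z-\eps] - \Pr[|Y|>\eps] \;\le\; \Pr[X+Y \le z] \;\le\; \Pr[X \le z+\eps] + \Pr[|Y|>\eps].
\end{align*}

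Next I would control the three ingredients. By Chebyshev's inequality, $\Pr[|Y|>\eps] \le \Var[Y]/\eps^2 = (\Var[Y])^{1/3} = \eps$. By the definition of $\dk$, $\Pr[X \le z+\eps] \le \Pr[N \le z+\eps] + \dk(X,N(0,1))$ and $\Pr[X \le z-\eps] \ge \Pr[N \le z-\eps] - \dk(X,N(0,1))$. Finally, since the standard normal density is bounded by $1/\sqrt{2\pi} < 1$, the function $z \mapsto \Pr[N \le z]$ is $1$-Lipschitz, so $|\Pr[N \le z\pm\eps] - \Pr[N \le z]| \le \eps$. Combining these bounds gives, for every $z$,
\begin{align*}
	|\Pr[X+Y \le z] - \Pr[N \le z]| \le \dk(X,N(0,1)) + 2\eps = \dk(X,N(0,1)) + 2(\Var[Y])^{1/3},
\end{align*}
and taking the supremum over $z \in \R$ yields $\dk(X+Y,N(0,1)) \le \dk(X,N(0,1)) + 2(\Var[Y])^{1/3}$, which is stronger than the claimed inequality (one relaxes to the stated constant $3$, using $\dk(X,N(0,1)) \ge 0$).

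There is no real obstacle here: the only points needing a little care are the degenerate case $\Var[Y]=0$ and the bookkeeping of the two-sided estimates, both routine. If one wanted the sharpest constant one could keep the Lipschitz bound $1/\sqrt{2\pi}$ and optimise over the truncation level $\eps$, but this refinement is unnecessary for the stated bound.
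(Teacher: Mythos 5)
Your proof is correct and follows essentially the same route as the paper's: truncate at level $\eps=(\Var[Y])^{1/3}$, apply Chebyshev to $\Pr[|Y|>\eps]$, and use the boundedness of the standard normal density to absorb the shift $\pm\eps$, yielding in fact the slightly sharper bound $\dk(X,N(0,1))+2(\Var[Y])^{1/3}$. The paper phrases this via the symmetric difference $\{X+Y\le t\}\triangle\{X\le t\}$ rather than your two-sided sandwich, but the argument is the same.
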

\begin{proof}
	Let $t \in \R$ and set $a := (\Var[Y])^{1/3}$. Then
	by the union bound and Chebyshev's inequality
	\begin{align*}
		|\Pr[X+Y \leq t] - \Pr[X \leq t]|
		& \leq \Pr [\{X+Y \leq t\} \triangle \{X \leq t\}] 
		\\
		& \leq  \Pr[ t-a < X \leq t+a] + 
		\Pr[|Y| \geq a]
		\\
		& \leq \Pr[ t-a < N(0,1) \leq t+a] + 2 \dk(X,N(0,1))
		+ a^{-2}\Var[Y]
		\\
		& \leq 3a + 2 \dk(X,N(0,1)),
	\end{align*}
	and the result follows.
\end{proof}




To  prove   Poisson approximation for the number of singletons,
we shall use the following coupling bound from
\cite{Pen18} adapted to our situation (i.e. without marking). 
For any event $X$ and any event $E$ with non-zero probability
on the same probability space,
let $\mathscr L(X)$ and $\mathscr L(X|E)$ denote
the distribution (law) of $X$, and the conditional
distribution of $X$ given $E$ occurs, respectively.

\begin{lemma}[{\cite[Theorem 3.1]{Pen18}}]
\label{t:JAP}
        Let $g: \R^d 
        \times \mathbf{N}(\RR^d)\to \{0,1\}$  be measurable.
        Define
\begin{align*}
W:=F(\P_n) := \sum_{x \in  \P_n }
	g(x, \P_n\setminus \{x\}).
\end{align*}
Let $n >0$.
        For $x \in \RR^d$,
        set $p(x):=
        \EE[g(x,\P_n)]$ and set $\mu = n \nu$.
        Assume that for $\mu$-a.e. $x$ with $p(x)>0$, we can find coupled random variables $U_x, V_x$ such that
\begin{itemize}
\item $\mathscr L(U_x) = \mathscr L(W)$;
\item $\mathscr L(1+ V_x) = \mathscr L(F(\P_n\cup\{x\})| g
        (x,\P_n)=1)$.
\item
        $\E[|U_{x}- V_{x}|] \le w(x)$, where
                $w: \R^d \to [0,\infty)$ is measurable.

		\end{itemize}
Then
\begin{align}
        \dtv(W, Z_{\EE[W]}
        )
	\le \min(1, \EE[W]^{-1})
	\int
        w(x)
        p(x)\mu(dx).
        \label{0730b}
\end{align}
\end{lemma}

For Poisson approximation in the binomial setting, 
we use the following result from
\cite[Theorem II.24.3]{Lin92}
or
\cite[Theorem 1.B]{BHJ92}.
\begin{lemma}
        \label{l:Lindvall}
        Let $n \in \N$.
        Suppose $Y_1,\ldots,Y_n$ are  Bernoulli random variables
        on a common probability space.
        Set $W:= \sum_{i=1}^n Y_i$.
        Suppose  for each $i \in [n]$ that
        there exist coupled random variables $U_i,V_i$ such that
         $\LL(U_i)= \LL(W)$
         and $\LL(1+V_i) = \LL(W|Y_i=1)$.
         Then
         $$
         \dtv(W, Z_{\EE[W]} ) \leq  (\min(1,1/\EE[W]))\sum_{i=1}^n
	 \EE[ Y_i]
         \EE[ |U_i- V_i|].
         $$
\end{lemma}

\subsection{Percolation type estimates}
\label{ss:perc}
%
For finite $\X \subset \R^d$, and $x \in \X$ and $ s>0$, let
$\cC_s(x,\X)$ denote the vertex set of the component of
$G(\X,s) $ containing $x$, so  $\#(\cC_s(x,\X))$ is
the order of this component.

To prove our theorems, we shall need to establish uniqueness
of the giant component in $G(\X_n,r)$ or $G(\eta_n,r)$ (with $r=r(n)$).
The next two lemmas help do this, and are proved using 
discretization and path-counting (Peierls) arguments of the sort used
in the theory of continuum percolation.

The first lemma says that if
 $n r_n^d \to \infty$ as $n \to \infty$,
the existence of two components of diameter much larger than $r_n$
is extremely unlikely for $n$ large. 
Throughout, the diameter of a component means the
  Euclidean (rather than graph-theoretic) diameter of its set of vertices.
  
Bounds of this sort also arise in the study of connectivity thresholds 
(which concerns the regime with  $I_n \to c\in (0,\infty)$);
see for instance \cite[Proposition 3.2]{Pen99}.
In the proof, we shall invoke a topological lemma from  \cite{Pen99}.
\begin{lemma}[Uniqueness of the large component]
\label{l:uniqueness} 
 Suppose 
	$(r_n)_{n \geq 1}$ satisfies 
	$n r_n^d \to \infty$ as $n \to \infty$.
	Let $\phi_n$ be given with  $\phi_n \geq \log n $
	for all $n \geq 1$ and assume $\phi_n r_n  \to 0$ as $n \to \infty$.
	Let $\scr U_n$, respectively $\tilde{\scr U}_n$,
	denote the event that there exists at most one component of 
	$G(\eta_n, r_n)$ (respectively $G(\X_{n},r_n)$)
	with diameter larger than $\phi_n r_n$. Then
	for all $n$ large enough,
\begin{align}
\PP[\scr U_n^c]\le \exp(- \delta_3 \phi_n nr_n^d ); ~~~~~~~~ 
	\PP[\tilde{\scr U}_n^c]\le \exp(-\delta_3 \phi_n nr_n^d ), 
	\label{e:0524a}
\end{align}
where 	$\delta_3 >0$ is a constant  
	depending only on $d, A $ and $f$.
\end{lemma}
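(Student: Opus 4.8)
The strategy is the standard Peierls-type coarse-graining argument from continuum percolation, adapted to the bounded region $A$. First I would fix a mesh size proportional to $r_n$, say $a_n := r_n/(2\sqrt d)$ so that any two points in adjacent (or equal) cells of the cubic lattice $a_n \Z^d$ are within distance $r_n$ of each other, and hence joined by an edge in $G(\cdot, r_n)$. Call a cell \emph{occupied} if it contains at least one point of the process. Then any component of $G(\eta_n,r_n)$ with Euclidean diameter exceeding $\phi_n r_n$ must meet a connected cluster of occupied cells (in the $*$-connectivity sense, i.e.\ cells sharing a face, edge or corner) whose diameter in lattice steps is at least of order $\phi_n$. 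Conversely, if there were two components of $G(\eta_n,r_n)$ each of diameter $> \phi_n r_n$ lying in distinct occupied clusters, then on the relevant part of $A$ there must be a ``separating surface'' of \emph{vacant} cells between them — this is where the topological lemma from \cite{Pen99} enters, to guarantee that two disjoint large occupied clusters that are not connected through occupied cells force the existence of a large vacant $*$-connected ``dual'' structure surrounding one of them (or a large vacant region separating them, taking the boundary $\partial A$ into account).

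\textbf{Key steps.} (1) Reduce the event $\scr U_n^c$ to the existence either of a large vacant $*$-connected set of cells of cardinality $\Omega(\phi_n)$, or — handling boundary effects — of a vacant ``blocking'' configuration anchored near $\partial A$ of comparable size; the topological lemma from \cite{Pen99} (its Proposition~3.2 is cited as the analogous statement) is invoked here to make precise that two large separated occupied clusters force such a vacant structure. (2) Bound the probability that a \emph{fixed} cell $Q$ is vacant: since $\nu$ has density $f$ with $f_0 = \inf_A f > 0$ and $A = \overline{A^o}$, every cell that meets $A$ satisfies $\nu(Q \cap A) \ge c_1 r_n^d$ for a constant $c_1 = c_1(d,A,f) > 0$ (for interior cells this is immediate; for boundary cells one uses $A = \overline{A^o}$ and continuity of $f$, shrinking to a fixed fraction), so $\Pr[Q \text{ vacant}] = e^{-n\nu(Q\cap A)} \le e^{-c_1 n r_n^d}$ in the Poisson case, and a comparable bound $(1 - c_1 r_n^d/2)^{n} \le e^{-c_1 n r_n^d/2}$ in the binomial case (valid once $r_n$ is small). (3) Union bound over all $*$-connected vacant animals of size $m$: the number of $*$-connected subsets of $\Z^d$ of size $m$ containing a given cell is at most $C^m$ for a dimensional constant $C = C(d)$, and the number of cells meeting $A$ is $O(r_n^{-d})$; so
\[
\Pr[\scr U_n^c] \le \sum_{m \ge c_2 \phi_n} O(r_n^{-d}) \, C^m \, e^{-c_1 m n r_n^d / 2},
\]
and since $nr_n^d \to \infty$, for $n$ large the summand is dominated by a geometric series with ratio $\le e^{-c_1 n r_n^d/3}$, giving $\Pr[\scr U_n^c] \le O(r_n^{-d}) e^{-c_2\phi_n \cdot c_1 n r_n^d/3} \le \exp(-c\, \phi_n n r_n^d)$ for a suitable $c > 0$, where the $O(r_n^{-d})$ prefactor is absorbed using $\phi_n \ge \log n$ and $nr_n^d \to \infty$ (so $\phi_n n r_n^d \ge (\log n)(nr_n^d) \gg d \log(1/r_n)$). (4) The assumption $\phi_n r_n \to 0$ is used to ensure that a component of diameter $> \phi_n r_n$ still ``fits'' inside $A$ in the sense needed to run the geometric/topological argument at scale $a_n$, i.e.\ so that the blocking structures have room to form; and to keep the mesh count $O(r_n^{-d})$ honest relative to the geometry of $A$. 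The binomial case $\tilde{\scr U}_n$ follows by the same argument with the Poisson occupancy probabilities replaced by their binomial analogues, or alternatively by a de-Poissonization / coupling with $\eta_{n/2}$ and $\eta_{2n}$.

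\textbf{Main obstacle.} The genuinely delicate point is step (1): turning ``two components of $G(\eta_n,r_n)$ each of large diameter'' into ``a large vacant separating structure at the lattice scale,'' \emph{uniformly over the shape of $A$ and including the case where both large components hug the boundary $\partial A$}. In the full-space setting this is the classical fact that two disjoint infinite occupied clusters cannot coexist with a ``thin'' interface; in a bounded domain with curved $C^2$ boundary (or the square), one must instead show that a vacant surface separating the two occupied clusters, \emph{together with the portion of $\partial A$ it abuts}, has cardinality growing with $\phi_n$. This is exactly the role of the topological lemma imported from \cite{Pen99} (cf.\ the cited Proposition~3.2 there), and the work is in checking that its hypotheses are met at mesh scale $a_n \asymp r_n$ given $\phi_n r_n \to 0$ and $\partial A \in C^2$ (or $A = [0,1]^2$); the probabilistic Peierls estimate in steps (2)–(3) is then routine. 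I would carry out the cube-geometry and occupancy bounds first (they are immediate from $f_0 > 0$ and $A = \overline{A^o}$), then invoke the topological lemma, and finally assemble the union bound, tracking constants only to the extent needed to absorb the $O(r_n^{-d})$ prefactor via $\phi_n \ge \log n$.
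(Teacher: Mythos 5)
Your overall architecture --- coarse-graining at scale proportional to $r_n$, the deterministic topological separation lemma from \cite{Pen99}, and a Peierls union bound over $*$-connected vacant animals of size $\Omega(\phi_n)$, with $\phi_n \geq \log n$ absorbing the $O(r_n^{-d})$ prefactor --- is the same as the paper's. However, step (2) contains a false claim which breaks the assembly in step (3): it is not true that every cube of side $\Theta(r_n)$ meeting $A$ satisfies $\nu(Q\cap A)\ge c_1 r_n^d$. A cube that barely clips the (locally nearly flat, $C^2$) boundary of $A$ can have $\lambda(Q\cap A)=o(r_n^d)$, and neither $A=\overline{A^o}$ nor continuity of $f$ yields any uniform lower bound; such sliver cells are vacant with probability close to one. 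Consequently the per-animal bound $C^m e^{-c_1 m n r_n^d/2}$ is unjustified: a long $*$-connected chain of boundary-hugging sliver cells can be entirely vacant with non-negligible probability, so the Peierls sum does not close as written.

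The missing ingredient is precisely the extra conclusion of the topological lemma (Lemma 3.5 of \cite{Pen99}, not Proposition 3.2, which is the probabilistic connectivity estimate): the vacant $*$-connected set $\sigma$ separating two components can be chosen so that at least a fixed proportion $\alpha$ of its cubes lie \emph{entirely inside} $A$, in addition to $\#(\sigma)$ being at least of order $\min(d^{-1/2}\diam(U), d^{-1/2}\diam(V), \alpha')/(\eps r_n)$. The paper's proof applies the vacancy estimate only to those fully interior cubes, giving a bound of the form $n\exp(ck-\alpha k n f_0(\eps r_n)^d)$, which is where $f_0>0$ actually enters; the hypothesis $\phi_n r_n\to 0$ then ensures the minimum in the size bound is of order $\phi_n$. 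Once this property is stated and your step (2) bound is applied only to the interior cubes, your steps (2)--(3) go through. Your treatment of the binomial case is fine, though the paper does it more simply by noting that a binomial count is zero with probability at most the corresponding Poisson probability with the same mean; and for $A=[0,1]^2$ the paper adjusts the mesh so the cubes tile the square exactly.
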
  
	\begin{proof}
		First assume that $\partial A \in C^2$.
	Let $\ep= 1/(99\sqrt{d})$. 
	 Given $n$, partition $\RR^d$ into cubes $(Q_{n,i})$ of side length
	 $\ep r_n$
	indexed by $i \in \ZZ^d$. To be definite, for $i= (i_1,\ldots,i_d) \in
	\Z^d$, set $Q_{n,i} := \prod_{k=1}^d ((i_{k}-1)\ep r_n, i_k\ep r_n]$.
		Recall the definition of $*$-connectedness just before
		Lemma \ref{l:Peierls}.
By the deterministic topological lemma \cite[Lemma 3.5]{Pen99},
	there exist $\al, \al'>0, n_1\in\NN$ such that for all $n\ge n_1$ and  
	for any finite $\cX\subset A$, if
	 $U$ and $V$ are the vertex sets of two components of $G(\X,r_n)$, 
	then there exists a $*$-connected set $\sigma\subset \ZZ^d$ enjoying
		the following properties:
\begin{itemize}
	\item[i)] $\cX \cap (\cup_{i\in\sigma} Q_{n,i} )= \emptyset$;
	\item[ii)] $ \#(\{i\in \sigma: Q_{n,i}\subset A\}) \ge \al \, \#(\sigma)$;
	\item[iii)] $\eps r_n \#(\sigma) \ge \min(d^{-1/2}\diam(U), 
		d^{-1/2}\diam(V), \al')$.
\end{itemize}
		In   (iii), the factor of $d^{-1/2}$ arises because
		if $\diam_\infty$ denotes diameter in the
		$\ell_\infty$ sense
		(as used in \cite{Pen99}) then $\diam_\infty (\cdot)
		\geq d^{-1/2}
		\diam (\cdot)$.

	We shall apply this lemma to $\cC_1$ and $\cC_2$ which we define to be
	 the vertex sets of the largest and second-largest component
	 (in terms of Euclidean diameter) of $G(\eta_n,r_n)$,
		with diameter $\ell_1, \ell_2$ respectively
	 (so $\ell_1 \geq \ell_2$). 

		For $n >0, k \in \N$, define
		\begin{align}
			\cK_{n,k,\alpha} :=
			\{ \sigma \subset \Z^d: \#(\sigma)=k, \sigma ~\mbox{is
			$*$-connected}, \#\{i \in \sigma:Q_{n,i} \subset A\}
			\geq \alpha k\},
			\label{e:Knka}
		\end{align}
		and define the events 
		\begin{align}
		\scr G_{n,k,\alpha} := \cup_{\sigma \in \cK_{n,k,\alpha}}
		\{ \eta_n \cap (\cup_{i \in \sigma} Q_{n,i} ) = \emptyset\};
		~~~~
		\tilde{\scr G}_{n,k,\alpha}
			:= \cup_{\sigma \in \cK_{n,k,\alpha}}
		\{ \X_{n} \cap (\cup_{i \in \sigma} Q_{n,i} ) = \emptyset\}.
			\label{e:GGdef}
			\end{align}

		If event $\scr U_n^c$ occurs,
	then $\ell_2 \geq r_n \phi_n$, so by the lemma, 
	$\scr G_{n,k,\alpha}$ occurs for some $k \geq \eps^{-1}
		d^{-1/2} \phi_n  \geq \phi_n$.
		By Lemma \ref{l:Peierls},
	there exists $c=c(d,A) > 0 $ such that the family of
	$*$-connected sets $\sigma \subset \ZZ^d$ with
		$\#(\sigma)=k$ and with $Q_{n,i} \cap A \neq \emptyset$
	for some $i \in \sigma$ has cardinality at most $c r_n^{-d} e^{ck}$,
	which is  at most $ne^{ck}$,
	provided $n$ is large enough, by the condition $nr_n^d \to \infty$.
	Thus by the union bound, for $n $ large enough we have
\begin{align}
	\PP[\scr G_{n,k,\alpha}] \le n  \exp( ck -  k \al  nf_0 (\ep r_n)^d)
	\leq n \exp(-(\alpha f_0 \eps^d/2) k n r_n^d),
	\label{0623a}
\end{align}
where we used that $nr_n^d \to \infty$
again for the last inequality.
The same bound holds for $\tilde{\scr G}_{n,k,\alpha}$, since 
the probability of a binomial random quantity taking the value zero
is bounded above by the corresponding probability for a Poisson random
quantity  with the same mean.

By (\ref{0623a}),
for $n$ large enough
\begin{align*}
	\PP[\scr U_n^c]
	 \le \sum_{k\ge  \phi_n} \PP[\scr G_{n,k,\alpha}] & \le 2n
	\exp( - (\al f_0 \ep^d/2) n  r_n^d \phi_n ) 
	\\
	& \leq
	 \exp( - (\al f_0 \ep^d/4) n  r_n^d \phi_n ), 
\end{align*}
where we used the conditions $n r_n^d \to \infty$ 
and $\phi_n \geq \log n$, for the last inequality.
This gives us the first assertion in (\ref{e:0524a}), and 
the second assertion is obtained similarly using $\tilde{\scr G}_{n,k}$.

	In the case where $A=[0,1]^2$, we can argue similarly (see
	\cite[Lemma 13.5]{Pen03}).
	We should now take $\eps $ so that the cubes $Q_{n,i}$ fit
	exactly in the unit cube, which means  $\eps$ needs to vary with $n$
	but we can take $\eps(n)$ to satisfy this condition as well as 
	$\eps \in [1/(99 \sqrt{d}),  1/(98\sqrt{d}))$ for all large enough $n$,
	and the preceding argument still works.

We can prove the results for the other choices of $\xi_n$
in the statement of the lemma, by  similar arguments.
	\end{proof}

We next provide a bound on the probability
of  existence of a moderately large component of $G(\eta_n,r_n)$ 
near a given location in $A$, again measuring
`size' of a component $\cC$ by the  Euclidean diameter of 
its vertex set $V(\cC)$. 
For $x,y,z \in \R^d$ and $\cX$
a finite set of points in $\R^d$, we use the notation
\begin{align}
	\cX^x:= \cX \cup \{x\}; ~~~
\cX^{x,y}:=  \cX \cup\{x,y\}; ~~~  
\cX^{x,y,z}:= \cX \cup\{x,y,z\}. 
	\label{e:addpoint}
\end{align}
Suppose $0\leq \eps < K \leq \infty$. 
Given  $(r_n)_{n \geq 1}$ we define events
\begin{align}
	\scr M_{n,\eps,K}(x,\X)
	& := \{ \eps r_n< \diam(\cC_{r_n}(x,\X^x  ))
	\le K r_n\};  \label{e:medevent1}
	\\
~~~
	\scr M^*_{n,\eps,K}(x,\X) & := \cup_{y \in \cX \cap B_{r_n}(x)}
	\scr M_{n,\eps,K}(y,\X). 
	\label{e:medevent2}
\end{align}

\begin{lemma}[Non-existence of moderately large components near
	a fixed site]
	\label{l:E9}
	Suppose 
	$nr_n^d \to \infty$ and
	$n^{2/3}r_n^d \to 0$ as $n \to \infty$. 
	Then there exists $n_1 \in (0,\infty)$ 
	such that for all $n \geq n_1$ and all
	$x,y  \in A$, all $\rho \in [1,n^{1/(3d)}]$,
	with $\xi_n$ representing any of $\eta_n$, $\eta_n \cup \{y\}$,
	$\cX_{n-1}$,  $\cX_{n-2} \cup \{y\}$ or
	$\cX_{n-3} \cup \{y\}$,
	we have
\begin{align}
	\PP[ \scr M_{n,\rho,n^{1/(3d)}} (x, \xi_{n}) ]
	\le  \exp(-\delta_4 \rho n r_n^d);
	\label{e:E2}
	\\
	\PP[ \scr M^*_{n,\rho,n^{1/(3d)}} (x, \xi_{n}) ]
	\le  \exp(-\delta_4 \rho n r_n^d),
	\label{e:E9}
\end{align}
where $\delta_4 >0$ is a constant depending only on $d$ and $f_0$.
\end{lemma}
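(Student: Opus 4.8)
\textbf{Proof proposal for Lemma \ref{l:E9}.}

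The plan is to reduce both \eqref{e:E2} and \eqref{e:E9} to a single Peierls-type estimate via discretization, in the same spirit as the proof of Lemma \ref{l:uniqueness}. First I would observe that it suffices to prove \eqref{e:E9}, since \eqref{e:medevent2} gives $\scr M_{n,\rho,n^{1/(2d)}}(x,\xi_n) \subset \scr M^*_{n,\rho,n^{1/(2d)}}(x,\xi_n)$ whenever $x \in \xi_n$ (and when $x \notin \xi_n$ one can absorb the case $\cC_{r_n}(x,\xi_n^x) = \{x\}$ trivially, or argue directly by adding $x$ as a point). So the target is a bound on the probability that some component of diameter between $\rho r_n$ and $n^{1/(2d)}r_n$ meets $B_{r_n}(x)$.

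The key steps: (i) Partition $\R^d$ into cubes $Q_{n,i}$ of side $\eps r_n$ with $\eps = 1/(99\sqrt d)$ exactly as in Lemma \ref{l:uniqueness}. If a component $\cC$ of $G(\xi_n, r_n)$ has $\diam(\cC) \in [\rho r_n, n^{1/(2d)} r_n]$ and meets $B_{r_n}(x)$, then the set of cube-indices $\tau := \{i : Q_{n,i} \cap \cC \neq \emptyset\}$ is $*$-connected, contains a cube meeting $B_{r_n}(x)$, and has $\#(\tau) \geq c_1 \rho$ for a dimensional constant $c_1 > 0$ (from the lower diameter bound, since each cube has diameter $\eps r_n \sqrt d$). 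Moreover the ``collar'' of cubes adjacent to $\tau$ but not in $\tau$ must be empty of points of $\xi_n$, since $\cC$ is a maximal component and the cube side is chosen so that points in adjacent cubes are within $r_n$; a standard combinatorial fact (as in \cite[Lemma 9.3]{Pen03}) gives a $*$-connected empty ``contour'' set $\sigma$ with $\#(\sigma) \geq c_2 \#(\tau) \geq c_2 c_1 \rho$, with $\sigma$ forced to lie within bounded $\ell_\infty$-distance (a few cube-widths) of $B_{r_n}(x)$ so that a bounded number of cubes of $\sigma$ have a prescribed location relative to $x$. Using also $n^{1/(2d)} r_n \to 0$, the upper diameter bound keeps everything inside a region of diameter $O(n^{1/(2d)} r_n) = o(1)$, so a positive fraction of the cubes of $\sigma$ satisfy $Q_{n,i} \cap A \neq \emptyset$, and a dimensional constant fraction lie inside $A$ (at the price of restricting to $x$ with the boundary structure controlled, or simply bounding $\nu(Q_{n,i}) \geq f_0 \eps^d r_n^d$ for any cube that meets $A^{(-\eps r_n\sqrt d)}$ — and cubes within $o(1)$ of $x$ meeting $A$ mostly do, with the remainder handled by $C^2$-boundary volume estimates from Lemma \ref{l:bdyvol} or Lemma \ref{l:A3}). (ii) For each admissible contour shape $\sigma$ anchored near $x$ with $\#(\sigma) = k$, the probability it is empty of $\xi_n$ is at most $\exp(-c_3 k n r_n^d)$: for Poisson input this is $\exp(-n\nu(\cup_{i\in\sigma}Q_{n,i}))$ and $\nu$ of the ``interior'' cubes contributes $\geq (c_4 f_0 \eps^d) k r_n^d$; for the binomial variants $\cX_{n-1}$, $\cX_{n-2}\cup\{y\}$, etc., one uses that the probability a binomial count is zero is dominated by the Poisson one (with a harmless $O(1)$ shift in the mean $n \mapsto n - O(1)$, absorbed by $nr_n^d \to \infty$), and adding the fixed points $y$ (and $x$) only helps by possibly making events impossible, never more likely. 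The extra fixed point $y \in A$ can also create at most one new component containing $y$, which is harmless because we are looking near $x$, not $y$; if one wants the cleanest argument, just union-bound over whether the relevant component contains $y$ or not. (iii) The number of $*$-connected sets $\sigma$ with $\#(\sigma) = k$ and containing a fixed cube (namely one of the $O(1)$ cubes near $x$) is at most $c_5^k$ for a dimensional $c_5$ (the Peierls count); since $\sigma$ must be anchored within $O(1)$ cubes of $x$, the total number of admissible $\sigma$ with $\#(\sigma) = k$ is at most $c_6 c_5^k$. Summing the union bound over all $k \geq c_2 c_1 \rho$ gives
\begin{align*}
\PP[\scr M^*_{n,\rho,n^{1/(2d)}}(x,\xi_n)] \leq \sum_{k \geq c_2 c_1 \rho} c_6 c_5^k \exp(-c_3 k n r_n^d) \leq \exp(-\beta \rho n r_n^d)
\end{align*}
for $n$ large, using $nr_n^d \to \infty$ to beat the combinatorial factor $c_5^k$, exactly as in \eqref{0623a}.

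The main obstacle I anticipate is not the percolation counting — that is essentially the argument of Lemma \ref{l:uniqueness} localized near $x$ — but rather handling boundary effects carefully enough to get the clean bound \eqref{e:E9} uniformly over all $x \in A$, including $x$ very close to (or on) $\partial A$. When $x$ sits in a corner of $[0,1]^2$ or in a highly curved part of a $C^2$ boundary, a moderately large component near $x$ might be ``squeezed'' so that its empty contour $\sigma$ has a large fraction of cubes outside $A$, weakening step (ii). The fix is to use the lower volume estimates of Lemma \ref{l:A3} (or the $A = [0,1]^2$ clause there) to guarantee that for $s = \eps r_n \sqrt d$ small, any cube whose centre lies in $A$ has $\nu$-measure at least $(f_0/2)\eps^d r_n^d$ up to constants — more precisely, one shows that among the $\geq k$ cubes of the contour, at least a fixed fraction have centres at $\ell_\infty$-distance $\geq 2\eps r_n$ from $\partial A$ (because the contour surrounds a component of diameter $\geq \rho r_n \geq r_n$, which cannot be entirely within $2\eps r_n$ of the boundary in a way that starves the contour), and those cubes each carry $\nu$-measure $\Omega(r_n^d)$. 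This is where the $C^2$ hypothesis (and the explicit corner analysis for the square) enters, analogously to how Lemma \ref{l:A1} and Lemma \ref{l:A2} use it. The rest — the shift $n \to n - O(1)$ for binomial input, the absorption of the anchor multiplicity $c_6$, and the role of $n^{1/(2d)} r_n \to 0$ in keeping the whole configuration within a vanishing neighbourhood so the density $f$ is essentially constant at scale $\eps r_n$ — is routine bookkeeping.
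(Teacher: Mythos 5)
Your overall strategy (discretize at scale $\eps r_n$, surround the offending component by a $*$-connected contour of empty cubes, and run a Peierls count) is the same as the paper's, but the written proposal has a genuine gap at the very first step: the reduction of \eqref{e:E2} to \eqref{e:E9} goes the wrong way. By \eqref{e:medevent1}, $\scr M_{n,\rho,n^{1/(2d)}}(x,\xi_n)$ is about the component of the \emph{added} point $x$ in $G(\xi_n\cup\{x\},r_n)$, and for the cases in the lemma $x$ is a deterministic point that (for Poisson input, almost surely) does not belong to $\xi_n$. Adding $x$ can merge several components of $G(\xi_n,r_n)$, each of diameter just below $\rho r_n$, into one of diameter above $\rho r_n$; in that configuration $\scr M_{n,\rho,n^{1/(2d)}}(x,\xi_n)$ occurs while $\scr M^*_{n,\rho,n^{1/(2d)}}(x,\xi_n)$ does not, so $\scr M\not\subset\scr M^*$ and your target event (``some component of $G(\xi_n,r_n)$ of diameter in $(\rho r_n,n^{1/(2d)}r_n]$ meets $B_{r_n}(x)$'') does not cover \eqref{e:E2}. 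The paper argues in the opposite direction: it proves \eqref{e:E2} by building the contour around $\cC_{r_n}(x,\xi_n^x)$ itself (the added point lies inside, so the contour cubes are still empty of $\xi_n$), and then deduces \eqref{e:E9} from \eqref{e:E2} by Markov's inequality and the Mecke formula, paying a factor $O(nr_n^d)$ that is absorbed into a smaller $\beta$. Your machinery could be repaired the same way, but as written the case \eqref{e:E2} is not proved; note also that trying to salvage it by saying the component of $x$ contains a component of $G(\xi_n,r_n)$ of diameter at least $(\rho-2)r_n/2$ fails uniformly over the required range $\rho\in[1,n^{1/(2d)}]$.

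Two further points are asserted rather than established, and they are exactly where the work lies. First, your anchoring claim that the empty contour ``lies within a few cube-widths of $B_{r_n}(x)$'' is false: the component may have diameter up to $n^{1/(2d)}r_n\gg r_n$, so the contour can be far from $x$. The count $c_6c_5^k$ can still be rescued, but only by using that the contour \emph{surrounds} a point of $B_{r_n}(x)$ (hence lies within $O(k)$ cubes of $x$), which is how the paper controls the enumeration; a naive anchor ``anywhere within $2n^{1/(2d)}r_n$ of $x$'' gives an extra factor of order $n^{1/2}$ that is not absorbed by $e^{-\beta\rho nr_n^d}$ when $\rho=1$ and $nr_n^d\to\infty$ arbitrarily slowly, which is all the lemma assumes. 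Second, the statement that a positive fraction of the contour cubes lies inside $A$, uniformly over $x$ up to and including the boundary, is the genuinely non-trivial geometric input; your one-sentence justification (the component ``cannot be entirely within $2\eps r_n$ of the boundary in a way that starves the contour'') is not an argument — components can hug $\partial A$. The paper does not reprove this either: it imports it (together with the $*$-connectedness of the contour via unicoherence) from the proof of \cite[Lemma 3.5]{Pen99}/\cite[Lemma 13.9]{Pen03}, splitting into the cases $x\in\Omega_3$ (then, since $n^{1/(2d)}r_n\to0$, the whole contour lies in $A$) and $x\notin\Omega_3$. Without either citing that result or supplying the boundary argument for the $C^2$ and $[0,1]^2$ cases, the proposal leaves its acknowledged ``main obstacle'' unresolved.
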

	\begin{proof}
	Suppose $\xi_n = \eta_n$.
	Assume for now that $ \partial A \in C^2$.
	As in the previous proof, given $n$ we
	partition $\R^d$ into cubes $Q_{n,i}, i \in \Z^d$ of side
	$\eps r_n$ with $\eps = 1/(9\sqrt{d})$.
	For $n >0, k \in \N$, $\alpha > 0$, with
	$\cK_{n,k,\alpha} $ defined at \eqref{e:Knka} define
	$$
	\cK_{n,k,\alpha,x} =
	\{ \sigma \in \cK_{n,k,\alpha} : 
	\cup_{i \in \sigma} \overline{ Q_{n,i}} 
	\mbox{ surrounds }
	x 
		\},
	$$
	where we say a set $D \subset \R^d$ {\em surrounds} $x$
	if $x $ lies in a bounded component of $\R^d \setminus D$.
	Define the event
		\begin{align}
	\scr G_{n,k,\alpha,x} : = \cup_{\sigma \in \cK_{n,k,\alpha,x}}
	\{ \eta_n \cap (\cup_{i\in\sigma} Q_{n,i} )= \emptyset \}.
			\label{e:G4def}
		\end{align}

	Let $\rho >0$.  Suppose now
	that $\scr M_{n,\rho, n^{1/(3d)}}(x,\eta_n)$
	occurs, and let $\cC:= \cC_{r_n}(x,\eta_n^x)$.

Set $\cC'= \cC \oplus B_{r_n/2}(o)$;
 then $\cC'$ is a connected compact set. 
Let ${\cal D}$ be the closure of the
unbounded component of $\R^d \setminus \cC'$, and
 let
 $\partial \cC' := \cC' \cap {\cal D} $,
 which is the external boundary of $\cC'$.
		Note that every $y \in \partial \cC'$ satisfies
		$\dist(y,\cC) = r_n/2$.
 Let $\Sigma$ denote the collection of $i \in \Z^d$ such that
 $Q_{n,i} \cap \partial \cC' \neq \emptyset$.

 Then $\partial \cC'$ is connected by the unicoherence of $\R^d$
 (see e.g. \cite{Pen03}),
 so $\Sigma$ is $*$-connected.
 Also $\eta_n \cap Q_{n,i} = \emptyset$ for all $i \in \Sigma$.
 Moreover, since $\diam(\cC) \leq  n^{1/(3d)} r_n$ and
 $x \in \cC $,
 we have that $\cup_{i \in \Sigma} Q_{n,i} \subset B_{2  n^{1/(3d)} 
 r_n}(x)$.

 We claim that 
 $ \cup_{i \in \Sigma} \overline{ Q_{n,i}}$ surrounds $x$.
Indeed, $x \notin \cup_{i \in \Sigma} \overline{ Q_{n,i}}$ since
	 $\dist (x, \partial \cC') \geq {r_n}/2$,
	 whereas for all $u \in \cup_{i \in \Sigma} \overline{Q_{n,i}}$
	 we have
	 $\dist(u,\partial \cC') \leq \sqrt{d} \eps r_n \leq r_n/9$.
Since  
$x \in \cC \subset \cC'$, 
  any path from $x$  to a point in ${\cal D}$ must pass
 through a point in $\partial \cC'$,
 and the claim follows.

 Note that $\#(\Sigma) \geq \eps^{-1} d^{-1/2} \rho \geq \rho$.
Taking $ \alpha = (1+ \theta_d (4/\varepsilon)^d)^{-1}$,
we claim  that 
(provided $n$ is large enough) we have
 $\Sigma \in {\cal K}_{n,k,\alpha,x}$ for some $k$.

By the assumption $n^{2/3} r_n^d \to 0$,
we have that 
$ n^{1/(3d)} {r_n} \to 0$ as $n \to \infty$.
If $\dist(x,\partial A) > 4 n^{1/(3d)} r_n$ then
%
we have
$B_{2  n^{1/(3d)} r_n}(x) \subset A$ and hence
(provided $n$ is large enough) 
 $\cup_{i \in \Sigma} Q_{n,i} \subset A$,  so the claim
 is valid in this case.

Now suppose instead that
 $\dist(x,\partial A) \leq 4 n^{1/(3d)} r_n$.
We shall now justify the preceding claim
in this case too,
 which takes more work.
%

Without loss of generality we can and do assume the closest point of
$\partial A$ to $x$ is at the origin $o$. 
Let  $\hat{n}_o$ be 
the inward unit vecter orthogonal  to the tangent plane at $o$,
as in Definition \ref{d:SC}.

Given $i \in \Sigma$ with $Q_{n,i} \setminus A \neq \emptyset$,
we define $\phi(i) \in \Sigma $ 
as follows. 
Take 
$X = X(i) \in {\cal C}$ such that there exists
$y   \in \partial {\mathcal C}' \cap Q_{n,i}$
 with $\|X - y\| = r_n/2$, choosing the first such $X$
in the lexicographic ordering if there is a choice.
Set
$$
\lambda(i) = \max \{\lambda \in [0,\infty): X + \lambda \hat{n}_o
\in {\mathcal C}'\}.
$$
Set $w(i) := X + \lambda(i) \hat{n}_o,$ and
define $ \phi(i)$ to be the $z \in \Z^d$
such that $w(i) \in Q_{n,z}$.  

Let $\mathbb H := \{y \in \mathbb R^d: y \cdot \hat{n}_o \geq 0\}$, and 
$\mathbb L := \{y \in \mathbb R^d: y \cdot \hat{n}_o = 0\}$. 
Let $\tau = \tau(A)/2$.
Set  $b_n = (2n^{1/(3d)})^2r_n/\tau$, and note that 
$b_n \to 0$ as $n \to \infty$  by our assumption on $r_n$.
By Lemma \ref{l:slice},
all points of the set $ B_{2n^{1/(3d)}r_n}(x)
\cap (A \triangle  \mathbb H) $
lie
within distance 
$(2n^{1/(3d)}r_n)^2/\tau = b_n r_n$
of the hyperplane $ \mathbb L$.


Next we show $Q_{n,\phi(i)} \subset A$.
Since $X \in A \cap B_{2 n^{1/(3d)}}(x)$, we have
$x \cdot \hat{n}_o \geq -b_n r_n$, and since $\lambda(i) \geq r_n/2$,
we have that $w(i) \cdot \hat{n}_o \geq (\frac12 - b_n ) r_n$ and thus
for all $u \in Q_{n,\phi(i)}$ we have (provided $n$ is large enough)
that
$u \cdot \hat{n}_o \geq ( \frac12 - b_n -  \eps \sqrt{d} ) r_n 
\geq  b_n r_n $, and therefore $u \in A$, confirming that
$Q_{n,\phi(i)} \subset A$.


Let $\psi(\cdot)$ denote orthogonal projection onto the hyperlane
$\mathbb L$. Then we have:
\begin{align*}
\| \psi( \varepsilon r_n \phi(i) )
	- \psi(X) \| = \|\psi( \varepsilon r_n \phi(i)) - \psi (w(i))\|
\leq \|\varepsilon r_n \phi(i) - w(i)\| \leq \sqrt{d} \eps r_n.
\end{align*}

Choose
$y \in \partial \mathcal C' \cap Q_{n,i}$
with $\| X-y \| = r_n/2$.
Since $Q_{n,i} \setminus A \neq \emptyset$
and $Q_{n,i} \subset B_{2 n^{1/(3d)} r_n}(x)$, we have
$y \cdot \hat{n}_o \leq ( b_n + \sqrt{d} \eps) r_n$. 
Then  we have
$X \cdot \hat{n}_o \leq ( b_n + \sqrt{d} \eps + \frac12 ) r_n$.
Also since $X \in A \cap B_{2 n^{1/(3d)}}(x)$ we have
$X \cdot \hat{n}_o \geq -b_n r_n$.
Therefore 
$$
\|X- \psi(X) \| = | X \cdot \hat{n}_o| \leq r_n,
$$ 
and by the triangle inequality
$$
\| X- i \eps r_n \| \leq \|X-y\| + \|y - i \eps r_n\|
\leq  r_n.
$$
Combining the last three displays and using the triangle inequality
again  we have
$$
\|
\psi (\eps r_n \phi(i))
- 
i \eps r_n 
\| \leq 3 r_n.
$$
Therefore given $z \in \Z^d$, the number of $i \in \Sigma$
which satisfy $\phi(i) = z$
is bounded by the number of points of $\eps r_n \Z^d$ lying in
the ball $B( \eps r_n z, 3 r_n)$, which is bounded by
$ 4^d \theta_d / \eps^d$.
From this we can deduce as required that
$\Sigma \in {\cal K}_{n,k, \alpha, x}$, taking
$\alpha = (1+ \theta_d (4/\varepsilon)^d)^{-1},$ as claimed.

 Thus if 
 $\scr M_{n,\rho,n^{1/(3d)}}(x,\eta_n) $
 occurs, then event $\scr G_{n,k,\alpha,x}$ (defined at \eqref{e:G4def}
 with the above choice of $\alpha$)
 occurs for some $k \geq \rho $.

 By Lemma \ref{l:Peierls}
 there are constants $c,c'$ such that for all $n,k$
 the family of $*$-connected sets $\sigma \subset \Z^d$
 with $\#(\sigma) =k$ and
 with
$ \cup_{i \in \sigma}
  \overline{  Q_{n,i}}$
 surrounding $x$  
 has cardinality at most $c' e^{ck}$. Hence 
	for $n $ large enough we have
\begin{align*}
	\PP[\scr G_{n,k,\alpha,x}] \le c'  \exp( ck -  k \al  nf_0 (\ep r_n)^d)
	\leq c'
	\exp(-(\alpha f_0 \eps^d/2) k n r_n^d).
\end{align*}
Summing over $k \geq \rho$, using the geometric series formula,  yields
  for $n$ large enough that
\begin{align*}
	\PP[\scr M_{n,\rho,n^{1/(3d)}}(x,\eta_n) ] 
	\leq 2 c' \exp(-(\alpha f_0 \eps^d/2)  \rho n r_n^d).
\end{align*}
Taking $\delta_4 = \alpha f_0 \eps^d/4$, we obtain (\ref{e:E2}). 
Then using Markov's inequality,
 the Mecke formula (see e.g. \cite{LP18}) and \eqref{e:E2} we can deduce that
\begin{align*}
	\PP[\scr M^*_{n,\rho,n^{1/(3d)}}(x,\eta_n) ] 
	& \leq n \int_{B_{r_n}(x)} 
	\PP[\scr M_{n,\rho,n^{1/(3d)}}(y,\eta_n) ] 
	\nu(dy)
	\\
	& = O( nr_n^d \exp(- \delta_4 \rho nr_n^d)),
\end{align*}
and on taking a smaller value of $\delta_4 $ we obtain \eqref{e:E9}.

	In the case where $A=[0,1]^2$, we adapt the preceding argument as 
	follows.
	We should now take $\eps $ so that the cubes $Q_{n,i}$ fit
	exactly in the unit cube, which means  $\eps$ needs to vary with $n$
	but we can take $\eps(n)$ to satisfy this condition as well as 
	$\eps \in [1/(9 \sqrt{d}),  1/(8\sqrt{d}))$
	for all large enough $n$.
	Also, in this case we define $\cC'$ to be the set
	$\cC \oplus B_{r_n/2}(o) \cap A$, and note that 
	$A \setminus (\{x\} \oplus [-2 n^{1/(3d)}r_n, 2 n^{1/(3d)} r_n ]^d )$
	is connected and disjoint from $\cC'$.
	Let ${\cal D}$ be the closure of the
	component of $A \setminus \cC'$ that contains 
	 this set. Let $\partial \cC' := \cC' \cap {\cal D}$, and
	now set $$
	\Sigma := \{i \in \Z^d: Q_{n,i} \cap \partial \cC' \neq \emptyset, 
	Q_{n,i} \subset A\}.
	$$
Then $\Sigma$ is connected, and surrounds $x$ in the sense that any
path in $A$ from $x$ to $\cal D$ must pass through 
$\cup_{i \in \Sigma} \overline{Q_{n,i}}$. There exist
positive finite constants $\gamma$ and $c$ such that
the number of such $\Sigma$ of length
$n$ is bounded by $c \gamma^n$.
We can then follow the same argument as in the case $\partial A \in C^2$.
\end{proof}

 We shall use crossing estimates from the theory of continuum percolation.
 Given $s >0$, and given a point set 
 $\mathcal X \subset [0,s]^d$, 
 we say that the graph $G(\X,r)$ {\em crosses}
 the cube $[0,s]^d$ 
 in the first coordinate
 if there exists a component of $G(\X,r)$ such that its vertex set
 $\cC$ satisfies $(\cC\oplus B_{r/2} (o))\cap (\{0\}\times [0,s]^{d-1})\neq
 \emptyset$ and $(\cC\oplus B_{r/2}(o))\cap (\{s\}\times [0,s]^{d-1})\neq 
 \emptyset$, namely, we can find a path contained in $\cC\oplus B_{r/2}(o)$
 which connects two opposite faces of $[0,s]^d$
 along the first coordinate. 
 For each $k\in\{2,\ldots,d\}$,
 we define the event that 
 the graph $G(\X,r)$  crosses
 the cube $[0,s]^d$ 
 in the $k$th coordinate in an analogous manner.


 Now consider a homogeneous Poisson process $\cH_\al$ in $\RR^d$ with intensity  $\al$. For each $s>0$, let $\cH_{\al,s}=\cH_\al\cap 
 [0,s]^d$.
 For $k \in [d] := \{1,\ldots,d\}$
 we define  $\cross_k(s,\alpha)$ to be the event
  that the graph $G(\cH_{\al,s},1)$
 crosses the cube $[0,s]^d$ in the $k$th coordinate.
 We say $\cross(s,\alpha)$ occurs if $\cross_k(s,\alpha)$ occurs
 for all $k\in[d]$.   
 Observe that the crossing event defined above is slightly different from
 the one in Meester and Roy \cite{MR96} where a crossing  in the first
 coordinate is said to occur if there is a path in $(\cH_\al\oplus
 B_{1/2}(o))\cap [0,s]^d$ connecting two opposite faces of $[0,s]^d$ along the first coordinate.  In other words, in \cite{MR96}, one is allowed to use all the Poisson points to construct a crossing path in $[0,s]^d$, while in our setting, one is restricted to the Poisson points in $[0,s]^d$.

 A fundamental fact about continuum percolation is the existence of 
 $\al_c\in (0,\infty)$ such that,  as $s\to\infty$, $\PP[\cross(s,\alpha)]
 \to 1$ for $\al>\al_c$ and $\PP[\cross(s,\alpha)]\to 0$ for $\al<\al_c$. 
 For our purpose, we are concerned with the super-critical phase $\al>\al_c$.  The following  estimate taken from \cite{Pen03} quantifies the convergence of the crossing probabilities. 
 
\begin{lemma}[{\cite[Lemma 10.5 and Proposition 10.6]{Pen03}}] 
\label{l:cross}
 Let $d\ge 2$ and $\al> \al_c$. Then there exists a finite
	constant $\delta_5(d, \alpha) >0$ such that for all $s \geq 1$, 
\begin{align*}
1- \PP[\cross_1(s,\alpha)]\le e^{-\delta_5 s}.
\end{align*}
\end{lemma}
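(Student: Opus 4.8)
The plan is to deduce this exponential bound on the crossing-probability deficit from the cited results \cite[Lemma 10.5 and Proposition 10.6]{Pen03}. Those two results together establish that for $\al > \al_c$ there is a constant $c_0>0$ such that, for the \emph{Meester--Roy} style crossing event (where all Poisson points of $\cH_\al$, not just those in $C_s$, may be used to build a crossing path of $(\cH_\al \oplus B_{1/2}(o)) \cap C_s$), the probability of no crossing decays at least as fast as $e^{-c_0 s}$. The remaining gap is that our $\cross_1(s)$ event is defined using only the points $\cH_{\al,s} = \cH_\al \cap C_s$, which is a more restrictive requirement, so \emph{a priori} a crossing in our sense could fail even when the Meester--Roy crossing succeeds. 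Thus the heart of the argument is a geometric sandwiching that restores the exponential bound.

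First I would recall the standard renormalization/block argument underlying \cite[Proposition 10.6]{Pen03}: one tiles a slab of $C_s$ by $O(s)$ disjoint sub-blocks of fixed (large) side length $L$, declares a block ``good'' if the Poisson points \emph{inside that block} form a crossing cluster of the block in each coordinate direction and this cluster links up with the analogous clusters in all neighbouring good blocks, and uses that for $L$ large enough each block is good with probability arbitrarily close to $1$, independently over a suitable sublattice. A chain of good blocks spanning $C_s$ in the first coordinate then yields a crossing path built \emph{entirely from points lying in $C_s$} — precisely our $\cross_1(s)$ event, not merely the weaker Meester--Roy one. The probability that no such spanning chain of good blocks exists is bounded, by a one-dimensional first-passage/Peierls estimate on the $O(s/L)$ blocks in each column, by $e^{-cs}$ for a suitable $c>0$ and all $s \ge 1$. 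Hence $1 - \PP_\al[\cross_1(s)] \le e^{-cs}$.

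An alternative, if one prefers to invoke the cited lemmas as black boxes rather than reopening their proofs, is to run a two-scale comparison: apply \cite[Lemma 10.5]{Pen03} on each of $\sim s$ overlapping sub-cubes of side $2L$ covering $C_s$ to conclude that with probability $1 - O(s) e^{-c_0 L}$ every such sub-cube is internally crossed in every direction using only its own points; a deterministic patching argument then concatenates these internal crossings into a global crossing of $C_s$ using only $\cH_{\al,s}$. Choosing $L = L(\al)$ large and fixed makes the error $O(s) e^{-c_0 L}$ still exponentially small in $s$ after absorbing the polynomial factor, giving the claim with a possibly smaller constant $c$.

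The main obstacle is exactly this discrepancy between ``crossing with all points'' and ``crossing with only the points in $C_s$'': one must check that the block construction can be arranged so that the crossing cluster certified inside each good block, together with the linking conditions to neighbours, never relies on a point outside $C_s$, and that the boundary blocks abutting $\partial C_s$ can still be made good with high probability despite having ``less room''. This is handled by taking the good-block side length $L$ large enough that the probability bound survives a union over the $O(s)$ blocks, and by a minor adjustment at the boundary (e.g. requiring goodness only for blocks strictly interior to $C_s$ and using that a chain through interior blocks already connects the two relevant faces of $C_s$ once $L$ is fixed). Everything else — the independence of well-separated blocks, the Peierls count, and absorbing polynomial-in-$s$ prefactors into the exponential — is routine.
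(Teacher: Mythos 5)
The paper gives no argument for this lemma at all: it is quoted directly from \cite[Lemma 10.5 and Proposition 10.6]{Pen03}, and those results (as used and cited here) are stated for exactly the crossing event appearing in the lemma, i.e.\ crossings built only from the points of $\cH_\al$ lying inside $C_s$. The contrast the paper draws is with the definition in Meester and Roy \cite{MR96}, not with \cite{Pen03}; so the ``gap'' your entire proposal is designed to bridge (crossings using all points versus crossings using only the points of $C_s$) is not present in the cited source, and no sandwiching or renormalisation argument is required --- the citation is the proof.

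Beyond the misplaced premise, the bridging arguments you sketch would not work as written. In your second variant, covering $C_s$ by $\sim s$ sub-cubes of fixed side $2L$ and applying a union bound gives an error of order $s\,e^{-c_0 L}$, which for \emph{fixed} $L$ grows linearly in $s$; it is not exponentially small in $s$, and no ``absorbing of the polynomial factor'' is possible because the exponent does not grow with $s$ (one would have to let $L$ grow proportionally to $s$, which changes the argument entirely). In your first variant, the ``one-dimensional Peierls estimate on the $O(s/L)$ blocks in each column'' cannot yield $e^{-cs}$: the probability that a fixed column consists entirely of good blocks is $p^{\Theta(s/L)}\to 0$, and independence over the $(s/L)^{d-1}$ disjoint columns gives only $\bigl(1-p^{\Theta(s/L)}\bigr)^{(s/L)^{d-1}}$, which is of the wrong order. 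A correct renormalisation proof needs the dual blocking-surface Peierls bound for highly supercritical $k$-dependent block percolation, together with a linking construction guaranteeing that the crossing clusters of adjacent blocks are actually connected (this does not follow from each block merely being crossed in every coordinate direction); that is essentially the content of the proof already carried out in \cite{Pen03}, which is why the paper simply cites it.
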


From this we derive a bound for the probability of having a small 
giant component. Again in the next result, $\diam$ refers to the
Euclidean metric diameter of the vertex set of a component.

Given finite nonempty $\cX \subset \R^d$ and $n \geq 1$, let $\cL_n(\cX)$
and $\cL_{n,2}(\cX)$
denote the vertex set of the component of $G(\cX,r_n)$ with 
with largest order and second largest order, respectively
(setting $\cL_{n,2}(\cX)$ to be empty if the graph is connected).
Choose the left-most one if there is a tie. 

\begin{lemma}
\label{l:smallgiant}
	Suppose $nr_n^d \to \infty$ and $nr_n^d = O(\log n)$ as $n \to \infty$.
	Then there
	exist  constants $\delta_6, n_1 \in (0,\infty)$ such that 
	for all $n \geq n_1$,
	with $\xi_n$ denoting either $\eta_n$ or $\cX_n$,
\bea
		\PP[\diam( \cL_n(\xi_n)	)<
	(\log n)^2 r_n]
	\le \exp(-   \delta_6 (n/\log n)^{1/d}). 
	\label{0621d}
	\eea
\end{lemma}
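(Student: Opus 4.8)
The plan is to establish, with probability at least $1-\exp(-\delta_3(n/\log n)^{1/d})$, the conjunction of two statements, where $\xi_n$ denotes either $\eta_n$ or $\cX_n$ as in the lemma: (i) $G(\xi_n,r_n)$ has \emph{some} component of Euclidean diameter at least a fixed constant $c_0>0$; and (ii) \emph{every} component of $G(\xi_n,r_n)$ with at least $c_0/(2r_n)$ vertices has Euclidean diameter at least $(\log n)^2r_n$. These suffice: a component realising (i) contains two vertices at Euclidean distance $\ge c_0-r_n$ joined by a path in the graph, so that path has $\ge(c_0-r_n)/r_n\ge c_0/(2r_n)$ edges for $n$ large, hence the component, and therefore also $\cL_n(\xi_n)$ (which has at least as many vertices), has at least $c_0/(2r_n)$ vertices; then (ii) forces $\diam(\cL_n(\xi_n))\ge(\log n)^2r_n$. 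Throughout we use the elementary consequence of $nr_n^d=O(\log n)$ that $1/r_n\ge c(n/\log n)^{1/d}$ for some $c>0$, so it is enough to make both failure probabilities of order $\exp(-\Omega(1/r_n))$; the point of $(\log n)^2r_n\to0$ is that a constant-size region of $A$ persists, while after rescaling by $r_n^{-1}$ it has linear extent of order $1/r_n$, the scale on which Lemma~\ref{l:cross} is sharp. The argument uses only that $A^o\neq\emptyset$, so it covers $A=[0,1]^2$ equally.

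\emph{Step 1: a macroscopic component.} Fix once and for all a closed axis-aligned cube $C\subset A^o$ of constant side $c_0>0$ (possible since $A^o$ is non-empty and open); on $C$ we have $f\ge f_0>0$. Rescaling space by $r_n^{-1}$ turns $G(\eta_n\cap C,r_n)$ into the unit-distance graph on the image of $\eta_n\cap C$, which is a Poisson process of intensity at least $nr_n^df_0$ on a cube of side $c_0/r_n$; since $nr_n^d\to\infty$, for $n$ large this intensity exceeds a fixed $\al^*>\al_c$, so by splitting the Poisson process we couple a homogeneous intensity-$\al^*$ process inside it. The first-coordinate crossing event is increasing in the configuration restricted to the cube, so Lemma~\ref{l:cross} gives that $G(\eta_n\cap C,r_n)$ fails to cross $C$ in the first coordinate with probability $\exp(-\Omega(1/r_n))$; a crossing component lies inside a component of $G(\eta_n,r_n)$ containing two points whose first coordinates differ by at least $c_0-r_n$, hence of diameter $\ge c_0-r_n$. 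For $\xi_n=\cX_n$ we de-Poissonise: on the event $\{Z_{n/2}\le n\}$, which has probability $1-e^{-\Omega(\sqrt n)}$ by Lemma~\ref{lemChern}(ii), we have $\eta_{n/2}\subseteq\cX_n$, and $\eta_{n/2}$ restricted to the rescaled cube is Poisson of intensity at least $(n/2)r_n^df_0\to\infty$, so the argument just given produces a diameter-$\ge c_0-r_n$ component of $G(\eta_{n/2},r_n)$, hence of $G(\cX_n,r_n)$. Using $\sqrt n\ge(n/\log n)^{1/d}$, in both cases $\PP[\,G(\xi_n,r_n)\text{ has no component of diameter }\ge c_0-r_n\,]\le\exp(-\Omega(1/r_n))$.

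\emph{Step 2: small components are sparse.} A component of diameter less than $(\log n)^2r_n$ is contained in a ball of radius $(\log n)^2r_n$, which has $\nu$-measure at most $\fmax\theta((\log n)^2r_n)^d$, so contains at most $\fmax\theta(\log n)^{2d}(nr_n^d)=O((\log n)^{2d+1})$ points of $\xi_n$ in expectation (here $nr_n^d=O(\log n)$ is used). Since $c_0/(2r_n)=\Omega((n/\log n)^{1/d})$ is, for $n$ large, at least $e^2$ times this expected occupancy, the Chernoff bounds of Lemma~\ref{lemChern}(i),(iii) give that any fixed such ball contains at least $c_0/(2r_n)$ points of $\xi_n$ with probability at most $e^{-c_0/(2r_n)}$. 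Covering $A$ by an $r_n$-net of cardinality $O(r_n^{-d})=O(n)$ and enlarging radii by $r_n$ to absorb the snapping, a union bound shows that with probability at least $1-O(n)e^{-c_0/(2r_n)}\ge1-\exp(-\Omega(1/r_n))$ no ball of radius $(\log n)^2r_n$ contains $c_0/(2r_n)$ points of $\xi_n$; on this event every component with at least $c_0/(2r_n)$ vertices has diameter at least $(\log n)^2r_n$, which is (ii).

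Intersecting the events of Steps 1 and 2 and invoking $1/r_n\ge c(n/\log n)^{1/d}$ yields the lemma for a suitable constant $\delta_3>0$ and all $n$ large enough that $c_0-r_n>(\log n)^2r_n$ and the above estimates hold. I expect the points requiring genuine care are the reconciliation of ``largest order'' with ``large diameter'' — which is precisely what Step~2 supplies — and, more routinely, the bookkeeping of the several couplings (in particular the de-Poissonisation for $\xi_n=\cX_n$); the one substantive input is the continuum-percolation crossing estimate Lemma~\ref{l:cross}, itself proved by discretisation and path counting, and the reason the exponent is as strong as $\exp(-\Omega(1/r_n))$ is that we cross a region of \emph{constant} size in $A$, which is of \emph{diverging} size $c_0/r_n$ in the rescaled picture.
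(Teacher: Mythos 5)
Your proof is correct and follows essentially the same route as the paper's: a crossing estimate (Lemma \ref{l:cross}) combined with a thinning coupling and de-Poissonisation via Lemma \ref{lemChern} shows that the largest component has $\Omega(1/r_n)$ vertices, and then a Chernoff occupancy bound together with a union bound over an $O(r_n^{-d})$-size discretization rules out that many points lying in a set of diameter $(\log n)^2 r_n$. The only differences are cosmetic: the paper extracts the order bound $\#(\cL_n(\xi_n)) \ge \delta/r_n$ directly from the crossing event and applies the occupancy argument only to $\cL_n(\xi_n)$ (using cubes of side $r_n$ rather than a net of balls), whereas you pass through the diameter of the crossing component and exclude all small-diameter components of large order.
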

\begin{proof}
	First we show there exist constants $\delta, c' \in (0,\infty)$
	such that for all large enough $n$,
	\bea
	\PP[ \# (\cL_n(\xi_n)) < \delta r_n^{-1}]
	\le \exp(-  c'(n/\log n)^{1/d}). 
	\label{0621c}
	\eea
	Without loss of generality we can and do choose 
	$\delta >0$ such that $C_{2\delta } :=[0,2\delta ]^d\subset A$.
	Define the event 
	$$
	\scr Y_n :=\{ G(\eta_{e^{-2}n} \cap C_{2\delta },r_n)~ {\rm crosses}~
	C_{2 \delta } ~{\rm in~ the~ first~ coordinate} \}.
	$$
	Since $\eta_{e^{-2}n} \subset \eta_n$, we have
	that $\scr Y_n  \subset \{\# (\cL_n(\eta_n))
	\geq \delta /r_n\}$ for $n$ large.

	Clearly the graph
	$G(\eta_{e^{-2}n} \cap C_{2 \delta },r_n)$ is isomorphic to
	$G(r_n^{-1}(\eta_{e^{-2}n} \cap C_{2 \delta }), 1)$.
	Also $e^{-2}nf_0r_n^d>\al_c+1$ for all $n$ large by \eqref{e:supcri}. 
	We claim that for such $n$, we have $\PP[\scr Y_n ] \geq
	\PP_{\al_c+1}[\cross_1(2\delta/r_n)]$. 
	Indeed, by the mapping theorem 
	\cite[Theorem 5.1]{LP18}, $r_n^{-1}(\eta_{e^{-2} n} \cap 
	C_{2 \delta})$
	is a Poisson process in $C_{2\delta/r_n}$  with intensity
	measure having a density bounded below by
	$e^{-2}nf_0 r_n^d $, and hence by $\al_c +1$. 
	By the thinning theorem \cite[Corollary 5.9]{LP18}, one can couple 
	$\cH_{\al_c+1}$ and $\eta_{e^{-2}n}$ 
	in such a way that
	$\cH_{\al_c+1} \cap C_{2 \delta/r_n} \subset r_n^{-1} ( \eta_{e^{-2}n}
	\cap C_{2 \delta})$.
	Since
	the crossing event is increasing in the sense that adding more points to the Poisson process increases the chance of its occurrence,  this coupling
	justifies the claim. Thus
	by Lemma \ref{l:cross},
\begin{align*}
	\Pr[ \#(\cL_n(\eta_n))
	\geq \delta/r_n] \geq \PP[\scr Y_n ]
	\geq
	\PP[\cross_1(2\delta/r_n,\al_c+1)] 
	\ge 1- e^{-2 \delta_5 \delta/r_n},
\end{align*}
	For the case of binomial input, note that if $Z_{e^{-2} n} \leq n$ then
	$\eta_{e^{-2} n} \subset \X_n$ and hence if also $\scr Y_n $ occurs then
	$ \#(\cL_n(\X_n)) \geq \delta/r_n$ for $n$ large. Therefore
	using Lemma \ref{lemChern}(iii) we have
	$$
	\Pr[ \#(\cL_n(\X_n)) < \delta/r_n] \leq \Pr[\scr Y_n ^c]
	+ \Pr[Z_{e^{-2} n} >n] \leq e^{-2 \delta_5 \delta/r_n} + e^{-n}.
	$$
	Thus using the assumption $nr_n^d = O(\log n)$,
	we have (\ref{0621c}) for both choices of $\xi_n$.

	Now for $n \geq 1$ let $\rho := \rho(n) := \max ((\log n)^2,1)$, and
	partition $\RR^d$ into cubes of side length $r_n$. 
	Necessarily $\cL_n(\eta_n)$
	intersects one of the cubes with non-empty intersection with $A$, called $Q$, and if 
	$\diam(\cL_n(\eta_n)) <\rho r_n$,
	then
	$\cL_n(\eta_n)
	\subset Q\oplus B_{\rho r_n}$. If also
	$\#(\cL_n(\eta_n)) \geq \delta/r_n$,
	then
	$\eta_n(Q\oplus B_{\rho r_n})\ge \delta/r_n$.  Since $\rho \geq 1$, we have
	$\lambda (Q \oplus B_{\rho r_n}) \leq (3 \rho r_n)^d$.
	By the union bound, we have
	for some constant $c$ that
\begin{align*}
	\PP[\{\diam(\cL_n(\eta_n)) < \rho r_n\}\cap \{ \#(\cL_n(\eta_n))
	\geq \delta/r_n \}]
	\le c r_n^{-d} \PP[Z_{3^d\rho^d n\fmax r_n^d} \ge \delta/r_n]. 
\end{align*}
We can then apply Lemma 
	\ref{lemChern}(iii) provided $\delta /r_n \geq e^2 (3^d \rho^d n\fmax r_n^d)$, or
	in other words $\rho^d \leq  (c' n r_n^{d+1})^{-1}$ for some constant $c'$.

	By assumption $nr_n^d = O(\log n)$ so
	$\rho^d nr_n^{d+1}=O((\log n)^{2d+(d+1)/d} n^{-1/d})$. Hence
	we can apply 
	Lemma \ref{lemChern}(iii) to deduce that for $n $ large
	$$
	\PP[\{\diam(\cL_n(\eta_n))<\rho r_n\}\cap \{ \#(\cL_n(\eta_n)) \geq \delta/r_n \}]
	\le c r_n^{-d} \exp(- \delta/r_n)
	\leq \exp(-\delta/(2r_n)).
	$$
	Since $r_n^d = O((\log n)/n)$,
	we have $r_n^{-1} = \Omega \big( \big(\frac{n}{\log n} \big)^{1/d} 
	\big)$
so using (\ref{0621c}) and the union bound we can deduce
	 (\ref{0621d}) for $\xi_n = \eta_n$.
	 We can obtain 
	 \eqref{0621d} for $\xi_n = \cX_n$ by a similar argument,
	 using Lemma \ref{lemChern}(i) instead of Lemma \ref{lemChern}(iii).
\end{proof}

\section{The number of isolated vertices}
\label{s:sngltn}
 In this section we prove Propositions 
	\ref{p:bc}
and	\ref{p:Sall}.
In the uniform case we also demonstrate the asymptotic equivalence of
$I_n $ and $\mu_n$, defined at \eqref{e:def:I_n} and
\eqref{e:defI'} respectively.

We continue to make the assumptions on $\nu$ and $A$
that we set out at the start of Section \ref{s:prelims}.
%
Also we assume $r_n \in (0,\infty) $
 is given for all $n \geq 1$.
Recalling from (\ref{e:def:I_n}) that 
$I_n:= n \int \exp(-n \nu(B_{r_n}(x)))\nu(dx)$,
we
assume throughout this section  that $r_n$ satisfies 
\begin{align}
\label{c:lowerbound}
\lim_{n \to \infty} nr_n^d = \infty; 
\\
	\liminf_{n \to \infty} I_n > 0.
	\label{c:upperbound}
\end{align}
Recall that for $s >0$ we write $A^{(-s)} := \{x \in A: B_s(x) \subset A\}$.


 \subsection{Mean and variance of the  number of isolated vertices}
 \label{ss:IsoMom}

Let $S_n$ (respectively $S'_n$) denote the number of singletons
(i.e. isolated vertices) of $G(\X_n,r_n)$ (resp., of $G(\eta_n,r_n)$).
That is, set
\begin{align}\label{e:def_Sn}
	S'_n = \sum_{x\in \eta_n} \1\{ \eta_n \cap B_{r_n}(x)= \{x\}\};
~~~~~~~
	S_n = \sum_{x\in \X_n} \1\{ \X_n \cap B_{r_n}(x)= \{x\}\}.
\end{align}
By the Mecke formula $\E[S'_n] = I_n$.
Also  define
\begin{align}
	\tilde I_n := \E[S_n]=  n \int (1-\nu(B_{r_n}(x)))^{n-1} \nu(dx).
	\label{e:tIdef}
\end{align}

\begin{lemma}[Lower bounds on $I_n$]
	\label{l:Ilower}
	Let $f_0^+$, $f_1^+$ be constants with $f_0^+ >f_0$ and
	$f_1^+ >f_1$. Then as $n \to \infty$,
	\bea
	n \exp(-n \theta_d f_0^+ r_n^d) = o( I_n);
	\label{e:Ilower}
	\\
	n^{1-1/d} \exp(-n \theta_d f_1^+ r_n^d/2) = o(I_n).
	\label{e:Ilower2}
	\eea
\end{lemma}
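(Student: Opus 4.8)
The goal is to show that $I_n$ is bounded below (up to $o(\cdot)$) by two expressions: a "bulk" term $n\exp(-n\theta f_0^+ r_n^d)$ and a "boundary" term $n^{1-1/d}\exp(-n\theta f_1^+ r_n^d/2)$. Both bounds come from restricting the integral defining $I_n = n\int_A \exp(-n\nu(B_{r_n}(x)))\,\nu(dx)$ to a well-chosen subregion and bounding $\nu(B_{r_n}(x))$ from above there.

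For the first bound \eqref{e:Ilower}, the plan is to restrict the integral to the "deep interior" $A^{(-r_n)}$, where $B_{r_n}(x)\subset A$, so that $\nu(B_{r_n}(x)) \le \fmax\,\lambda(B_{r_n}(x))$ — but this gives $\fmax$, not $f_0^+$, in the exponent, which is too crude. Instead I would use continuity of $f$: pick a point $x_0\in A^o$ with $f(x_0)=f_0$ (or arbitrarily close, using $\inf_A f = f_0$ and compactness/continuity), and a small ball $B(x_0,\rho)\subset A^o$ on which $f < f_0^+$ (possible since $f_0^+ > f_0$ and $f$ is continuous). Then for $x\in B(x_0,\rho/2)$ and $n$ large enough that $r_n < \rho/2$, we have $B_{r_n}(x)\subset B(x_0,\rho)$, hence $\nu(B_{r_n}(x)) \le f_0^+\,\theta r_n^d$. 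Restricting the integral to $B(x_0,\rho/2)$ gives
\[
I_n \ge n\,\nu(B(x_0,\rho/2))\,\exp(-n\theta f_0^+ r_n^d) = \Omega\bigl(n\exp(-n\theta f_0^+ r_n^d)\bigr),
\]
and since $\nu(B(x_0,\rho/2))$ is a positive constant, this is not merely $\Omega$ but actually a constant multiple, which dominates $n\exp(-n\theta f_0^+ r_n^d)$ — wait, that only gives the same order. To get the strict $o(\cdot)$ statement I would instead choose an intermediate constant $f_0 < f_0^- < f_0^+$ and argue as above with $f_0^-$ in place of $f_0^+$; then $n\exp(-n\theta f_0^+ r_n^d) = n\exp(-n\theta f_0^- r_n^d)\cdot \exp(-n\theta(f_0^+ - f_0^-) r_n^d)$, and since $nr_n^d\to\infty$ the last factor tends to $0$, giving $n\exp(-n\theta f_0^+ r_n^d) = o(I_n)$.

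For the second bound \eqref{e:Ilower2}, the idea is that near the boundary, a ball $B_{r_n}(x)$ has roughly half its volume outside $A$, so $\nu(B_{r_n}(x)) \approx \tfrac12 f_1 \theta r_n^d$, producing the factor $1/2$ in the exponent; the extra factor $n^{-1/d}$ versus $n$ reflects that the relevant region (a boundary layer of width $\sim r_n$) has measure $\sim |\partial A| r_n = \Theta(n^{-1/d})$ relative to the bulk when $r_n = \Theta(n^{-1/d})$ — more precisely its $\nu$-measure is $\Theta(r_n)$. Concretely: choose $z_0\in\partial A$ with $f(z_0)$ close to $f_1$, and using continuity of $f$ and the smoothness of $\partial A$ (Lemma \ref{l:A3}, specifically \eqref{e:A32}, which bounds $\lambda(A\cap B_s(z))\le (\tfrac12+\eps)\theta s^d$ for $z\in\partial A$), find a boundary patch and a constant $f_1^- \in (f_1, f_1^+)$ such that for all $x$ within distance, say, $\eps' r_n$ of this patch of $\partial A$ (for suitable small $\eps'$) we have $\nu(B_{r_n}(x)) \le f_1^-(\tfrac12+\eps)\theta r_n^d \le \tfrac12 f_1^+ \theta r_n^d$ once $\eps$ is chosen small enough relative to $f_1^+/f_1^- - 1$. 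The $\nu$-measure of this collar region is of order $r_n \cdot (\text{patch area}) = \Theta(r_n) = \Omega(n^{-1/d})$ since $r_n = \Omega(n^{-1/d})$ is \emph{not} guaranteed — actually $nr_n^d\to\infty$ only gives $r_n \gg n^{-1/d}$, so $\nu(\text{collar}) = \Theta(r_n) = \omega(n^{-1/d})$, which is even better. Hence
\[
I_n \ge n\,\Theta(r_n)\,\exp(-\tfrac12 n\theta f_1^+ \theta r_n^d) = \omega\bigl(n^{1-1/d}\exp(-\tfrac12 n\theta f_1^+ r_n^d)\bigr),
\]
giving \eqref{e:Ilower2}. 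In the case $A=[0,1]^2$ one argues the same way using a boundary edge of the square (away from corners), where the volume estimate $\lambda(A\cap B_s(x))\le (\tfrac12+\eps)\theta s^2$ for $x$ close to an edge is elementary.

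**Main obstacle.** The routine part is the bulk bound \eqref{e:Ilower}; the only subtlety there is inserting the intermediate constant $f_0^-$ to convert an equality of orders into the desired $o(\cdot)$, exploiting $nr_n^d\to\infty$. The more delicate part is \eqref{e:Ilower2}: one must locate a boundary point where $f$ is close to its boundary infimum $f_1 = \inf_{\partial A} f$, and then verify that the $\nu$-measure of the appropriate boundary collar of thickness proportional to $r_n$ is of order $r_n$ (using that $\partial A$ is a $C^2$ hypersurface of positive $(d-1)$-measure, or a side of the square), while simultaneously controlling $\nu(B_{r_n}(x))$ uniformly on that collar via Lemma \ref{l:A3}. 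Handling the square's corners (which must be excluded from the collar) and making the constants in the continuity-of-$f$ argument fit together with the $\eps$ in \eqref{e:A32} is where the care is needed, but no deep idea is required.
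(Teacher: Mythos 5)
Your proposal is correct and follows essentially the same route as the paper: restrict the integral defining $I_n$ to a small interior ball around a near-minimizer of $f$ (for \eqref{e:Ilower}, with an intermediate constant and $nr_n^d\to\infty$ turning the bound into $o(\cdot)$), and to a boundary collar of width $\Theta(r_n)$ around a near-minimizer of $f|_{\partial A}$ (for \eqref{e:Ilower2}), using the half-ball volume bound of Lemma \ref{l:A3} and $n r_n = \omega(n^{1-1/d})$. The only detail worth spelling out is that \eqref{e:A32} applies to centres on $\partial A$, so for a collar point $x$ one should bound $\nu(B_{r_n}(x))\le\nu(B_{(1+\delta)r_n}(z))$ with $z$ the nearest boundary point, which is exactly how the paper proceeds.
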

\begin{proof}
	Assume for now that $\partial A \in C^2$.
	See \cite[Lemma 2]{PY23} (Lemma 3.1 in the Arxiv version)
	for a proof of \eqref{e:Ilower}.
	For \eqref{e:Ilower2}, 
	choose $x_0 \in \partial A$ with $f(x_0) < f_1^+$.
	Using the assumed continuity of $f$,
	choose $s_0 >0$ and $\delta  >0$ such that
	$$
	(1+\delta)^{d+1}
	\sup \{ f(y): y \in A \cap B_{2s_0}(x_0) \} 
	\leq f_1^+.
	$$
	By
	Lemma \ref{l:A3},
	there is a constant $r_1>0$
	(independent of $z$) such that 
	$\lambda(B_s(z) \cap A) \leq (1+\delta) \theta_d s^d/2$
	for all $z \in \partial A, s \in (0, r_1)$. 

	Let $y \in B_{s_0}(x_0) \cap A \setminus A^{(-\delta r_n)}$
	and let $z$ be the closest point of $\partial A$ to $y$. 
	Then $\|y-z\| \leq \delta r_n$, so
	provided $n$ is large enough
	\begin{align*}
		\nu(B_{r_n}(y)) \leq \nu(B_{(1 + \delta){r_n}}(z)) 
		& \leq  (1+ \delta)^{d+1}
	\sup \{ f(y): y \in A \cap B_{2s_0}(x_0) \} 
		\theta_d r_n^d/2
		\\ &	\leq f_1^+ \theta_d r_n^d/2.
	\end{align*}
	Therefore since $\lambda(B_{s_0}(x_0) \cap A \setminus A^{(-s)})
	= \Omega (s)$ as $s \downarrow 0$,
	\begin{align*}
		I_n \geq n \int_{B_{s_0}(x_0) \cap A \setminus
		A^{(-\delta {r_n})} } \exp(-n \nu(B_{r_n}(y)) ) \nu(dy)
		= \Omega ( n r_n e^{-nf_1^+ \theta_d r_n^d/2}),
	\end{align*}
	and then using \eqref{c:lowerbound} we obtain \eqref{e:Ilower2}.

	In the case where
	$A= [0,1]^2$ instead of $\partial A \in C^2$, the preceding
	proof still works, since we can take $x_0$ to not be a corner of
	$A$.
\end{proof}
\begin{lemma}[Upper bound on $I_n$]
	\label{l:IUB}
	Suppose $b^+ \leq (6/5) b_c$ with $b_c$ given at \eqref{e:bcdef}. Then given $\eps >0$, 
	\begin{align}
	I_n = O(ne^{-n \theta_d f_0 r_n^d} + n^{1-1/d}  e^{-n r_n^d \theta_d f_1 
	(\frac12-\eps)}).
		\label{e:InUB}
	\end{align}
\end{lemma}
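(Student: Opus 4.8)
The plan is to split the integral defining $I_n$ into a contribution from the ``deep interior'' $A^{(-r_n)}$ and a contribution from the boundary layer $A \setminus A^{(-r_n)}$, estimate each $\nu$-ball volume from below by a multiple of $\theta r_n^d$ (with constant $f_0$ in the interior and essentially $f_1/2$ near the boundary), and then integrate. First I would handle the interior term: for $x \in A^{(-r_n)}$ we have $B_{r_n}(x) \subset A$, so $\nu(B_{r_n}(x)) \geq f_0 \theta r_n^d$, giving
\[
n \int_{A^{(-r_n)}} e^{-n \nu(B_{r_n}(x))} \nu(dx) \leq n e^{-n \theta f_0 r_n^d},
\]
which matches the first term in \eqref{e:InUB}.

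For the boundary layer I would use Lemma \ref{l:A3} (in the $C^2$ case) or the square case: given $\eps > 0$, there is $s_0 > 0$ such that $\lambda(A \cap B_s(z)) \geq ((1/2)-\eps')\theta s^d$ for $z \in \partial A$ and $s < s_0$, with $\eps'$ chosen small relative to $\eps$; combined with continuity of $f$ and $f \geq f_1$ near $\partial A$, for $x$ within distance $r_n$ of $\partial A$ one gets $\nu(B_{r_n}(x)) \geq f_1 \theta r_n^d (\tfrac12 - \eps)$ once $n$ is large, after possibly shrinking the relevant neighbourhood and absorbing the approximation errors (here the hypothesis $b^+ \leq (6/5)b_c$ is what guarantees that $n r_n^d$ does not grow so fast that these lower-order corrections matter — it keeps $n r_n^d = O(\log n)$, so that an error of the form $e^{-\eps'' n r_n^d}$ times $n$ is still manageable). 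Then, since $\lambda(A \setminus A^{(-r_n)}) = O(r_n)$ by the $C^2$ (or polygonal) regularity of $\partial A$, and since $\nu$ has bounded density, $\nu(A \setminus A^{(-r_n)}) = O(r_n)$, so
\[
n \int_{A \setminus A^{(-r_n)}} e^{-n \nu(B_{r_n}(x))} \nu(dx)
= O\!\left( n r_n \, e^{-n r_n^d \theta f_1 (\frac12 - \eps)} \right).
\]
Finally I would observe $r_n = O(n^{-1/d})$ trivially fails, but rather $r_n^{1-d} \to \infty$ is not what we want; instead note $n r_n = n^{1-1/d}(n^{1/d} r_n)$ and $n^{1/d} r_n \to 0$ follows from $n r_n^d = O(\log n)$, so $n r_n = o(n^{1-1/d})$ and this term is $o(n^{1-1/d} e^{-n r_n^d \theta f_1(\frac12-\eps)})$, which is absorbed into the stated bound (after relabelling $\eps$). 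Summing the two pieces gives \eqref{e:InUB}.

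The main obstacle I expect is the boundary-layer estimate: one must be careful that the lower bound $\nu(B_{r_n}(x)) \geq f_1 \theta r_n^d(\frac12 - \eps)$ holds \emph{uniformly} over the whole boundary layer, not just near a single boundary point. Near $\partial A$, a ball $B_{r_n}(x)$ may capture less than half its volume inside $A$ (if $x$ is near a locally concave part, or near a corner of the square), but Lemma \ref{l:A3} is precisely designed to rule out the ``less than $(\frac12 - \eps)$'' scenario for points at distance $o(r_n)$ from $\partial A$; for the intermediate shell $\{x : c r_n \leq \mathrm{dist}(x,\partial A) \leq r_n\}$ one gets an even better constant than $1/2$ (tending to $\theta_d$ as the distance approaches $r_n$), so the worst case is genuinely the $(\frac12 - \eps)$ bound and it suffices. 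One also needs the elementary fact that $\lambda(A\setminus A^{(-r_n)}) = O(r_n)$, which is standard for $C^2$ boundaries (tubular neighbourhood) and immediate for the square; this, together with continuity of $f$, controls the prefactor.
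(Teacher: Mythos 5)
Your decomposition into bulk and boundary layer, the bulk bound $n e^{-n\theta f_0 r_n^d}$, and the moat bound via Lemma \ref{l:A3} plus uniform continuity of $f$ are essentially the paper's argument, but two steps as written are wrong. First, your absorption of the prefactor rests on the false claim that $n^{1/d} r_n \to 0$: in fact $n^{1/d} r_n = (n r_n^d)^{1/d} \to \infty$, and in this regime $n r_n \asymp n^{1-1/d} (\log n)^{1/d}$, so $n r_n$ is \emph{not} $o(n^{1-1/d})$. The conclusion is still salvageable, but by a different mechanism: prove the volume bound with $\eps/2$ in place of $\eps$, so the moat contribution is $O(n r_n\, e^{-n\theta f_1 r_n^d(\frac12-\eps/2)})$, and then note that the ratio to $n^{1-1/d} e^{-n\theta f_1 r_n^d(\frac12-\eps)}$ is $(n r_n^d)^{1/d} e^{-(\theta f_1 \eps/2) n r_n^d} = O(1)$ because $n r_n^d \to \infty$. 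This exponential-slack absorption is exactly how the paper passes from its intermediate bound $O(n r_n e^{-n\theta f_1 r_n^d (1-\eps)/2})$ to the stated $O(n^{1-1/d} e^{-n\theta f_1 r_n^d(\frac12-\eps)})$.

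Second, and more substantively, your uniform lower bound $\nu(B_{r_n}(x)) \geq f_1 \theta r_n^d(\frac12-\eps)$ on the whole boundary layer fails in the case $A=[0,1]^2$: the inequality \eqref{e:A31} of Lemma \ref{l:A3} is stated only for $\partial A \in C^2$, and for the square the lemma gives only the quarter-ball bound $(\pi/4)s^2$; a ball centred near a corner genuinely captures only about a quarter of its volume, so your assertion that Lemma \ref{l:A3} ``rules out the less than $(\frac12-\eps)$ scenario'' is incorrect there. This is precisely where the hypothesis $b^+ \leq (6/5) b_c$ is actually used — not merely to ensure $n r_n^d = O(\log n)$ as you suggest. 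The paper excises the four corner regions $\mathsf{Cor}_n$ of side $r_n$, bounds their contribution by $4 \fmax\, n r_n^2 e^{-n\pi f_0 r_n^2/4}$, and uses $n\pi f_0 r_n^2 \leq (5/4)\log n$ (a consequence of $b^+ \leq 6/(5f_0)$) to conclude that this is $o(n e^{-n\pi f_0 r_n^2})$, i.e.\ absorbed by the bulk term; the remaining rectangular strips of the moat are then handled by the half-ball argument as in the $C^2$ case. Without some such separate corner treatment your proof does not cover $A=[0,1]^2$, which is part of the standing assumptions for the lemma.
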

\begin{proof}
	 Note first that for all $x \in A^{(-r_n)}$ we have
	 $\nu(B_{r_n}(x)) \geq \theta_d r_n^d f_0$, so that 
	 writing $I_n(S) $ for $n \int_S\exp(-n \nu(B_{r_n}(x))) \nu(dx)$,
	 we have
	 $$
	 I_n(A^{(-r_n)}) = n \int_{A^{(-r_n)}}  e^{- n \nu(B_{r_n}(x))}
	 \nu(dx)
	 \leq n e^{-n \theta_d r_n^d f_0}.
	 $$
	 Let $\eps  \in (0,\frac12)$. 
	 Suppose $\partial A \in C^2$. Then by Lemma \ref{l:A3} and
	 the continuity of $f$, for all large enough $n$ and all 
	 $x \in A \setminus A^{(-r_n)} $ we have
	 $\nu(B_{r_n}(x)) \geq (1 - \eps) f_1 \theta_d r_n^d/2$;
	 hence
	 $$
	 I_n (A \setminus A^{(-r_n)} ) = 
	 O(n r_n e^{-n \theta_d r_n^d f_1  (1 - \eps)/2}) =
	 O(n^{1-1/d} e^{-n \theta_d r_n^d f_1  (\frac12 - \eps)}).
	 $$
	 Now suppose  instead that $d=2$ and
	 $A = [0,1]^2$. Let $\mathsf{Cor}_n $
	 denote the set of $x \in A$ lying at an $\ell_\infty$
	 distance at most $r_n$ from one of the corners of $A$.
 By the same argument as above
	 $$
	 I_n(A \setminus A^{(-r_n)} \setminus \mathsf{Cor}_n)
	 =
	 O(n^{1/2}  e^{-n \theta_d r_n^2 f_1  (\frac12 - \eps)}).
	 $$
	 Also $I_n(\mathsf{Cor}_n) \leq 4 \fmax nr_n^2 
	 \exp(-n\pi r_n^2 f_0/4)$
	 and using the 
	  assumption $b^+ < (6/5) b_c = 6/(5f_0)$,
	 we obtain
	 for large $n$ that $n f_0 \pi  r_n^2 \leq (5/4) \log n$ so that
	 $$
	 \frac{I_n(\mathsf{Cor}_n)}{
		 n e^{- n \pi r_n^2 f_0 } } 
	 \leq 4 \fmax  r_n^2 \exp( 3 n \pi r_n^2 f_0/4 )
	  = O(r_n^2 n^{15/16 }) = o(1).
	 $$
	 Combining all of the preceding estimates we obtain
	 for both cases ($\partial A \in C^2$ or $A= [0,1]^2$) that
	 \eqref{e:InUB} holds.
\end{proof}

\begin{proof}[Proof of Proposition \ref{p:bc}]
	If $b^+ < b_c = \max( \frac{1}{f_0},
	\frac{2-2/d}{f_1})$ then
	we claim that 
	$I_n \to \infty$. Indeed, if $b^+ <1/f_0$,
	 choose $f_0^+ > f_0$, $\delta >0$,
	such that $f_0^+(b^+ + \delta) <1$. Then 
	for $n$ large $ne^{-n \theta_d f_0^+ r_n^d} >n
	e^{-f_0^+ (b^+ + \delta) \log n}$
	so $I_n \to \infty$ by Lemma \ref{l:Ilower}.
	 If $b^+ < (2-2/d)/f_1$ then  choose
	 $f_1^+ > f_1$ and $\delta >0$ with $f_1^+(b^+ + \delta) < 2-2/d$. 
	 Then for $n$ large,
	 $n^{1-1/d} e^{-n \theta_d f_1^+ r_n^d/2}
	 > n^{1-1/d} e^{- \frac12 f_1^+ (b^+ + \delta)\log n}$
	so again $I_n \to \infty$ by Lemma \ref{l:Ilower}, and the
	claim follows.

	Now suppose $b^- > b_c$. We need to show $I_n \to 0$ as $n \to \infty$.
	Since 
	$n \int_A e^{-n \nu(B_s(x))} \nu(dx)$ is nonincreasing
	in $s$, it suffices to prove this under the extra assumption
	$b^+ \leq 6b_c/5$, which makes Lemma \ref{l:IUB} applicable.
	Since $b^- > b_c$ there exists $\eps >0$ such that
	for $n$ large enough $\theta_d f_0 n r_n^d >
	(1+ \eps) \log n$
	and $n r_n^d \theta_d f_1 (\frac12 - \eps) > (1- 1/d) (1+ \eps) \log n$,
	and then we see $I_n \to 0$ by \eqref{e:InUB}.

	Finally if $b^+ > b_c$ then by the preceding argument
	$I_n \to 0$ as $n \to \infty$ along some subsequence,
	so we must have $\liminf_{n \to \infty} I_n = 0$.
\end{proof}

\begin{lemma}[Asymptotic equivalence of $I_n$ and $\tilde{I}_n$] 
\label{l:diff_I_Itilde}
	There exists $\delta_7 >0$ such that as $n\to\infty$ we have 
	$|I_n-\tilde I_n| = O(e^{- \delta_7 nr_n^{d}}I_n)$.
\end{lemma}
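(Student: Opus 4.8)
The plan is to compare the two integrals
$I_n = n\int_A e^{-n\nu(B_{r_n}(x))}\,\nu(dx)$ and $\tilde I_n = n\int_A (1-\nu(B_{r_n}(x)))^{n-1}\,\nu(dx)$
pointwise in $x$, exploiting that $(1-p)^{n-1}$ and $e^{-np}$ agree up to a multiplicative error that is uniformly close to $1$ once $p$ is not too large, and showing that the region where $p=\nu(B_{r_n}(x))$ is large contributes negligibly to either integral. Writing $p = p(x) := \nu(B_{r_n}(x))$, the elementary inequality $1-p \le e^{-p}$ gives immediately $(1-p)^{n-1}\le e^{-(n-1)p}$, while for $p \le 1/2$ one has $\log(1-p) \ge -p - p^2$ (valid for $0\le p\le 1/2$), so $(1-p)^{n-1} \ge e^{-(n-1)(p+p^2)} = e^{-np}\,e^{p}e^{-(n-1)p^2}$. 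Thus for such $x$,
\[
e^{-np}\bigl(e^{p}e^{-(n-1)p^2}\bigr) \;\le\; (1-p)^{n-1} \;\le\; e^{-np}e^{p},
\]
so $(1-p)^{n-1} = e^{-np}(1 + O(np^2))$ whenever $np^2 = O(1)$.

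Next I would split $A$ according to the size of $p(x)$. On $A^{(-r_n)}$ we have $p(x) \ge \theta f_0 r_n^d$, but more importantly we only need an upper bound on $p$: since $f$ is bounded by $\fmax$, we always have $p(x) \le \fmax\,\lambda(B_{r_n}(x)) \le \fmax\,\theta\, r_n^d$. Hence $np(x)^2 \le \fmax^2\theta^2\, n r_n^{2d} = O((nr_n^d)\cdot r_n^d) = o(1)$ by \eqref{c:lowerbound} (indeed $r_n \to 0$, so $n r_n^{2d} = (nr_n^d) r_n^d$; one has to check $nr_n^{2d}\to 0$, which follows because $nr_n^d = O(\log n)$ in the mildly dense / critical regimes forces $r_n^d = O((\log n)/n)$ hence $nr_n^{2d} = O((\log n)^2/n) \to 0$ — alternatively use the standing assumption $I_n = \Theta(1)$ or $\to\infty$, which via Proposition \ref{p:bc} bounds $n r_n^d = O(\log n)$). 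Consequently $p(x) \le 1/2$ for all $x$ once $n$ is large, and $np(x)^2 \le \delta' nr_n^{2d}$ for a constant $\delta'$; write $\eps_n := nr_n^{2d} \to 0$. Then for every $x$,
\[
\bigl|(1-p(x))^{n-1} - e^{-np(x)}\bigr| \;\le\; C\,\eps_n\, e^{-np(x)}
\]
for a constant $C$ and $n$ large (absorbing the $e^{p}=1+O(r_n^d)$ factor too). Multiplying by $n$ and integrating against $\nu$ yields $|\tilde I_n - I_n| \le C\eps_n I_n$, which is even stronger than claimed since $\eps_n = nr_n^{2d}$ decays polynomially; to get the stated form with $e^{-\delta n r_n^d}$ one simply notes $nr_n^{2d} = (nr_n^d)r_n^d$ and $r_n^d \le C'' e^{-\delta n r_n^d}$ is false in general, so instead I would just record the bound as $O(n r_n^{2d} I_n)$ and observe $n r_n^{2d} = O(e^{-\delta n r_n^d})$ is \emph{not} what we want — rather, I would keep the exponential form by a slightly different split: on the bulk $\{p(x) \ge c_0 r_n^d\}$ with $c_0 = \theta f_0$ (valid on $A^{(-r_n)}$, which carries all but an $o(I_n)$ fraction of the mass by the boundary-layer estimates implicit in Lemmas \ref{l:Ilower}–\ref{l:IUB}), one has $e^{-np(x)} \le e^{-n c_0 r_n^d/2} e^{-np(x)/2}$...

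Actually the cleanest route — and the one I would commit to — avoids this issue entirely: the pointwise error is $O(np(x)^2 e^{-np(x)})$, and $np(x)^2 e^{-np(x)} \le p(x)\cdot (np(x)e^{-np(x)}) \le (\fmax\theta r_n^d)\cdot e^{-1}$... no. The right observation is: $np^2 e^{-np} = (np)e^{-np}\cdot p \le e^{-1} p \le e^{-1}\fmax\theta r_n^d$ uniformly, but we need a bound relative to $e^{-np}$, not an absolute one. So stick with $|(1-p)^{n-1}-e^{-np}| \le C(np^2) e^{-np}$ and $np^2 \le \fmax\theta\, r_n^d \cdot np = O(r_n^d \log n)$ on the support region contributing to $I_n$ — wait, $np \le n\fmax\theta r_n^d = O(\log n)$ only where relevant. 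In any case $np(x)^2 \le \fmax\theta r_n^d\cdot np(x)$, and on $A^{(-r_n)}$, $np(x) \le n\fmax\theta r_n^d$; combined with the fact (from Proposition \ref{p:bc} and the standing hypotheses \eqref{c:lowerbound}–\eqref{c:upperbound}) that $n r_n^d = O(\log n)$, we get $np(x)^2 = O(r_n^d\log n) = O((\log n)^2/n)$, which is certainly $O(e^{-\delta nr_n^d})$ for suitable $\delta$ since $nr_n^d = O(\log n)$ makes $e^{-\delta nr_n^d} = \Omega(n^{-\delta C})$. Picking $\delta$ small enough that $n^{-\delta C}$ dominates $(\log n)^2/n$ closes it. The contribution from $A\setminus A^{(-r_n)}$ is handled by noting it is $O(r_n)$ times $\sup$-comparable to the whole, or directly: there $p(x) \ge (1-\eps)f_1\theta r_n^d/2$ by Lemma \ref{l:A3}, and the same pointwise comparison applies with $np(x)^2 = O((\log n)^2/n)$ again.

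The main obstacle is purely bookkeeping: making sure the uniform bound $np(x)^2 = O(e^{-\delta nr_n^d})$ really holds, which rests on $nr_n^d = O(\log n)$ — a consequence of $\liminf I_n > 0$ via Proposition \ref{p:bc} (if $nr_n^d$ grew faster than logarithmically then $b^- > b_c$ and $I_n \to 0$). Once that reduction is in hand, the pointwise estimate $(1-p)^{n-1} = e^{-np}(1+O(np^2))$ for $p\le 1/2$ is routine, and integrating gives $|I_n - \tilde I_n| = O(e^{-\delta n r_n^d} I_n)$ directly, with no need to separate boundary from bulk. I would write it in that order: (i) invoke $nr_n^d = O(\log n)$; (ii) bound $p(x) \le \fmax\theta r_n^d$ hence $np(x)^2 \le \fmax^2\theta^2 nr_n^{2d} = O((\log n)^2/n) = O(e^{-\delta nr_n^d})$ and $p(x)\le 1/2$ for large $n$; (iii) apply $|\log(1-p)+p|\le p^2$ for $p\le 1/2$ to get the pointwise multiplicative estimate; (iv) integrate against $n\,\nu(dx)$.
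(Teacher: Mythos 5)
Your proposal is correct and takes essentially the same route as the paper's proof: both compare $(1-p_n(x))^{n-1}$ with $e^{-np_n(x)}$ via the elementary bounds $1-p\le e^{-p}$ and $\log(1-p)\ge -p-p^2$, arrive at $|\tilde I_n-I_n|=O(nr_n^{2d}I_n)$, and then convert this polynomial error into the exponential form $O(e^{-\delta nr_n^d}I_n)$ using $nr_n^d=O(\log n)$, which follows from Proposition \ref{p:bc} and the standing assumption $\liminf_{n\to\infty} I_n>0$. Your mid-stream worry that $r_n^d\le C e^{-\delta nr_n^d}$ ``is false in general'' is unfounded under these assumptions (it holds for $\delta$ small), but the steps you finally commit to are exactly those of the paper.
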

\begin{proof}
	For $x \in A$, given $n$ write $p_n(x) := \nu(B_{r_n}(x))$.
	By the bounds  $1-p_n(x) \leq e^{-p_n(x)}$,
	and $1-p_n(x) \geq 1- \fmax \theta_d r_n^d$, 
	and the condition \eqref{c:lowerbound},
	\begin{align*}
		\tilde{I}_n &
		\leq (1-  \fmax \theta_d r_n^d)^{-1} n \int_A e^{-n p_n(x)} \nu(dx)
		\\
		& = (1+ O(r_n^d))I_n.
	\end{align*}
	Also by Taylor's theorem 
	$\log ( 1-p) \geq -p - p^2 $ for $p >0$ close to 0, so
	\begin{align*}
		\tilde{I}_n & \geq
		n \int_A \exp(n\log (1-p_n(x))) dx
		\\
		& 	\geq n \int_A \exp(n (-p_n(x) -p_n(x)^2)) dx
		\\
		& 	\geq e^{-n (\fmax \theta_d r_n^d)^2} I_n. 
	\end{align*}
	Combining these two estimates and using \eqref{c:lowerbound}
	yields $|\tilde{I}_n -I_n| = 
	O(nr_n^{2d}I_n)$. Therefore it suffices to show $nr_n^{2d} 
	e^{\delta nr_n^d} = O(1)$ for some $\delta >0$. By \eqref{c:upperbound}
	and Proposition \ref{p:bc},
	$nr_n^d = O(\log n)$ so for $\delta $ small enough $e^{\delta nr_n^d}
	= O(n^{1/2}) $, while $nr_n^{2d} = O((\log n)^2 n^{-1})$
	so $nr_n^{2d} e^{\delta nr_n^d} = O((\log n)^2 n^{-1/2}) = o(1)$.
\end{proof}

\begin{proposition}[Variance asymptotics of the number of singletons: Poisson input]
\label{p:Sn}
	Let $\delta_1$ be as in Lemma \ref{l:A1}. Then
	as $n \to \infty$,
\begin{align*}
\Var[S'_n]= I_n (1 + O(e^{- \delta_1 f_0 nr_n^d})). 
\end{align*}
\end{proposition}
\begin{proof}
	Since $S'_{n}(S'_{n}-1)$ is the number of ordered
        pairs of distinct isolated vertices,
\begin{align*}
        S'_{n}(S'_{n}-1) = 
        \sum_{x, y \in  \P_n}
	\1\{ (\P_n\setminus \{x,y\})
	\cap B_{r_n} (x,y)=\emptyset,  \|y- x\| > r_n \},
\end{align*}
        where $B_r(x,y):= B_r(x) \cup B_r(y)$.
        Thus by the multivariate Mecke equation
\begin{align*}
        \EE[(S'_{n})^2]-\EE[S'_{n}] =
	n^2
        \int_{A} \int_{A}
	\exp(-n \nu(B_{r_n}(x,y)))
        \1\{\|x - y\| > r_n\} \nu(dy) \nu( dx).
\end{align*}
        We compare this integral with
\begin{align*}
        \EE[S'_{n}]^2 = I_n^2 =  n^2 
        \int_{A} \int_{A}
	\exp(-n[ \nu(B_{r_n}(x)) +  \nu (B_{r_n}(y))])
        \nu(dy) \nu(dx).
\end{align*}
	Observe that  $\nu(B_{r_n}(x,y)) = \nu(B_{r_n}(x)) + \nu(B_{r_n}(y))$
	whenever $\| x - y \| > 2{r_n}$.
        Therefore $|\Var[S'_{n}]-\EE[S'_{n}]|\le J_{1,n}+J_{2,n}$,
        where
\begin{align}
        J_{1,n}&:= n^2  
	\int_{A} \int_{A}
	\exp(-n \nu(B_{r_n}(x,y)))
        \1\{ r_n < \| x -y \|  \leq 2r_n 
	\}
        \nu(dy) \nu(dx);
        \label{e:J1def}
        \\
        J_{2,n}&:= 
	n^2 \int_{A^{2}}
	\exp(-n[\nu(B_{r_n}(x))+\nu(B_{r_n}(y))])
        \1\{\|x - y\| \leq 2r_n \} \nu^{2}(d(x, y)).
        \label{e:J2def}
\end{align}
We estimate $J_{1,n}$ in Lemma \ref{l:J1} below.
For $J_{2,n}$, note that by Lemma 
	\ref{l:A3}
	there exists $n_0
        \in (0,\infty)$
such that for all $n \geq n_0$ and all
$y \in A$
we have $\nu(B_{r_n}(y)) \geq (\theta_d/4) f_0 r_n^d$.
Hence
$$
\sup_{x \in A} \left( n \int_{A} 
\exp(-n \nu(B_{r_n}(y)))
\1\{\|x - y\| \leq 2r_n\} \nu(d y)
\right)
=O( nr_n^d \exp(-n (\theta_d/4) f_0  r_n^d)),
$$
and hence
$J_{2,n}
= O( I_{n} nr_n^d \exp(-n f_0 (\theta_d/4) r_n^d))$, which
is $O(I_{n} \exp(-n f_0 (\theta_d/4)r^d))$.
Combined with Lemma \ref{l:J1}, this completes the proof.
\end{proof}

%
%
%
%
	\begin{lemma}
        \label{l:J1}
        Let $J_{1,n}$ be given by \eqref{e:J1def}.
		Then
        $J_{1,n} = O(e^{- \delta_1 f_0 nr_n^d}I_{n})$ as $n \to \infty$, where
		$\delta_1$ is as in Lemma \ref{l:A1}.
\end{lemma}
\begin{proof} Since the integrand in \eqref{e:J1def}
	is symmetric in $x$ and $y$, we have:
	 \begin{align*}
		 J_{1,n} 
		 \leq 2 n^2 \int_A \int_A e^{-n \nu(B_{r_n}(x) \cup B_{r_n}(y))}
		 \1\{{r_n} < \|y-x\| \leq 2{r_n}, x \prec y\} \nu (dy) \nu (dx).
	 \end{align*}
	By \eqref{e:volLB1} from Lemma \ref{l:A1},
	 there exists $\delta_1 >0$ such
	 that for $n$ large and $x,y \in A$ with $\|x-y \| \geq {r_n}$
	 and $x \prec y$ 
	 we have $\nu(B_{r_n}(y) \setminus B_{r_n}(x)) \geq 2 \delta_1 f_0 r_n^d$.
	 Hence
	 \begin{align*}
		 J_{1,n} & \leq 2 n^2 \fmax \theta_d (2r_n)^d
		 \int_A e^{-n \nu(B_{r_n}(x)) - 2 n \delta_1 f_0 r_n^d} \nu(dx)
		 \\
		 &  \leq 
		 e^{-n\delta_1 f_0 r_n^d} I_n,
	 \end{align*}
	 which gives us
	 the result. 
\end{proof}

\begin{proposition}[Variance asymptotics of the number of singletons: binomial input]
\label{p:var_iso_bin}
	There exists $\delta_8 >0$ such that as $n \to \infty$,
	$$\Var[S_n]= I_n(1+O(e^{-\delta_8 nr_n^d})).$$
\end{proposition}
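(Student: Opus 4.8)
The plan is to deduce the variance asymptotics for the binomial case $\Var[S_n]$ from the Poisson case $\Var[S'_n]$ proved in Proposition \ref{p:Sn}, together with the asymptotic equivalence $\tilde I_n \sim I_n$ established in Lemma \ref{l:diff_I_Itilde}. First I would obtain an exact formula (or a sharp two-sided bound) for $\Var[S_n]$ by writing $S_n = \sum_{x \in \X_n} \1\{\X_n \cap B_{r_n}(x) = \{x\}\}$ and computing $\E[S_n^2]$ via the first and second factorial moments of the binomial point process. This gives $\E[S_n] = \tilde I_n$ (already recorded at \eqref{e:tIdef}) and
\[
\E[S_n(S_n-1)] = n(n-1)\int_A\int_A \1\{\|y-x\|>r_n\}\,(1 - \nu(B_{r_n}(x)\cup B_{r_n}(y)))^{n-2}\,\nu(dy)\,\nu(dx),
\]
so that $\Var[S_n] = \E[S_n(S_n-1)] + \tilde I_n - \tilde I_n^2$.

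Next I would compare this with the corresponding Poisson expression. By the Mecke formula, $\E[S'_n] = I_n$ and $\E[S'_n(S'_n-1)] = n^2\int_A\int_A \1\{\|y-x\|>r_n\} e^{-n\nu(B_{r_n}(x)\cup B_{r_n}(y))}\,\nu(dy)\,\nu(dx)$, so $\Var[S'_n] = \E[S'_n(S'_n-1)] + I_n - I_n^2$. The task is then to control the difference of the double integrals. Using $(1-p)^{n-2} \le e^{-(n-2)p}$ and $(1-p)^{n-2} \ge e^{-(n-2)(p + p^2)}$ for $p$ small (valid since $p \le \fmax\theta(2r_n)^d \to 0$ here, because $\nu(B_{r_n}(x)\cup B_{r_n}(y)) \le \fmax\theta(2r_n)^d$), together with $nr_n^d = O(\log n)$ from Proposition \ref{p:bc} and \eqref{c:upperbound}, I would show that replacing $(1-\nu(B_{r_n}(x)\cup B_{r_n}(y)))^{n-2}$ by $e^{-n\nu(B_{r_n}(x)\cup B_{r_n}(y))}$ changes the double integral by a multiplicative factor $1 + O(nr_n^{2d})$, exactly as in the proof of Lemma \ref{l:diff_I_Itilde}. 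Since the double integral is itself $O(I_n)$ — this follows from the estimate on $J_{1,n}$ (the range $r_n < \|y-x\| \le 2r_n$) and the trivial bound on $\|y-x\| > 2r_n$ controlled by $\Var[S'_n] = O(I_n)$ from Proposition \ref{p:Sn} — the difference between the binomial and Poisson second factorial moments is $O(nr_n^{2d} I_n) = O(e^{-\delta nr_n^d} I_n)$ for $\delta$ small, using $nr_n^{2d}e^{\delta nr_n^d} = O((\log n)^2 n^{-1/2}) = o(1)$ exactly as at the end of Lemma \ref{l:diff_I_Itilde}.

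Finally I would combine: $\Var[S_n] = \E[S_n(S_n-1)] + \tilde I_n - \tilde I_n^2 = \E[S'_n(S'_n-1)](1 + O(e^{-\delta nr_n^d})) + I_n(1+O(e^{-\delta nr_n^d})) - \tilde I_n^2$. Since $\Var[S'_n] = I_n(1+O(e^{-cnr_n^d}))$ by Proposition \ref{p:Sn} and $I_n \to \infty$ (as $I_n = \Theta(1)$ is excluded here, or more precisely $I_n \to \infty$ is not assumed but $I_n^2 - \tilde I_n^2 = O(e^{-\delta nr_n^d}I_n^2)$ must be shown negligible relative to $I_n$), I need the extra input that $I_n = O(n)$ trivially and indeed $I_n^2 e^{-\delta nr_n^d}$ — hmm, this is only $o(I_n)$ if $I_n = o(e^{\delta nr_n^d})$, which holds since $I_n \le n$ and $e^{\delta nr_n^d}$ dominates any power of $n$ only when $nr_n^d$ grows faster than $\log n$; in the critical regime it does not. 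So more carefully: $I_n^2 - \tilde I_n^2 = (I_n - \tilde I_n)(I_n + \tilde I_n) = O(e^{-\delta nr_n^d} I_n)\cdot O(I_n) = O(e^{-\delta nr_n^d} I_n^2)$, and I absorb this into the error by writing everything as $I_n(1 + O(\cdot))$ only after noting that the $-I_n^2$ and $-\tilde I_n^2$ terms cancel up to $O(e^{-\delta nr_n^d}I_n^2)$, which combined with $\Var[S'_n]$'s own $-I_n^2$ cancellation leaves $\Var[S_n] = I_n(1 + O(e^{-\delta nr_n^d}))$. The main obstacle I expect is bookkeeping the quadratic terms $I_n^2$ versus $\tilde I_n^2$ cleanly when $I_n$ does not necessarily tend to infinity at a controlled rate; the resolution is that these quadratic terms are present identically in both the binomial and Poisson variance formulas and their difference is $O(e^{-\delta nr_n^d} I_n^2)$, which is of the same relative order as the errors already tolerated, so one never needs to bound $I_n^2$ against $I_n$ in absolute terms — only relative errors propagate.
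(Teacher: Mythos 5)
There is a genuine gap in your error bookkeeping, located at the claim that ``the double integral is itself $O(I_n)$'' and in the concluding ``only relative errors propagate'' resolution. The second factorial moment $\E[S'_n(S'_n-1)]$ is \emph{not} $O(I_n)$: since $\Var[S'_n]=I_n(1+O(e^{-cnr_n^d}))$ (Proposition \ref{p:Sn}), one has $\E[S'_n(S'_n-1)]=I_n^2+O(I_n)$, i.e.\ it is of order $I_n^2$, and in the present regime $I_n\to\infty$ may be as large as $n^{1-o(1)}$ (e.g.\ when $nr_n^d\to\infty$ arbitrarily slowly, say $nr_n^d=\log\log n$, which is allowed under \eqref{c:lowerbound}--\eqref{c:upperbound}). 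Consequently your Poissonization step gives $|\E[S_n(S_n-1)]-\E[S'_n(S'_n-1)]|=O(nr_n^{2d}\,I_n^2)$, not $O(nr_n^{2d}I_n)$, and likewise $|\tilde I_n^2-I_n^2|=O(nr_n^{2d}\,I_n^2)$; neither of these is $O(e^{-\delta nr_n^d}I_n)$ on the strength of $nr_n^{2d}=O(e^{-\delta nr_n^d})$ alone, because that would require $I_n=O(e^{(\delta-\delta')nr_n^d})$, which fails when $I_n$ is nearly of order $n$ while $e^{\delta nr_n^d}$ grows arbitrarily slowly. The statement being proved demands an \emph{absolute} error bound $O(e^{-\delta nr_n^d}I_n)$, so a relative error on an $I_n^2$-sized quantity must be converted into an absolute one, and this conversion genuinely needs an extra input you never invoke: the uniform bound $\nu(B_{r_n}(x))\geq c_0 f_0 r_n^d$ for all $x\in A$ (from Lemma \ref{l:A3}, with $c_0$ close to $\theta/2$, or $\pi/4$ for the square), which yields $I_n\leq n e^{-c_0 f_0 nr_n^d}$ and hence $nr_n^{2d}I_n\leq (nr_n^d)^2e^{-c_0f_0nr_n^d}=O(e^{-\delta' nr_n^d})$. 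With that bound in hand your scheme can be repaired (keep the errors in the form $nr_n^{2d}I_n^2$ rather than passing through $e^{-\delta nr_n^d}I_n^2$, and treat the annulus $r_n<\|y-x\|\leq 2r_n$ by the $J_{1,n}$-type estimate with Lemma \ref{l:A1}); as written, however, the final combination does not yield the claimed $I_n(1+O(e^{-\delta nr_n^d}))$.

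For comparison, the paper does not Poissonize at all: it simply observes that the proof of \cite[Proposition 4.3]{PY23} (a direct binomial second-moment computation, exploiting the pointwise cancellation between the pair integrand and the product $\tilde I_n^2$ integrand) goes through under the weaker hypotheses here, once its use of \cite[Lemma 4.2]{PY23} is replaced by the $J_{1,n}$ estimate already established in the proof of Proposition \ref{p:Sn}, and then transfers $\tilde I_n$ to $I_n$ via Lemma \ref{l:diff_I_Itilde}. That route avoids your difficulty precisely because the binomial-vs-product comparison is made inside the double integral, where the relative discrepancy is $O(r_n^d+1/n)$ per pair, rather than by comparing two globally $I_n^2$-sized quantities to their Poisson counterparts.
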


\begin{proof}
	See the case $k=1$ of
	\cite[Proposition 3]{PY23} (Proposition 4.3 in the Arxiv version)
	and Lemma \ref{l:diff_I_Itilde}
	of the present paper.
	In the proof of \cite[Proposition 3]{PY23}
	it is assumed that \cite[equation (3.1)]{PY23} holds
	(i.e. $b^+ < 1/\max(f_0, d(f_0-f_1/2))$ in our notation here),
	but the proof 
	carries through to the general case with $nr_n^d \to \infty$
	and $I_n \to \infty$. Instead of \cite[Lemma 5]{PY23}
	(Lemma 4.2 of the Arxiv version)
	we can use Lemma 
        \ref{l:J1}
	of the present paper.
\end{proof}

\subsection{Asymptotic distribution of the singleton count}
For both the normal and Poisson convergence results, we
use the following. Again $\delta_1$ is as in Lemma \ref{l:A1}
\begin{lemma}[Poisson  approximation  for $S'_n$]
	\label{l:dTV}
	%
	As  $n \to \infty$,
	\begin{align}
	& \dtv(S'_n,Z_{I_n}) 
	 = O( e^{- \delta_1 f_0 n r_n^d});
	\label{e:dtvS'Z}
\end{align}
\end{lemma}
\begin{proof}
%
%
	We apply Lemma \ref{t:JAP} with
        $
        g(x,\psi):=
	\1\{\psi \cap B_{r_n}(x)= \emptyset\}.
        $
        For $x  \in \R^d$,
        we construct  coupled random variables $(U_x,V_x)$ as follows.
        Define $U_x := \sum_{y \in \P_n} 
	g(y, \P_n \setminus \{y\})$, and
\begin{align*}
V_x := \sum_{y \in \P_n\setminus 
	B_{r_n}(x)} g(y, (\P_n\setminus B_{r_n}(x))
	\setminus \{y\}).
\end{align*}
This coupling satisfies the distributional
	requirement in Lemma \ref{t:JAP} because the conditional distribution of
        $\P_n$ given the event $\{ g(x,\P_n)=1\}$ is the same as the distribution of
	$\P_n\setminus B_{r_n}(x)$.

	There are two sources of contribution to the change 
	$V_{x}-U_{x}$ in the singleton count upon removal of
	all the Poisson points in $B_{r_n}(x)$.
First, after removal, all singletons  of $G(\P_n,r_n)$
	that were inside  $B_{r_n}(x)$ are destroyed,
	thereby  reducing the singleton  count.  Second, every  
	$y \in  \P_n\setminus B_{r_n}(x)$ satisfying the two properties
\begin{itemize}
	\item[(a)] $g(y, ( \P_n\setminus
		B_{r_n}(x) )
                \setminus y)=1$;
        \item[(b)] $\P_n \cap 
		B_{r_n}(x)
		\cap  B_{r_n}(y )\neq \emptyset$;
\end{itemize}
becomes an isolated vertex only after removing all the Poisson points
	in $ B_{r_n}(x)$, thereby increasing the number of singletons.
                Let $\xi_1(x)$ denote the number of singletons
                of $G(\P_n,r_n)$ that lie in
		$B_{r_n}(x)$ and let $\xi_2(x)$ denote the
                number of singletonss $y$ of
		$\P_n\setminus  B_{r_n}(x)$ satisfying property (a)
		and property (c) $B_{r_n}(y)\cap B_{r_n}(x)
                \neq\emptyset$. It is clear that (b) implies (c) and
\begin{align}
|U_x-V_x|\le \xi_1(x)+\xi_2(x).
        \label{e:UVX1X2}
\end{align}

We estimate $\EE[\xi_1(x)]$ and $\EE[\xi_2(x)]$ separately. Since
$$
	\xi_1(x) = \sum_{y \in \P_n \cap B_{r_n}(x) }
	\1\{ (\P_n \setminus \{y\})  \cap B_{r_n}(y) = \emptyset \},
$$
applying the Mecke equation leads to
$$
\E [ \xi_1(x) ] = \int_{B_{r_n}(x)}
e^{-n \nu(B_{r_n}(y))} n \nu(dy).
$$
By Lemma \ref{l:A3},
if $n$ is large enough then $\nu(B_{r_n}(y)) \geq f_0 (\theta_d/4) r_n^d$
for any $y \in A$. Hence for all $n$ large enough and all $x$,
$$
\E[ \xi_1(x)] \leq   n  \theta_d r_n^d \fmax e^{- f_0 (\theta_d/4) n r_n^d}.
$$
Therefore setting $p(x)= \EE[g(x,\P_n)]$, and using 
		\eqref{e:def:I_n}, we have
\begin{align}
        \int_{A} \EE[ \xi_1(x)] p(x) n \nu(dx)
        & \leq
         \fmax \theta_d  nr_n^d e^{-(\theta_d/4)f_0nr_n^d}
        I_{n}.
        \label{0730c}
\end{align}

 Turning to $\xi_2(x)$, set $\gamma_n(x,y)= \1\{ r_n < \dist(x,y) \leq 2r_n\}$.
 By the Mecke equation
\begin{align*}
\EE[\xi_2(x)] &= n
        \int_{A}
	\gamma_n(x,y)
	e^{-n \nu(B_{r_n}(y) \setminus B_{r_n}(x))}
        \nu (  dy),
\end{align*}
and therefore writing $B_r(x,y)$ for $B_r(x) \cup B_r(y)$,
we have that
\begin{align*}
        \int_{A} \EE[\xi_2(x)] p(x)n \nu(dx)
        = n^{2} \int_{A} \int_{A}
        \gamma_n(x,y)
	e^{-n \nu(B_{r_n}(x,y))}
 \nu       (d y) \nu(dx).
\end{align*}
By (\ref{e:J1def}) this expression is equal to $ J_{1,n}$, and therefore
by Lemma \ref{l:J1}
it is $O(e^{- \delta_1 f_0 nr_n^d}I_{n})$.

Combining this with (\ref{0730c}), and using (\ref{e:UVX1X2})
and the fact that we took $\delta_1 < \theta_d/4$ in Lemma \ref{l:A1},
we obtain that
$$
\int_{A} \E [|U_{x} - V_{x}| ] p(x) n \nu(dx)
= O( e^{- \delta_1 f_0  n r_n^d}  I_{n}).
        $$
Applying Lemma \ref{t:JAP} with the present choice of $g$
(so that the $W$ of that result is $S'_{n}$)
gives the desired bound in Poisson approximation, completing the proof
of
%
	of \eqref{e:dtvS'Z}.
\end{proof}

	
\begin{lemma}[Poisson  approximation  for  $S_n$]
	\label{l:dTV2}
	As  $n \to \infty$,
	\begin{align}
	{\rm ~if~} \partial A \in C^2, ~~~ &
	\dtv(S_n,Z_{\tilde{I}_n}) = O( e^{- \delta_1 f_0 n r_n^d}).
	\label{e:dtvSZ}
	\end{align}
\end{lemma}
\begin{proof}
	We shall 
	use  Lemma \ref{l:Lindvall}. 
	 Let $Y_i$ be the indicator of the
event that $\cC_{r_n}(X_i,\cX_n) =\{X_i\}$. 
Then $S_{n}= \sum_{i=1}^n Y_i $.
%
	  We
 need to define $U_i$,  $V_i$ for each $i \in [n]$ so that
$\LL(U_i) = \LL(S_{n})$, and $\LL(1+V_i)= \LL(S_{n}|Y_i=1)$,
and so that we can find a good bound for $\EE[|U_i-V_i|]$.
We do this for $i=1$ as follows. 
Let $\tX_1 $ be a random vector in $\R^d$
with $\LL(\tX_1) = \LL(X_1|Y_1=1)$.
Also let $(X_{i,j}, i \in [n], j \in \N)$ be 
an array of independent $\nu$-distributed random variables,
independent of $\tX_1$.
Set $\X_{n,1} := \{X_{1,1},\ldots,X_{n,1}\}$.

For $2 \leq i \leq n$ set $J_i := \min \{j: \|X_{i,j} - \tX_1\| > r_n\}$
and set $\tX_{i} := X_{i,J_i}$.
Then set $\X_{n,2}:= \{\tX_1,\ldots,\tX_n\}$.

In other words, we sample the random vector $\tX_1$
from the conditional distribution of $X_1$ given
that $Y_1 = 1$, independently of $\X_{n,1}$.
Given the outcome of $\tX_1$,
	for $i \in \{2,\ldots, n\}$, if $| X_{i,1} - \tX_1 | >
r_n$
we take $\tX_i := X_{i,1}$. Otherwise
 we re-sample a random  vector with distribution $\nu$ repeatedly
until we get a value that is not in $ B_{r_n}(\tX_1)$,
and call this $\tX_i$.
Thus, given the value of $\tX_1$,
the distribution  of $\tX_i$ is given by the measure
$\nu $ restricted to $A \setminus B_{r_n}(\tX_1)$,
normalized to a probability measure. 

	For $x \in \R^d$, $\cX \subset \R^d$,
	we use the notation $h_n(x,\X) := {\bf 1} 
	\{
	\X \cap B_{r_n}(x) 
	\setminus \{x\} = \emptyset \}$.
	Let 
	$$
	U_1:= \sum_{i=1}^n h_n(X_{1,i},\X_{n,1}); ~~~~~
	V_1 := \sum_{i=2}^n h_n(\tilde{X}_{i}, \cX_{n,2}). 
	$$
Clearly $\LL(U_1)= \LL(S_{n})$. Also we claim that
	\begin{align}
	\LL(1+V_1)= \LL(S_{n}|Y_1=1). 
		\label{e:V1dist}
	\end{align}
	Indeed, the conditional distribution of $X_2,\ldots,X_n$, given
	that $X_1$ is a singleton and given also the location of
	$X_1$, is given by independent $d$-vectors each with
	distribution given by the restriction of $\nu$ to the
	complement of $B_{r_n}(X_1)$, normalized to a probability measure,
	which implies \eqref{e:V1dist}. For a more detailed proof of
	\eqref{e:V1dist}, see \cite[Lemma 8]{PY23} (Lemma 5.7 of
	the Arxiv version).

	We need to find a useful bound on $\E[|U_1-V_1|]$.
	Note that the `new' point process 
$\mathcal X_{n,2}=\{\tX_1,\ldots\tX_n\}$
	is obtained from the `old' point process
	$\X_{n,1}$ by moving the point
	$X_{1,1}$ to $\tX_1$, 
	and also, for those $i \in \{2,\ldots,n\}$ such
	that $J_i >1$, moving  the point $X_{i,1} $ to $\tX_i$,
	leaving the other points unchanged.


	We claim $|U_1-V_1| \leq \sum_{i=1}^6 N_i,$ where we set:
	\begin{align}
	N_1 & := h_n(X_{1,1}, \X_{n,1});
	\\
		N_2 & := \sum_{i=2}^n h_n(X_{i,1}, \cX_{n,1} \setminus \{X_{1,1}\})
		{\bf 1}\{ \|X_{i,1} - X_{1,1}\| \leq r_n\}
	\\
	N_3 & := \sum_{i=2}^n \sum_{j=2}^n
		h_n(X_{i,1}, \cX_{n,1} \setminus \{X_{1,1}\})
		{\bf 1} \{ \| X_{i,1} - \tilde{X}_j \| \leq r_n,
		J_j > 1, i \neq j \}; 
	\\
	N_4  & := \sum_{i=2}^n
		h_n( \tilde{X}_i, \X_{n,1} \setminus \{X_{1,1}\})
		{\bf 1}\{ J_i > 1, \| \tX_i - \tX_1 \| > 2 r_n \};
	\\
		N_5 & := \sum_{i=2}^n h_n(X_i, \X_{n,1} \setminus \{X_{1,1}\})
		{\bf 1} \{ \|X_i - \tilde{X}_1 \| \leq r_n\};
	\\
	 N_6 & := \sum_{i=2}^n h_n ( \tilde{X}_i ,\X_{n,2}) {\bf 1} \{ r_n <
		\|\tilde{X}_i - \tilde{X}_1 \| \leq 2r_n\}.
	\end{align}
	Indeed, given $i \in \{2,\ldots,n\}$, for the $i$th vertex to
	contribute to $U_1$ but not to $V_1$ 
	we must have $X_{i,1}$ connected to  $\tilde{X}_j$ for some $j \in
	\{2,\ldots,n\} \setminus \{i\}$ with $J_j > 1$, but otherwise isolated,
	and hence
	counting towards $N_3$; or $X_{i,1}$ connected to $\tX_1$ but
	otherwise isolated, and hence counting towards $N_5$.
	For the $i$th vertex to contribute to $V_1$ but not to $U_1$,
	we must have
	  either $X_{i,1}$  connected to $X_{1,1}$ but otherwise isolated
	(hence counting towards $N_2$); or the $i$th vertex moved to
	an isolated location distant more than  
	$2r_n$ from $\tX_1$ (hence counting towards $N_4$);
	or $\tX_i$ isolated and distant at most $2r_n$ from
	$\tX_1$ (hence counting towards $N_6$).

%
%
%
%
%
%

	We estimate $\EE[N_i]$ for $i=1,2,\ldots,6$,
	repeatedly using the fact that $\nu(B_s(x)) \geq (\theta_d/4)f_0s^d$
	for all small enough $s>0$ and 
	all $x \in A$
	by Lemma 
	\ref{l:A3}.
	We have for large enough $n$ that
	\begin{align*}
		\EE[N_1] & \leq  
		\int_{A}  (1- \nu(B_{r_n}(x)))^{n-1} \nu(d x)
		\\
		& \leq 2   
		\exp(-f_0 (\theta_d/4) n r_n^d),
	\end{align*}
	and
	\begin{align*}
		\EE[N_2] & \leq (n-1)
		\int_A \int_{B_{r_n}(x)} 
		(1- \nu(B_{r_n}(y)))^{n-2} 
		\nu(d y) \nu(dx)
		\\
		& \leq 2  n \fmax \theta_d r_n^d
		\exp(-f_0 (\theta_d/4) n r_n^d).
	\end{align*}
	Using the fact that $\Pr[J_i>1] \leq  \fmax \theta_d r_n^d$
	for $i >1$, and the point process
	$\X_{n,1} \setminus \{X_{i,1}\}$ is independent of 
	the event $\{J_i >1\}$ and the random vector $\tX_i$, we obtain that
	\begin{align*}
		\EE[N_3] & \leq (n-1)^2  \fmax \theta_d r_n^d
		\sup_{x \in A}
		\int_{B_{r_n}(x)} (1- \nu(B_{r_n}(y)))^{n-2}
		\nu(d y)
		\\
		& \leq c (nr_n^d)^{2} \exp(- f_0 (\theta_d/4) n r_n^d).
	\end{align*}
Using the same bound on $\Pr[J_i >1]$ and the fact that for $i \in
\{2,\ldots,n\}$ 
the  distribution
of $\X_{n,2} \setminus \{\tX_1,\tX_i\},$ 
given $(\tX_1,\tX_i)$ is that of
a sample of size $n-2$ from the restriction of $\nu$ to $A \setminus
B_{r_n}(\tX_1)$ normalized to be a probability measure,
we have that
	\begin{align*}
		\EE[N_4] & \leq (n-2)  \fmax \theta_d r_n^d
		\sup_{x \in A} 
		\left\{
		(1- \nu(B_{r_n}(x)))^{n-2} \right\}
		\\
		& \leq c n r_n^d \exp(-f_0 (\theta_d/4) n r_n^d).
	\end{align*}
	Next, by conditioning on $\tX_1$, we have
	\begin{align*}
		\EE[N_5] & \leq n  \fmax \theta_d r_n^d
		\sup_{x \in A}
		(1- \nu(B_{r_n}(x)))^{n-1} 
		 \\ & \leq c nr_n^d \exp(- f_0 (\theta_d/4) nr_n^d).
	\end{align*}

	$N_6$ is the number of singletons
	of $G(\X_{n,2},r_n)$ within distance $2r_n$ of $\tilde{X}_1$. Each
	vertex has probability $O(r_n^d)$ of lying in
	$B_{2r_n}(\tilde{X}_1) \setminus B_{r_n}(\tilde{X}_1)$,
	and given its location and that of $\tilde{X}_1$, using 
	\eqref{e:volLB1} from Lemma \ref{l:A1} without
	assuming $x \prec y$ there (and therefore requiring
	$\partial A \in C^2$), it has
	probability at most $e^{-2 \delta_1 f_0 nr_n^d}$ of being isolated,
	for some $\delta >0$.
	Thus we can obtain that $\E[N_6] = O(nr_n^d \exp(- 2 \delta_1 f_0
	nr_n^d))$.

	Combining  these  estimates for
	$N_1,\ldots,N_6$, we obtain that 
	$$
	\E [ |U_1-V_1|] = O(\exp(-  \delta_1 f_0 n r_n^d)).
	$$
By the exchangeability of $X_1,\ldots,X_n$, we can construct
$U_i, V_i$ similarly for each $i \in [n]$, with the same bound
for $\E[|U_i-V_i|]$. 
%
Then by Lemma \ref{l:Lindvall} (with the $W$ of that result
equal to our $S_{n}$) we obtain for  $n$ large enough  that
\begin{align*}
	\dtv(S_{n},Z_{\E[S_{n}]} ) & \leq
	\min \Big( 1, \frac{1}{\E[S_n]} \Big)
	\sum_{i=1}^n \E[Y_i]
	\times O(\exp(- \delta_1 f_0 n r_n^d))
	 = O(e^{- \delta_1 f_0 n  r_n^d}),
\end{align*}
as required.
\end{proof}

%

\begin{proof}[Proof of Proposition \ref{p:Sall}]

	The assertion 
	$\E[\zeta_n]= I_n (1+ O(e^{-\delta nr_n^d}))$ 
	follows from the Mecke formula (in the case $\zeta_n= S'_n$)
	and from Lemma \ref{l:diff_I_Itilde}
	(in the case $\zeta_n = S_n$).
	The assertion 
	$\Var[\zeta_n]= I_n (1+ O(e^{-\delta nr_n^d}))$ follows
	from Proposition \ref{p:Sn} (if  $\zeta_n = S'_n$)
	and from Proposition \ref{p:var_iso_bin}
	(if  $\zeta_n = S_n$).

	  By the Berry-Esseen theorem $\dk(t^{-1/2}(Z_t-t),N(0,1))=
        O(t^{-1/2})$ as $t \to \infty$. Hence, by
                the triangle inequality for $\dk$,
		we have
\begin{align}
	\dk( I_n^{-1/2}(S'_n- I_n),N(0,1))\le
	\dk(S'_n,Z_{I_n}) + O(I_n^{-1/2}).
        \label{e:PotoNbin}
\end{align}
	Then using
	\eqref{e:dtvS'Z}, and
	the obvious inequality $\dk \leq \dtv$,
	we obtain \eqref{e:CLTS'}.
	Similarly using \eqref{e:dtvSZ} and Lemma \ref{l:diff_I_Itilde}
	we obtain
	\eqref{e:CLTS}.
\end{proof}

	\subsection{Asymptotics for $I_n$ in the uniform case}
	\label{s:Iunif}

Throughout this subsection we make the additional  assumption
that $f \equiv f_0 \1_A$, with
$f_0 = 1/\lambda(A)$,
and assume as $n \to \infty$ that
$r_n$ satisfy  
\begin{align}
nr_n^d \to \infty; ~~~~~~~~~~
	\limsup (nf_0 \theta_d r_n^d - (2-2/d) (\log n -
	\1\{d \geq 3\} \log \log n) ) < \infty.
	\label{e:supcri5}
\end{align}
We shall demonstrate  the asymptotic equivalence  of
$I_n$ and $\mu_n$, defined at 
\eqref{e:def:I_n} and \eqref{e:defI'} respectively.
	Given $n$, for $x \in A$ set $p_n(x) := \exp(-n \nu(B_{r_n}(x)))$.
	Given Borel $S \subset A$, let
\begin{align}\label{e:E[F_nS]}
	I_n(S) : = nf_0 \int_S  p_n(x) dx.
\end{align}

\begin{proposition}[The case $d=2$]
 \label{p:average2d}
	Suppose $d=2 $ and either $\partial A \in C^2$,
	or $A= [0,1]^2$. Suppose \eqref{e:supcri5} holds.
	Then we have as $n \to \infty$ that 
	\begin{align}
		I_n =  n \exp(-n  f_0 \pi r_n^2) (1+ O(nr_n^2)^{-1/2}).
		\label{e:In2d}
	\end{align}
\end{proposition}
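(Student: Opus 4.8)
The plan is to split the integral $I_n = nf_0\int_A p_n(x)\,dx$ into a contribution from the deep interior $A^{(-r_n)}$, where $\nu(B_{r_n}(x))=\theta r_n^2 f_0$ exactly, and a boundary-layer contribution from $A\setminus A^{(-r_n)}$, and to show the latter is a lower-order correction. On $A^{(-r_n)}$ the integrand is the constant $e^{-n\theta r_n^2 f_0}$, so $I_n(A^{(-r_n)}) = nf_0\lambda(A^{(-r_n)})e^{-n\pi r_n^2 f_0}$, and since $\lambda(A\setminus A^{(-r_n)}) = O(r_n)$ (a standard fact for $C^2$ boundary, and elementary for the square), this already gives $I_n(A^{(-r_n)}) = ne^{-n\pi r_n^2 f_0}(1 - O(r_n))$, and the $O(r_n)$ term is absorbed into the claimed $O((nr_n^2)^{-1/2})$ error since $\eqref{e:supcri5}$ forces $nr_n^2 = O(\log n)$, hence $r_n = O(\sqrt{(\log n)/n}) = o((nr_n^2)^{-1/2})$.

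Next I would bound the boundary contribution $I_n(A\setminus A^{(-r_n)})$. For $x$ in this layer, Lemma~\ref{l:A3} (the first inequality $\eqref{e:A31}$, for $C^2$ boundary; the stated $(\pi/4)s^2$ bound for the square) gives $\nu(B_{r_n}(x)) \geq (\tfrac12 - \eps)\theta r_n^2 f_0$ for any fixed $\eps>0$ and $n$ large, so $p_n(x) \leq \exp(-n(\tfrac12-\eps)\pi r_n^2 f_0)$. Hence $I_n(A\setminus A^{(-r_n)}) = O(n r_n \exp(-n(\tfrac12-\eps)\pi r_n^2 f_0))$. Comparing this with $ne^{-n\pi r_n^2 f_0}$, the ratio is $O(r_n \exp(n(\tfrac12+\eps)\pi r_n^2 f_0))$; using $\eqref{e:supcri5}$, namely $n\pi r_n^2 f_0 \leq \log n + O(1)$, this ratio is $O(r_n n^{1/2+\eps})$, which is $o((nr_n^2)^{-1/2})$ provided we choose $\eps$ small enough, again because $r_n = O(\sqrt{(\log n)/n})$. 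Actually, to be safe one should first restrict attention to the subcase $b^+ \leq \tfrac{6}{5}b_c$ as in the proof of Lemma~\ref{l:IUB} (monotonicity of $I_n$ in $r_n$ reduces the general case to this), so that the exponents behave as claimed; alternatively one can use Lemma~\ref{l:IUB} directly to see $I_n(A\setminus A^{(-r_n)}) = O(n^{1/2}e^{-n\pi r_n^2 f_0(\tfrac12-\eps)})$.

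The main obstacle, and the reason the error is only $O((nr_n^2)^{-1/2})$ rather than something smaller, is that the crude interior/boundary dichotomy loses a factor of roughly $r_n \cdot n^{1/2}$ in the boundary estimate; a finer analysis would track the integrand's decay as a function of $a(x) = \dist(x,\partial A)$ using the sharper Lemma~\ref{l:bdyvol} and integrate in the normal direction (this is presumably what the authors do for $d\geq 3$ in Proposition~\ref{p:average3d+}, invoking the coarea-type Lemma~\ref{thm:cov} from \cite{HPY24}), but in $d=2$ the boundary term is genuinely negligible at the crude level so the dichotomy suffices. I would therefore organize the write-up as: (i) reduce to $b^+\leq \tfrac65 b_c$ by monotonicity; (ii) evaluate $I_n(A^{(-r_n)})$ exactly and bound the interior error by $\lambda(A\setminus A^{(-r_n)}) = O(r_n)$; (iii) bound $I_n(A\setminus A^{(-r_n)})$ via Lemma~\ref{l:A3} and $\eqref{e:supcri5}$; (iv) combine, checking in each step that the error terms are $O((nr_n^2)^{-1/2})$ using $nr_n^2 = O(\log n)$.
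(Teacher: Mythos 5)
Your steps (i)--(ii) are fine (and match the paper's treatment of the bulk), but step (iii) contains a genuine gap that breaks the argument. Under \eqref{e:supcri5} with $d=2$ you have $n\pi f_0 r_n^2\le \log n+O(1)$ \emph{and} $nr_n^2\to\infty$. Your crude boundary bound gives a boundary-to-bulk ratio of order $r_n e^{(\frac12+\eps)n\pi f_0 r_n^2}=O\big(n^{\eps}\,r_n n^{1/2}\big)=O\big(n^{\eps}(nr_n^2)^{1/2}\big)$, which tends to infinity; the claim that $r_n n^{1/2+\eps}=o((nr_n^2)^{-1/2})$ because $r_n=O(\sqrt{(\log n)/n})$ is an arithmetic slip, since $r_n n^{1/2}=(nr_n^2)^{1/2}\to\infty$. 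So the interior/boundary dichotomy at the ``crude level'' does not show the boundary layer is negligible at all, let alone within the claimed $O((nr_n^2)^{-1/2})$ error --- and the proposition is needed precisely in the critical regime $n\pi f_0 r_n^2=\log n+O(1)$ (Theorem \ref{t:PoLimUn}(a)), where the true boundary contribution is already of relative order $(nr_n^2)^{-1/2}$, so there is no slack to lose. The same objection applies to your fallback via Lemma \ref{l:IUB}, which yields only $O(n^{\eps})$ for the ratio.

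The missing ingredient is exactly the refinement you set aside as unnecessary: the integrand decays like $e^{-cnr_n a(x)}$ in the distance $a(x)=\dist(x,\partial A)$ (Lemma \ref{l:bdyvol} for $\partial A\in C^2$; the explicit bound $\nu(B_{r_n}(x))\ge r_n^2(\tfrac{\pi}{2}+a(x)/r_n)$ on the edge strips of the square), so the effective width of the boundary layer is $\Theta(1/(nr_n))$ rather than $r_n$. Integrating in the normal direction gains the factor $(nr_n^2)^{-1}$, giving a boundary contribution $O(r_n^{-1}e^{-n\pi f_0 r_n^2/2})$ whose ratio to the bulk is $O\big(e^{n\pi f_0 r_n^2/2}/(nr_n)\big)=O((nr_n^2)^{-1/2})$ by \eqref{e:supcri5}. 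This is how the paper argues: for $\partial A\in C^2$ it simply quotes Proposition \ref{p:average3d+} (valid for all $d\ge2$, built on Lemmas \ref{l:bdyvol} and \ref{thm:cov}) and checks the boundary term is $O((nr_n^2)^{-1/2})$ relative to the bulk; for $A=[0,1]^2$ it performs the normal-direction integration explicitly on the edge strips and bounds the four corner squares separately (where the exponent drops to $e^{-n\pi r_n^2/4}$, a case your uniform layer bound also ignores, though that term happens to be harmless). As a minor point, your reduction ``to $b^+\le\frac65 b_c$ by monotonicity'' is unnecessary here --- \eqref{e:supcri5} already forces $n\theta f_0 r_n^2\le\log n+O(1)$ --- and monotonicity of $I_n$ in $r_n$ would not transfer an asymptotic equivalence in any case.
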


\begin{proof}
	{\bf Case 1: $\partial A \in C^2$.}
	In this case the result follows from Proposition \ref{p:average3d+}
	below,
	since
	the ratio between the two terms in the right hand side of 
	\eqref{e:EFconv} is given by
	\begin{align*}
		\frac{
	 e^{-n f_0 \theta_d r_n^d/2} \theta_{d-1}^{-1} r_n^{1-d}
		|\partial A| }{ 
		n  e^{-n f_0 \theta_d r_n^d}}
		= O\Big( \exp \big( \frac{n f_0 \pi r_n^2}{2} - \frac{\log n}{2}
		- \frac{\log(nr_n^2)}{2} \big) \Big),  
	\end{align*}
	which is $O((nr_n^2)^{-1/2})$
	by
	\eqref{e:supcri5}.

{\bf Case 2: $A= [0,1]^2$.}
In this case $f_0=1$.
	Define 
	the `moat'
$\mathsf{Mo}_n := A \setminus A^{(-r_n)}$.

For $1 \leq i \leq 4$
let $\mathsf{Cor}_{n,i}$  be the region of $A$ within $\ell_\infty$-distance 
	$r_n$ of the $i$th corner of $A$ (a square of side $r_n$).
	Then
$	I_n  ({\mathsf{Cor}_{n,i}})
	 \leq  n r_n^2 e^{- n \pi r_n^2/4}. 
	$

	 The set $\mathsf{Mo}_n \setminus \cup_{i=1}^4
	\mathsf{Cor}_{n,i}$ is a union of 
	rectangular regions $\mathsf{Rec}_{n,i}$, $1 \leq i \leq 4$.
	For all $n$ large enough, $i \leq 4$
and $x \in \mathsf{Rec}_{n,i}$
	we have $\nu(B_{r_n}(x)) \geq  r_n^2(\frac{\pi}{2} + a(x)/{r_n})$,
	where $a(x)$ denotes the distance from $x$ to $\partial A$. 
Hence
\begin{align*}
	I_n( \mathsf{Rec}_{n,i}) 
	\le n {r_n} e^{-n  \pi r_n^2/2}   
	\int_0^1 e^{-n r_n^2  a} da
	= O(n {r_n} e^{- \pi nr_n^2/2} (nr_n^2)^{-1} ).
\end{align*}
Combined with the corner region estimate, and the bound
$\pi nr_n^2 \leq \log n + c$ from \eqref{e:supcri5}, this yields
\begin{align*}
	I_n(\mathsf{Mo}_n )/I_n(A^{(-{r_n})})
	& =
	O(n^{-1} r_n^{-1} e^{ n \pi r_n^2/2} ) 
	+ O( r_n^{2} e^{ (3/4)  n \pi r_n^2} ) 
\\
	& = O( n^{-1/2} r_n^{-1}) + O(((\log n)/n) n^{3/4} ).
\end{align*}
Also $I_n(A^{(-{r_n})}) = ne^{-n \pi r_n^2}(1+ O({r_n}))$
and $r_n= o((nr_n^2)^{-1/2})$.  Putting together these estimates
yields (\ref{e:In2d}) in Case 2.
\end{proof}

\begin{proposition}[The case $\partial A \in C^2$] 
	\label{p:average3d+}
	Suppose 
	$d \geq 2$ and $\partial A \in C^2$.
	Suppose $(r_n)_{n \geq 1}$ satisfy \eqref{e:supcri5}.
	Then as $n \to \infty$,
\begin{align}
	I_n = n  e^{-n f_0 \theta_d r_n^d}
	+
	 e^{-n f_0 \theta_d r_n^d/2} \theta_{d-1}^{-1}
	 |\partial A|
	 r_n^{1-d}
	 \Big( 1+ O \Big( \big(\frac{\log (nr_n^d)}{nr_n^d} \big)^2
	 \Big) \Big). 
	\label{e:EFconv}
\end{align}
\end{proposition}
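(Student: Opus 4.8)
The plan is to split the integral $I_n = nf_0\int_A p_n(x)\,dx$ into the contributions from $A^{(-r_n)}$ (the ``deep interior'', where $B_{r_n}(x)\subset A$) and from the boundary layer $A\setminus A^{(-r_n)}$, and to analyse each term separately. For the interior term, every $x\in A^{(-r_n)}$ has $\nu(B_{r_n}(x)) = f_0\theta r_n^d$ exactly, so $I_n(A^{(-r_n)}) = nf_0\lambda(A^{(-r_n)})e^{-nf_0\theta r_n^d} = ne^{-nf_0\theta r_n^d}(1+O(r_n))$, using $\lambda(A\setminus A^{(-r_n)}) = O(r_n)$ (a standard consequence of $\partial A\in C^2$, hence of bounded reach). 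The main work is the boundary term, and here the key device is Lemma~\ref{l:bdyvol}: for $x\in A\setminus A^{(-r_n)}$ with $r_n<\tau(A)$ we have $\lambda(B_{r_n}(x)\cap A) = ((\theta/2)+g(a(x)/r_n))r_n^d + O(r_n^{d+1})$, where $a(x)=\dist(x,\partial A)$ and $g(s)=\lambda(B_1(o)\cap([0,s]\times\R^{d-1}))$. Since $f\equiv f_0$, this gives $\nu(B_{r_n}(x)) = f_0 r_n^d\big((\theta/2)+g(a(x)/r_n)\big) + O(r_n^{d+1})$, so the integrand $p_n(x) = e^{-nf_0\theta r_n^d/2}\exp(-nf_0 r_n^d g(a(x)/r_n))\cdot(1+O(nr_n^{d+1}))$ depends on $x$, to leading order, only through $a(x)$.

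\medskip\noindent\textbf{Reduction to a one-dimensional integral.}
Next I would invoke the coarea-type result from \cite{HPY24} quoted as Lemma~\ref{thm:cov} in the present paper (referred to in Section~\ref{ss:overview}): for a nice integrand $\psi(a(x))$ supported on a thin tube around $\partial A$, $\int_A \psi(a(x))\,dx = |\partial A|\int_0^\infty \psi(t)\,dt \cdot(1+o(1))$, with an explicit error controlled by the thickness of the tube and the $C^2$ geometry of $\partial A$. Applying this with $\psi(t) = \exp(-nf_0 r_n^d g(t/r_n))$ and substituting $t = r_n u$ converts the boundary contribution into
\begin{align*}
	I_n(A\setminus A^{(-r_n)}) = nf_0 e^{-nf_0\theta r_n^d/2}|\partial A| r_n \int_0^1 e^{-nf_0 r_n^d g(u)}\,du\,\big(1+O(nr_n^{d+1}) + (\text{geometric error})\big).
\end{align*}
It then remains to evaluate the scalar integral $J_n := \int_0^1 e^{-nf_0 r_n^d g(u)}\,du$ asymptotically as $nr_n^d\to\infty$. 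Since $g(0)=0$, $g$ is smooth and increasing with $g'(0) = \lambda(B_1(o)\cap\{0\}\times\R^{d-1}) = \theta_{d-1}$, a Laplace/Watson-lemma expansion at $u=0$ gives $J_n = \frac{1}{nf_0 r_n^d\, g'(0)}(1+O(1/(nf_0 r_n^d))) = \frac{1}{nf_0\theta_{d-1}r_n^d}(1+O((nr_n^d)^{-1}))$. Multiplying back, the boundary term becomes $e^{-nf_0\theta r_n^d/2}\,\theta_{d-1}^{-1}|\partial A|\,r_n^{1-d}(1+O((nr_n^d)^{-1}))$, and combining with the interior term gives the claimed \eqref{e:EFconv}, provided the various error terms are all $O((\log(nr_n^d)/(nr_n^d))^2)$.

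\medskip\noindent\textbf{Bookkeeping of errors.}
The last step is to check that every error is absorbed into $O((\log(nr_n^d)/(nr_n^d))^2)$ after using \eqref{e:supcri5}. Under \eqref{e:supcri5} we have $nf_0\theta r_n^d = (2-2/d)(\log n - \1\{d\ge3\}\log\log n) + O(1)$, so $nr_n^d = \Theta(\log n)$ and $r_n = \Theta((\log n / n)^{1/d})$; hence $r_n = O(nr_n^d/n)$ is a far smaller quantity, and the interior relative error $O(r_n)$ and the multiplicative $O(nr_n^{d+1}) = O(r_n\cdot nr_n^d)$ from Lemma~\ref{l:bdyvol} are both comfortably $o((\log(nr_n^d)/(nr_n^d))^2)$ (indeed they are polynomially small in $n$ up to logs). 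One also needs the \emph{relative} size of the boundary term versus the interior term: the ratio is $\Theta\big(e^{nf_0\theta r_n^d/2}\,r_n^{1-d}/n\big) = \Theta\big(e^{nf_0\theta r_n^d/2 - \log n}\,r_n^{1-d}\big)$, and under \eqref{e:supcri5} the exponent $nf_0\theta r_n^d/2 - \log n$ is $(1-1/d)\log n - \tfrac12(2-2/d)\1\{d\ge3\}\log\log n - \log n + O(1) = -\tfrac1d\log n - \ldots$, while $r_n^{1-d} = \Theta((n/\log n)^{(d-1)/d})$, so the ratio is $\Theta((\log n)^{\cdot})$ up to the $\log\log n$ correction — in particular the error needs to be stated relative to whichever term dominates, which is why the statement writes the $O(\cdot)$ only on the boundary factor. \textbf{I expect the main obstacle to be the precise extraction and error control in the coarea reduction} (Lemma~\ref{thm:cov} of \cite{HPY24}): one must verify that the integrand $\exp(-nf_0 r_n^d g(a(x)/r_n))$, which concentrates on a layer of thickness $\Theta(r_n/(nr_n^d)) = \Theta(1/n)$ near $\partial A$ — far thinner than $r_n$ — still falls within the hypotheses of that lemma with an error of the required order, and that replacing $g(a(x)/r_n)$ by its Taylor expansion near $a(x)=0$ (needed both inside the Laplace estimate and to handle the curvature of $\partial A$ entering $g$) contributes only the advertised $(\log(nr_n^d)/(nr_n^d))^2$ error rather than something larger. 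The Laplace-type evaluation of $J_n$ is routine once this is set up.
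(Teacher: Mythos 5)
Your decomposition is exactly the paper's: bulk $A^{(-r_n)}$ versus moat, Lemma \ref{l:bdyvol} to replace $\nu(B_{r_n}(x))$ by $f_0 r_n^d(\tfrac{\theta}{2}+g(a(x)/r_n))$ with multiplicative error $1+O(nr_n^{d+1})$, Lemma \ref{thm:cov} to collapse the moat integral to the one-dimensional integral $\int_0^1 e^{-nf_0r_n^d g(u)}\,du$, and a Laplace-type evaluation of that integral. (Your worry about the lemma of \cite{HPY24} is unnecessary: its error is multiplicative, $\leq c\,r|\partial A|\int_0^r\Psi$, so concentration of the integrand on a layer of width $\Theta(1/n)$ causes no loss beyond the relative $O(r_n)$.)

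There is, however, a genuine quantitative gap at the final step. You assert that Watson's lemma gives $\int_0^1 e^{-\lambda_n g(u)}\,du = (\theta_{d-1}\lambda_n)^{-1}\bigl(1+O(\lambda_n^{-1})\bigr)$ with $\lambda_n:=nf_0r_n^d$, and then need "the various error terms to be $O((\log\lambda_n/\lambda_n)^2)$". But $\lambda_n^{-1}$ is \emph{larger} than $(\log\lambda_n/\lambda_n)^2$, so the error you carry through is too crude to yield \eqref{e:EFconv} as stated: you would only obtain the boundary term with relative error $O((nr_n^d)^{-1})$. To close this you must sharpen the one-dimensional estimate, either as the paper does — split $[0,1]$ at $\delta_n\asymp \log(nr_n^d)/(nr_n^d)$ and use $\theta_{d-1}(a-da^3)\le g(a)\le\theta_{d-1}a$ near $0$ together with $g(a)\ge(1-\eps)\theta_{d-1}a$ on $(0,\delta)$, which gives relative error $O(\delta_n^2)$ — or by pushing the Laplace expansion one order further, noting $g''(0)=0$ (indeed $g(a)=\theta_{d-1}(a-\tfrac{d-1}{6}a^3+O(a^5))$), which in fact yields the stronger $O(\lambda_n^{-2})$; either way the advertised rate is recovered, but not from the bound you wrote. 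A smaller point: \eqref{e:supcri5} is only an upper bound ($\limsup\gamma_n<\infty$), so your claim $nr_n^d=\Theta(\log n)$ is unwarranted — one only has $nr_n^d=O(\log n)$ together with $nr_n^d\to\infty$. Your negligibility claims for the $O(r_n)$ and $O(nr_n^{d+1})$ errors survive this (since $(\log(nr_n^d)/(nr_n^d))^2=\Omega((\log n)^{-2})$ while $nr_n^{d+1}=o((\log n)^{-2})$), and the $O(r_n)$ error on the bulk term is absorbed because $nr_n^d e^{-n\theta f_0 r_n^d/2}$ is exponentially small relative to the polynomial target rate, but these checks should be made against the upper bound on $nr_n^d$ rather than a two-sided $\Theta(\log n)$ assumption.
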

As in Section \ref{ss:geom}, for $x\in A$ let $a(x): = \dist (x,\partial A)$,
the Euclidean distance from $x$ to $\partial A$, and
for $s \geq 0$ let $g(s) := \lambda(B_1(o) \cap ([0,s] \times \R^{d-1}))$.
To prove Proposition \ref{p:average3d+} we shall use the following result
from \cite{HPY24}.

\begin{lemma}
        \label{thm:cov}
	If $d \geq 2$ and $\partial A \in C^2$,
        there are positive finite  constants $c = c(A), r_0 = r_0(A)$, such that
for all $r \in (0,r_0)$, and all bounded measurable
	 $\Psi: [0,r) \to [0,\infty)$,
\begin{equation}
	\Big| \int_{A \setminus A^{(-r)}} \Psi(a(y)) \, d y
        - |\partial A| \int_0^{r} \Psi (s) \,d s \Big|
        \leq c  r |\partial A| 
        \int_0^{r} \Psi (s) \,d s.
        \label{e:CoVcoro}
\end{equation}
\end{lemma}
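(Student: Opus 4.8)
\textbf{Plan of proof for Lemma~\ref{thm:cov}.}

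The plan is to reduce the $d$-dimensional integral over the thin shell $A \setminus A^{(-r)}$ to a one-dimensional integral over the distance-to-boundary coordinate, using the $C^2$ regularity of $\partial A$ to control the Jacobian of the normal-coordinate parametrization. First I would recall the standard fact (a consequence of $\partial A \in C^2$ and compactness, via the tubular neighbourhood theorem) that there is a constant $r_0 = r_0(A) > 0$ such that the map $\Phi : \partial A \times [0, r_0) \to A \setminus A^{(-r_0)}$, $(z, s) \mapsto z - s\, \mathbf{n}(z)$ (where $\mathbf{n}(z)$ is the outward unit normal at $z$), is a bijection onto $A \setminus A^{(-r_0)}$, and that $a(\Phi(z,s)) = s$. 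The key analytic input is the formula for the Jacobian: in these coordinates, $dy = J(z,s)\, \mathcal{H}^{d-1}(dz)\, ds$ where $J(z,s) = \prod_{i=1}^{d-1}(1 - s\,\kappa_i(z))$ and $\kappa_i(z)$ are the principal curvatures of $\partial A$ at $z$. Since $\partial A \in C^2$ and $\partial A$ is compact, the principal curvatures are uniformly bounded, say $|\kappa_i(z)| \le \kappa_{\max}$ for all $z$ and $i$; hence, shrinking $r_0$ if necessary so that $r_0 \kappa_{\max} \le 1/2$, we get $|J(z,s) - 1| \le c_1 s$ for all $z \in \partial A$ and $s \in [0,r_0)$, for a constant $c_1$ depending only on $d$ and $\kappa_{\max}$.

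With this in hand, the computation is a direct change of variables. For $r \in (0,r_0)$ and $\Psi : [0,r) \to [0,\infty)$ bounded measurable, since $A \setminus A^{(-r)} = \Phi(\partial A \times [0,r))$ and $a(\Phi(z,s)) = s$, Tonelli's theorem gives
\begin{align*}
	\int_{A \setminus A^{(-r)}} \Psi(a(y))\, dy
	= \int_{\partial A} \int_0^{r} \Psi(s)\, J(z,s)\, ds\, \mathcal{H}^{d-1}(dz).
\end{align*}
Subtracting the main term $|\partial A| \int_0^r \Psi(s)\, ds = \int_{\partial A}\int_0^r \Psi(s)\, ds\, \mathcal{H}^{d-1}(dz)$ and using $|J(z,s)-1| \le c_1 s \le c_1 r$ together with $\Psi \ge 0$ yields
\begin{align*}
	\Big| \int_{A \setminus A^{(-r)}} \Psi(a(y))\, dy - |\partial A| \int_0^r \Psi(s)\, ds \Big|
	\le \int_{\partial A} \int_0^r \Psi(s)\, c_1 r\, ds\, \mathcal{H}^{d-1}(dz)
	= c_1 r\, |\partial A| \int_0^r \Psi(s)\, ds,
\end{align*}
which is \eqref{e:CoVcoro} with $c = c_1$.

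The main obstacle is making rigorous the claim that $\Phi$ is a bijection onto the full shell $A \setminus A^{(-r_0)}$ with the stated Jacobian, i.e., establishing the normal collar / tubular neighbourhood structure with quantitative control depending only on the $C^2$ data of $\partial A$. This requires the existence of a positive \emph{reach} for $\partial A$ (equivalently, a positive rolling radius), which holds for compact $C^2$ (indeed $C^{1,1}$) hypersurfaces; for each $y$ in a one-sided neighbourhood of $\partial A$ there is a unique nearest point $z = z(y) \in \partial A$, and $y = z - a(y)\mathbf{n}(z)$. One should also check that $A \setminus A^{(-r_0)}$ really equals $\{y \in A : a(y) < r_0\}$ and is covered by the collar, which again uses positivity of reach together with $A = \overline{A^o}$; and that the coarea/Jacobian formula for $\Phi$ holds Lebesgue-almost everywhere, which is standard once $\Phi$ is bi-Lipschitz. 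Since the paper only invokes this lemma as a black box from \cite{HPY24}, in the write-up I would simply cite \cite[Lemma~3.4]{HPY24} (or the relevant statement there) for the collar structure and Jacobian estimate, and present the change-of-variables computation above as the proof; alternatively, the entire lemma can be cited directly from \cite{HPY24} as the excerpt already does.
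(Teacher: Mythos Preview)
Your proposal is correct and gives the standard tubular-neighbourhood argument; the paper itself does not prove this lemma at all but simply cites \cite[Proposition~3.8]{HPY24}, so your sketch is in fact more detailed than what appears here. The normal-collar parametrization with Jacobian $\prod_i(1-s\kappa_i(z))$ is almost certainly the proof used in \cite{HPY24} as well, so there is no substantive difference in approach to comment on.
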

\begin{proof}
	See \cite[Proposition 3.8]{HPY24}.
\end{proof}

	\begin{proof}[Proof of Proposition \ref{p:average3d+}]
		We refer to $A^{(-r_n)}$ as the {\em bulk}.
To deal with this region, note that for each $x\in A^{(-r_n)}$, we have
$p_n(x)= e^{-nf_0 \theta_d r_n^d}$ so that by (\ref{e:E[F_nS]}),
\begin{align}
	I_n(A^{(-r_n)})=
	nf_0 \lambda(A^{(-r_n)}) e^{-nf_0 \theta_d r_n^d}
	= (1+O(r_n))
	\label{e:Inblk}
	n e^{-nf_0 \theta_d r_n^d}.
\end{align}

It remains to deal with the region
		$ \mathsf{Mo}_n := A \setminus A^{(-r_n)}$ (which we call 
		the {\em moat}).
This is the region within distance $r_n$ of $\partial A$.
For each 
$x\in \mathsf{Mo}_{n}$, 
		we have
\begin{align*}
|p_n(x) - e^{-nf_0  r_n^d(\frac{\theta_d}{2}+g(\frac{a(x)}{r_n}))}| 
	 & \le  e^{-nf_0  r_n^d(\frac{\theta_d}{2}+g(\frac{a(x)}{r_n}))}
	\\
	 & \times \Big|  \exp\Big(nf_0 \Big( r_n^d \big( 
	 \frac{\theta_d}{2} + g(\frac{a(x)}{r_n}) \big) - 
	 \lambda(B_{r_n}(x) \cap A) \Big) \Big)  -1 \Big|.
\end{align*}
		Using the inequality $|e^{s} -1| \leq 2|s|$ for $s \in [-1,1]$,
and (\ref{e:supcri5}), and 
		Lemma \ref{l:bdyvol}
		we obtain that there exists a constant $c$ such that
		for all $x \in \mathsf{Mo}_n$,
\begin{align*}
|p_n(x) - e^{-nf_0 r_n^d(\frac{\theta_d}{2}+g(\frac{a(x)}{r_n}))}| &
\le c 
	n r_n^{d+1}
	e^{-nf_0 r_n^d(\frac{\theta_d}{2}+g(\frac{a(x)}{r_n}))}.
\end{align*}
		Integrating over $\mathsf{Mo}_n$
and using (\ref{e:E[F_nS]}), we obtain that
\begin{align*}
	I_n(\mathsf{Mo}_n) =
	n f_0 \int_{\mathsf{Mo}_{n}} p_n(x) dx = 
	( 1 + O( n r_n^{d+1}))
	nf_0 
	\int_{\mathsf{Mo}_n} 
	e^{-nf_0 r_n^d(\frac{\theta_d}{2}+g(\frac{a(x)}{r_n}))}da .
\end{align*}
Hence, using Lemma \ref{thm:cov} and the fact that $r_n = o(nr_n^{d+1})$ by
\eqref{e:supcri5}, we obtain that
\begin{align}
	I_n(\mathsf{Mo}_n) =
	( 1 + O( n r_n^{d+1}))
	n f_0 |\partial A| \int_0^1 r_n e^{-n f_0 r_n^d ( \frac{\theta_d}{2} + g(a))} da.
	\label{0131b}
\end{align}

Next we claim that as $n \to \infty$,
\begin{align}\label{e:error_const}
\int_0^1 nf_0 r_n e^{-nf_0  r_n^d(\frac{\theta_d}{2}+g(a))} da 
	& = e^{-n f_0 \theta_d r_n^d/2} \theta_{d-1}^{-1} r_n^{1-d}
	\Big(   1+ O \big( \frac{\log (nr_n^d)}{nr_n^d} \big)^2  \Big) ,
\end{align}
To prove this,
we first notice that 
$g(a)= \theta_{d-1} \int_0^a (1-t^2)^{(d-1)/2}dt $, $0 \leq a \leq 1$.
Therefore, we have (i) $g(0)=0$, $g$ is increasing and 
$ g(a)\le \theta_{d-1}a$, (ii) for any $\ep \in (0,1/2)$ there exists
$\delta \in (0,1)$ such that for all $a\in (0,\delta )$,
$g(a)\ge (1-\ep)\theta_{d-1}a$, and we claim that (iii)
upon choosing a smaller $\delta$ in (ii), we also have 
$g(a)\ge \theta_{d-1}(a - d a^3)$
for $a\in (0,\delta )$. To justify  (iii), 
we use the Taylor expansion $(1-t^2)^{(d-1)/2} =
1- 
\big( \frac{d-1}{2} \big)
t^2 + O(t^4)$ as $t \downarrow 0$,
so that $\theta_{d-1}^{-1} g(a) = a - \big( \frac{d-1}{6} \big)
a^3 + O(a^5)$ as $ a \downarrow 0$, and the fact that $(d-1)/6 < d$.

By item (i), we have
\begin{align}
	nf_0 \theta_{d-1} r_n^d \int_0^1 e^{-nf_0  r_n^d g(a)} da & \ge
	n f_0 \theta_{d-1} r_n^d \int_0^1 e^{-n r_n^d f_0\theta_{d-1} a} da
	\nonumber
\\
	& = 1- e^{-nf_0 \theta_{d-1} r_n^d}.
	\label{e:0117a}
\end{align}
Let $\eps_n\in (0,\delta)$. By item (iii), we have
\begin{align}
nf_0 \theta_{d-1} r_n^d \int_0^{\eps_n} e^{-nf_0  r_n^d g(a)} da &\le nf_0 \theta_{d-1} r_n^d \int_0^{\eps_n} e^{-nf_0 \theta_{d-1}r_n^d a(1- d \eps_n^2)} da \le
	1+c\eps_n^2.
	\label{e:0117b}
\end{align}
By item (ii), we have
\begin{align}
nf_0 \theta_{d-1} r_n^d \int_{\eps_n}^\delta e^{-nf_0  r_n^d g(a)} da
	&\le nf_0 \theta_{d-1} r_n^d \int_{\eps_n}^{\delta}
	e^{-nf_0r_n^d(1-\ep) \theta_{d-1}a} da
	\nonumber \\
	&\le 2 \exp(-nf_0r_n^d \theta_{d-1}(1-\ep)\eps_n).
	\label{e:0117c}
\end{align}
Moreover, using \eqref{e:supcri5}
it is easy to see that for $n $ large
\begin{align}
nf_0 \theta_{d-1} r_n^d \int_\delta^1 e^{-nf_0  r_n^d g(a)} da\le 
	\exp(-nr_n^d f_0  g(\eps_n)/2  ). 
	\label{e:0117d}
%
\end{align}

Set $u_n := nr_n^d$, and note
$u_n \to \infty$ as $n \to \infty$ by 
\eqref{e:supcri5}.  The right hand side of \eqref{e:0117a}
is $1- O(u_n^{-3})$.
 We take  $\eps_n = c' (\log u_n )/ u_n$ with a big constant $c'$;
 then the right hand side of \eqref{e:0117b} is 
 $1 + O \big( \big( \frac{\log u_n}{u_n} \big)^2 \big)$. 
 Provided $c'$ is big enough, the right hand side of
 \eqref{e:0117c} is  $O(u_n^{-3})$, as is the right hand
 side of \eqref{e:0117d}. Thus
combining these four estimates
yields
\begin{align}\label{e:conv1}
 nf_0 \theta_{d-1} r_n^d \int_0^1 e^{-nf_0  r_n^d g(a)} da  
	= 1 + O \Big( \big(  \frac{\log (nr_n^d)}{nr_n^d} \big)^2 \Big).
\end{align}
Since the left side of (\ref{e:conv1}), multiplied by
	$\theta_{d-1}^{-1} r_n^{1-d} e^{-nf_0\theta_d r_n^d/2}$,
comes to the left side of (\ref{e:error_const}), \eqref{e:conv1} yields \eqref{e:error_const}.

By \eqref{e:supcri5},
	$\big(\frac{\log(nr_n^d)}{nr_n^d} \big)^2 = \Omega( (\log n)^{-2})$ and
$nr_n^{d+1} = 
o( (\log n)^{-2})
= o \Big( \big( \frac{\log nr_n^d}{(nr_n^d)} \big)^2 \Big) $.
Therefore combining \eqref{0131b} and \eqref{e:error_const} 
leads to
\begin{align}
		I_n(\mathsf{Mo}_n) =
	e^{-n f_0 \theta_d r_n^d/2} \theta_{d-1}^{-1} |\partial A| r_n^{1-d}  
	\Big(1 + O \Big( \big( \frac{\log (nr_n^d)}{nr_n^d} \big)^2 \Big)
	\Big).
	\label{0131c}
\end{align}

 The error term in  the right hand side of \eqref{e:Inblk}, divided by the leading-order  term in
 the right hand side of \eqref{0131c},
satisfies
 \begin{align*}
	 \frac{n r_ne^{-n \theta_d f_0 r_n^d}}{e^{-n \theta_d f_0 r_n^d/2} r_n^{1-d}}
	 = O( nr_n^d e^{-n \theta_d f_0 r_n^d/2} ) =
	 O \Big( \big( \frac{\log (nr_n^d)}{nr_n^d} \big)^2 \Big),
 \end{align*}
 Thus combining  \eqref{0131c} with
 \eqref{e:Inblk} shows that \eqref{e:EFconv} holds.
\end{proof}

\section{Proof of first-order asymptotics}
\label{s:moments}

Throughout this section we make the same assumptions
on $\nu$ and $A$ that we set out at the start of Section
\ref{s:prelims}.
We also assume that
\eqref{c:lowerbound} and \eqref{c:upperbound} hold,
i.e. that $nr_n^d \to \infty$ and $\liminf(I_n) >0$
as $n \to \infty$. 
We note for later use that the latter assumption,
together with Proposition
		\ref{p:bc}, implies
		\begin{align}
			nr_n^d = O(\log n) ~~~~ {\rm as~} n \to \infty.
			\label{e:RLB}
		\end{align}

We shall prove that if $\xi_n$ denotes any of $K_n-1, K'_n-1, R_n$ or
$R'_n$, then both $\E[\xi_n]$ and $\Var [\xi_n]$ are asymptotic
to $I_n$
(which was  defined at \eqref{e:def:I_n}) as $n \to \infty$;
we will then be able to prove the first-order convergence
results from Section \ref{s:stateresults}, i.e.
Theorems \ref{t:momLLN}, \ref{t:exprate}, \ref{t:PoLim} and
	\ref{t:PoLimUn}.

To achieve this goal,
we shall consider separately
the contributions to $\xi_n$ from non-singleton components
of $G(\X_n,r_n)$ or $G(\eta_n,r_n)$ that are
 small, medium or large. Here, given fixed $\rho > \eps > 0$,
 we say a component is {\em small} (respectively {\em medium},
 {\em large}) if
 its Euclidean diameter is less than $\eps r_n$ 
 (resp.,  between $\eps r_n$ and $\rho r_n$, greater than $\rho r_n$).
 We shall make appropriate choices of the constants $\eps, \rho$
 as we go along.





For finite $\X \subset \R^d$, $x \in \X$, and $n \geq 1$ we let
$\scr F_n(x,\X) $ denote the event that $x$ is the first
element of $\cC_{r_n}(x,\X)$ (defined in  Section
\ref{ss:perc}) in the $\prec$ ordering (defined in Section
\ref{ss:geom}), i.e.
\begin{align}
	\scr F_n(x,\X) :=  \{ 
	x \prec y ~~\forall ~~ y \in \cC_{r_n}(x,\X) \setminus \{x\} \}.
	\label{e:defZ}
\end{align}
Given $n$ and $(r_n)_{n \geq 1}$,
for $0 \leq \eps  < \rho \leq \infty$ we define
$K_{n,\eps ,\rho}(\cX)$
to be the number of components  of $G(\cX,r_n)$
that have Euclidean diameter in the range $(\eps r_n,\rho r_n]$,
and $R_{n,\eps,\rho}$ to be the number of vertices in such components,
that is,  with event $\scr M_{n,\eps,\rho}(x,\X)$ defined at
\eqref{e:medevent1}, 
\begin{align}
	K_{n,\eps ,\rho}(\cX)  := \sum_{x \in \X} \1_{
		\scr M_{n,\eps,\rho}(\X)
		\cap
		\scr F_n(x,\X)
		}
	;
	\label{e:Knrdef}
	~~~~~
	 R_{n,\eps,\rho}(\cX)  := 
	 \sum_{x \in \X} {\bf 1}_{ 
	 \scr M_{n,\eps,\rho}(x,\cX)} .
\end{align}
We then define the random variables
$K_{n,\eps,\rho} := K_{n,\eps,\rho}(\cX_n)$
and $K'_{n,\eps,\rho} := K_{n,\eps,\rho}(\eta_n)$.
Also we set
	  $R_{n,\eps,\rho} := R_{n,\eps,\rho}(\cX_n)$
	  and $R'_{n,\eps,\rho} := R_{n,\eps,\rho}(\eta_n)$.

	  \subsection{Asymptotics of means}
	  \label{ss:means}

We shall bound the expected number of `small' non-singleton components,
$\E[K_{n,0,\eps }]$,
using the following lemma.

\begin{lemma}
\label{l:smallK}
	There exist $\delta_9  \in (0,1)$ and $c, n_0 <\infty$ such that 
	for all $n\ge n_0$ and any $x\in A$, 
\begin{align}
	\PP[\scr F_n(x,\eta^x_n)  \cap \{0<\diam(\cC_{r_n}(x,\eta_n^x)) 
	\leq \delta_9  {r_n}\}] 
	\le c(nr_n^d)^{1-d} e^{-n\nu(B_{r_n}(x))}; 
	\label{e:Zub}
	\\ 
	\PP[\scr F_n(x,\X^x_{n-1})  \cap \{0<\diam(\cC_{r_n}(x,\X_{n-1}^x)) 
	\leq \delta_9  {r_n}\}] 
	\le c(nr_n^d)^{1-d} e^{-n\nu(B_{r_n}(x))}.
	\label{e:Zubbin}
\end{align}
\end{lemma}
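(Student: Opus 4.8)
The plan is to bound the probability that $x$ is the $\prec$-minimal vertex of a "small" non-singleton component of $G(\cX,r_n)$ — where $\cX$ is either $\eta_n$ or $\cX_{n-1}$ — by conditioning on the configuration of the component $\cC := \cC_{r_n}(x,\cX^x)$ and then using the emptiness of the surrounding region. First I would introduce the event under consideration: for $\scr F_n(x,\cX)$ to hold together with $0 < \diam(\cC) \le \delta_4 r_n$, the component $\cC$ must contain at least one other vertex $y$ with $x \prec y$ and $\|y-x\| \le \delta_4 r_n$, and moreover $\cX$ must avoid $(\cC \oplus B_{r_n}(o)) \setminus \cC$ entirely (else $\cC$ would be larger). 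The key geometric input is Lemma \ref{l:A1}, specifically \eqref{e:volLB2}: since $x \prec y$ and $\|y-x\| \le 3 r_n$, we have $\lambda(A \cap B_{r_n}(y) \setminus B_{r_n}(x)) \ge 2\delta_1 r_n^{d-1}\|y-x\|$, and hence $\nu\big((B_{r_n}(x) \cup B_{r_n}(y)) \setminus B_{r_n}(x)\big) \ge 2\delta_1 f_0 r_n^{d-1}\|y-x\|$. Combined with the fact that $\cC \supseteq \{x,y\}$ and $\cC \oplus B_{r_n}(o) \supseteq B_{r_n}(x) \cup B_{r_n}(y)$, the emptiness event forces an extra exponential factor beyond $e^{-n\nu(B_{r_n}(x))}$.

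The core computation would be, in the Poisson case, to use the Mecke formula to "extract" the vertex $y$: the expected number of such configurations involving exactly one extra near vertex $y$ is at most
\begin{align*}
	n^2 \int_A \int_{B_{\delta_4 r_n}(x) \cap A_x} \exp\big(-n \nu(B_{r_n}(x) \cup B_{r_n}(y))\big)\, \nu(dy) \le c\, n^2 e^{-n\nu(B_{r_n}(x))} \int_{B_{\delta_4 r_n}(x)} e^{-2 n \delta_1 f_0 r_n^{d-1}\|y-x\|}\, dy,
\end{align*}
and the last integral is $O\big((n r_n^{d-1})^{-d}\big) = O\big( r_n^d (n r_n^d)^{-d}\big)$ by a radial change of variables, so the product is $O\big(n^2 r_n^d (nr_n^d)^{-d}\big) e^{-n\nu(B_{r_n}(x))} = O\big((nr_n^d)^{2-d}\big) e^{-n\nu(B_{r_n}(x))}$ — which is even stronger than the claimed $O((nr_n^d)^{1-d})$. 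To handle components with two or more extra vertices, I would either sum a similar Mecke expansion over the number $m \ge 1$ of extra vertices (each contributing a factor that is summable in $m$, again using \eqref{e:volLB2} or the cruder \eqref{e:volLB1} for vertices at distance $\ge r_n$ within the small component), or bound the event by requiring at least one "nearest extra neighbour" as above and absorb the rest into the constant, since the small diameter $\delta_4 r_n$ forces all vertices of $\cC$ to lie in $B_{\delta_4 r_n}(x)$ and each then contributes a shrinking volume increment. The binomial case \eqref{e:Zubbin} follows by the same argument with the Mecke formula replaced by its binomial analogue, replacing $(1-\nu(\cdot))^{n-2}$ bounds by exponentials as in Lemma \ref{l:diff_I_Itilde}, noting $\cX_{n-1}^x$ has the law of $n-1$ points plus the fixed point $x$; the condition \eqref{c:lowerbound} ($nr_n^d \to \infty$, hence $nr_n^d = O(\log n)$ by Proposition \ref{p:bc} and \eqref{c:upperbound}) ensures all stray polynomial factors in $r_n$ are dominated.

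The main obstacle I anticipate is bookkeeping the contribution of components of size $\ge 3$ uniformly over $x$ — in particular making sure the geometric lower bound on the swept volume $\lambda(A \cap (\cC \oplus B_{r_n}(o)) \setminus B_{r_n}(x))$ is genuinely of order $r_n^{d-1} \diam(\cC)$ (linear in the "spread" of the component), not just of order $r_n^d$, because only the former gives the $(nr_n^d)^{1-d}$ rate rather than a mere $e^{-\delta nr_n^d}$ bound. Here the fact that $\diam(\cC)$ can be as small as $o(r_n)$ is exactly where \eqref{e:volLB2} (rather than \eqref{e:volLB1}) is essential, and one must be careful that the constant $\delta_4$ is chosen small enough (e.g. $\delta_4 < 1/3$) that \eqref{e:volLB2} applies to every pair of vertices in $\cC$. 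I would also need to confirm that the $\prec$-ordering does not cause trouble near $\partial A$ or near the corners of $[0,1]^2$: for $\partial A \in C^2$, \eqref{e:volLB1} holds without the $x \prec y$ hypothesis, while \eqref{e:volLB2} is stated only with it, which is fine since $\scr F_n(x,\cX)$ supplies exactly that hypothesis for every $y \in \cC \setminus\{x\}$.
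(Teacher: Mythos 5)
Your overall strategy (Mecke expansion over the component of $x$, with \eqref{e:volLB2} supplying a volume gain of order $r_n^{d-1}\,\mathrm{diam}(\cC)$) is the right one; note the paper itself does not carry out this argument but simply cites \cite[Lemma 4.2(i)]{PY21}, so you are reconstructing the proof rather than matching a written one. However, as written the proposal has a genuine gap at exactly the point you flag as the main obstacle: components with three or more vertices. Saying that the Mecke expansion over $m\ge 1$ extra vertices gives "a factor summable in $m$" is the whole issue, and your fallback -- extract one extra neighbour and "absorb the rest into the constant" -- does not work as stated: if you only record emptiness of $(B_{r_n}(x)\cup B_{r_n}(y))\setminus B_{\delta_4 r_n}(x)$ (to leave room for the unexamined points of $\cC$), you pay a factor $e^{cn\delta_4^d r_n^d}$, which is unbounded. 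The correct bookkeeping is: for a component $\{x,y_1,\dots,y_m\}$ with $t:=\max_i\|y_i-x\|\le\delta_4 r_n$, the vacant region has $\nu$-measure at least $\nu(B_{r_n}(x))+2\delta_1 f_0 r_n^{d-1}t$ (apply \eqref{e:volLB2} only to the farthest point, which satisfies $x\prec y_{i^*}$ on $\scr F_n$), while the multivariate Mecke/binomial expansion contributes at most $n^m(\fmax\theta t^d)^{m-1}/(m-1)!$ per $m$; summing over $m$ produces $n\,e^{n\fmax\theta t^d}$, and since $t\le\delta_4 r_n$ this entropy term is dominated by the volume gain precisely when $\fmax\theta\delta_4^{d-1}<2\delta_1 f_0$. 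Thus the binding smallness condition on $\delta_4$ is $\delta_4<(2\delta_1 f_0/(\fmax\theta))^{1/(d-1)}$ (compare the constraints on $\rho$ in Lemmas \ref{l:V0eps} and \ref{l:close}), not the "$\delta_4<1/3$ so that \eqref{e:volLB2} applies to every pair" you state; with that choice the remaining radial integral $n\int_0^{\delta_4 r_n}e^{-\delta_1 f_0 n r_n^{d-1}t}t^{d-1}\,dt=O((nr_n^d)^{1-d})$ gives the claimed rate.

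Two smaller corrections. First, your $m=1$ display mixes the pointwise and integrated versions (an outer $\int_A$ with no $\nu(dx)$, and a factor $n^2$ where the pointwise bound needs $n$); done correctly it yields exactly $O((nr_n^d)^{1-d})e^{-n\nu(B_{r_n}(x))}$. Second, the assertion that $O((nr_n^d)^{2-d})$ is "even stronger than" $O((nr_n^d)^{1-d})$ is backwards: since $nr_n^d\to\infty$, the exponent $2-d$ gives a \emph{weaker} bound, which for $d=2$ would not even tend to $0$ and would be insufficient for the lemma.
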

\begin{proof}
	See \cite[Lemma 4.2(i)]{PY21},
	taking $k=1$ there. Note that
	a $0$-separating set in $\cX$ (as it is called in
	\cite{PY21})
	is simply a component of $G(\cX,{r_n})$.
	Note also that if $A= [0,1]^2$ the proof of
	\cite[Lemma 4.2(i)]{PY21} remains applicable, using Lemma \ref{l:A1}
	of the present paper.
\end{proof}

We shall bound the expected number of `medium-sized' non-singleton components,
$\E[K_{n,\eps ,\rho}]$,
using the following two lemmas (here we use notation such as $\cX^x$ from \eqref{e:addpoint} and $\scr M_{n,\eps,K}(x,\cX)$ from \eqref{e:medevent1}). 


\begin{lemma}
\label{l:medK}
Let $\eps, \rho \in (0,\infty)$ with $\eps < \rho$. Then
	there exists $\delta_{10} = \delta_{10}
	(d,A,\eps,\rho) >0$ such that for all $n$ large and  all 
	distinct $x,y,z \in A$, we have:
\begin{align}
	\PP[\scr F_n(x,\eta_n^x) \cap
	\scr M_{n,\eps, \rho}(x, \eta_n)
	] \le e^{-n\nu(B_{r_n}(x)) - \delta_{10}  n r_n^d}; 
	\label{e:0825a}
	\\
	\PP[\scr F_n(x,\eta_n^{x,y}) \cap 
	\scr M_{n,\eps,\rho}(x,\eta_n^y)
	] \le e^{-n\nu(B_{r_n}(x)) - \delta_{10}  n r_n^d}; 
	\label{e:0825b}
	\\
	\PP[\scr F_n(x,\eta_n^{x,y,z}) \cap 
	\scr M_{n,\eps,\rho}(x,\eta_n^{y,z})
	] \le e^{-n\nu(B_{r_n}(x)) - \delta_{10}  n r_n^d}; 
	\label{e:0825e}
	\\
	\PP[\scr F_n(x,\X_{n-1}^x) \cap 
	\scr M_{n,\eps, \rho}(x,\X_{n-1})
	] \le e^{-n\nu(B_{r_n}(x)) - \delta_{10}  n r_n^d}; 
	\label{e:0825c}
	\\
	\PP[\scr F_n(x,\X_{n-2}^{x,y}) \cap
	\scr M_{n,\eps,\rho}(x,\X_{n-2}^y)
	] \le e^{-n\nu(B_{r_n}(x)) - \delta_{10}  n r_n^d}. 
	\label{e:0825d}
\end{align}
\end{lemma}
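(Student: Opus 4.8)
The plan is to prove the five estimates by a single geometric argument that captures the following intuition: if $x$ is the $\prec$-first vertex of a component $\cC$ whose Euclidean diameter lies in $(\eps r_n,\rho r_n]$, then the enlarged set $\cC \oplus B_{r_n}(o)$ is empty of points of the underlying process, and this enlarged set has $\nu$-measure strictly larger than $\nu(B_{r_n}(x))$ by an amount of order $r_n^d$. More precisely, since $\diam(\cC) \geq \eps r_n$, the component contains a second point $x_0$ with $x_0 \succ x$ (by the $\scr F_n$ event) and $\|x_0 - x\| \geq \eps r_n / (\text{const})$ once we account for the fact that a diameter-$\eps r_n$ set, being connected via $r_n$-edges, contains a point at graph distance comparable to $\eps r_n$ from $x$; alternatively one simply uses that $\cC$ itself, as a compact set, satisfies $\diam(\cC) \in [\eps r_n, \rho r_n]$ and applies Lemma~\ref{l:A2} with $B = \cC$, $s = r_n$, and $x_0 = x$ (which is legitimate precisely because $x \prec y$ for all $y \in \cC$). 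Lemma~\ref{l:A2} then gives
\[
	\lambda\big( (\cC \oplus B_{r_n}(o)) \cap A \big) \geq \lambda(\cC) + \lambda(B_{r_n}(x) \cap A) + 2\delta_2 r_n^d
\]
for a constant $\delta_2 = \delta_2(\eps,\rho) > 0$, hence $\nu\big( (\cC \oplus B_{r_n}(o)) \cap A \big) \geq \nu(B_{r_n}(x)) + 2 f_0 \delta_2 r_n^d$.

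The core step is then a Mecke-formula (or direct conditioning) computation. For the Poisson case \eqref{e:0825a}, write the event as a sum over which compact set $\cC$ equals $\cC_{r_n}(x,\eta_n^x)$; on the event $\scr F_n(x,\eta_n^x) \cap \scr M_{n,\eps,\rho}(x,\eta_n)$ we have $\eta_n \cap ((\cC \oplus B_{r_n}(o)) \cap A) = \emptyset$ apart from the points of $\cC$ themselves, and $\cC$ has at least one point other than $x$, living in $B_{\rho r_n}(x)$. Summing the Poisson void probability $e^{-n \nu(\cC \oplus B_{r_n}(o) \cap A)}$ against the intensity of the (at least one) extra point in $B_{\rho r_n}(x) \setminus B_{r_n}(x)$, which contributes a factor $O(n r_n^d)$, and bounding any further points similarly, we obtain an overall bound of the form
\[
	C (n r_n^d)^{M} e^{-n \nu(B_{r_n}(x)) - 2 f_0 \delta_2 n r_n^d}
\]
for some finite $M$ (the number of extra points we need to integrate out is bounded because $\diam(\cC) \leq \rho r_n$ forces $\cC \subset B_{\rho r_n}(x)$, but actually we do not need to control the total cardinality: it suffices to extract the one extra point and then use the void probability of the whole enlarged set, so $M=1$ works). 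Since $n r_n^d \to \infty$ only polylogarithmically (indeed $n r_n^d = O(\log n)$ by Proposition~\ref{p:bc} under our standing assumption $\liminf I_n > 0$), the polynomial prefactor $(n r_n^d)^{M}$ is absorbed into $e^{-\delta n r_n^d}$ for any $\delta < 2 f_0 \delta_2$, which yields \eqref{e:0825a}. Here one can also just cite the analogous argument in \cite[Lemma 4.2]{PY21} or the treatment of $J_{1,n}$ in the proof of Proposition~\ref{p:Sn}, adjusted from the diameter-$\leq 2r_n$ case to the diameter-$\leq \rho r_n$ case.

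For the remaining four estimates \eqref{e:0825b}--\eqref{e:0825d} the argument is essentially identical: adding the fixed extra points $y$ (and $z$) to the process can only be handled by noting that either they lie inside the relevant enlarged ball (in which case they are harmless — we are conditioning on an event involving $\eta_n^y$, not requiring $y \notin \cC \oplus B_{r_n}$) or they lie outside, and in all cases the void-probability bound $e^{-n \nu(\cC \oplus B_{r_n}(o) \cap A)}$ still applies to $\eta_n$ (or $\cX_{n-1}$, $\cX_{n-2}$), with the binomial void probabilities dominated by Poisson ones exactly as in the proof of Lemma~\ref{l:uniqueness}. The passage from $\eta_n$ to $\cX_{n-1}$ and $\cX_{n-2}$ costs only a $(1 + O(r_n^d))^{-n} = O(1)$ factor via $(1-p)^{n} \geq e^{-np - np^2}$ with $p = O(r_n^d)$, as in Lemma~\ref{l:diff_I_Itilde}.

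The step I expect to be the main obstacle is getting Lemma~\ref{l:A2} to apply cleanly near the boundary (and near corners, when $A = [0,1]^2$): Lemma~\ref{l:A2} requires $x_0 \prec y$ for all $y \in B$, which is exactly guaranteed by the event $\scr F_n(x,\cdot)$ since $x$ is the $\prec$-first point of $\cC$ — but one must be careful that the component $\cC$, viewed as the compact set $B$ in Lemma~\ref{l:A2}, has diameter genuinely bounded below by a constant times $r_n$ (this is immediate from $\scr M_{n,\eps,\rho}$) and bounded above by $\rho r_n$ so that the constant $\delta_2$ does not degenerate. One subtlety in the binomial cases with added points $y,z$ is that $y$ or $z$ might themselves be vertices of $\cC$; this is fine, as it only makes $\cC$ larger and the void region no smaller. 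A second, more bookkeeping-type obstacle is ensuring uniformity of the constant $\delta$ over all $x \in A$; this follows because $\delta_2$ from Lemma~\ref{l:A2} and $\tau(A)$, $s_0$ from the geometric lemmas are uniform, and $\nu(B_{r_n}(x)) \leq \fmax \theta r_n^d$ uniformly.
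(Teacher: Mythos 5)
Your identification of Lemma \ref{l:A2} as the geometric engine is right, but the probabilistic step that you flag as routine is exactly where the argument breaks. After extracting one extra point by the Mecke formula, the void region $\big((\cC\oplus B_{r_n}(o))\cap A\big)\setminus \cC$ is still a \emph{random} set determined by the remaining points of the same process, so you cannot simply "use the void probability of the whole enlarged set": the claim that $M=1$ works is not a valid decomposition. The only direct way to make "sum over which compact set $\cC$ equals the component" rigorous is the cluster expansion over the number $k$ of component points, and then the geometric gain $2\delta_2 f_0 nr_n^d$ from Lemma \ref{l:A2} must beat the entropy of placing $k$ unconstrained points in $B_{\rho r_n}(x)$, which contributes a factor $\sum_k (n\fmax\theta(\rho r_n)^d)^k/k! = e^{\fmax\theta\rho^d nr_n^d}$. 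Since in the intended application $\rho$ is a large constant (it is $\rho_1$ from Lemma \ref{l:ERmod}) while $\delta_2=\delta_2(\eps,\rho)$ is small, this factor can swamp the gain, so the bound does not close. Your fallback of "adjusting" the $J_{1,n}$ argument from Proposition \ref{p:Sn} (or \cite[Lemma 4.2]{PY21}) has the same defect: that two-point argument subtracts the ball $B_{\|y-x\|}(x)$, whose measure is of order $\rho^d r_n^d$, and it only works when $\rho$ is small enough that $\fmax\theta\rho^{d-1}<\delta_1 f_0$ --- precisely the small-component regime, not the medium one.

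The missing idea is a coarse-graining of the component: discretize $\R^d$ into cubes of side $\delta r_n$ and record only the \emph{shape} $\sigma$ (the union of cubes meeting $\cC$), of which there are only boundedly many since $\diam(\cC)\le\rho r_n$. The event then forces the process to avoid the deterministic region $[(\sigma\cap A_x)\oplus B_{(1-\sqrt d\,\delta)r_n}(o)]\setminus(\sigma\cap A_x)$, Lemma \ref{l:A2} applied to $B=\sigma\cap A_x$ gives the $\nu(B_{r_n}(x))+\Omega(r_n^d)$ lower bound on its measure (the $\lambda(B)$ term in \eqref{e:2110} is what allows the component's own points to sit inside $\sigma$), and the union bound over shapes costs only a constant. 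This is exactly the mechanism used in Lemmas \ref{l:DF} and \ref{l:DFplus} of the paper, and it is the content of \cite[Lemma 4.3]{PY21}, which the paper's proof of this lemma simply cites (with $k=1$), adding a separate corner argument via \cite[Proposition 5.15]{Pen03} when $A=[0,1]^2$. Your peripheral remarks (uniformity of constants, added points $y,z$ only enlarging the component, Poisson domination of binomial void probabilities) are fine, but without the shape discretization the central estimate is not proved.
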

\begin{proof}
%
%
	Later in the proof we shall use the fact that since
	we assume $A$ is compact and $f$ is continuous on $A$ with $f_0>0$,
	\bea
	\lim_{s \downarrow 0} \big(
	\sup\{f(y)/f(x): x,y \in A, \|y-x\|\leq s\} \big)
	=1.
	\label{0703c}
	\eea
	
	We shall first show \eqref{e:0825b}.  Without loss of generality, 
	we can and do assume $\eps <1$.  
	Let $\delta := \delta_2(d,A,\rho,\eps )$ be as in Lemma
	\ref{l:A2}.
	Choose 
	$\delta'  \in (0, 1/(99 \sqrt{d}))$
	such that
	\begin{align}
		\lambda( B_1(o)\setminus B_{1-\sqrt{d}\delta'  }(o)
		) \le \delta.
		\label{e:del2}
	\end{align}
	Partition $\RR^d$ into cubes of side length $\delta'   r_n$.
	Given finite $\cY \subset \R^d$, denote by $\A_{\delta'}
	(\cY)$ the closure of the union of all the cubes in
	the partition that intersect $\cY$. Here
	$\A$ stands for ``animal''.
	If $x \in \cY$ and
	$\diam \cY \in (\eps  r_n, \rho r_n]$, then
	$\A_{\delta'}(\cY)  \subset
	B_{\rho r_n + \delta' d^{1/2} r_n}(x)$ 
	and $\A_{\delta'}(\cY) $
	can take at most
	$c := 2^{{(2\lceil (\rho /{\delta'})  +\sqrt{d}\rceil
	)^d}}$ different possible shapes. 

	
	Fix $x$ and $y$.
	For finite $\cX \subset \R^d$, 
	write $\cY^*(\cX)$ for $\cC_{r_n}(x,\cX^{x,y})$.
	Fix a possible shape $\sigma$ that might arise
	as $\A_{\delta'}
	(\cY^*(\Po_n))$
	when $\scr F_n(x,\eta_n^{x,y}) \cap 
	\scr M_{n,\eps,\rho}(x,\eta_n^y)$ occurs,
	and suppose
	 event
	$\scr F_n(x,\eta_n^{x,y}) \cap 
	\scr M_{n,\eps,\rho}(x,\eta_n^y)
	\cap \{\A_{\delta'}
	(\cY^*(\Po_n))
	= \sigma\}$ occurs. 

	Let $\sigma^*:= \{z\in \sigma: x\prec z \} \cup \{x\}$. 
	Set $H:=H(\sigma)= (\sigma^* \oplus 
	B_{(1-\sqrt{d}{\delta'}) r_n}(o)) \setminus \sigma^*$. By the
	triangle inequality, $H\subset \cY^*(\Po_n)
	\oplus B_{r_n}(o)$. We claim that $\eta_n \cap H = \emptyset$. 
	Indeed, if there exists  $ u \in \eta_n\cap H$, then by
	definition of $\cY^*(\Po_n) $ we have 
	$u \in \cY^*(\Po_n) $.
	Hence  $u \in \eta_n\cap H\cap \cY^*(\Po_n)$, implying 
	$u \in \sigma$ and therefore $u \in \sigma \setminus \sigma^*$ 
	(since $u \in H$), but this would contradict the assumption that 
	$\scr F_n(x, \Po_n^{x,y})$ occurs.

	Now we estimate from below the volume of $H \cap A$. 
	By Lemma \ref{l:A2} and our choice of 
	$\delta$ and $\delta'$,
	\begin{align*}
		\lambda ( H\cap A )
		\ge \lambda( B_{{r_n}(1-\sqrt{d}{\delta'})}(x)\cap A) + 
		2 \delta r_n^d. 
	\end{align*}
	By \eqref{e:del2}, $\lambda((B_{r_n}(x) \setminus 
	B_{{r_n}(1- \sqrt{d}\delta')}(x))
	\cap A ) \leq \delta r_n^d$ and hence
	$$
	\lambda ( B_{{r_n}(1-\sqrt{d}{\delta'})}(x)\cap A ) 
	\ge \lambda( B_{r_n}(x)\cap A)- \delta r_n^d.
	$$
	Let $\delta'' \in (0,1/2)$ be such  that
	$ \delta''':= (1-2\delta'')(1 + \delta/(\fmax \theta_d)) - 1 >0 $.
	By the preceding estimates,
	and \eqref{0703c}, provided $n$ is large enough
	we have that
	\begin{align*}
		\nu(H) & \geq (1-\delta'') f(x) \left(
		\lambda(B_{r_n}(x) \cap A) +  \delta r_n^d \right)
		\\
		& \geq (1-2 \delta'') \nu(B_{r_n}(x)) 
		\left( 1 + \frac{  \delta r_n^d}{ \fmax
			\theta_d r_n^d
		} \right) = (1+ \delta''') \nu(B_{r_n}(x)).
	\end{align*}
	By Lemma \ref{l:A3}, 
	for all small enough $ r >0$ and all $y \in A$ we have
	$\nu(B_r(y)) \geq f_0 (\theta_d/4) r^d$.
	Let $\delta^* = (\theta_d f_0/4) \delta'''$.
	Then
	\begin{align}
		\label{e:0811-3}
		\nu(H) \ge
		\nu(B_{r_n}(x)) + \delta^*  r_n^d. 
	\end{align}
%
Then we can deduce that
	\begin{align*}
		\PP[
	\scr F_n(x,\eta_n^{x,y}) \cap 
	\scr M_{n,\eps,\rho}(x,\eta_n^y)
			\cap \{\A_{\delta_2}
		(\cY^*(\Po_n)) = \sigma\}]
		& \leq \PP[\Po_n \cap H = \emptyset ] \\
		& \leq 
		e^{- n \nu(B_{r_n}(x)) - n \delta^* r_n^d}.
	\end{align*}
	This, together with the union bound over the choice of possible shapes $\sigma$, gives us \eqref{e:0825b}, and
	\eqref{e:0825a} and \eqref{e:0825e} are proved similarly.
	
	Now consider the binomial case.  Using
	\eqref{e:0811-3} again,
	for $n$ large, we have
	\begin{align*}
		\PP[
			\scr F_n(x,\X_{n-2}^{x,y}) \cap \scr 
			M_{n,\eps,\rho}(x,\X_{n-2}^y)
			\cap \{ \A_{\delta_2}
		(\cY^*(\cX_{n-2})) 
		= \sigma\}] & \leq 
		\PP[\cX_{n-2} \cap H = \emptyset ]
		\\
		& = 
		(1-\nu(H))^{n-2}
		\\
		& \leq 2 
		\exp(- n \nu(H)),
	\end{align*}
	and hence \eqref{e:0825d}; \eqref{e:0825c} is proved similarly.
%
%
\end{proof}

	 \begin{lemma}[Bound on means for moderately large components]
		 \label{l:ERmod}
		 There exists $\rho_1 \in (1,\infty)$ such that
	 $\EE [R_{n,\rho_1,(\log n)^2} ] = 
		 O(e^{- n r_n^d} I_n)$
	 and $\EE [R'_{n,\rho_1,(\log n)^2} ] =
		 O(e^{- n r_n^d} I_n)$
		 as $n \to \infty$, where 
		 $I_n$ is defined at \eqref{e:def:I_n}.
	 \end{lemma}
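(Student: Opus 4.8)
The plan is to bound the expected number of vertices lying in components whose Euclidean diameter lies in the range $(\rho_1 r_n, (\log n)^2 r_n]$, where $\rho_1$ will be chosen large. First I would note the identity
\[
	\E[R'_{n,\rho_1,(\log n)^2}] = \E \sum_{x \in \eta_n} \1_{\scr M_{n,\rho_1,(\log n)^2}(x,\eta_n)}
	= n \int_A \PP[\scr M_{n,\rho_1,(\log n)^2}(x,\eta_n^x)]\, \nu(dx),
\]
using the Mecke formula; here $\scr M_{n,\eps,K}(x,\X)$ is the event from \eqref{e:medevent1} that the component of $x$ in $G(\X^x, r_n)$ has diameter in $(\eps r_n, K r_n]$. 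The corresponding identity for binomial input follows from the elementary fact that $\E \sum_{x \in \X_n} \1_{\scr M(x,\X_n)} = n\,\E[\1_{\scr M(X_1,\X_n)}] = n \int_A \PP[\scr M_{n,\cdot}(x,\X_{n-1}^x)]\,\nu(dx)$. Since $(\log n)^2 r_n \to 0$ (indeed $nr_n^d = O(\log n)$ by Proposition \ref{p:bc} and \eqref{c:upperbound}, so $r_n = O((\log n / n)^{1/d})$ and $(\log n)^2 r_n \to 0$), and $nr_n^d \to \infty$, the hypotheses of Lemma \ref{l:E9} are in force once we take $\rho = \rho_1 \geq 1$ constant and $K = (\log n)^2 \leq n^{1/(2d)}$ for $n$ large.

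The key step is to decouple the ``existence of a moderately large component near $x$'' event from the ``$x$ itself is isolated / in a small neighbourhood that is sparse'' event, so as to pull out a factor $e^{-n\nu(B_{r_n}(x))}$ comparable to the integrand of $I_n$. Concretely, on the event $\scr M_{n,\rho_1,(\log n)^2}(x,\eta_n^x)$ the component $\cC := \cC_{r_n}(x,\eta_n^x)$ has diameter exceeding $\rho_1 r_n$; by the uniqueness-type reasoning behind Lemma \ref{l:E9} (applied with the role of the base point played by $x$), such a component forces a $*$-connected vacant circuit of cubes of side $\eps r_n$ surrounding or reaching $x$, of cardinality at least $c \rho_1$. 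Summing the Peierls bound over circuit sizes $k \geq c\rho_1$ gives, for $n$ large and all $x \in A$,
\[
	\PP[\scr M_{n,\rho_1,(\log n)^2}(x,\eta_n^x)] \leq \exp(-\beta \rho_1 n r_n^d),
\]
which is exactly \eqref{e:E2} of Lemma \ref{l:E9} with $\rho = \rho_1$. But this crude bound alone is not enough: I need the extra factor $e^{-n\nu(B_{r_n}(x))}$. To obtain it, I would instead work on the event that the component containing $x$ both has diameter $> \rho_1 r_n$ \emph{and} that $B_{r_n}(x)$ contains no further points of $\eta_n$ is \emph{too restrictive} — so rather, I decompose: the component $\cC$ of $x$ must reach distance $> \rho_1 r_n/2$ from $x$, so there is a self-avoiding $\eta_n$-path from $B_{r_n}(x)$ of graph length $\geq \rho_1/2$ say; meanwhile the vacant annular region it traces out, having volume $\Omega(\rho_1 r_n^d)$, is disjoint from a ball of radius $r_n$ centred at some occupied point. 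The cleanest route is: condition on $\eta_n \cap B_{r_n}(x)$; if $\eta_n \cap B_{r_n}(x) = \{x\}$ (probability $e^{-n\nu(B_{r_n}(x))}$ by the void probability, using $\eta_n^x$), then $x$ is isolated and contributes to neither $R'_n$ nor this event; if $\eta_n \cap B_{r_n}(x) \supsetneq \{x\}$ then, on $\scr M_{n,\rho_1,(\log n)^2}$, there is a vacant circuit of $\Omega(\rho_1)$ cubes \emph{outside} $B_{r_n}(x)$ surrounding a large occupied region, and one can run the Peierls argument of Lemma \ref{l:E9} on the exterior, getting probability $\leq e^{-\beta' \rho_1 n r_n^d}$ \emph{times} the unconditional $e^{-n \nu(B_{r_n}(x))}$ obtained by additionally demanding a vacant annulus of width $r_n/2$ around the whole component, which has $\nu$-measure $\geq \nu(B_{r_n}(x))$. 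Integrating,
\[
	\E[R'_{n,\rho_1,(\log n)^2}] \leq n \int_A e^{-n\nu(B_{r_n}(x))} e^{-\beta' \rho_1 nr_n^d}\,\nu(dx) = e^{-\beta' \rho_1 n r_n^d} I_n,
\]
and choosing $\rho_1$ with $\beta' \rho_1 > 1$ gives $O(e^{-nr_n^d} I_n)$. The binomial case is identical with $\eta_n^x$ replaced by $\X_{n-1}^x$, using that binomial void probabilities are dominated by Poisson ones.

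The main obstacle is the combinatorial/geometric bookkeeping needed to make the last displayed inequality rigorous: one must verify that the vacant circuit produced by a moderately large component can be taken to lie \emph{outside} $B_{r_n}(x)$ (so that the void event for $B_{r_n}(x)$ and the vacant-circuit event are independent, being determined by disjoint regions of space), while still surrounding enough occupied volume to force a factor exponentially small in $\rho_1 n r_n^d$. This is precisely the content hidden in the proof of Lemma \ref{l:E9} (which in turn rests on the topological lemma \cite[Lemma 3.5]{Pen99} and the unicoherence of $\R^d$), so in practice I would cite Lemma \ref{l:E9}, observing that its proof constructs exactly such an exterior circuit $\Sigma$ with $\cup_{i \in \Sigma} Q_{n,i} \cap B_{r_n}(x) = \emptyset$ and $|\Sigma| \geq c\rho_1$, and then attach the independent void factor $e^{-n\nu(B_{r_n}(x))}$ by hand. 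A minor secondary point is checking that, near the boundary $\partial A$ (or the corners of $[0,1]^2$), the volume of the vacant circuit inside $A$ is still $\Omega(\rho_1 r_n^d)$; this follows from property (ii) in the definition of $\cK_{n,k,\alpha}$, i.e. a positive proportion of the circuit cubes lie inside $A$, exactly as used in Lemma \ref{l:E9}.
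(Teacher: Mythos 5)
Your overall skeleton is the paper's: reduce via the Mecke formula (resp.\ conditioning on $X_i=x$) to a pointwise bound on $\Pr[\scr M_{n,\rho_1,(\log n)^2}(x,\cdot)]$, invoke Lemma \ref{l:E9} (noting $(\log n)^2\le n^{1/(2d)}$ and $n^{1/(2d)}r_n\to 0$, both consequences of $nr_n^d=O(\log n)$), and take $\rho_1$ large. The paper actually uses the slightly different reduction through $\scr M^*_{n,(\rho-4)/2,(\log n)^2}(x,\X_{n-1})$ and \eqref{e:E9} (removing the point $X_i$ and looking at the surviving component pieces), whereas you use \eqref{e:E2} with $\xi_n=\X_{n-1}$ directly; both are fine.

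Where your write-up goes astray is the step you call the key one: extracting the factor $e^{-n\nu(B_{r_n}(x))}$ by a conditioning/disjoint-regions argument. As sketched it is not sound: on the event $\scr M_{n,\rho_1,(\log n)^2}$ the ball $B_{r_n}(x)$ is typically \emph{occupied} (its $\eta_n$-points belong to the component of $x$), so there is no independent void factor $e^{-n\nu(B_{r_n}(x))}$ to be had from that ball; and the alternative claim that the vacant shell of width $r_n/2$ around the whole component has $\nu$-measure at least $\nu(B_{r_n}(x))$ \emph{in addition to} the $\Omega(\rho_1 r_n^d)$ already charged by the Peierls count is exactly the kind of excess-volume statement that, in this paper, rests on Lemma \ref{l:A2}, whose constants are only uniform for a fixed upper diameter ratio $K$ -- not for $K=(\log n)^2$. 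Fortunately the whole step is unnecessary. Since $\nu(B_{r_n}(x))\le \fmax\theta r_n^d$ for all $x$, the crude bound $\Pr[\scr M_{n,\rho_1,(\log n)^2}(x,\cdot)]\le e^{-\beta\rho_1 nr_n^d}$ from Lemma \ref{l:E9} already implies $e^{-n\nu(B_{r_n}(x))-nr_n^d}$ once $\beta\rho_1\ge \fmax\theta+1$, and integrating gives the claim. Equivalently (and this is the paper's route) one keeps the crude bound, obtains $\E[R_{n,\rho_1,(\log n)^2}]\le n e^{-(\theta f_0+2)nr_n^d}$ for $\rho_1$ with $\beta\rho_1$ large enough, and then applies Lemma \ref{l:Ilower}, which gives $n e^{-(\theta f_0+1)nr_n^d}=o(I_n)$ and hence the stated $O(e^{-nr_n^d}I_n)$. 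With either of these one-line replacements for your decoupling step, your argument is complete; the Poisson and binomial cases are then handled exactly as you say.
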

	 \begin{proof}
		 Let $\rho > 4$.
		 Given $i \in [n] := \{1,\ldots,n\}$,
		 if $\rho r_n< \diam(\cC_{r_n}(X_i,\X_n)) \leq  (\log n)^2r_n $,
		 then there is at least one component of
		 $G(\X_{n-1},r_n)$ with at least one vertex
		 in $B_{r_n}(X_i)$ and with diameter in the range
		 $((\rho-4)r_n/2, (\log n)^2 r_n]$. Hence by the definition
	at \eqref{e:medevent2},
	 $$
	 \EE [R_{n,\rho,(\log n)^2} ] \leq  n \int_A
		 \Pr[ \scr M^*_{n,(\rho-4)/2,(\log n)^2} (x,\cX_{n-1})
		 ]
	 \nu(dx).
	 $$
Hence by Lemma \ref{l:E9}
		 (which applies since the
		 the condition $n^{2/3}r_n^d \to 0$
		  holds by \eqref{e:RLB}),
		 we can choose $\rho_1$ large enough
		 that for $n$ large
		 \begin{align*}
	 \EE [R_{n,\rho_1,(\log n)^2} ] \leq  n
		 \exp (- (\theta_d  f_0 +2) n r_n^d).
			 \end{align*}
		Hence by  Lemma \ref{l:Ilower},
%
		 we have
		 the result claimed for
		 $\E[R_{n,\rho_1,(\log n)^2}]$.
		 The result for
		 $\E[R'_{n,\rho_1,(\log n)^2}]$,
		 possibly after taking $\rho_1$ even larger, is proved
		 similarly, using the Mecke formula.
	 \end{proof}

 We shall approximate $R_n$ with
	$S_n + R_{n,0,(\log n)^2}$ and $K_n$ with $
	S_{n} + K_{n,0,(\log n)^2} +1$. 

	\begin{lemma}
		\label{l:RvsR0an}
		Let $K >0$. Then all of
		$ \Pr[ R_n \neq S_n + R_{n,0,(\log n)^2} ],$
		$\Pr[ R'_n \neq S'_n + R'_{n,0,(\log n)^2} ]$,
		$  \Pr[ K_n \neq S_n + K_{n,0,(\log n)^2} +1 ]$ and
		$\Pr[ K'_n \neq S'_n + K'_{n,0,(\log n)^2} +1 ]$
		are  $O(n^{-K}I_n e^{-n r_n^d}) $
		as $n \to \infty$.
	\end{lemma}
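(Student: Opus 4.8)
The plan is to reduce each of the four events to two ``bad'' events whose probabilities decay faster than any power of $n$, and which are controlled by Lemma~\ref{l:uniqueness} (uniqueness of the large component) and Lemma~\ref{l:smallgiant} (the giant is not too small), both applied with threshold parameter $\phi_n=(\log n)^2$.

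First I would record the combinatorial structure. Since a singleton has Euclidean diameter $0$, classifying the components of $G(\cX_n,r_n)$ according to whether their diameter is $0$, lies in $(0,(\log n)^2 r_n]$, or exceeds $(\log n)^2 r_n$ is a genuine partition, which by \eqref{e:def_Sn} and \eqref{e:Knrdef} gives $K_n=S_n+K_{n,0,(\log n)^2}+M_n$, where $M_n$ is the number of components in the last class (all of which are non-singletons). Now on the event $\tilde{\scr U}_n\cap\{\diam(\cL_n(\cX_n))>(\log n)^2 r_n\}$ — with $\tilde{\scr U}_n$ the event that $G(\cX_n,r_n)$ has at most one component of diameter exceeding $(\log n)^2 r_n$ — the component $\cL_n(\cX_n)$ lies in that last class, hence is its unique member, so $M_n=1$ and every other component has diameter at most $(\log n)^2 r_n$; consequently the vertices outside $\cL_n(\cX_n)$ are precisely those in components of diameter at most $(\log n)^2 r_n$, a set of cardinality $S_n+R_{n,0,(\log n)^2}$. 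Thus on this event both $K_n=S_n+K_{n,0,(\log n)^2}+1$ and $R_n=S_n+R_{n,0,(\log n)^2}$, and therefore
\[
\{R_n\ne S_n+R_{n,0,(\log n)^2}\}\cup\{K_n\ne S_n+K_{n,0,(\log n)^2}+1\}\subseteq \tilde{\scr U}_n^c\cup\{\diam(\cL_n(\cX_n))\le(\log n)^2 r_n\}.
\]
The Poisson versions are handled identically, replacing $\cX_n,\tilde{\scr U}_n$ by $\eta_n,\scr U_n$. The change from ``$<$'' to ``$\le$'' costs only the probability that some interpoint distance equals $(\log n)^2 r_n$ exactly, which is zero.

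Next I would verify the hypotheses of the two lemmas. Under the standing assumptions $nr_n^d\to\infty$ and $\liminf_{n\to\infty}I_n>0$, Proposition~\ref{p:bc} gives $b^+\le b_c<\infty$, hence $nr_n^d=O(\log n)$ and $r_n=O(((\log n)/n)^{1/d})$; so $\phi_n=(\log n)^2$ satisfies $\phi_n\ge\log n$ for large $n$ and $\phi_n r_n\to0$, and Lemma~\ref{l:uniqueness} yields $\Pr[\tilde{\scr U}_n^c]\vee\Pr[\scr U_n^c]\le\exp(-c(\log n)^2 nr_n^d)\le\exp(-c(\log n)^2)$ once $nr_n^d\ge1$. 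Likewise $nr_n^d\to\infty$ and $nr_n^d=O(\log n)$ are exactly the hypotheses of Lemma~\ref{l:smallgiant}, giving $\Pr[\diam(\cL_n(\xi_n))<(\log n)^2 r_n]\le\exp(-\delta_3(n/\log n)^{1/d})$ for $\xi_n\in\{\cX_n,\eta_n\}$. Both bounds are $o(n^{-M})$ for every $M>0$.

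Finally I would match the target order: $I_n$ is bounded below by a positive constant for $n$ large, while $nr_n^d=O(\log n)$ forces $e^{-nr_n^d}\ge n^{-C}$ for some constant $C$ and all large $n$, so $n^{-K}I_n e^{-nr_n^d}\ge c'n^{-(K+C)}$ eventually. Since each bad event has probability $o(n^{-(K+C)})$, the union bound shows each of the four probabilities in the statement is $o(n^{-(K+C)})=O(n^{-K}I_ne^{-nr_n^d})$, as required. The step needing genuine care is the combinatorial reduction — in particular pinning down exactly when $R_n$ equals $S_n+R_{n,0,(\log n)^2}$ (the bound $R_n\ge S_n+R_{n,0,(\log n)^2}$ is \emph{not} unconditional, only valid once the giant has diameter exceeding $(\log n)^2r_n$) — after which the estimate is a routine application of Lemmas~\ref{l:uniqueness} and~\ref{l:smallgiant}.
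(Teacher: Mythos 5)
Your proposal is correct and follows essentially the same route as the paper: the same event inclusion into $\tilde{\scr U}_n^c\cup\{\diam(\cL_n(\cX_n))\le(\log n)^2 r_n\}$ (and its Poisson analogue), bounded via Lemma~\ref{l:uniqueness} and Lemma~\ref{l:smallgiant} with $\phi_n=(\log n)^2$, together with the observation that $nr_n^d=O(\log n)$ (from Proposition~\ref{p:bc} and $\liminf I_n>0$) makes $n^{-K}I_ne^{-nr_n^d}$ polynomially bounded below, so superpolynomial decay suffices. Your write-up is in fact slightly more careful than the paper's, spelling out the combinatorial identity behind the inclusion and the harmless strict-versus-weak inequality in the diameter event.
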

	\begin{proof} 
		By \eqref{e:RLB} and
		the assumption $\liminf(I_n) >0$,
		 there exists $\alpha >0 $ such
		that for $n$ large we have $nr_n^d < (\alpha/2) \log n$
		and $I_n > n^{-\alpha/2}$ and hence $I_n e^{-nr_n^d}
		> n^{-\alpha/2} e^{-(\alpha/2) \log n}  = n^{-\alpha}$. 
	%
		Therefore it suffices to prove
		that for any $K>0$,  the probabilities under
		consideration are
		$O(n^{-K})$ as $n \to \infty$.

		Define event $\tilde{\scr U}_n$ as
		in Lemma \ref{l:uniqueness}, taking $\phi_n= (\log n)^2$.
		Then 
		recalling the definition of $\cL_n(\cX)$ 
		just
		before Lemma \ref{l:smallgiant},
		we have the event inclusion
$$
		\{ R_n \neq S_n + R_{n,0,(\log n)^2} \} 
		\cup
		\{ K_n \neq S_n + K_{n,0,(\log n)^2} +1 \} 
		\subset \tilde{\scr U}^c_n \cup \{ \diam ( 
		\cL_n(\cX_n)) \leq (\log n)^2 r_n \}.
		$$
	By Lemma \ref{l:uniqueness}, there is a constant $c$ such that
		$\Pr[\tilde{\scr U}^c_n ] \leq \exp(-c (\log n)^2 nr_n^d)$
		for $n$ large.  Combining this
		with \eqref{0621d} from  Lemma \ref{l:smallgiant}
		(which is applicable 
		by \eqref{e:RLB})
		gives us the results for $R_n$ and $K_n$,
		and the results
		for $R'_n$ and $K'_n$ are proved similarly.
	\end{proof}

\begin{proposition}[Approximation of $K_n$ by $S_n+1$, $K'_n$ by $S'_n+1$]
\label{p:J}
\label{p:exp_K_bin}
	As $n \to \infty$ we have 
\begin{align}
	\max(\E[|K'_n-S'_n -1|], \E[|K_n-S_n -1|])=
	O( (nr_n^d)^{1-d} I_n).
	\label{e:Jlim1}
\end{align}
\end{proposition}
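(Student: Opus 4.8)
The plan is to show that, off a negligible event, $K_n-S_n-1$ coincides with the number $K_{n,0,(\log n)^2}$ of non-singleton components of Euclidean diameter at most $(\log n)^2 r_n$ (the $-1$ being absorbed by the unique giant component), and then to bound $\E[K_{n,0,(\log n)^2}]$ by splitting these components into three size ranges. A fact I would use throughout is that, by \eqref{c:upperbound} and Proposition \ref{p:bc}, $nr_n^d = O(\log n)$, and in particular $nr_n^d \to \infty$; since $d \geq 2$, this makes $e^{-\delta nr_n^d} = o((nr_n^d)^{1-d})$ for every fixed $\delta>0$, and likewise $e^{-nr_n^d} = o((nr_n^d)^{1-d})$. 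This is the only place in which the various exponents interact, and it is what allows the medium-sized and moderately large contributions to be absorbed into the claimed error term.

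First I would invoke Lemma \ref{l:RvsR0an} with, say, $K=4$: off an event of probability $O(n^{-4} I_n e^{-nr_n^d})$ one has $K_n = S_n + K_{n,0,(\log n)^2} + 1$, so that $|K_n-S_n-1| = K_{n,0,(\log n)^2} \geq 0$ there, and likewise $|K'_n-S'_n-1| = K'_{n,0,(\log n)^2}$ off a comparable event. On the exceptional event $|K_n-S_n-1| \leq n+1$ deterministically, while $|K'_n-S'_n-1| \leq \#(\eta_n)+1$, whose second moment is $O(n^2)$; hence by Cauchy--Schwarz, using $\liminf I_n > 0$ (which gives $I_n^{1/2} = O(I_n)$), the exceptional events contribute only $O(n^{-1} I_n e^{-nr_n^d/2}) = o((nr_n^d)^{1-d} I_n)$ to $\E[|K_n-S_n-1|]$ and to $\E[|K'_n-S'_n-1|]$. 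It therefore remains to prove $\E[K_{n,0,(\log n)^2}] = O((nr_n^d)^{1-d} I_n)$, and the same for $K'_{n,0,(\log n)^2}$.

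For $n$ large $(\log n)^2$ exceeds the constant $\rho_1$ of Lemma \ref{l:ERmod}, so with $\delta_4$ the constant of Lemma \ref{l:smallK} I would split $K_{n,0,(\log n)^2} = K_{n,0,\delta_4} + K_{n,\delta_4,\rho_1} + K_{n,\rho_1,(\log n)^2}$. For the first piece, the Mecke formula in the Poisson case and the identity $\E[\sum_{i=1}^n g(X_i,\cX_n)] = n\int_A \E[g(x,\cX_{n-1}^x)]\,\nu(dx)$ in the binomial case, combined with $I_n = n\int_A e^{-n\nu(B_{r_n}(x))}\nu(dx)$ and the bounds \eqref{e:Zub}, \eqref{e:Zubbin}, give $\E[K_{n,0,\delta_4}], \E[K'_{n,0,\delta_4}] \leq c(nr_n^d)^{1-d} I_n$. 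For the second piece, the same reduction together with \eqref{e:0825a} and \eqref{e:0825c} gives $\E[K_{n,\delta_4,\rho_1}], \E[K'_{n,\delta_4,\rho_1}] \leq e^{-\delta nr_n^d} I_n = o((nr_n^d)^{1-d} I_n)$ for some $\delta>0$. For the third piece, each component has at least one vertex, so $K_{n,\rho_1,(\log n)^2} \leq R_{n,\rho_1,(\log n)^2}$ and Lemma \ref{l:ERmod} gives $\E[K_{n,\rho_1,(\log n)^2}], \E[K'_{n,\rho_1,(\log n)^2}] = O(e^{-nr_n^d} I_n) = o((nr_n^d)^{1-d} I_n)$. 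Summing the three estimates yields \eqref{e:Jlim1}.

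The only genuinely quantitative ingredient is the small-component estimate of Lemma \ref{l:smallK}, which supplies the leading factor $(nr_n^d)^{1-d}$; everything else is either exponentially smaller or comes from the uniqueness and no-small-giant bounds (Lemmas \ref{l:uniqueness} and \ref{l:smallgiant}) already packaged in Lemma \ref{l:RvsR0an}. Accordingly I do not anticipate a substantive obstacle: the one point that needs care is ensuring that the term $-1$ corresponds, with overwhelming probability, to a component of diameter exceeding $(\log n)^2 r_n$ that actually occurs, which is precisely the content of Lemma \ref{l:RvsR0an}.
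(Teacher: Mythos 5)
Your proposal is correct and follows essentially the same route as the paper: the same decomposition into small, medium, and moderately large non-singleton components, bounded by Lemmas \ref{l:smallK}, \ref{l:medK} and \ref{l:ERmod} respectively via the Mecke formula and its binomial analogue. The only (immaterial) difference is that you absorb the giant-component term $-1$ by conditioning on the event of Lemma \ref{l:RvsR0an} and handling its complement with Cauchy--Schwarz, whereas the paper keeps a fourth term $\E[|K'_{n,(\log n)^2,\infty}-1|]$ and bounds it by the same Cauchy--Schwarz plus Lemma \ref{l:RvsR0an} argument.
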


\begin{proof}
	Take $\delta_9 $ as in Lemma \ref{l:smallK} and $\rho_1$ as in
	Lemma  \ref{l:ERmod}.
	Then $K'_n - S'_n = K'_{n,0,\delta_9 } + K'_{n,\delta_9 ,\rho_1} +
	K'_{n,\rho_1,(\log n)^2}
	+ K'_{n,(\log n)^2,\infty}
	$. Taking expectations and using
	 the Mecke formula, we obtain that
	\begin{align}
		\E[|K'_n - S'_n -1 |]     
		\leq \: & n \int_A \Pr[\scr F_n(x,\eta_n^x) \cap
		\{0 < \diam (\cC_{r_n}(x,\eta_n^x)) \leq \delta_9 r_n\}] \nu(dx)
	\nonumber \\
		& +  \int_A \Pr[\scr F_n(x,\eta_n^x) \cap
		\{\delta_9  r_n < \diam (\cC_{r_n}(x,\eta_n^x)) \leq \rho_1 r_n\}] n \nu(dx)
	\nonumber \\
		& +  \E[K'_{n,\rho_1,(\log n)^2}]
		 +  \E[|K'_{n,(\log n)^2,\infty} -1|].
		\label{e:Jn3}
	\end{align}
By Lemma \ref{l:smallK}, the first term 
	in the right hand side of (\ref{e:Jn3})
	is $O((nr_n^d)^{1-d} I_n)$. 
	By Lemma \ref{l:medK} there exists $\delta >0$ such that
	the second term
	in the right hand side of (\ref{e:Jn3})
	is at most $e^{-\delta n r_n^d} I_n$ for all large enough $n$.
	By Lemma \ref{l:ERmod},
	the third term in the right hand side is $O(e^{-nr_n^d}I_n)$.
	For the fourth term,
	recalling $\#(\eta_n) = Z_n$ is Poisson with mean $n$,
	note that $|K_{n, (\log n)^2, \infty}-1| \leq (Z_n +1)
	{\bf 1}\{ K'_{n, (\log n)^2,\infty} \neq 1\}$. Using
	 the Cauchy-Schwarz inequality, and then 
	Lemma \ref{l:RvsR0an} taking $K=2$,
	and the assumption $\liminf(I_n) >0$,
	we deduce that
	\begin{align*}
		\E[|K'_{n,(\log n)^2,\infty} - 1|]
		& 	\leq (\E[(Z_n+1)^2 ])^{1/2} (\Pr[ K'_{n,(\log n)^2,\infty} 
		\neq 1])^{1/2}
		\\
		& = O(n \times n^{-1} I_n^{1/2} e^{-nr_n^d/2})
		\\
		& = O(e^{-nr_n^d/2} I_n). 
	\end{align*}

Combining these estimates shows that
$\E[|K'_n - S'_n -1 |] = O((nr_n^d)^{1-d}I_n)  $.
The proof that $\E[|K_n - S_n -1 |] = O((nr_n^d)^{1-d}I_n)  $
	is similar; in that case the we have an analogous bound to 
	\eqref{e:Jn3} but
	with $\cX_{n-1}^x$ instead of $\eta_n^x$. Thus we have \eqref{e:Jlim1}.
\end{proof}

\begin{lemma}
  \label{l:V0eps}
	Suppose $\delta_1, \delta_9$ are as in Lemma \ref{l:A1}, Lemma 
	\ref{l:smallK} respectively, and $0 < \rho < \min(\frac12,\delta_9,(\delta_1 f_0
	/(\fmax \theta_d))^{1/(d-1)} ).$ Then
	  as  $n \to \infty,$ we have
	\begin{align}
	  \max( \E[R_{n,0,\rho}], \E[R'_{n,0,\rho}]) 
			  = O((nr_n^d)^{1-d}I_n).
			  \label{e:V0eps}
	\end{align}
 \end{lemma}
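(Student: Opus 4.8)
The plan is to show that the expected number of vertices in "small" non-singleton components (diameter at most $\rho r_n$) is of smaller order than $I_n$, by a power of $(nr_n^d)$. I will work with the binomial case $R_{n,0,\rho} = R_{n,0,\rho}(\cX_n)$; the Poisson case is handled identically using the Mecke formula in place of the standard first-moment identity for $\cX_n$. The key observation is that a vertex $x$ lies in a small non-singleton component precisely when $\scr M_{n,0,\rho}(x,\cX_n)$ occurs with the diameter strictly positive, i.e. $x$ has at least one neighbour within $r_n$ but the whole component containing $x$ has diameter at most $\rho r_n$. Writing $R_{n,0,\rho} = \sum_{x \in \cX_n} \1_{\scr M_{n,0,\rho}(x,\cX_n)}$ and taking expectations, I get
\begin{align*}
	\E[R_{n,0,\rho}] = n \int_A \Pr[\scr M_{n,0,\rho}(x,\cX_{n-1}^x)] \,\nu(dx),
\end{align*}
so it suffices to bound $\Pr[\scr M_{n,0,\rho}(x,\cX_{n-1}^x)]$ by $O((nr_n^d)^{1-d} e^{-n\nu(B_{r_n}(x))})$ uniformly in $x \in A$, and then integrate, recognizing $n\int_A e^{-n\nu(B_{r_n}(x))} \nu(dx) = I_n$.

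To bound that probability, I would split on the first point $y$ of the component $\cC_{r_n}(x,\cX_{n-1}^x)$ in the $\prec$ ordering: either $y=x$, in which case $\scr F_n(x,\cX_{n-1}^x)$ holds and the component has positive diameter at most $\rho r_n < \delta_4 r_n$, so Lemma \ref{l:smallK} (the bound \eqref{e:Zubbin}) applies directly and gives $\Pr[\cdot] \le c (nr_n^d)^{1-d} e^{-n\nu(B_{r_n}(x))}$; or $y \neq x$, i.e. $x$ is not the $\prec$-first vertex of its own component. In the latter case there is a point $y \in \cX_{n-1} \cap B_{\rho r_n}(x)$ with $y \prec x$ (since the component has diameter $\le \rho r_n$, all its vertices lie within $\rho r_n$ of $x$), so that the component $\cC_{r_n}(y,\cX_{n-1}^{x})$ coincides with that of $x$, has positive diameter $\le \rho r_n$, and has $y$ as $\prec$-first vertex. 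Conditioning on the location of such a $y$ (there are at most $n$ choices and each lies in $B_{\rho r_n}(x)$ with probability $O(r_n^d)$), the event $\scr F_n(y,\cX_{n-2}^{x,y}) \cap \{0 < \diam(\cC_{r_n}(y,\cX_{n-2}^{x,y})) \le \rho r_n\}$ again falls under the scope of the small-component estimate \eqref{e:Zubbin}, now applied at $y$ with the extra deterministic point $x$ present; since $\rho < \delta_4$, this is still a small component, and the extra point $x$ only decreases the probability of isolation of the relevant configuration. This yields a bound $n \cdot O(r_n^d) \cdot c(nr_n^d)^{1-d} e^{-n\nu(B_{r_n}(y))}$; using $\|x-y\| \le \rho r_n < r_n/2$ and the continuity of $f$ (so $\nu(B_{r_n}(y)) = \nu(B_{r_n}(x))(1+o(1))$, or more crudely $e^{-n\nu(B_{r_n}(y))} = O(e^{-n\nu(B_{r_n}(x))/2})$ which is too weak, so one should use that $B_{r_n/2}(x) \subseteq B_{r_n}(y)$ together with a doubling argument) one reduces this to $O((nr_n^d)^{1-d} e^{-c' n\nu(B_{r_n}(x))})$, and after integrating this is still $O((nr_n^d)^{1-d} I_n)$ provided one argues as in Lemma \ref{l:Ilower} that replacing $\nu(B_{r_n}(x))$ by a constant multiple in the exponent does not change the order of the integral. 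Alternatively, and more cleanly, one applies Lemma \ref{l:medK} when $y$ is at distance in $(\eps r_n, \rho r_n]$ from $x$ for a small fixed $\eps$ (giving an exponentially small bound $e^{-\delta n r_n^d}$), and Lemma \ref{l:smallK} when $\|x-y\| \le \eps r_n$ (a genuinely small component around $y$, distance $o(r_n)$ so the exponent is essentially unchanged).

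The main obstacle I anticipate is bookkeeping the add-one and add-two cost conditioning cleanly — specifically, making sure the small-component bounds from Lemma \ref{l:smallK}, which are stated for $\scr F_n(x,\eta_n)$ and $\scr F_n(x,\cX_{n-1})$, are applied to the configuration after deterministically inserting the extra vertex $x$ (or $x$ and $y$), so that one genuinely has a small component around the $\prec$-first vertex with the right number of Palm points removed. This is why the role of the hypothesis $\rho < \min(\tfrac12, \delta_4, (\delta_1 f_0/(\fmax\theta))^{1/(d-1)})$ matters: $\rho < \delta_4$ lets Lemma \ref{l:smallK} apply, $\rho < \tfrac12$ keeps $y$ well inside $B_{r_n}(x)$, and the third bound (via \eqref{e:volLB2} of Lemma \ref{l:A1}, which gives $\nu(B_{r_n}(y)\setminus B_{r_n}(x)) \ge 2\delta_1 f_0 r_n^{d-1}\|y-x\|$) is what guarantees $\nu(B_{r_n}(y))$ exceeds $\nu(B_{r_n}(x))$ minus a term that the factor $O(r_n^d)$ from the probability of finding $y$ can absorb, keeping the final integral comparable to $I_n$ rather than blowing up. Once these estimates are in place, summing the two (or three) cases and integrating over $x \in A$ gives \eqref{e:V0eps}.
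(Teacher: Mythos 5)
Your overall reduction (write $\E[R_{n,0,\rho}]$ as $n\int_A \Pr[\scr M_{n,0,\rho}(x,\cdot)]\,\nu(dx)$ and split according to whether $x$ is the $\prec$-first vertex of its component) is fine for the first case, which is exactly the bound on the number of small components via \eqref{e:Zubbin}. The genuine gap is in the second case, which is the heart of the lemma since $R$ counts \emph{vertices} rather than components. There you pay the volumetric factor $n\cdot O(r_n^d)$ for the existence of the $\prec$-first companion $y\in B_{\rho r_n}(x)$ and then invoke the small-component bound $c(nr_n^d)^{1-d}e^{-n\nu(B_{r_n}(y))}$ at $y$; multiplying these gives $O\big((nr_n^d)^{2-d}\sup_y e^{-n\nu(B_{r_n}(y))}\big)$, and after integration at best $O((nr_n^d)^{2-d}I_n)$ — a full factor $nr_n^d$ short of the claim (and vacuous when $d=2$, where the lemma must give $o(I_n)$). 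Your write-up silently drops this factor when passing to "$O((nr_n^d)^{1-d}e^{-c'n\nu(B_{r_n}(x))})$", and the proposed repair of the exponent mismatch is also invalid: replacing $\nu(B_{r_n}(x))$ by a constant multiple $c'<1$ in the exponent changes $n\int_A e^{-c'n\nu(B_{r_n}(x))}\nu(dx)$ by a factor that is polynomially large in $n$ (since $nr_n^d=\Theta(\log n)$ here), and Lemma \ref{l:Ilower} says nothing that controls this. A further flaw: your claim that the extra deterministic point $x$ "only decreases the probability" of the event at $y$ is false precisely in the dominant case of two-point components $\{x,y\}$ — remove $x$ and $y$ becomes isolated, so the event $\{0<\diam\le\rho r_n\}$ at $y$ is destroyed; hence \eqref{e:Zubbin}, stated for $\eta_n$ and $\X_{n-1}$ only, cannot be transferred to $\X_{n-2}^{x,y}$ by monotonicity.

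The missing mechanism, which is what the paper's proof supplies, is exponential decay in the \emph{separation} $\|y-x\|$, not just in $n\nu(B_{r_n}(x))$. The paper decomposes $R_{n,0,\rho}=K_{n,0,\rho}+N_{n,1}+N_{n,2}$ by classifying each vertex of a small component as the $\prec$-first vertex, the vertex furthest from it, or another vertex. For the pair (first vertex $x$, furthest vertex $y$) the region $(B_{r_n}(x)\cup B_{r_n}(y))\setminus B_{\|y-x\|}(x)$ must contain no further sample points, and by \eqref{e:volLB2} together with the hypothesis $\fmax\theta\rho^{d-1}<\delta_1 f_0$ its $\nu$-measure is at least $\nu(B_{r_n}(x))+\delta_1 f_0 r_n^{d-1}\|y-x\|$; integrating $n e^{-\delta_1 f_0 n r_n^{d-1}\|y-x\|}$ over $y\in B(x,\rho r_n)$ yields $O((nr_n^d)^{1-d})$ rather than the crude $O(nr_n^d)$, which is exactly the factor $(nr_n^d)^{-d}$ improvement your argument lacks (the third vertices give a further factor, handled by $N_{n,2}$). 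You do cite \eqref{e:volLB2} at the end, but only to argue the integral stays "comparable to $I_n$", not to extract this decay in $\|y-x\|$; without that step the claimed rate $(nr_n^d)^{1-d}$ is not reached.
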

  \begin{proof}
	  If the interpoint distances of $\cX_n$ are all distinct
and non-zero (an event of probability 1), then $R_{n,0,\rho } = K_{n,0,\rho} +
		  N_{n,1} + N_{n,2}$, where we set
	  \begin{align*}
		  N_{n,1} :=
		  \sum_{(i,j) \in [n] \times [n]: i \neq j}
		  {\bf 1} ( \scr F_n(X_i,\cX_n)
		  \cap
		  \{0< \diam(\cC_{r_n}(X_i,\X_n)) \leq \rho  r_n  \}
		  \cap 
		  \{X_j \in \cC_{r_n}(X_i,\X_n)\}
		  \\
		  \cap \{\|X_j - X_i\| = \max_{x \in \cC_{r_n}(X_i,\X_n)}\|x-X_i\|
		  \}
		  ) ,
	  \end{align*}
	  and
	  \begin{align*}
		  N_{n,2} :=
		  \sum_{(i,j,k) \in [n] \times [n] \times [n]: i \neq j \neq k \neq i}
		  {\bf 1} ( {\scr F}_{n}(X_i,\cX_n)
		  \cap
		  \{0< \diam(\cC_{r_n}(X_i,\X_n)) \leq \rho  {r_n}  \}
		  \\
		  \cap 
		  \{\{X_j,X_k\} \subset \cC_{r_n}(X_i,\X_n)\}
		  \cap \{\|X_j - X_i\| = \max_{x \in \cC_{r_n}(X_i,\X_n)}\|x-X_i\|
		  \}
		  ) .
	  \end{align*}
	  Using
	  Lemma \ref{l:smallK},
	  we have
	  that
	 \begin{align}
		 \E[K_{n,0,\rho}] & 
		 = n \int_A \Pr[
			 \scr F_n(x,\X_{n-1}^x) \cap
		 \{ 0 < \diam (\cC_{r_n}(x,\eta_n^x)) \leq \rho r_n\}] \nu(dx) ]
	 \nonumber \\
		 &	 = O(  (nr_n^d)^{1-d} I_n).
		 \label{e:EN11}
	 \end{align}
 Recall the notation $A_x:=\{y \in A: x \prec y\}$.
	  By assumption $\fmax 
	  \theta_d
	  \rho^{d-1} 
	  < \delta_1 f_0$,
	  so by  Lemma \ref{l:A1}, for $n$ large and
	  $x \in A, y \in B(x,\rho r_n) \cap A_x $
	  we have $\nu(B_{r_n}(y) \setminus B_{r_n}(x)) \geq 2 \delta_1
	   f_0 r_n^{d-1}\|y-x\|$ and $\nu(B_{\|y-x\|}(x))  
	  \leq \fmax \theta_d \|y-x\|^d \leq \delta_1 f_0 r_n^{d-1}  \|y-x\|.$ 

	  For $(i,j) \in [n] \times [n]$ with $i \neq j$,
	  for $(i,j)$ to contribute to the sum in the definition of
	  $N_{n,1}$ we need $\|X_j -X_i\| \leq \rho r_n$ because of
	   the condition 
	   $X_j \in \cC_{r_n}( X_i, \cX_n)$ and  the diameter condition,
	  and we also need $X_j \in A_{X_i}$ because
	  of the condition ${\scr F}_n(X_i, \cX_n)$.
	  Also, for all $k \in [n] \setminus \{i,j\}$
	  we need $X_k \notin (B_{r_n}(X_i) \cup B_{r_n}(X_j))
	  \setminus B_{\|X_j - X_i\|}(X_i)$ because of the condition
	  that $\|X_j - X_i\| = \max_{x \in \cC_{r_n}(X_i, \cX_n)} \|x -X_i\|$.
	  Hence, using the estimates in the previous paragraph we have
	 \begin{align*}
		 \E[N_{n,1}] &
		 \leq  n^2 \int_A \int_{B(x,\rho r_n) \cap A_x}
		 (1- \nu[(B_{r_n}(x) \cup B_{r_n}(y)) \setminus
		 B_{\|y-x\|}(x)])^{n-2} \nu(dy) \nu(dx)
		 \\
		 &
		 \leq  2 n^2 \int_A \int_{B(x,\rho r_n) \cap A_x}
		 e^{- n(\nu(B_{r_n}(x)) + \nu (B_{r_n}(y) \setminus
		 B_{r_n}(x)) - \nu(
		 B_{\|y-x\|}(x)))} \nu(dy) \nu(dx)
		 \\
		 & \leq 2 n 
		 \int_A \left( \int_{B(x,\rho r_n) \cap A_x} n
		 e^{ - n \nu(B_{r_n}(x)) - n \delta_1 f_0 r_n^{d-1} \|y-x\|}
		 \nu(dy) \right) \nu(dx).
	 \end{align*}
	 In the last expression the inner integral can be bounded by
	 \begin{align*}
		 n e^{-n \nu(B_{r_n}(x))} \int_{B(o,\rho r_n)}
		 e^{-\delta_1 f_0 nr_n^{d-1}\|u\|}
		 \fmax du
		 = (nr_n^{d})^{1-d}
		 e^{-n \nu(B_{r_n}(x))} \int_{B(o,\rho  nr_n^d)} 
		 e^{-\delta_1 f_0 \|v\|}
		 \fmax dv
	 \end{align*}
	 and therefore 
	 \begin{align}
		 \E[N_{n,1}] = O((nr_n^d)^{1-d} I_n).
		 \label{e:EN12}
	 \end{align}
	 Next, using Lemma \ref{l:A1} and the inequality
	 $\fmax \theta_d \rho^{d-1} < \delta_1 f_0$ again we have that
	 \begin{align*}
		 \E[N_{n,2}] \leq n^3
		 \int_A \int_{B(x,\rho r_n) \cap A_x}
		 \int_{B(x,\|y-x\|)}
		 (1- \nu[(B_{r_n}(x) \cup B_{r_n}(y)) \setminus B_{\|y-x\|}(x)])^{n-3}
		 \\
		 \nu(dz) \nu(dy) \nu(dx)
		 \\
		 \leq 2 n \theta_d  \fmax
		 \int_A \left(\int_{B(x,\rho r_n) \cap A_x} n^2 
		 \|y-x\|^d
		 \exp( - n \nu(B_{r_n}(x)) - n \delta_1 f_0 r_n^{d-1} \|y-x\|)
		 \nu(dy) \right) \\
		 \nu(dx).
	 \end{align*}
	 In the last expression the inner integral can be bounded by
	 \begin{align*}
		 n^2 e^{-n \nu(B_{r_n}(x))} 
		 \int_{B(o,\rho r_n)} \|u\|^d e^{-\delta_1 f_0 nr_n^{d-1}\|u\|}
		 \fmax du
		 \\
		 = (nr_n^{d})^{2-2d}
		 e^{-n \nu(B_{r_n}(x))} \int_{B(o,\rho  nr_n^d)} 
		 \|v\|^d
		 e^{-\delta_1 f_0 \|v\|}
		 \fmax dv,
	 \end{align*}
	 and therefore 
	 \begin{align*}
		 \E[N_{n,2}] = O((nr_n^d)^{2-2d} I_n).
	 \end{align*}
	 Combined with (\ref{e:EN11}) and \eqref{e:EN12}
	 this shows that $\E[R_{n,0,\rho }] =
	 O((nr_n^d)^{1-d}I_n)$,
	 which is the statement about 
	  $\E[R_{n,0,\rho}] $ in \eqref{e:V0eps}.
	  The corresponding statement for 
	  $\E[R'_{n,0,\rho}] $ is proved similarly
	  using the multivariate Mecke formula.
 \end{proof}

For $0 < \eps < \rho < \infty$,
recall the definition of event $\scr M_{n,\eps,\rho}(x,\cX)$
at \eqref{e:medevent1}.
	To deal with the medium-sized components,
we shall use the following
estimate for the integral of $\Pr[\scr M_{n,\eps,\rho}(x,\xi_n)]$
with $\xi_n = \eta_n$ or $\xi_n = \X_{n-1}$.
We use notation $\cX^x$ from \eqref{e:addpoint}.

\begin{lemma}[Estimate on medium clusters]
\label{l:E6}
	Let $\rho,\eps \in (0,\infty)$ with  $\rho >\eps $.
	Then there exists
	$\delta =\delta (\eps ) \in
	(0,\infty)$ such that as $n \to \infty$,
	we have
\begin{align}
	n \int_A
	\PP[\scr M_{n,\eps,\rho}(x,\eta_n)] \nu(dx) 
	=O( e^{ -  \delta  n r_n^d} I_n) ;
	\label{e:E6} 
	\\
	n \int_A \PP[\scr M_{n,\eps,\rho}(x,\X_{n-1})] \nu(dx)
	= O( e^{- \delta  n r_n^d} I_n).
	\label{e:E'6} 
\end{align}
\end{lemma}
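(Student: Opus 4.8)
The plan is to bound the integrand $\PP[\scr M_{n,\eps,\rho}(x,\xi_n)]$ by classifying the medium-sized cluster according to its $\prec$-minimal vertex and then invoking Lemma \ref{l:medK}. Take first $\xi_n=\eta_n$. On the event $\scr M_{n,\eps,\rho}(x,\eta_n)$ the cluster $\cC:=\cC_{r_n}(x,\eta_n^x)$ has $\diam\cC\in(\eps r_n,\rho r_n]$, and since $\eps r_n>0$ this forces $\cC\cap\eta_n=\cC\setminus\{x\}\neq\emptyset$; let $w$ be the $\prec$-minimal vertex of $\cC$. I would split according to whether $w=x$ or $w\in\eta_n$.

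If $w=x$, then $\scr F_n(x,\eta_n^x)\cap\scr M_{n,\eps,\rho}(x,\eta_n)$ holds, so \eqref{e:0825a} of Lemma \ref{l:medK} gives a probability at most $e^{-n\nu(B_{r_n}(x))-\delta nr_n^d}$; integrating against $n\,\nu(dx)$ and recalling \eqref{e:def:I_n}, this part of the left side of \eqref{e:E6} is at most $e^{-\delta nr_n^d}I_n$.

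If instead $w\in\eta_n$, I would use the Mecke formula to promote $w$ to a deterministic point. The corresponding contribution to the left side of \eqref{e:E6} then equals $n\int_A n\int_A \PP[E_n(w,x)]\,\nu(dw)\,\nu(dx)$, where $E_n(w,x)$ is the event that $w$ is the $\prec$-minimal vertex of $\cC_{r_n}(x,\eta_n^{x,w})$ and this cluster has diameter in $(\eps r_n,\rho r_n]$. Since $w$ being the $\prec$-minimal vertex of $x$'s cluster puts $w$ and $x$ into a common cluster $\cC_{r_n}(w,\eta_n^{w,x})=\cC_{r_n}(x,\eta_n^{x,w})$ of diameter at most $\rho r_n$, we have the inclusion $E_n(w,x)\subset\{\|w-x\|\le\rho r_n\}\cap\scr F_n(w,\eta_n^{w,x})\cap\scr M_{n,\eps,\rho}(w,\eta_n^x)$, and the last two events form exactly the event of \eqref{e:0825b} with the roles of $x$ and $y$ there played by $w$ and $x$ (the diagonal $w=x$ is $\nu\otimes\nu$-null because $\nu$ has a density). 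Hence, by \eqref{e:0825b} and an interchange of the order of integration, this contribution is at most
\begin{align*}
 e^{-\delta nr_n^d}\, n\int_A n\,e^{-n\nu(B_{r_n}(w))}\,\nu\big(B_{\rho r_n}(w)\big)\,\nu(dw)\ \le\ \fmax\,\theta\,\rho^d\,(nr_n^d)\,e^{-\delta nr_n^d}\,I_n,
\end{align*}
using $\nu(B_{\rho r_n}(w))\le\fmax\theta\rho^d r_n^d$ and $n\int_A n\,e^{-n\nu(B_{r_n}(w))}\nu(dw)=nI_n$. Because $t e^{-\delta t}=O(e^{-\delta t/2})$ as $t\to\infty$, taking $t=nr_n^d\to\infty$ this is $O\big(e^{-(\delta/2)nr_n^d}I_n\big)$. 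Combining the two cases proves \eqref{e:E6} with, say, $\de(\eps):=\delta/2$, where $\delta$ is the constant of Lemma \ref{l:medK} (depending only on $\eps$ and the fixed $\rho$). The bound \eqref{e:E'6} is proved in the same way: one extracts $w$ using the exchangeability of $(X_1,\dots,X_{n-1})$, so that $\X_{n-1}$ is replaced by $\X_{n-2}$ carrying the two deterministic points $w$ and $x$, and one uses \eqref{e:0825c} and \eqref{e:0825d} in place of \eqref{e:0825a} and \eqref{e:0825b}; the case $d=2$, $A=[0,1]^2$ requires no separate treatment since Lemma \ref{l:medK} already covers it.

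The one place where care is needed is the bookkeeping in the case $w\in\eta_n$: one must retain the constraint $x\in\cC_{r_n}(w,\eta_n^{w,x})$, equivalently $\|w-x\|\le\rho r_n$, \emph{before} applying \eqref{e:0825b}, so that the subsequent $x$-integration contributes only the factor $\nu(B_{\rho r_n}(w))=O(r_n^d)$ instead of $\nu(A)=1$. Without it one would be left with a surplus factor $n$ coming from $n\int_A\nu(dx)$, and $n\,e^{-\delta nr_n^d}$ need not tend to $0$ when $nr_n^d\to\infty$ only slowly; with the constraint kept in, the surplus is merely $nr_n^d$, which $e^{-\delta nr_n^d}$ absorbs. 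Apart from this the proof is a direct application of Lemma \ref{l:medK} and the Mecke formula, so I do not anticipate any serious difficulty.
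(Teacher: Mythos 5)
Your proposal is correct and follows essentially the same route as the paper: split according to whether $x$ is the $\prec$-first vertex of its cluster (handled by \eqref{e:0825a}, resp. \eqref{e:0825c}), and otherwise use the Mecke formula (resp. exchangeability) to fix the $\prec$-minimal vertex $w$, retain the constraint $\|w-x\|\le\rho r_n$ so the extra integration only costs a factor $O(nr_n^d)$, and apply \eqref{e:0825b}, resp. \eqref{e:0825d}, absorbing that factor into the exponential. The bookkeeping point you flag is exactly how the paper's proof proceeds (it restricts the $y$-integral to $B_{\rho r_n}(x)$ before applying Lemma \ref{l:medK}), so no changes are needed.
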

\begin{proof}
	If $\scr M_{n,\eps,\rho}(x,\eta_n) \setminus \scr F_n(x,\eta_n^x)$ occurs
	then for at least one
	$y \in \eta_n \cap B_{\rho r_n}(x)$ 
	we have that $\diam(\cC_{r_n}(y,\eta_n^{x,y})) \in (\eps r_n,\rho r_n]$,
	and moreover $\scr F_n(y,\eta_n^{x,y})$ occurs and $x \in
	\cC_{r_n}(y,\eta_n^{x,y})$.
	By Markov's inequality and the Mecke formula,
	\begin{align*}
		n \int_A \Pr[\scr M_{n,\eps,\rho}(x,\eta_n) \setminus \scr F_n(x,\eta_n^x)]
		\nu(dx)
		\leq n^2 \int_A  \int_{A \cap B_{\rho r_n}(x)} 
		\Pr[\scr M_{n,\eps,\rho}(y,\eta_n^x ) \cap
		\scr F_n(y,\eta_n^{x,y}) \nonumber \\
		\cap \{x \in \cC_{r_n}(y,\eta_n^{x,y})\}]
		\nu(dy) \nu(dx).
	\end{align*}
	By \eqref{e:0825b} from
	Lemma \ref{l:medK},
	there exists $\delta >0$
	such that for $n$ large the
	probability inside the integral on the right of
	the last display is bounded above
	by $\exp(-n \nu(B_{r_n}(y)) -  2 \delta n r_n^d)$. 
	Then using Fubini's theorem we obtain
	that for $n$ large
	\begin{align}
		n \int_A \Pr[\scr M_{n,\eps,\rho}(x,\eta_n) \setminus \scr F_n(x,\eta_n^x)]
		\nu(dx)
		& \leq n^2 \int_A  \nu(B_{\rho r_n}(y))  
		\exp(- n \nu(B_{r_n}(y)) - 2 \delta n r_n^d)
		\nu(dy)
		\nonumber \\
		& = O(nr_n^d I_n e^{-2 \delta nr_n^d}) 
		= O( e^{- \delta nr_n^d} I_n).
		\label{e:0525a}
	\end{align}
	Also using Lemma \ref{l:medK} we obtain that
	\begin{align*}
		n \int_A \Pr[\scr M_{n,\eps,\rho}(x,\eta_n) \cap \scr F_n(x,\eta_n^x)]
		\nu(dx)
		& \leq n \int_A  
		\exp(- n \nu(B_{r_n}(x)) -  \delta n r_n^d)
		\nu(dx)
		\nonumber \\
		& = e^{- \delta nr_n^d} I_n, 
	\end{align*}
	and combined with \eqref{e:0525a} this yields \eqref{e:E6}.

	The proof of \eqref{e:E'6} is similar.
\end{proof}

We are now ready to estimate the asymptotic expected values of $R_n$
	and $R'_n$.

\begin{proposition}[Approximation of $R_n$, $R'_n$ by $S_n, S'_n$]
 \label{p:exp_R_bin}
	As $n \to \infty$ we have that
\begin{align}
	 \EE[|R_n - S_n|] = O( (nr_n^d)^{1-d} I_n);   
	 \label{e:RvS}
	\\
	\EE[|R'_n - S'_n|]
	= O((nr_n^d)^{1-d} I_n).
	\label{e:0903d}
\end{align}
\end{proposition}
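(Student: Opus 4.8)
The plan is to decompose $R_n - S_n$ according to the Euclidean diameter of the (non-singleton, non-giant) component containing each vertex, exactly as was done for $K_n - S_n$ in the proof of Proposition~\ref{p:exp_K_bin}. First I would fix a small constant $\rho$ satisfying the hypotheses of Lemma~\ref{l:V0eps}, i.e. $0<\rho<\min(1/2,\delta_4,(\delta_1 f_0/(\fmax\theta))^{1/(d-1)})$ with $\delta_1,\delta_4$ from Lemmas~\ref{l:A1} and~\ref{l:smallK}, together with a constant $\rho_1>4$ as produced by Lemma~\ref{l:ERmod}; for $n$ large one has $\rho<\rho_1<(\log n)^2$ and hence the exact identity $R_{n,0,(\log n)^2}=R_{n,0,\rho}+R_{n,\rho,\rho_1}+R_{n,\rho_1,(\log n)^2}$, and similarly for the Poisson versions. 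Writing $\scr B_n$ for the event $\{R_n\neq S_n+R_{n,0,(\log n)^2}\}$, on $\scr B_n^c$ one has $R_n-S_n=R_{n,0,(\log n)^2}\ge 0$ so that $|R_n-S_n|\1_{\scr B_n^c}=R_{n,0,(\log n)^2}\1_{\scr B_n^c}$, while on $\scr B_n$ one bounds $|R_n-S_n|\le R_n\le n$ deterministically. This reduces the claim to four estimates.

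The exceptional term $n\Pr[\scr B_n]$ is controlled by Lemma~\ref{l:RvsR0an}, which gives $\Pr[\scr B_n]=O(n^{-K}I_ne^{-nr_n^d})$ for every $K>0$; since $nr_n^d=O(\log n)$ by \eqref{c:upperbound} and Proposition~\ref{p:bc}, we have $(nr_n^d)^{1-d}=\Omega((\log n)^{1-d})$, so choosing $K=2$ makes $n\Pr[\scr B_n]=O((nr_n^d)^{1-d}I_n)$. The small-component term $\E[R_{n,0,\rho}]$ is precisely the content of Lemma~\ref{l:V0eps} and equals $O((nr_n^d)^{1-d}I_n)$; this is the term supplying the claimed order of magnitude. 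For the medium-component term I would use exchangeability of $X_1,\dots,X_n$ (conditioning on the value of one point) to write $\E[R_{n,\rho,\rho_1}]=n\int_A\Pr[\scr M_{n,\rho,\rho_1}(x,\X_{n-1})]\,\nu(dx)$, which Lemma~\ref{l:E6} bounds by $O(e^{-\delta nr_n^d}I_n)$; since $nr_n^d\to\infty$ this is $o((nr_n^d)^{1-d}I_n)$. Finally Lemma~\ref{l:ERmod} gives $\E[R_{n,\rho_1,(\log n)^2}]=O(e^{-nr_n^d}I_n)=o((nr_n^d)^{1-d}I_n)$. Adding the four bounds gives \eqref{e:RvS}.

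For the Poisson statement \eqref{e:0903d} the argument is identical, with exchangeability replaced by the Mecke formula (so that $\E[R'_{n,\rho,\rho_1}]=n\int_A\Pr[\scr M_{n,\rho,\rho_1}(x,\eta_n)]\,\nu(dx)$) and the deterministic bound $R_n\le n$ on $\scr B_n$ replaced by a Cauchy--Schwarz bound $\E[|R'_n-S'_n|\1_{\scr B'_n}]\le(\E[Z_n^2])^{1/2}\Pr[\scr B'_n]^{1/2}=O(n^{1-K/2}(I_ne^{-nr_n^d})^{1/2})$, which is $O((nr_n^d)^{1-d}I_n)$ for $K$ large because $I_n=\Omega(1)$ by \eqref{c:upperbound}. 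I do not anticipate a genuine obstacle here: the analytic work is already packaged into Lemmas~\ref{l:V0eps},~\ref{l:E6} and~\ref{l:ERmod}, and what remains is bookkeeping — checking that $\rho<\rho_1$ are simultaneously admissible for every lemma invoked, that $R_{n,0,\rho}$ indeed furnishes the dominant $(nr_n^d)^{1-d}I_n$ order while the medium and moderately-large pieces are exponentially smaller, and that the contribution of $\scr B_n$ (resp. $\scr B'_n$) is negligible once $nr_n^d=O(\log n)$ is used.
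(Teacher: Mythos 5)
Your proposal is correct and follows essentially the same route as the paper: split off the event $\{R_n \neq S_n + R_{n,0,(\log n)^2}\}$ via Lemma \ref{l:RvsR0an} (with Cauchy--Schwarz in the Poisson case), then bound $\E[R_{n,0,(\log n)^2}]$ by decomposing into small, medium and moderately large components and invoking Lemmas \ref{l:V0eps}, \ref{l:E6} and \ref{l:ERmod} respectively, with the small-component term supplying the dominant $(nr_n^d)^{1-d}I_n$ order. The only differences are notational (your $\rho,\rho_1$ play the roles of the paper's $\eps,\rho$), so nothing further is needed.
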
 
 \begin{proof}
	 Note that $|R_n- S_n- R_{n,0, (\log n)^2}| \leq n$.
	 Hence  by Lemma \ref{l:RvsR0an},
	 \begin{align}
		 \E[|R_n - S_n - R_{n,0,(\log n)^2}|]
		 \leq n \Pr[R_n \neq S_n + R_{n,0,(\log n)^2}]
		 = O(e^{-nr_n^d} I_n).
	 \label{0621b}
	 \end{align}

	 Using Lemma  \ref{l:V0eps},  choose $\eps  \in (0,1)$
	 such that $\E[R_{n,0,\eps}]
	 = O((nr_n^d)^{1-d}I_n)$.
	 Using Lemma \ref{l:ERmod},  choose $\rho \in (1,\infty)$
	 such that $\E[R_{n,\rho, (\log n)^2}] =
	 O(e^{-nr_n^d} I_n)$.
	   By Lemma \ref{l:E6}, there exists $\delta >0$ such that 
	  \begin{align*}
		  \E[R_{n,\eps,\rho} ] = n \int_A
		  \Pr[ \eps r < \diam \cC_{r_n}(x, \cX_{n-1}^x 
		  ) \leq \rho r_n]
		  \nu(dx)
		  =O( e^{-n \delta r_n^d} I_n).
	  \end{align*}
	 Combining these estimates shows that  
	 $\E[R_{n,0,(\log n)^2} ] = O((nr_n^d)^{1-d}I_n)$.
	 Then using (\ref{0621b}) yields (\ref{e:RvS}).

	The proof of  \eqref{e:0903d}
	is  similar; the only difference is that in the
	step of the argument corresponding to \eqref{0621b} we
	 use the inequality $|R'_n - S'_n - R'_{n,0,(\log n)^2}|
	 \leq Z_n {\bf 1}\{R'_n \neq S'_n + R'_{n,0,(\log n)^2}\}$
	 and the Cauchy-Schwarz inequality.
 \end{proof}

\subsection{Proof of first order limit theorems}
\label{ss:norates}

\begin{proof}[Proof of Theorem \ref{t:momLLN}]
	By Proposition \ref{p:J}
	we have 
	$$
	\E[K'_n-1] = \E[S'_n] +
	\E[K'_n -1 - S'_n] = I_n(1+O(nr_n^d)^{1-d}).
	$$
	By Proposition \ref{p:J}
	and Lemma \ref{l:diff_I_Itilde}
	we also have $ \E[K_n-1] = I_n(1+ O(nr_n^d)^{1-d})$.
	By Proposition \ref{p:exp_R_bin}
	we have $\E[R'_n] =  I_n(1+ O(nr_n^d)^{1-d})$.
	By Proposition \ref{p:exp_R_bin}
	and Lemma \ref{l:diff_I_Itilde}
	we have $\E[R_n] = \E[S_n ] +
	O((nr_n^d)^{1-d} I_n) = I_n(1+ O(nr_n^d)^{1-d})$.
	Thus we have \eqref{e:meanasymp}.

	For  \eqref{e:basicLLN},
	suppose for now that $\zeta_n$ is $K_n -1$ or $R_n$.  Recalling
	that $\tilde{I}_n := \E[S_n]$, we note that
	\begin{align}
	\E[ |(\zeta_n/I_n) -1|] \leq 
		\E[
		I_n^{-1}
			| \zeta_n - S_n|]  
			+ \E[
		I_n^{-1}
				|S_n-\tilde{I}_n|] +
				I_n^{-1} |\tilde{I}_n-I_n|.
		\label{e:forL1}
		\end{align}
	By Proposition \ref{p:J} (when $\zeta_n = K_n -1$)
	or Proposition \ref{p:exp_R_bin} (when $\zeta_n = R_n$)
	we have $\E[|\zeta_n - S_n|] = O((nr_n^d)^{1-d}I_n)$,
	so the first term in the right hand side of \eqref{e:forL1}
	is $O((nr_n^d)^{1-d})$.
	 Moreover by
	 the Cauchy-Schwarz inequality the 
	second term in the right hand side of \eqref{e:forL1}
	is bounded by $(I_n^{-2}\Var(S_n))^{1/2}$, and by
	 Proposition \ref{p:var_iso_bin} this is $O(I_n^{-1/2})$.
	 The third term in the right hand side of \eqref{e:forL1}
	 is $O((nr_n^d)^{1-d})$
	 by Lemma \ref{l:diff_I_Itilde}.
	 Thus we have 
	 \eqref{e:basicLLN} when $\zeta_n $ is $ K_n-1$ or $R_n$,
	 and the corresponding result when $\zeta_n $ is $K'_n-1$
	 or $R'_n$
	 can be proved similarly.  Finally if we add $1$ to $\zeta_n$
	 then we should add a term of $1/I_n$ on the right hand side
	 of \eqref{e:forL1}, but this term is $o(I_n^{-1/2})$
	 so we have \eqref{e:basicLLN} when $\zeta_n$ is
	 $K_n, R_n+1, K'_n$ or $R'_n+1$ too.
\end{proof}

\begin{proof}[Proof of Theorem \ref{t:exprate}]
	 Since we assume $I_n \to \infty$, 
	 by Proposition \ref{p:bc} we have
	 $b^+ \leq b_c$.
	 Suppose also $b^+ \leq b'_c $.
	 Let $\delta >0$.  Then by Lemma \ref{l:Ilower},
	 \begin{align}
		 ne^{-n \theta_d f_0 r_n^d (1+ \delta) } = o(I_n).
		 \label{e:0927a}
	 \end{align}
	 For an upper bound on $I_n$,
	 we shall use Lemma \ref{l:IUB}.
	Let $\eps >0$ with $f_1 \eps < f_0 \delta$.
	Since $b^+ \leq b'_c$ we have  $b^+   (f_0 - f_1/2) \leq 1/d $, 
	and hence
	\begin{align*}
		\frac{n^{1-1/d} e^{-n \theta_d r_n^d  f_1 (\frac12 - \eps)}}{
			n e^{-n \theta_d f_0r_n^d(1- 2  \delta)}}
		& = O (n^{-1/d} e^{n  \theta_d r_n^d( f_0 - (f_1 /2) - f_0 
		\delta)}  )
		\\
		& = O( n^{-1/d} e^{(b^+(f_0 - f_1/2)- \frac12 f_0  
		\delta)\log n} ) = o(1).
	\end{align*}
	Therefore both terms in the right hand side of \eqref{e:InUB}
	are $o( n e^{-n \theta_d f_0r_n^d (1- 2 \delta)})$, 
	so by Lemma \ref{l:IUB},
	 $I_n = o(n e^{-n\theta_d r_n^d f_0(1- 2 \delta)})$.
	 Using this, along with \eqref{e:0927a} and the fact that
	 the $L^1$ convergence in \eqref{e:basicLLN} implies
	 convergence in probability also, we obtain  that 
	 with probability tending to one,
	 $n e^{-n\theta_d r_n^d f_0(1+ 2 \delta)} < \zeta_n < n
	 e^{-n\theta_d r_n^d f_0(1- 2 \delta)}$, which
	 gives us \eqref{e:loginP1}.

	 Now suppose $b^- \geq b'_c$.
	 Since also 
	 $b^+ \leq b_c$, we have
	 $b'_c \leq b_c < \infty$. Hence
	 $b'_c = (d(f_0-f_1/2))^{-1}$ so
	 $b^- \geq (d(f_0 - f_1/2))^{-1}$ and
	 $b^-((f_1/2) - f_0) \leq -1/d$. Let $\eps >0$. Then
	 \begin{align*}
		 \frac{n e^{-n \theta_d f_0 r_n^d}}{n^{1-1/d}
		 e^{-n r_n^d \theta_d f_1(\frac12 - \eps)}}
		 = n^{1/d} e^{nr_n^d \theta_d ((f_1/2) - f_0 - f_1 \eps)} 
		 \leq n^{1/d}
		 e^{b^-((f_1/2)- f_0 - f_1 \eps/2) \log n} = o(1), 
	 \end{align*}
	 so by Lemma \ref{l:IUB},
	 $I_n = o(n^{1-1/d} e^{-n \theta_d r_n^d f_1 (\frac12 - 2 \eps)})$.
	 Also by Lemma \ref{l:Ilower},
	 $n^{1-1/d} e^{-n r_n^d \theta_d f_1(\frac12 +\eps) } = o(I_n)$.
	 Hence by the convergence in probability of $\zeta_n/I_n$
	 to 1 which follows from \eqref{e:basicLLN},
	 with probability tending to 1 we have
	 $n^{1-1/d} e^{-n r_n^d \theta_d f_1(\frac12 +\eps) } \leq
	 \zeta_n \leq
	 n^{1-1/d} e^{-n \theta_d r_n^d f_1 (\frac12 - 2 \eps)}$,
	 and \eqref{e:loginP2} follows.

	 Now suppose $b^+ = b^- =b$ for some $b \geq 0$.
	 Then if $f_0 b \leq (1/d) + f_1 b/2$ we have
	 $(f_0 -f_1/2) b \leq 1/d$ so $b \leq b'_c$ 
	 and \eqref{e:loginP1} applies. By \eqref{e:loginP1}
	 we have $\zeta_n = n^{1- bf_0 + o_{\Pr}(1)}$. 
	 Conversely if $f_0 b \geq (1/d) + f_1 b/2$
	 we have $(f_0-f_1/2) b \geq 1/d$ and $b \geq b'_c$ so
	 \eqref{e:loginP2} applies and tells us that
	 $\zeta_n = n^{1- (1/d) - f_1 b/2 + o_{\Pr}(1)}$. 
\end{proof}

\begin{proof}[Proof of Theorem \ref{t:PoLim}]
	Here we assume as $n \to \infty$ that 
	$I_n = \Theta(1)$ (which implies $n r_n^d = \Theta(\log n) $ 
	by Proposition \ref{p:bc} and Lemma \ref{l:Ilower}).
	Then by Lemma \ref{l:dTV} 
	we have $\dtv(S'_n , Z_{I_n}) = O(e^{-\delta_1 f_0
	n r_n^d})$.

	By Proposition \ref{p:J} (when $\xi_n = K'_n -1$)
	or Proposition \ref{p:exp_R_bin} (when $\xi_n = R'_n$)
	and  Markov's inequality,  for both those cases
	$\dtv(\xi_n, S'_n) \leq \Pr[ \xi_n  \neq  S'_n]  \leq
	\E[|\xi_n  - S'_n|] = O((nr_n^d)^{1-d})$,
	and therefore by Lemma \ref{l:dTV} and the triangle inequality,
	$\dtv(\xi_n  , Z_{I_n}) = O((nr_n^d)^{1-d})= O((\log n)^{1-d})$
	in those cases. 

	Now suppose  $\xi_n$ is $K_n -1 $ or $ R_n$.  
	By Proposition \ref{p:J} (when $\xi_n = K_n -1$)
	or Proposition \ref{p:exp_R_bin} (when $\xi_n = R_n$)
	and  Markov's inequality,  for both those cases
	$\Pr[ \xi_n  \neq  S_n]  \leq
	\E[|\xi_n  - S_n|] = O((nr_n^d)^{1-d})$,
	and therefore it suffices to prove that $\dtv(S_n , Z_{I_n})
	= O((nr_n^d)^{1-d})$.
	By Lemma \ref{l:dTV} we have 
	$\dtv(S'_n , Z_{I_n}) = O(e^{-\delta_1 f_0 nr_n^d})$,
	so it suffices to prove that $\E[|S'_n-S_n|] = O((nr_n^d)^{1-d})$.

	 Recall that $\eta_n = \{X_1,\ldots,X_{Z_n}\}$.
	 Let $m=m(n) = \lfloor n^{3/4} \rfloor$.
	 By the Cauchy-Schwarz inequality
	 and the Chernoff bound from Lemma \ref{lemChern}(ii),
	 \begin{align}
		 \E[|S'_n-S_n| {\bf 1}\{|Z_n -n | > m \}
		 & \leq (\E[\max(Z_n,n)^2])^{1/2} (\Pr[|Z_n-n| > m])^{1/2}
		 \nonumber \\
		 & \leq (2n^2+n)^{1/2} \exp(-\Omega(n^{1/2})). 
		 \label{e:0924a}
	 \end{align}
	 For $i=1,2,\ldots$ write
	  $Y_i := X_{Z_n+i}$ 
	 and $Y'_i := X_{n+i}$. 
	 Then $Y_1,Y_2,\ldots$ are $\nu$-distributed
	 random vectors, independent of each other and of $\eta_n$.
	 Observe that
	 \begin{align*}
		 |S'_n - S_n| {\bf 1}\{ Z_n \leq n \leq Z_n + m\}
		 \leq \sum_{i=1}^m
		 \big( & {\bf 1}\{ \eta_n \cap B_{r_n}(Y_i) = \emptyset\}
		 \\
		  & + \sum_{x \in \eta_n \cap B_{r_n}(Y_i)} {\bf 1}
		 \{\eta_n \cap B_{r_n}(x) = \{x\}\} \big). 
	 \end{align*}
	 Therefore using the Mecke formula followed by
	 Fubini's theorem we obtain that
	 \begin{align}
		 \E[|S'_n - S_n| {\bf 1}\{ Z_n \leq n \leq Z_n + m\}]
		 \leq
		 m \int_A e^{-n \nu(B_{r_n}(x))}dx
		 \nonumber
		 \\
		 + mn \int_A \int_{B_{r_n}(x)} e^{-n \nu(B_{r_n}(y))} \nu(dy) 
		 \nu(dx)
		 \nonumber
		 \\
		 \leq n^{-1/4}  I_n + (n \fmax \theta_d r_n^d ) n^{-1/4} I_n
		 = O(n^{-1/4}(\log n)). 
		 \label{e:0924b}
	 \end{align}
	 Also  $Y'_1,Y'_2,\ldots$ are $\nu$-distributed
	 random vectors, independent of each other and of $\X_n$.
	 Then since $(1- \nu(B_{r_n}(x)))^{n-1} \leq 2 e^{-n \nu(B_{r_n}(x))}$
	 for all large enough $n$ and all $x \in A$,
	 \begin{align*}
		 \E[|S'_n - S_n| {\bf 1}\{ n \leq Z_n \leq n + m\} ]
		 \leq \E \Big[ \sum_{i=1}^m
		 \big(  {\bf 1}\{ \X_n \cap B_{r_n}(Y'_i) = \emptyset\}
		 \\
		   + \sum_{x \in \X_n \cap B_{r_n}(Y'_i)} {\bf 1}
		 \{\cX_n \cap B_{r_n}(x) = \{x\}\} \big) 
		 \Big]
		 \\
		 \leq m  \int_A (1- \nu(B_{r_n}(x)))^n \nu(dx)
		 + m n \int_A \int_{B_{r_n}(x)} (1- \nu(B_{r_n}(y)))^{n-1} \nu(dy)
		 \nu(dx)
		 \\
		 = O( (nr_n^d) n^{-1/4} I_n) = O(n^{-1/4} \log n). 
	 \end{align*}
Combined with \eqref{e:0924a} and \eqref{e:0924b} this shows
that $\E[|S'_n-S_n|] = O(n^{-1/4} \log n) = O((nr_n^d)^{1-d})$ as required.
\end{proof}

\begin{proof}[Proof of Theorem \ref{t:PoLimUn}]
	Assume the uniform case applies.
	We first show that for any $\gamma \in \R$ we have:
	\begin{align}
	{\rm if}~ \lim_{n \to \infty} \gamma_n = \gamma
	~{\rm then}~ 
	\lim_{n \to \infty} \mu_n = \begin{cases}
		e^{-\gamma} ~{\rm if}~ d=2\\
		c_{d,A} e^{-\gamma/2} ~{\rm if}~ d\geq 3.
	\end{cases}
		\label{e:mulim}
	\end{align}
	The case $d=2$ of \eqref{e:mulim} is obvious because
	$\mu_n = e^{-\gamma_n}$ in this case.
	Suppose $d \geq 3$. If $\lim_{n \to \infty} \gamma_n = \gamma$,
	then as $n \to \infty$ the second term
	in the right hand side of \eqref{e:defI'} satisfies
	\begin{align*}
		\theta_{d-1}^{-1} |\partial A|
		r_n^{1-d}
		e^{-n \theta_d f_0 r_n^d/2}
		& \sim 
		\theta_{d-1}^{-1} |\partial A|
		\big( \frac{(2-2/d)\log n}{n \theta_d f_0} \big)^{-1+1/d}
		e^{-\gamma/2} \Big( \frac{n}{\log n} \Big)^{-1+ 1/d}
		\\
		& = \theta_{d-1}^{-1} \sigma_A (\theta_d/(2-2/d))^{1-1/d}
		e^{-\gamma/2} = c_{d,A}e^{-\gamma/2},
	\end{align*}
	and moreover the ratio between the two terms 
	in the right hand side of \eqref{e:defI'} satisfies
	\begin{align*}
		\frac{ne^{-n\theta_d f_0 r_n^d}}{\theta_{d-1}^{-1}
		|\partial A| r_n^{1-d} e^{-n \theta_d f_0 r_n^d/2}}
		& = \theta_{d-1}|\partial A|^{-1}  nr_n^{d-1}
		e^{-n\theta_d f_0 r_n^d/2}
		\\
		& = O \Big( (\log n) 
		\big( \frac{n}{\log n} \big)^{-1+2/d} \Big) = o(1),
	\end{align*}
	and \eqref{e:mulim} follows.

	Now suppose $|\gamma_n |= O(1)$,
	which implies $nr_n^d = \Theta(\log n)$ as $n \to \infty$.
	By \eqref{e:mulim} and a subsequence argument we have
	that $\mu_n = \Theta(1)$ as $n \to \infty$.
	Let $\xi_n$ be any of $K_n-1, R_n,K'_n -1$ or $R'_n$. 
	By a simple coupling argument for $0 < s<t$
	we have $\dtv(Z_{s},Z_t) \leq t-s$. Hence by the
	triangle inequality 
	$$
	\dtv(\xi_n,Z_{\mu_n}) \leq \dtv(\xi_n,Z_{I_n}) + |I_n-\mu_n|.
	$$
	If $d=2$ then 
	by Proposition \ref{p:average2d} 
	and \eqref{e:defI'}
	$I_n = \mu_n(1+ O(nr_n^2)^{-1/2})$; 
	hence by Theorem \ref{t:PoLim},  
	$$
	\dtv(\xi_n,Z_{\mu_n}) = O( (\log n)^{-1} ) + O((nr_n^2)^{-1/2})
	= O((\log n)^{-1/2}).
	$$
	If  $d=3$ then
	by Proposition \ref{p:average3d+}
	and \eqref{e:defI'}, $I_n = \mu_n \Big(1+ O\big( \frac{\log (nr_n^d)}{
		 nr_n^d} \big)^2 \Big) $;
	hence by Theorem \ref{t:PoLim},  
	$$
	\dtv(\xi_n,Z_{\mu_n}) = O( (\log n)^{1-d}) + O\Big(
	\big(\frac{\log(nr_n^d)}{nr_n^d} \big)^{2} \Big)
	= O\Big( \big(\frac{\log \log n}{\log n} \big)^{2}\Big).
	$$
	Thus we have part (a).
	In particular, for all $d \geq 2 $ we have
	$\dtv(\xi_n,Z_{\mu_n}) \to 0$ so that if
	$\gamma_n \to \gamma$ then by \eqref{e:mulim}
	we have $\xi_n \toD Z_{e^{-\gamma}}$ if $d=2$
	and  $\xi_n \toD Z_{c_{d,A}e^{-\gamma/2}}$ if $d \geq 3$,
	which is part (b).

	For part (c),
	now assume \eqref{e:supcri} and \eqref{e:supcriupper}.
	First suppose $d=2$. By Proposition \ref{p:average2d},
	as $n \to \infty$
	we have $I_n = \mu_n(1+ O((nr_n^2)^{-1/2})) $  and
	\eqref{e:EPoUn2} follows from \eqref{e:meanasymp}.
	Also by \eqref{e:basicLLN} from Theorem \ref{t:momLLN},
	and Proposition \ref{p:average2d},
	\begin{align*}
	\E \Big[ \Big| \frac{\xi_n}{\mu_n} -1 \Big| \Big]
		\leq \E \Big[ \Big| \frac{\xi_n}{I_n} \big(
		\frac{I_n}{\mu_n} -1 \big) \Big| \Big]
		+ \E \Big[ \Big| \frac{\xi_n}{I_n} -1 \Big| \Big]
		= O((nr_n^2)^{-1/2} + I_n^{-1/2}),
	\end{align*}
	and hence
	\eqref{e:L1Un2}.

		Suppose $d \geq 3$.
		By Proposition \ref{p:average3d+}, as $n \to \infty$ we have
	 $I_n = \mu_n \Big(1+ O\Big( \big( \frac{\log(nr_n^d)}{nr_n^d}
	\big)^2 \Big) \Big)$.
	Hence using  \eqref{e:meanasymp} we have \eqref{e:EUn3+}.
	Also by \eqref{e:basicLLN} from Theorem \ref{t:momLLN},
	and  Proposition \ref{p:average3d+}, we have
	\begin{align*}
	\E \Big[ \Big| \frac{\xi_n}{\mu_n} -1 \Big| \Big]
		\leq \E \Big[ \Big| \frac{\xi_n}{I_n} \big(
		\frac{I_n}{\mu_n} -1 \big) \Big| \Big]
		+ \E \Big[ \Big| \frac{\xi_n}{I_n} -1 \Big| \Big]
		= O\Big(\big(\frac{\log(nr_n^d)}{nr_n^d}\big)^{2} + I_n^{-1/2}
		\Big),
	\end{align*}
	and hence \eqref{e:L1BiUn3+}.
\end{proof}

\section{Asymptotics of variances}
\label{s:asympvar}

Throughout this section we make the same assumptions on $d$, $A$
and $f$ that were set out at the start of Section \ref{s:prelims}.
We also assume that
\eqref{c:lowerbound} and \eqref{c:upperbound} hold,
i.e. that $nr_n^d \to \infty$ and $\liminf(I_n) >0$
as $n \to \infty$.

We shall prove that if $\xi_n$ denotes any of $K_n-1, K'_n-1, R_n$ or
$R'_n$, then   $\Var [\xi_n]$ is asymptotic to $I_n$
(which was  defined at \eqref{e:def:I_n}) as $n \to \infty$;
in the case of $\Var[K_n-1]$ and $\Var[R_n]$ we require
the extra condition $d \geq 3$.

Later we shall show that the number of non-singleton components has
negligible  variance
compared to the number of singletons.  This goal will be achieved by estimating separately the variance for the number of non-singleton components of
small (i.e., smaller than $\delta r_n$), medium and large
(i.e., larger than $\rho r_n$) diameter, and showing that each of these
three variances is $o(I_n)$;
the constants $\delta, \rho$ will be chosen later.

\subsection{Variances for small components: Poisson input}
\label{ss:varsmallPo}

Next we consider for $G(\eta_n,r_n)$
the number of small non-singleton components $K'_{n,0,\rho}$
and the number of vertices in such components, $R'_{n,0,\rho}$
(as defined at \eqref{e:Knrdef}), 
for suitably small (fixed) $\rho$.

\begin{proposition}
	\label{p:varT_n}
	There exists $\rho_0 >0$ such that if  $0 < \rho < \rho_0$
	then as $n \to \infty$ we have
\begin{align}
	\max(\Var[K'_{n,0,\rho}],\Var[R'_{n,0,\rho}] ) = O(
	(nr_n^d)^{1-d}
	I_n
	).
	\label{e:0529f}
\end{align}
\end{proposition}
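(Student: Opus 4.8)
The plan is to bound the variances using the Poincaré inequality for Poisson functionals, Lemma~\ref{l:Poinc.ES}, applied to $F = K_{n,0,\rho}(\cdot)$ and $F = R_{n,0,\rho}(\cdot)$. For this I need to control the add-one cost $D_x F(\eta_n)$. First I would observe that inserting a point $x$ into a configuration $\cX$ can only affect small non-singleton components whose vertex sets meet $B_{r_n}(x)$, or create one by merging $x$ with nearby points. Since a small component has diameter at most $\rho r_n < \tfrac12 r_n$, if $x$ lies outside $B_{(1+\rho) r_n}$ of every point of such a component then nothing changes. So $D_x K_{n,0,\rho}(\eta_n)$ and $D_x R_{n,0,\rho}(\eta_n)$ are supported (in absolute value) on the event that there is a vertex of $\eta_n$ within $(1+\rho)r_n$ of $x$ lying in a component that is either small and non-singleton, or becomes small and non-singleton after adding $x$, or was a singleton that after adding $x$ together with other insertions forms a small component. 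The magnitude of the cost is controlled by the number of points of $\eta_n$ within $O(r_n)$ of $x$ that lie in such components.

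The key step is then the estimate
\begin{align}
n \int_A \E\bigl[ |D_x K_{n,0,\rho}(\eta_n)|^2 \bigr] \, \nu(dx)
= O\bigl( (nr_n^d)^{1-d} I_n \bigr),
\label{e:varplan}
\end{align}
and likewise for $R_{n,0,\rho}$. To prove this I would split according to the number $m$ of points of $\eta_n$ in $B_{(1+\rho)r_n}(x)$; by the Chernoff bound (Lemma~\ref{lemChern}(iii)) the probability of $m$ large decays super-exponentially in $m$, while $nr_n^d = O(\log n)$ by Proposition~\ref{p:bc}, so the dominant contribution comes from bounded $m$, where $|D_x F|$ is $O(1)$ for $K$ and $O(1)$ for $R$ as well since a small component has $O((\rho nr_n^d)^{?})$... here I must be careful: a small component can have many vertices, so for $R_{n,0,\rho}$ the cost $|D_x R_{n,0,\rho}|$ is bounded by the order of the affected components, which can grow. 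I would control this by noting that on the relevant event the affected component has order at most, say, $\eta_n(B_{2\rho r_n}(x))$, and use the second-moment / exponential-moment bounds for Poisson counts in a region of volume $O((\rho r_n)^d)$, together with the fact (from Lemma~\ref{l:A1}, exactly as in the proof of Lemma~\ref{l:V0eps}) that the probability a given point near $x$ is the first point of a small non-singleton component is $O((nr_n^d)^{1-d} e^{-n\nu(B_{r_n}(x))})$ provided $\rho < \rho_0$ is chosen so that $\fmax\theta\rho^{d-1} < \delta_1 f_0$. Integrating the resulting bound over $x\in A$ against $n\,\nu(dx)$ and recognizing $\int_A e^{-n\nu(B_{r_n}(x))} n\,\nu(dx) = I_n$ yields \eqref{e:varplan}.

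More precisely, I would proceed as in Lemma~\ref{l:V0eps}: the event that $D_x F \neq 0$ forces the existence of a point $y \in \eta_n^x$ within $(1+\rho)r_n$ of $x$ that is the first ($\prec$-minimal) point of a component of $G(\eta_n^x, r_n)$ of diameter in $(0, (\rho+2)r_n]$ containing at least two points. Using the multivariate Mecke formula to fix $y$ (and possibly one or two further points realizing the diameter and the order of the component), and bounding the void probability of the relevant enlarged region using \eqref{e:volLB2} — giving a factor $e^{-n\delta_1 f_0 r_n^{d-1}\|y'-y\|}$ for each extra vertex $y'$ at distance $\|y'-y\| \le \rho r_n$ from $y$ — I get that each such inner integral over the $\rho r_n$-ball contributes a factor $O((nr_n^d)^{1-d})$, exactly as in the computation of $\E[N_{n,1}]$ there. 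The number of points realizing the squared cost costs at most one or two more such factors, all of which are $o(1)$, so the total is $O((nr_n^d)^{1-d} I_n)$. Finally, substituting into the Poincaré inequality \eqref{e:poincare} gives \eqref{e:0529f}.

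The main obstacle I anticipate is that the add-one cost for $R_{n,0,\rho}$ is not bounded: adding a single point can absorb a whole small component of unbounded order into a larger (possibly medium or large) component, or break a small component's "smallness", changing $R_{n,0,\rho}$ by the order of that component. Handling this requires showing that the order of a small component is, with overwhelming probability, not too large, or more efficiently, carrying the order of the affected component inside the Mecke integral and absorbing it against the exponentially small void probabilities coming from Lemma~\ref{l:A1} — this is the same mechanism that made $\E[N_{n,1}]$ and $\E[N_{n,2}]$ small in the proof of Lemma~\ref{l:V0eps}, and I expect the bookkeeping (keeping track of up to a few extra integration variables and the corresponding powers of $(nr_n^d)^{1-d}$) to be the technically delicate part, though not conceptually new.
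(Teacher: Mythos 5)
Your plan hinges on the Poincar\'e inequality \eqref{e:poincare} delivering $n\int_A \E[|D_xR_{n,0,\rho}(\eta_n)|^2]\,\nu(dx)=O((nr_n^d)^{1-d}I_n)$, and this is where the argument breaks: that estimate is false, and the gap is intrinsic to the method, not to the bookkeeping. The problematic term is the negative part of the add-one cost. The event $D_x^-R_{n,0,\rho}(\eta_n)\neq 0$ only requires that \emph{some} small non-singleton component has a vertex within $r_n$ of $x$ (adding $x$ then typically merges it into something of diameter $>\rho r_n$); it imposes no void constraint near $x$ itself. The $\prec$-first point $y$ of that component ranges over a ball of volume $\Theta(r_n^d)$ around $x$ with intensity $n$, and the probability that a given $y$ is the first point of a small non-singleton component is $\Theta((nr_n^d)^{1-d}e^{-n\nu(B_{r_n}(y))})$. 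After integrating $y$ over $B_{O(r_n)}(x)$ and then $x$ over $A$ against $n\,\nu(dx)$ (Fubini), the contribution is of order $(nr_n^d)\cdot(nr_n^d)^{1-d}I_n=(nr_n^d)^{2-d}I_n$, and one can check by the same computation run as a lower bound (e.g.\ two-point components at distance $\approx 0.75\,r_n$ from $x$) that this order is actually attained. Your analogy with $\E[N_{n,1}]$ in Lemma \ref{l:V0eps} fails precisely here: there, the factor $e^{-n\nu(B_{r_n}(x))}$ is attached to a \emph{process point} $x$ whose own component must be small, so the outer integral produces $I_n$ directly; in the Poincar\'e integrand the added point $x$ carries no such factor, and the extra $nr_n^d$ from locating the nearby component cannot be avoided. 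This is exactly the phenomenon the paper acknowledges (the Efron--Stein/Poincar\'e route for small components ``does the job only in dimension $d\ge 3$''), and its own Efron--Stein computation, see \eqref{e:N1x} and Proposition \ref{p:varRsmall}, tops out at $O((nr_n^d)^{2-d}I_n)$. In particular for $d=2$ your bound degenerates to $O(I_n)$, which neither proves the proposition nor suffices downstream: the Poisson variance asymptotics \eqref{e:VarPo} and CLT \eqref{e:CLTPo} for $d=2$ would be lost.

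The paper instead computes the second moment exactly for Poisson input: by the multivariate Mecke formula (Lemma \ref{l:VE}), $\Var[R'_{n,0,\rho}]-\E[R'_{n,0,\rho}]$ is bounded by a double integral of $\Pr[\scr T_{x,y}\cap\scr T_{y,x}]$ over pairs with $\|y-x\|\le 4r_n$ (correlations vanish beyond that range), and this local integral is estimated by splitting into $\|y-x\|>r_n$ (Lemma \ref{l:6.4}), $\rho r_n<\|y-x\|\le r_n$ (identically zero, Lemma \ref{l:P0}) and $\|y-x\|\le\rho r_n$ (Lemma \ref{l:close}); combining with $\E[R'_{n,0,\rho}]=O((nr_n^d)^{1-d}I_n)$ from Lemma \ref{l:V0eps} gives \eqref{e:0529f}. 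If you want to salvage your approach, you would have to abandon the plain Poincar\'e inequality for the Poisson small-component count and switch to this second-moment/Mecke factorization (or accept the weaker exponent and the restriction $d\ge3$, which is what the paper does only in the binomial case).
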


We divide the proof of this proposition into a series of lemmas. 
Given $\rho >0$ and given $n$,
for $x,y \in A$ define the events 
$
\scr T_{x} :=  \scr M_{n,0,\rho}(x, \eta_n)$
and
$\scr T_{x,y} := \scr M_{n,0,\rho}(x, \eta_n^y),
$
where  $\scr M_{n,\eps,K}(\cX)$ was defined
at \eqref{e:medevent1}.
Also, recalling the definition of $\scr F_n(x,\cX)$ at
\eqref{e:defZ}, 
set $\scr E_{x} := \scr T_x \cap \scr F_n(x, \eta_n^x)$
and 
$\scr E_{x,y} := \scr T_{x,y} \cap \scr F_n(x, \eta_n^{x,y})$.
We begin with the following bound based on the Mecke formula.
\begin{lemma} \label{l:VE} Suppose $\rho  \in (0,1)$. Then
\begin{align}\label{e:varT_n,0}
\Var[R'_{n,0,\rho}] -\EE[R'_{n,0,\rho}] \leq 
	n^2 \int_A \int_{A \cap B_{4r_n}(x)}
	 \PP[\scr T_{x,y} \cap \scr T_{y,x}]
	 \nu(dy) \nu( dx);
	 \\
\Var[K'_{n,0,\rho}] -\EE[K'_{n,0,\rho}] \leq 
	n^2 \int_A \int_{A \cap B_{4r_n}(x)}
	 \PP[\scr E_{x,y} \cap \scr E_{y,x}]
	 \nu(dy) \nu( dx).
	 \label{e:varKn0}
\end{align} 
\end{lemma}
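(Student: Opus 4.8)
The plan is to express each of $R'_{n,0,\rho}$ and $K'_{n,0,\rho}$ as a sum over the points of $\eta_n$ of a $\{0,1\}$-valued functional, compute its second factorial moment by the bivariate Mecke equation, and then use the short spatial range of the small-component events to cut down the resulting double integral. By definition $R'_{n,0,\rho}=\sum_{x\in\eta_n}\mathbf 1_{\scr M_{n,0,\rho}(x,\eta_n)}$ and $K'_{n,0,\rho}=\sum_{x\in\eta_n}\mathbf 1_{\scr M_{n,0,\rho}(x,\eta_n)\cap\scr F_n(x,\eta_n)}$, and since the summands lie in $\{0,1\}$ we have $(R'_{n,0,\rho})^2=R'_{n,0,\rho}+\sum_{x,y\in\eta_n,\,x\neq y}\mathbf 1_{\scr M_{n,0,\rho}(x,\eta_n)}\mathbf 1_{\scr M_{n,0,\rho}(y,\eta_n)}$. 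Applying the bivariate Mecke formula and noting that inserting the extra points $x,y$ into $\eta_n$ turns $\scr M_{n,0,\rho}(x,\eta_n)$ into $\scr M_{n,0,\rho}(x,\eta_n^y)=\scr T_{x,y}$ (and likewise for $y$), the last sum has expectation $n^2\int_A\int_A\Pr[\scr T_{x,y}\cap\scr T_{y,x}]\,\nu(dy)\,\nu(dx)$, while the univariate Mecke formula gives $\E[R'_{n,0,\rho}]=n\int_A\Pr[\scr T_x]\,\nu(dx)$ (interpreting $\scr T_x$ as the event with the point $x$ inserted, as in the statement). Subtracting,
\[
\Var[R'_{n,0,\rho}]-\E[R'_{n,0,\rho}]=n^2\int_A\int_A\big(\Pr[\scr T_{x,y}\cap\scr T_{y,x}]-\Pr[\scr T_x]\Pr[\scr T_y]\big)\,\nu(dy)\,\nu(dx),
\]
and the identical computation with the events $\scr E$ in place of $\scr T$ yields the corresponding identity for $K'_{n,0,\rho}$.

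\textbf{Localization.} It then suffices to show that the integrand above vanishes whenever $\|x-y\|>4r_n$; since the subtracted term is nonnegative, one obtains \eqref{e:varT_n,0} and \eqref{e:varKn0} by restricting the integral to $y\in A\cap B_{4r_n}(x)$ and discarding that term. Because $\rho<1$, on $\scr T_{x,y}$ the component $\cC_{r_n}(x,\eta_n^{x,y})$ has diameter at most $\rho r_n$, hence lies in $\bar B_{\rho r_n}(x)$; if $\|x-y\|>2r_n$ then $y$ sits at distance more than $(2-\rho)r_n>r_n$ from every point of $\bar B_{\rho r_n}(x)$, so inserting or deleting $y$ cannot change this component, whence $\scr T_{x,y}=\scr T_x$ and, symmetrically, $\scr T_{y,x}=\scr T_y$. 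Moreover $\scr T_x$ is measurable with respect to $\eta_n\cap B_{2r_n}(x)$: the component of $x$ in the graph on $(\eta_n\cap\bar B_{(1+\rho)r_n}(x))\cup\{x\}$ either already leaves $\bar B_{\rho r_n}(x)$, in which case $\scr T_x$ fails, or stays inside it, in which case it coincides with $\cC_{r_n}(x,\eta_n^x)$, since any $\eta_n$-point within $r_n$ of it lies in $\bar B_{(1+\rho)r_n}(x)$. The same applies to $\scr T_y$, and for $\|x-y\|>4r_n$ the balls $\bar B_{(1+\rho)r_n}(x)$ and $\bar B_{(1+\rho)r_n}(y)$ are disjoint, so $\scr T_x$ and $\scr T_y$ are independent and the integrand equals $\Pr[\scr T_x]\Pr[\scr T_y]-\Pr[\scr T_x]\Pr[\scr T_y]=0$. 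For the $K'$-statement one argues in exactly the same way, using that $\scr F_n(x,\cdot)$ is determined by the vertex set of the component of $x$, which on $\scr T_x$ is unaffected by a point $y$ with $\|x-y\|>2r_n$, so that $\scr E_{x,y}=\scr E_x$ there and $\scr E_x$ is again $\sigma(\eta_n\cap B_{2r_n}(x))$-measurable.

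The part I expect to be fiddly is this localization step: one has to check carefully that it is the entire \emph{vertex set} of $x$'s component (not merely which points $x$ can reach) that is frozen once $\|x-y\|>2r_n$, and to identify precisely which bounded neighbourhood of $x$ certifies the event $\scr T_x$, so that the threshold $4r_n$ — rather than $2r_n$ — is the one giving independence of $\scr T_x$ and $\scr T_y$. Everything else is a routine application of the Mecke equations.
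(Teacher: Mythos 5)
Your proposal is correct and follows essentially the same route as the paper: the (bivariate) Mecke formula gives the exact identity $\Var-\E = n^2\int\!\!\int(\Pr[\scr T_{x,y}\cap\scr T_{y,x}]-\Pr[\scr T_x]\Pr[\scr T_y])$, and the integrand vanishes for $\|x-y\|>4r_n>2(1+\rho)r_n$ by the localization/independence argument, after which one drops the nonnegative product term. Your careful verification of the factorization (frozen component plus measurability with respect to $\eta_n$ restricted to disjoint balls) is exactly the step the paper asserts in one line, so there is no gap.
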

\begin{proof}
By the Mecke formula, we have
$
\E[R'_{n,0,\rho} ] = n \int_{A} \Pr[ \scr T_{x} ] \nu (dx).
$
Using this and  the multivariate Mecke formula we obtain that
\begin{align*}
	\E[R'_{n,0,\rho}(R'_{n,0,\rho}-1)] - \E[R'_{n,0,\rho}]^2 = 
	n^2 \int_A \int_A (\Pr[ \scr T_{x,y} \cap \scr T_{y,x} ]-
	\Pr[\scr T_{x}] \Pr[\scr T_{y}]) \nu(dy) \nu(dx).
	\end{align*}
	For $\|y-x\| > 4r_n > 2 (1 + \rho) r_n$ we have
	$\Pr[\scr T_{x,y}]= \Pr[\scr T_{x}] \Pr[\scr T_{y}]$,
	and \eqref{e:varT_n,0} follows.

	The proof of  \eqref{e:varKn0} is identical,
	with $K'_{n,0,\rho}$ replacing $R'_{n,0,\rho}$
	and $\scr E_{x,y}$ replacing $\scr T_{x,y}$
	throughout.
\end{proof}

The rest of the proof of Proposition \ref{p:varT_n}
is devoted to estimating the double integral at
\eqref{e:varT_n,0}. 
We deal separately with the integrals over pairs $(x,y)$ satisfying
(i)  $\|y-x\| >r_n$; (ii) 
$\rho r_n < \|y-x\| \leq r_n$, and (iii)
$\|y-x\| \leq \rho r_n$.
Let $\delta_1$ be as in Lemma \ref{l:A1}.

\begin{lemma}\label{l:6.4}
	Suppose
	$0 < \rho < \min( (f_0 \delta_1/(2\theta_d \fmax))^{1/d},1)$.
	Then as $n \to \infty$ we have
	\begin{align}
	n^2 \int_A \int_{A \cap B_{4r_n}(x) \setminus B_{r_n}(x)} 
	\Pr[\scr T_{x,y} \cap \scr T_{y,x}] \nu (dy) \nu(dx)
		= O(I_n \exp(- (\delta_1 f_0/3) nr_n^d)).
		\label{e:intannu}
		\end{align}
\end{lemma}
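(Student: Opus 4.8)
The plan is to bound the double integral by exploiting the fact that when $\|y-x\| > r_n$, the events $\scr T_{x,y}$ and $\scr T_{y,x}$ each force a substantial empty region around $x$ and around $y$, and these empty regions overlap only a little. First I would observe that $\scr T_{x,y} = \scr M_{n,0,\rho}(x,\eta_n^y)$ implies that $x$ lies in a component of $G(\eta_n^{x,y},r_n)$ of diameter at most $\rho r_n < r_n$; since $y$ is at distance more than $r_n$ from $x$, the point $y$ cannot be in that component, and in particular no point of $\eta_n$ can lie in $B_{r_n}(x) \setminus B_{\rho r_n}(x)$ other than (possibly) those in the small cluster, which are all within $\rho r_n$ of $x$. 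More carefully: $\scr T_{x,y}$ implies $\eta_n \cap \big(B_{r_n}(x) \setminus B_{\rho r_n}(x)\big) = \emptyset$ (any such point would be joined to $x$ and push the diameter above $\rho r_n$), and symmetrically $\scr T_{y,x}$ implies $\eta_n \cap \big(B_{r_n}(y) \setminus B_{\rho r_n}(y)\big) = \emptyset$.

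Next I would combine these two empty-region constraints. Since $\|y-x\| \le 4r_n$ and $\rho < 1$, the union $U := \big(B_{r_n}(x)\setminus B_{\rho r_n}(x)\big) \cup \big(B_{r_n}(y)\setminus B_{\rho r_n}(y)\big)$ contains $B_{r_n}(x)$ minus a set of volume $O(\rho^d r_n^d)$ together with the extra contribution from $B_{r_n}(y)\setminus B_{r_n}(x)$. Using Lemma \ref{l:A1} (specifically \eqref{e:volLB1}, which holds without the $x \prec y$ hypothesis when $\partial A \in C^2$, and with that hypothesis otherwise — here one splits the integral over $x\prec y$ and $y \prec x$ by symmetry), when $\|y-x\| \ge r_n$ we get $\nu\big(A \cap B_{r_n}(y) \setminus B_{r_n}(x)\big) \ge 2\delta_1 f_0 r_n^d$. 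Meanwhile $\nu\big(B_{\rho r_n}(x)\big) \le \fmax \theta \rho^d r_n^d$, and the hypothesis $\rho^d < f_0\delta_1/(2\theta\fmax)$ ensures this is at most $\delta_1 f_0 r_n^d$, i.e. strictly less than the gain from the annulus around $y$. Hence, conditioning on the event that $\eta_n \cap U = \emptyset$, we obtain
\[
\Pr[\scr T_{x,y}\cap \scr T_{y,x}] \le \Pr[\eta_n \cap U = \emptyset] = \exp\big(-n\,\nu(U)\big) \le \exp\big(-n\nu(B_{r_n}(x)) - n\delta_1 f_0 r_n^d\big),
\]
where the last inequality uses $\nu(U) \ge \nu(B_{r_n}(x)) - \nu(B_{\rho r_n}(x)) + \nu(A\cap B_{r_n}(y)\setminus B_{r_n}(x)) \ge \nu(B_{r_n}(x)) + \delta_1 f_0 r_n^d$.

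Plugging this bound into the left side of \eqref{e:intannu} and using $\nu(A \cap B_{4r_n}(x) \setminus B_{r_n}(x)) \le \fmax \theta (4r_n)^d = O(r_n^d)$ to carry out the inner integral over $y$, we get
\[
n^2 \int_A \int_{A\cap B_{4r_n}(x)\setminus B_{r_n}(x)} \Pr[\scr T_{x,y}\cap\scr T_{y,x}]\,\nu(dy)\,\nu(dx)
\le C\, n r_n^d \cdot n\int_A e^{-n\nu(B_{r_n}(x))}\,\nu(dx)\cdot e^{-\delta_1 f_0 n r_n^d},
\]
and since $n\int_A e^{-n\nu(B_{r_n}(x))}\nu(dx) = I_n$ and $nr_n^d e^{-\delta_1 f_0 n r_n^d} = O(e^{-(\delta_1 f_0/3)nr_n^d})$ (because $nr_n^d\to\infty$, so any polynomial factor in $nr_n^d$ is absorbed by shrinking the exponential rate by a constant factor), this gives \eqref{e:intannu}. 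The main obstacle I anticipate is being careful about the $A=[0,1]^2$ case, where \eqref{e:volLB1} in Lemma \ref{l:A1} genuinely needs $x \prec y$; there one splits the domain of integration into $\{x\prec y\}$ and $\{y \prec x\}$ and applies the volume bound with the roles of $x$ and $y$ swapped in the second piece, the rest of the argument going through verbatim with the annulus around the $\prec$-larger point playing the role of the extra empty region. A secondary point to get right is verifying that a point of $\eta_n$ in $B_{r_n}(x)\setminus B_{\rho r_n}(x)$ really does violate $\scr T_{x,y}$: it is adjacent to $x$ in $G(\eta_n^{x,y}, r_n)$, hence in the same component as $x$, which then has diameter at least $\rho r_n$, contradicting $\diam \le \rho r_n$ — actually one wants strict, so one should use the half-open definition in \eqref{e:medevent1} and note the point is at distance $> \rho r_n$, giving $\diam > \rho r_n$, which is outside the range $(0,\rho r_n]$.
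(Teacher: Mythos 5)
Your proposal is essentially the paper's own proof: restrict to $x \prec y$ by symmetry, note that $\scr T_{x,y}\cap\scr T_{y,x}$ forces $\eta_n$ to avoid the region inside $B_{r_n}(x)\cup B_{r_n}(y)$ but outside the two small balls, bound the probability by the Poisson void probability, gain $2\delta_1 f_0 r_n^d$ from \eqref{e:volLB1} of Lemma \ref{l:A1}, and absorb the leftover polynomial factor $nr_n^d$ into the exponent to get the rate $\delta_1 f_0/3$. One minor slip that does not affect the outcome: your set $U$ excludes $B_{\rho r_n}(y)$, so the correct lower bound is $\nu(U)\ge \nu(B_{r_n}(x))-\nu(B_{\rho r_n}(x))-\nu(B_{\rho r_n}(y))+\nu(A\cap B_{r_n}(y)\setminus B_{r_n}(x))$ (both small balls must be subtracted, which is exactly why the factor $2$ appears in the constraint $2\theta \fmax \rho^d < \delta_1 f_0$), and with this correction your final estimate $\nu(U)\ge \nu(B_{r_n}(x))+\delta_1 f_0 r_n^d$ still holds, matching the paper.
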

\begin{proof}
	Since $\Pr[ \scr T_{x,y} \cap \scr T_{y,x}]
	{\bf 1}\{r_n < \|y-x\| \leq 4r_n\}$ is symmetric in $x$ and $y$
	it suffices to prove the estimate for the integral
	restricted to $(x,y) \in A \times A$ with $x \prec y$, i.e. 
	$y \in A_x$.
	For such $(x,y)$,
	if
	$\scr T_{x,y}\cap \scr T_{y,x}$ occurs, then
	$\eta_n \cap  
	(B_{r_n}(x)\cup B_{r_n}(y))\setminus ( B_{\rho r_n}(x) \cup B_{\rho r_n}(y))
	= \emptyset$.
	Hence 
\begin{align*}
	\PP[\scr T_{x,y}\cap \scr T_{y,x} ] & \le 
	\exp(-  n \nu[ (B_{r_n}(x)\cup B_{r_n}(y))\setminus (B_{\rho r_n}(x) \cup 
	B_{\rho r_n}(y) )]) \\
	 & \leq \exp( -n \nu(B_{r_n}(x)) -n \nu(B_{r_n}(y)
	 \setminus B_{r_n}(x))
	+ 2n \theta_d \fmax (\rho r_n)^d).
\end{align*}
	By Lemma \ref{l:A1},
	if $\|x-y\|  \geq r_n$ then
	$\nu(B_{r_n}(y)\setminus B_{r_n}(x))
	\geq 2 f_0 \delta_1 r_n^d$. Therefore if we take $\rho$ to be
	so small that $2  \theta_d \fmax \rho^d < f_0 \delta_1$,
	the third (positive) term in the exponent is less
	than half the second (negative) term. 
	Hence
	$
\PP[\scr T_{x,y}\cap \scr T_{y,x}]\le
	e^{- n \nu(B_{r_n}(x)) - \delta_1 f_0   n  r_n^d}.
	$
It follows that
\begin{align*}
	n^2 \int_A \int_{A_x \cap B_{4r_n}(x) \setminus B_{r_n}(x)} 
	\Pr[\scr T_{x,y} \cap \scr T_{y,x}] \nu (dy) \nu(dx)
	& \leq n I_n \theta_d \fmax (4r_n)^d e^{-\delta_1 f_0 nr_n^d},
\end{align*}
	and \eqref{e:intannu} follows.
\end{proof}

\begin{lemma}
	\label{l:P0}
	Let $x,y\in A$ with $\|x-y\|\in (\rho r_n, r_n]$. Then
	$\PP[\scr T_{x,y}\cap \scr T_{y,x}]=0$.
\end{lemma}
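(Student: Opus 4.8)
The plan is to show that the intersection event is \emph{empty}, by a purely deterministic argument; the probability then vanishes trivially. First I would unwind the definitions: by \eqref{e:medevent1}, $\scr T_{x,y} = \scr M_{n,0,\rho}(x,\eta_n^y)$ is the event $\{0 < \diam(\cC_{r_n}(x,\eta_n^{x,y})) \le \rho r_n\}$, and likewise $\scr T_{y,x} = \scr M_{n,0,\rho}(y,\eta_n^x)$ is the event $\{0 < \diam(\cC_{r_n}(y,\eta_n^{x,y})) \le \rho r_n\}$. The key point to record here is that both events refer to the same graph $G(\eta_n^{x,y},r_n)$, i.e. to the configuration with \emph{both} $x$ and $y$ added.

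Next I would use the hypothesis $\|x-y\| \le r_n$: this means $x$ and $y$ are joined by an edge in $G(\eta_n^{x,y},r_n)$, so they lie in a common component, $\cC := \cC_{r_n}(x,\eta_n^{x,y}) = \cC_{r_n}(y,\eta_n^{x,y})$. Since $x,y \in \cC$, the Euclidean diameter of the vertex set of $\cC$ is at least $\|x-y\| > \rho r_n$. This contradicts the bound $\diam(\cC) \le \rho r_n$ imposed by $\scr T_{x,y}$ (equivalently by $\scr T_{y,x}$). Hence $\scr T_{x,y}\cap \scr T_{y,x} = \emptyset$, and so $\PP[\scr T_{x,y}\cap \scr T_{y,x}]=0$.

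There is no real obstacle: the statement is deterministic and the two hypotheses on $\|x-y\|$ play complementary roles --- $\|x-y\| \le r_n$ forces $x$ and $y$ into the same component of $G(\eta_n^{x,y},r_n)$, while $\|x-y\| > \rho r_n$ forces that component's diameter to exceed $\rho r_n$. The only thing to be careful about is recognizing that $\scr T_{x,y}$ and $\scr T_{y,x}$ are built from the same augmented point configuration $\eta_n^{x,y}$, after which the geometric contradiction is immediate.
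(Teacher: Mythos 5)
Your proof is correct and is essentially the same argument as the paper's: since $\|x-y\|\le r_n$, the point $y$ lies in $\cC_{r_n}(x,\eta_n^{x,y})$, so this component has diameter at least $\|x-y\|>\rho r_n$, contradicting $\scr T_{x,y}$. No issues.
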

\begin{proof}
	The condition on $\|x-y\|$ implies 
	that $y \in \cC_{r_n}(x,\eta_n^y)$
and $\diam(\cC_{r_n}(x,\eta_n^y)) >\rho r_n$, which 
	negates the event $\scr T_{x,y}$.
\end{proof}
\begin{lemma}
	\label{l:close}
	Suppose
	 $0 < \rho < \min((\delta_1 f_0/( \theta_d \fmax))^{1/(d-1)},1)$.
	Then as $n \to \infty$ we have
	\begin{align}
	n^2	\int_A \int_{A \cap B_{\rho r_n}(x)}
		\PP[\scr T_{x,y}\cap \scr T_{y,x}] \nu(dy) \nu(dx)
		= O(  (nr_n^d)^{1-d} I_n  ).
		\label{e:0529d}
	\end{align}
\end{lemma}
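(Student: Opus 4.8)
The plan is to adapt, to the two‑point setting, the device used in the proof of Lemma~\ref{l:V0eps} (the decomposition $R_{n,0,\rho}=K_{n,0,\rho}+N_{n,1}+N_{n,2}$), keeping track of how far the shared small component reaches from $x$. Since $\PP[\scr T_{x,y}\cap\scr T_{y,x}]\,\1\{\|y-x\|\le\rho r_n\}$ is symmetric in $x$ and $y$, it suffices, up to a factor $2$, to integrate over $y\in B_{\rho r_n}(x)\cap A_x$. When $\|y-x\|\le\rho r_n<r_n$ the points $x$ and $y$ lie in the same component $\cC:=\cC_{r_n}(x,\eta_n^{x,y})$; on $\scr T_{x,y}\cap\scr T_{y,x}$ this component has Euclidean diameter at most $\rho r_n$, so, writing $t:=\max_{z\in\cC}\|z-x\|\in[\|y-x\|,\rho r_n]$, we have $\cC\subset B_t(x)$. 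Let $w\in\cC$ attain this maximum. Since $\cC$ is an isolated component, any point of $\eta_n$ within distance $r_n$ of one of $x,y,w$ lies in $\cC\subset B_t(x)$, so $\eta_n$ avoids $(B_{r_n}(x)\cup B_{r_n}(y)\cup B_{r_n}(w))\setminus B_t(x)$. I would then split into the cases $w=y$ (so $t=\|y-x\|$) and $w\in\eta_n$ (so $w\ne x$ and $w\ne y$).

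In the first case the probability is at most $\exp\!\big(-n\nu\big((B_{r_n}(x)\cup B_{r_n}(y))\setminus B_{\|y-x\|}(x)\big)\big)$. Because $B_{\|y-x\|}(x)\subset B_{r_n}(x)$ and $B_{r_n}(y)\setminus B_{r_n}(x)$ is disjoint from $B_{r_n}(x)$, the exponent is at least $\nu(B_{r_n}(x))-\nu(B_{\|y-x\|}(x))+\nu(B_{r_n}(y)\setminus B_{r_n}(x))$; by \eqref{e:volLB2} of Lemma~\ref{l:A1} (valid as $x\prec y$ and $\|y-x\|\le 3r_n$) the last term is at least $2\delta_1f_0r_n^{d-1}\|y-x\|$, while $\nu(B_{\|y-x\|}(x))\le\fmax\theta\|y-x\|^d\le\fmax\theta\rho^{d-1}r_n^{d-1}\|y-x\|<\delta_1f_0r_n^{d-1}\|y-x\|$ by the hypothesis on $\rho$; hence the probability is at most $e^{-n\nu(B_{r_n}(x))-n\delta_1f_0r_n^{d-1}\|y-x\|}$. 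Plugging this in, substituting $v=nr_n^{d-1}(y-x)$ in the inner integral, and using $n\int_A e^{-n\nu(B_{r_n}(x))}\nu(dx)=I_n$, this case contributes $O\big(n(nr_n^{d-1})^{-d}I_n\big)=O((nr_n^d)^{1-d}I_n)$.

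In the second case I would apply the multivariate Mecke formula once more to pull out $w$, bounding this contribution by
\[
n^3\int_A\int_{B_{\rho r_n}(x)\cap A_x}\int_{B_{\rho r_n}(x)}\PP\big[\,w\text{ is the point of }\cC_{r_n}(x,\eta_n^{x,y,w})\text{ farthest from }x,\ \diam\le\rho r_n\,\big]\,\nu(dw)\,\nu(dy)\,\nu(dx),
\]
where $\|w-x\|=t\ge\|y-x\|$ on the relevant event and $\eta_n$ avoids $(B_{r_n}(x)\cup B_{r_n}(w))\setminus B_t(x)$. Since $B_{r_n}(w)\setminus B_{r_n}(x)$ (when $x\prec w$), respectively $B_{r_n}(x)\setminus B_{r_n}(w)$ (when $w\prec x$, where — assuming $\rho<\frac12$ — one checks $B_t(x)\subset B_{r_n}(w)$), is disjoint from $B_t(x)$ and contained in that empty region, the exponent is at least $\nu(B_{r_n}(\cdot))-\nu(B_t(x))+2\delta_1f_0r_n^{d-1}t$ (with $\nu(B_{r_n}(\cdot))$ equal to $\nu(B_{r_n}(x))$ or $\nu(B_{r_n}(w))$), and again $\nu(B_t(x))\le\fmax\theta\rho^{d-1}r_n^{d-1}t<\delta_1f_0r_n^{d-1}t$, so the probability is at most $e^{-n\nu(B_{r_n}(\cdot))-n\delta_1f_0r_n^{d-1}t}$. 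Integrating over $y$ gives a factor $O(t^d)$, integrating the $w$-variable with the substitution $v=nr_n^{d-1}(w-x)$ gives a factor $O((nr_n^{d-1})^{-2d})$, and the remaining integral against $n\,e^{-n\nu(B_{r_n}(\cdot))}$ gives $I_n$; altogether this case contributes $O\big(n^2(nr_n^{d-1})^{-2d}I_n\big)=O((nr_n^d)^{2-2d}I_n)=o((nr_n^d)^{1-d}I_n)$ since $nr_n^d\to\infty$. Adding the two cases yields \eqref{e:0529d}. (For $A=[0,1]^2$ one argues as in Lemma~\ref{l:6.4}, treating $x,w$ near corners separately via \cite[Proposition~5.15]{Pen03}; and one may shrink the admissible range of $\rho$, which is harmless since Proposition~\ref{p:varT_n} only needs small $\rho$.)

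The crux is the second case. One cannot simply declare $B_{\rho r_n}(x)$ empty of $\eta_n$, as that would cost a factor $e^{\Omega(nr_n^d)}$; instead the ``lost'' mass $\nu(B_t(x))=O(t^d)$ inside the component must be paid for by the ``stretch gain'' $\nu(B_{r_n}(w)\setminus B_{r_n}(x))=\Omega(tr_n^{d-1})$ furnished by Lemma~\ref{l:A1}. Since $t\le\rho r_n$ gives $t^d\le\rho^{d-1}r_n^{d-1}t$, this balancing works exactly when $\theta\fmax\rho^{d-1}<\delta_1f_0$ — precisely the hypothesis — and it is what forces one to split according to the $\prec$-order of $x$ and $w$ and to keep the component small relative to $r_n$.
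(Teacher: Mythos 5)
Your proposal is correct and is essentially the paper's own proof: your case $w=y$ is exactly the event $\scr N_{x,y}$ used there (with the same bound via \eqref{e:volLB2} and the hypothesis $\fmax\theta\rho^{d-1}<\delta_1 f_0$), and your case $w\in\eta_n$ is the paper's treatment of $\scr T_{x,y}\setminus\scr N_{x,y}$ via the Mecke formula, the same $\prec$-ordered application of Lemma \ref{l:A1}, and the same change of variables, yielding the same rates $O((nr_n^d)^{1-d}I_n)$ and $O((nr_n^d)^{2-2d}I_n)$. The only (harmless) deviation is that your containment $B_t(x)\subset B_{r_n}(w)$ in the $w\prec x$ subcase forces the extra restriction $\rho\le\tfrac12$, which is avoidable as in the paper, since $\nu\big((B_{r_n}(x)\cup B_{r_n}(w))\setminus B_t(x)\big)\ge \nu(B_{r_n}(w))+\nu(B_{r_n}(x)\setminus B_{r_n}(w))-\nu(B_t(x))$ gives the same exponent without any containment (and the corner remark for $A=[0,1]^2$ is unnecessary, as Lemma \ref{l:A1} already covers that case under the $\prec$ condition you maintain).
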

\begin{proof} Let $x,y \in A$ with $\|x-y \| \in (0,\rho r_n]$.
	Then $\scr T_{x,y} = \scr T_{y,x}$. Define event
	$$
	\scr N_{x,y} := 
	\{ \eta_n ((B_{r_n}(x) \cup B_{r_n}(y)) \setminus B_{\|y-x\|}(x)) =0 \}.
	$$
	By assumption $\fmax \rho^{d-1} \theta_d <  \delta_1 f_0 $. 
	If $x \prec y$, using Lemma \ref{l:A1} yields
\begin{align}
	\PP[  \scr N_{x,y} ]
	& \leq \exp(-n  \nu(B_{r_n}(x)) - 2n \delta_1 f_0 r_n^{d-1}\|y-x\| +
	n \fmax \theta_d \|y-x\|^d)
	\nonumber
	\\
	& \leq \exp(-n  \nu(B_{r_n}(x)) -n \delta_1 f_0 r_n^{d-1}\|y-x\| ).
	\label{e:0529a}
\end{align}
Similarly, if $y \prec x$ then
	\begin{align}
	\PP[  \scr N_{x,y} ]
		\leq \exp(-n  \nu(B_{r_n}(y)) -n \delta_1 f_0 
		r_n^{d-1}\|x-y\|).
		\label{0529b}
\end{align}
	Hence, recalling $A_x  := \{y \in A: x \prec y\}$  and
	using Fubini's theorem we obtain that
	\begin{align}
		&	n^2 \int_A \int_{A \cap B_{\rho r_n}(x)} \Pr[ \scr N_{x,y} ] \nu(dy)
		\nu(dx)
		\nonumber \\
		& \leq  n^2 \int_A \int_{A_x \cap B_{\rho r_n}(x) } 
		e^{-n\nu(B_{r_n}(x)) - n \delta_1 f_0r_n^{d-1}\|y-x\|}
		\nu(dy ) \nu(dx)
		\nonumber \\
		& +  n^2 \int_A \int_{A_y \cap B_{\rho r_n}(y) } 
		e^{-n\nu(B_{r_n}(y)) - n \delta_1 f_0r_n^{d-1}\|x-y\|}
		\nu(dx ) \nu(dy)
		\nonumber \\
		& \leq 2 n I_n  \fmax 
		\int_{ B_{\rho r_n}(o)} e^{-n \delta_1 f_0 r_n^{d-1}
		\|u\|} du 
		\nonumber \\
		& = 2 n \fmax I_n (nr_n^{d-1})^{-d} \int_{B_{n \rho r_n^d(o)}}
		e^{- n \delta_1 f_0 \|v\|} dv
		= O( (n r_n^d)^{1-d} I_n).
		\label{e:0529c}
	\end{align}

	Next, let $z$ denote 
	the furthest point from $x $ in $\cC_{r_n}(x,\eta_n^{x,y})$.
	If $z=y$ then
	$\cN_{x,y}$ occurs. Thus if
	 $\scr T_{x,y} \setminus \cN_{x,y} $ occurs
	 then $z \neq y$
	 and hence 
	 $z \in \eta_n$ with $\|y-x\| < \|z-x\| \leq \rho r_n$, and 
	 moreover $\eta_n \cap (B_{r_n}(x) \cup B_{r_n}(z))
	 \setminus B_{\|z-x\|}(x))
	 = \emptyset $.  That is,
	$$
	\{ \scr T_{x,y} \setminus \scr N_{x,y} \} \subset 
	\{
		\exists z \in \eta_n \cap B_{\rho r_n}(x) \setminus
		B_{\|y-x\|}(x):
	\eta_n((B_{r_n}(x) \cup B_{r_n}(z)) \setminus B_{\|z-x\|}(x) ) =0
	\}.
	$$
	Hence by Markov's inequality, the Mecke formula and
	Fubini's theorem,
	\begin{align*}
		& 	n^2 \int_A \int_{A \cap B_{\rho r_n}(x)}
		\Pr[\scr T_{x,y} \setminus \scr N_{x,y}]
		\nu(dy) \nu(dx) \\
		& \leq n^3 \int_A 
		\int_A
		\int_{B_{\rho r_n}(x) \setminus B_{\|y-x\|}(x) }
		e^{- n \nu [(B_{r_n}(x) 
		\cup B_{r_n}(z)) \setminus B_{\|z-x\|}(x)] }
		\nu(dz) \nu(dy) \nu(dx)
		\\
		& \leq n^3 \int_A \int_{B_{\rho r_n}(x) }
		e^{-n \nu[(B_{r_n}(x) \cup B_{r_n}(z))\setminus B_{\|z-x\|}(x)]}
		(\fmax \theta_d \|z-x\|^d) \nu(dz) \nu(dx).
	\end{align*}
	By the same estimates as at \eqref{e:0529a} and \eqref{0529b}
	(now with $z$ instead of $y$), the last expression is bounded
	by
	\begin{align*}
		&	n^3 \fmax \theta_d \int_A \int_{A_x \cap B_{\rho r_n}(x)}
		\|z-x\|^d e^{-n \nu(B_{r_n}(x))- n \delta_1 f_0 
		r_n^{d-1}\|z-x\|}
		\nu(dz) \nu(dx)
		\\
		&+ n^3 \fmax \theta_d \int_A \int_{A_z \cap B_{\rho r_n}(z)}
		\|x-z\|^d e^{-n \nu(B_{r_n}(z))- n \delta_1 f_0 
		r_n^{d-1}\|x-z\|}
		\nu(dx) \nu(dz) \\
		&	\leq 2 n^2 \fmax^2 \theta_d I_n \int_{B_{\rho r_n}(o)}
		e^{- n \delta_1 f_0 r_n^{d-1} \|u\|} \|u\|^d du
		\\
		&	= O(n^2 (nr_n^{d-1})^{-2d} I_n ) 
		= O((nr_n^d)^{2-2d} I_n).
	\end{align*}
	Combining this with \eqref{e:0529c} yields \eqref{e:0529d}.
\end{proof}


\begin{proof}[\it Proof of Proposition \ref{p:varT_n}.]
	Applying Lemmas 
	\ref{l:6.4}, \ref{l:P0} and \ref{l:close} we obtain that
	provided $\rho$ is taken small enough, we have
	as $n \to \infty$ that
	\begin{align}
	n^2 \int_A \int_{A \cap B_{4r_n}(x)} \Pr[ \scr T_{x,y} 
	\cap \scr T_{y,x} ] \nu(dy) \nu(dx) = O((nr_n^d)^{1-d} I_n ).
		\label{e:0903e}
		\end{align}
	Hence by Lemma  \ref{l:VE}, we obtain that
	\begin{align}
	(\Var[R'_{n,0,\rho}] - \E[R'_{n,0,\rho}])^+ = O((nr_n^d)^{1-d} I_n ).
		\label{e:0529e}
	\end{align}
	Also, by Lemma \ref{l:V0eps},
	provided
	$\rho$ is small enough we have 
	$ \E[R'_{n,0,\rho}] = O((nr_n^d)^{1-d} I_n)$. Combining this
	with  \eqref{e:0529e} and using the nonnegativity of variance,
	we obtain the statement about $R'_{n,0,\rho}$ in
	\eqref{e:0529f}.

	Since  $\scr E_{x,y} \subset \scr T_{x,y}$ we still have
	\eqref{e:0903e} with $\scr T_{x,y}$ replaced by
	$\scr E_{x,y}$. We can then derive the statement about
	$K'_{n,0,\rho}$ by a similar argument;
	instead of Lemma \ref{l:V0eps} we now use part of the proof of
	Proposition \ref{p:J}.
\end{proof}

\subsection{Variances for small components: binomial input}

Next we consider for $G(\X_n,r(n))$
the number of small non-singleton components $K_{n,0,\rho}$
and the number of vertices in such components, $R_{n,0,\rho}$
(as defined at \eqref{e:Knrdef}),
for suitably small (fixed) $\rho$.

While the asymptotic variance for small components in a Poisson sample
was obtained above by computing the first two moments and
exploiting the spatial independence of the Poisson process,
we shall bound the variance
for small components in a binomial sample
by a very different argument, namely, 
the Efron-Stein inequality from Lemma \ref{l:Poinc.ES}.
This does not work so well, in the sense that  
our bound does the job only in dimension $d\ge 3$.

\begin{proposition}[Variance estimates for small non-singleton
	components: binomial input]
	\label{p:varRsmall}
	If $d \geq 3$ then there exists
	$\delta_{11} >0 $
	such that if $0 < \rho \leq \delta_{11}$ then
	$\Var(K_{n,0,\rho}) = O((nr_n^d)^{2-d}I_n)$ as $n \to \infty$, and
	$\Var(R_{n,0,\rho}) = O((nr_n^d)^{2-d}I_n)$ as $n \to \infty$.
\end{proposition}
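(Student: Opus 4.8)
The plan is to apply the Efron–Stein inequality \eqref{e:p4.2} from Lemma~\ref{l:Poinc.ES} to each of $F = K_{n,0,\rho}$ and $F=R_{n,0,\rho}$, viewed as functions of $\cX_{n-1}$, and to bound the integral of $\E[|D_xF(\cX_{n-1})|^2]$. The add-one cost $D_xF(\cX)$ counts the change in the number of (or number of vertices in) small non-singleton components caused by inserting $x$; since $x$ can only affect components meeting $B_{r_n}(x)$, and a small component has diameter at most $\rho r_n<r_n$, the quantity $D_xF(\cX_{n-1})$ is supported on the configuration inside $B_{2r_n}(x)$ (roughly), and is bounded in absolute value by a constant times the number of points of $\cX_{n-1}$ in $B_{Cr_n}(x)$ that lie in a small non-singleton component together with $x$ or nearby. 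The key observation is that $D_xF(\cX_{n-1})\neq 0$ forces the \emph{new} vertex $x$ to create, destroy, merge, or augment a non-singleton component of diameter $\le \rho r_n$; in every such case there must exist a point $y\in\cX_{n-1}\cap B_{(1+\rho)r_n}(x)$ such that the component of $y$ in $G(\cX_{n-1}^x,r_n)$ (or in $G(\cX_{n-1},r_n)$) is non-singleton with diameter $\le 2\rho r_n$, i.e. an event of the type $\scr M_{n,0,2\rho}(y,\cdot)$ holds.

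First I would make the deterministic combinatorial claim precise: on the event $\{D_xF(\cX_{n-1})\neq 0\}$ one of the following holds — (a) $x$ is an isolated-or-new vertex forming a small non-singleton component with some $y\in\cX_{n-1}\cap B_{r_n}(x)$; (b) $x$ merges two or more small components, or merges a small component into a larger one; or (c) adding $x$ changes the vertex count of an existing small component. In all cases $|D_xF(\cX_{n-1})|$ is bounded by $1 + \#(\cX_{n-1}\cap B_{Cr_n}(x))\,\mathbf 1\{\exists y\in\cX_{n-1}\cap B_{(1+\rho)r_n}(x): \scr M_{n,0,2\rho}(y,\cX_{n-1}^{x}) \text{ or } \scr M_{n,0,2\rho}(y,\cX_{n-1})\}$, and moreover the event inside the indicator has probability $O((nr_n^d)^{1-d}e^{-n\nu(B_{r_n}(y))})$ summed over $y$, by Lemma~\ref{l:smallK} (applied with $\delta_4$ in place of $2\rho$, so we need $2\rho\le\delta_4$; this fixes $\delta_5$). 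The factor $\#(\cX_{n-1}\cap B_{Cr_n}(x))$ has bounded moments (it is $\Bin(n-1,O(r_n^d))$ with mean $O(nr_n^d)=O(\log n)$), so by Cauchy–Schwarz $\E[|D_xF(\cX_{n-1})|^2]$ is bounded by $O((\log n)^2)$ times the square root of the probability of the small-component event near $x$; a cruder but cleaner route is to note $|D_xF|\le \#(\cX_{n-1}\cap B_{Cr_n}(x))$ always, bound the \emph{fourth} moment of this count by $O((nr_n^d)^4)$, and multiply by the probability that a small non-singleton component touches $B_{Cr_n}(x)$, which integrates (using the Mecke/Palm calculus for $\cX_{n-1}$ together with Lemma~\ref{l:smallK}) to $O((nr_n^d)^{1-d}I_n)$.

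The cleanest version of the bound I would aim for is: $n\int_A \E[|D_xF(\cX_{n-1})|^2]\,\nu(dx) = O\big((nr_n^d)^{2-d}I_n\big)$. Here one power of $nr_n^d$ is lost relative to the Poisson case precisely because the Efron–Stein bound does not exploit spatial independence: the deleted/re-inserted point can sit anywhere in $B_{Cr_n}(x)$, so the natural count $\#(\cX_{n-1}\cap B_{Cr_n}(x))$ enters squared with mean squared $\asymp (nr_n^d)^2$, versus one such factor saved by the small-component probability $O((nr_n^d)^{1-d}e^{-n\nu(B_{r_n}(\cdot))})$ from Lemma~\ref{l:smallK}; the net is $(nr_n^d)^{2-d}$, and since $nr_n^d=O(\log n)$ while $d\ge 3$ this is indeed $o(I_n)$, which is why the hypothesis $d\ge3$ is needed. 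Concretely I would write, using \eqref{e:p4.2}, $\Var[R_{n,0,\rho}]\le n\int_A\E[|D_xR_{n,0,\rho}(\cX_{n-1})|^2]\nu(dx)$, bound the integrand by the deterministic estimate above, take expectations splitting on whether $\#(\cX_{n-1}\cap B_{Cr_n}(x))$ exceeds $(\log n)^2$ (the tail being negligible by Lemma~\ref{lemChern}(i)), and integrate the main term using Lemma~\ref{l:smallK} exactly as in the proof of Lemma~\ref{l:V0eps}.

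\textbf{Main obstacle.} The delicate point is the deterministic structural claim bounding $|D_xF(\cX_{n-1})|$: one must verify that inserting a single vertex $x$ can change $K_{n,0,\rho}$ or $R_{n,0,\rho}$ only through configurations that genuinely contain a small \emph{non-singleton} component near $x$ (so that Lemma~\ref{l:smallK} is applicable), and in particular that merging or splitting events are correctly accounted for — e.g. when $x$ bridges a small component to the giant component, $R_{n,0,\rho}$ decreases by the order of that small component, which is why the multiplicative count $\#(\cX_{n-1}\cap B_{Cr_n}(x))$ (not just $O(1)$) appears, and one must be careful that this count is over a ball of the right (constant $\times r_n$) radius. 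Handling the boundary case $A=[0,1]^2$ requires, as elsewhere in the paper, invoking the $[0,1]^2$ version of Lemma~\ref{l:smallK} noted in its proof. Once the structural bound is in hand, the remaining estimates are routine and parallel to the proofs of Lemmas~\ref{l:V0eps} and~\ref{l:E6}.
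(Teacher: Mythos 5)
Your starting point coincides with the paper's: the Efron--Stein inequality \eqref{e:p4.2} applied to $K_{n,0,\rho}$ and $R_{n,0,\rho}$ as functions of $\X_{n-1}$, together with the observation that a non-zero add-one cost forces a small non-singleton component near $x$, and your heuristic for why one power of $nr_n^d$ is lost relative to the Poisson case (hence $d\geq 3$) is the right one. The genuine gap is in the quantitative step, which is the heart of the proof and which you defer to ``routine'' estimates. Bounding $|D_xF(\X_{n-1})|$ by $\#(\X_{n-1}\cap B_{Cr_n}(x))$ times an indicator and then applying Cauchy--Schwarz gives $\E[|D_xF|^2]\le(\E[\mathrm{count}^4])^{1/2}(\Pr[E_x])^{1/2}$; the square root halves the exponent in the factor $e^{-n\nu(B_{r_n}(\cdot))}$ supplied by Lemma \ref{l:smallK}, and $n\int_A e^{-n\nu(B_{r_n}(x))/2}\nu(dx)$ is \emph{not} $O(I_n)$ (in the uniform case it exceeds $I_n$ by a factor of order $e^{n\theta f_0 r_n^d/2}$ or $nr_n^{d-1}$, which can be polynomial in $n$), so this route gives neither the claimed rate $O((nr_n^d)^{2-d}I_n)$ nor, in general, even $o(I_n)$. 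Your alternative (truncate the count at $(\log n)^2$, or at $O(nr_n^d)$, and multiply the square of the truncation level by $\Pr[E_x]$) keeps the full exponential weight but pays an extra $(\log n)^4$ or $(nr_n^d)^2$ on top of $(nr_n^d)^{2-d}$. Since the standing assumptions only give $nr_n^d\to\infty$ and $nr_n^d=O(\log n)$ (it may grow like $\log\log n$), these factors are not absorbed: for $d=3$ one does not even recover $\Var=o(I_n)$, let alone the exact rate that feeds into \eqref{e:EVarsim} and \eqref{e:CLTbin}.

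The missing ingredient is to avoid any count of \emph{all} nearby points and any Cauchy--Schwarz between a count and an event. The paper first disposes of the positive part cheaply: if the component of $x$ in $G(\X_{n-1}^x,r_n)$ is small, all its other vertices lie in $B_{\rho r_n}(x)$ and (since $2\rho<1$) already formed a single component of $G(\X_{n-1},r_n)$, so $D_x^+R_{n,0,\rho}(\X_{n-1})\in\{0,1,2\}$ and $n\int\E[(D_x^+R_{n,0,\rho})^2]\nu(dx)\le 4\,\E[R_{n,0,\rho}(\X_n)]=O((nr_n^d)^{1-d}I_n)$ by Lemma \ref{l:V0eps}. For the negative part it bounds $D_x^-R_{n,0,\rho}$ by $N_{1,x}$, the number of points of $\X_{n-1}\cap B_{2r_n}(x)$ lying in small non-singleton components, and computes $\E[N_{1,x}]$ and $\E[N_{1,x}(N_{1,x}-1)]$ directly through multi-point integrals in which every term retains the full weight $e^{-n\nu(B_{r_n}(y))}$ multiplied by the geometric decay $e^{-\delta_1 f_0 n r_n^{d-1}\|z-y\|}$ from Lemma \ref{l:A1}; integrating these kernels yields the factors $(nr_n^d)^{1-d}$, $(nr_n^d)^{-d}$, etc., and the only loss relative to $\E[R_{n,0,\rho}]$ is the single factor $nr_n^d$ from integrating the location of $x$ over a ball of radius $O(r_n)$ around $y$, which is exactly what produces $O((nr_n^d)^{2-d}I_n)$. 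Your outline identifies the correct framework, but the decisive second-moment computation is precisely what your proposed shortcuts cannot reproduce.
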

\begin{proof}
	By the Efron-Stein inequality  \eqref{e:p4.2},
	\begin{align*}
		\Var[R_{n,0,\rho}] &
		\leq n \int_A   \E [ (D_x R_{n,0,\rho}(\X_{n-1}))^2] \nu(dx)
	\\
		& = n \int_A   \E [ (D_x^+ R_{n,0,\rho}(\X_{n-1}))^2] \nu(dx)
		+ n \int_A   \E [ (D_x^- R_{n,0,\rho}(\X_{n-1}))^2] \nu(dx).
	\end{align*}
	 Similarly
	\begin{align*}
		\Var[K_{n,0,\rho}] &
		 \leq n \int_A   \E [ (D_x^+ K_{n,0,\rho}(\X_{n-1}))^2] \nu(dx)
		+ n \int_A   \E [ (D_x^- K_{n,0,\rho}(\X_{n-1}))^2] \nu(dx).
	\end{align*}
	Moreover for all finite  $\X \subset \R^d$ and $x \in \R^d \setminus \X$
	we have $D_x^+ K_{n,0,\rho}(\X) \leq D_x^+ R_{n,0,\rho}(\X)$ and
	 $D_x^- K_{n,0,\rho}(\X) \leq D_x^- R_{n,0,\rho}(\X)$.
	Therefore the result follows from the next two
	lemmas.
\end{proof}
\begin{lemma}
  Let $\rho$ be as in Lemma \ref{l:V0eps}.
	Then as $n \to \infty$ we have
		 \begin{align}
			 n \int_A   \E [ (D_x^+ R_{n,0,\rho}(\X_{n-1}))^2] \nu(dx)
			 = O((nr_n^d)^{1-d} I_n).
			 \label{e:ED+}
		 \end{align}
\end{lemma}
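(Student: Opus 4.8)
The plan is to reduce \eqref{e:ED+} to a pathwise bound on the positive part of the add-one cost and then estimate the resulting integral by the same kind of moment computation used to prove Lemma~\ref{l:V0eps}, with one extra layer of bookkeeping.

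\emph{Step 1: a combinatorial bound on $D_x^+R_{n,0,\rho}$.} Fix a finite set $\X\subset\R^d$ and a point $x\notin\X$, and write $\cC:=\cC_{r_n}(x,\X^x)$. Adding $x$ to $\X$ merges into the single component $\cC$ exactly those components $\cC_1,\dots,\cC_p$ of $G(\X,r_n)$ that meet $B_{r_n}(x)$, and leaves all other components unchanged. If $\diam(\cC)=0$ or $\diam(\cC)>\rho r_n$, then adding $x$ cannot increase the number of vertices in components of diameter in $(0,\rho r_n]$, so $D_xR_{n,0,\rho}(\X)\le0$; while if $0<\diam(\cC)\le\rho r_n$ then every $\cC_i$ also has diameter $\le\rho r_n$, whence $D_xR_{n,0,\rho}(\X)=\#\cC-\sum_{i:\#\cC_i\ge2}\#\cC_i$, which lies in $[1,\#\cC]$. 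Therefore
\[
 D_x^+R_{n,0,\rho}(\X_{n-1})\;\le\;\#\cC_{r_n}(x,\X_{n-1}^x)\,\mathbf{1}_{\scr M_{n,0,\rho}(x,\X_{n-1})},
\]
with $\scr M_{n,0,\rho}$ as in \eqref{e:medevent1}.

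\emph{Step 2: expansion into point configurations.} Since $\cC:=\cC_{r_n}(x,\X_{n-1}^x)\subset\{x\}\cup\X_{n-1}$, we have $(\#\cC)^2=1+3\,\#(\cC\cap\X_{n-1})+\#\{(a,b):a\ne b,\ a,b\in\cC\cap\X_{n-1}\}$, so the left side of \eqref{e:ED+} is at most $T_0+3T_1+T_2$, where $T_0:=n\int_A\PP[\scr M_{n,0,\rho}(x,\X_{n-1})]\,\nu(dx)$, and $T_1$, $T_2$ are the integrals of $n\,\E[\#(\cC\cap\X_{n-1})\mathbf{1}_{\scr M_{n,0,\rho}(x,\X_{n-1})}]$ and of $n\,\E[\#\{(a,b):a\ne b,\ a,b\in\cC\cap\X_{n-1}\}\mathbf{1}_{\scr M_{n,0,\rho}(x,\X_{n-1})}]$ against $\nu(dx)$. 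The term $T_0$ equals $\E[R_{n,0,\rho}]$ (use $R_{n,0,\rho}(\X_n)=\sum_i\mathbf{1}_{\scr M_{n,0,\rho}(X_i,\X_n)}$ together with the independence of $X_1$ and $\{X_2,\dots,X_n\}$), hence $T_0=O((nr_n^d)^{1-d}I_n)$ by Lemma~\ref{l:V0eps}.

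\emph{Step 3: estimating $T_1$ and $T_2$.} Here I would apply the binomial Mecke formula to peel off the one, resp.\ two, tracked vertices of $\cC$ lying in $\X_{n-1}$, and in addition introduce as a \emph{pivot} the vertex $w$ of $\cC$ lying furthest from $x$. On the relevant events all vertices of $\cC$ lie in $\overline{B_{\|w-x\|}(x)}$ and, since $\cC$ is isolated, $\X_{n-1}\cap\big((B_{r_n}(x)\cup B_{r_n}(w))\setminus\overline{B_{\|w-x\|}(x)}\big)=\emptyset$; taking $\rho$ small enough (as permitted by the hypothesis, so that $\fmax\theta\rho^{d-1}\le\delta_1 f_0$) and using \eqref{e:volLB2} of Lemma~\ref{l:A1} after ordering $x$ and $w$ by $\prec$, the $\nu$-measure of this annular region is at least $\nu(B_{r_n}(u_0))+\delta_1 f_0 r_n^{d-1}\|w-x\|$, where $u_0$ is whichever of $x,w$ is $\prec$-smaller. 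This is exactly the mechanism used in the bound on $J_{1,n}$ in the proof of Proposition~\ref{p:Sn} and on $N_{n,1},N_{n,2}$ in the proof of Lemma~\ref{l:V0eps}: integrating $w$, the factor $e^{-n\delta_1 f_0 r_n^{d-1}\|w-x\|}$ contributes $O((nr_n^{d-1})^{-d})$ after the substitution $v=nr_n^{d-1}(w-x)$; each further tracked vertex of $\cC$ contributes only the polynomial factor $\nu(\overline{B_{\|w-x\|}(x)})=O(\|w-x\|^d)$; and integrating $u_0$ against $e^{-n\nu(B_{r_n}(u_0))}$ returns $I_n/n$ (using $(1-p)^{n-k}\le2e^{-np}$ for the relevant small $p$). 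Tracking exponents, a configuration involving $j\ge0$ vertices of $\X_{n-1}$ beyond the pivot contributes $O((nr_n^d)^{(j+1)(1-d)}I_n)$; in our expansion $j\le2$, and since $1-d\le0$ and $nr_n^d\to\infty$ every such term is $O((nr_n^d)^{1-d}I_n)$, so $T_1+T_2=O((nr_n^d)^{1-d}I_n)$, which yields \eqref{e:ED+}.

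\emph{Expected main obstacle.} The computation is otherwise routine, but two points require care. The first is the exact combinatorial identity for $D_xR_{n,0,\rho}$ in Step~1, which reveals that adding $x$ can only raise $R_{n,0,\rho}$ when $x$ falls into a small isolated component, so that $\mathbf{1}_{\scr M_{n,0,\rho}}$ may be inserted. The second, genuinely delicate, point is the choice of the furthest vertex $w$ as pivot in Step~3: it is this that produces the extra decay $e^{-n\delta_1 f_0 r_n^{d-1}\|w-x\|}$, which simultaneously confines the remaining vertices of $\cC$ to a small ball and forces large small-components to be rare. Replacing this by the crude containment $\cC\subseteq B_{\rho r_n}(x)$ would merely shave a constant off the exponent of $e^{-n\nu(B_{r_n}(x))}$, which is useless since it destroys the normalisation by $I_n$.
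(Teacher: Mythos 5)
Your argument is correct, but it takes a much longer route than the paper's. The paper's proof rests on one observation you missed: because $\rho<1$ (indeed $\rho<\tfrac12$ under the hypothesis of Lemma \ref{l:V0eps}), on the event $0<\diam \cC_{r_n}(x,\X_{n-1}^x)\le\rho r_n$ all points of that component are pairwise within distance $\rho r_n<r_n$, so $\cC_{r_n}(x,\X_{n-1}^x)\setminus\{x\}$ is a clique and forms a \emph{single} component of $G(\X_{n-1},r_n)$; hence adding $x$ merges exactly one old component and $D_x^+R_{n,0,\rho}(\X_{n-1})\in\{1,2\}$ (it equals $2$ precisely when that component is a singleton). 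Consequently $(D_x^+R_{n,0,\rho}(\X_{n-1}))^2\le 4\,\mathbf{1}\{0<\diam \cC_{r_n}(x,\X_{n-1}^x)\le\rho r_n\}$, the left-hand side of \eqref{e:ED+} is at most $4\,\E[R_{n,0,\rho}(\X_n)]$, and Lemma \ref{l:V0eps} finishes the proof immediately. Your Step 1 stops at the weaker pathwise bound $D_x^+R_{n,0,\rho}\le\#\cC\,\mathbf{1}_{\scr M_{n,0,\rho}}$ (your general merging identity is right, but with $\rho<1$ the number $p$ of merged components is always $1$), and you then compensate in Steps 2--3 by re-running, inside this lemma, the same pivot-based moment computations that constitute the proof of Lemma \ref{l:V0eps} (the analogues of $N_{n,1}$, $N_{n,2}$, plus a three-point term): the mechanics there --- ordering $x$ and the furthest vertex $w$ by $\prec$, applying \eqref{e:volLB2}, the annulus-emptiness bound with the cancellation $\fmax\theta\rho^{d-1}\le\delta_1 f_0$, and the $(nr_n^d)^{(j+1)(1-d)}$ bookkeeping --- are sound and mirror the paper's own estimates (compare Lemmas \ref{l:V0eps} and \ref{l:close}), so your proof does go through and even uses Lemma \ref{l:V0eps} only for the $T_0$ term. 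What the paper's clique observation buys is that the squared add-one cost is bounded by a constant times an indicator, so the whole lemma reduces to the expectation bound you already cite, with no new moment calculations needed.
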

\begin{proof}
	Note
	$ D_x^+ R_{n,0,\rho}(\X_{n-1}) $ is non-zero only if
	$0 < \diam \cC_{r}(x,\X_{n-1}^x) \leq \rho r_n$,
	in which case
	 $ D_x^+ R_{n,0,\rho}(\X_{n-1})  $ is either 1 (if
	 $ \#(\X_{n-1} \cap B_{\rho r_n}(x)) > 1$) or
	 2 (if 
	 $\# (\X_{n-1} \cap B_{\rho r_n}(x)) = 1$). 
	 Hence
	 \begin{align*}
		 n \int_A   \E [ (D_x^+ R_{n,0,\rho}(\X_{n-1}))^2] \nu(dx)
		 & \leq 4 n \int_A \Pr[ 0 < \diam \cC_{r_n}(x,\X_{n-1}^x) \leq 
		 \rho r_n]
		 \nu(dx)
		 \\
		 & = 4 \E[R_{n,0,\rho}(\X_n)]
	 \end{align*}
	 Then the result follows from Lemma \ref{l:V0eps}.
	 \end{proof}

	 \begin{lemma}
		 Suppose $0< \rho < \min((\delta_1 f_0/(2 \fmax \theta_d))^{1/(d-1)},1)$, where
	 $\delta_1 $ is as in Lemma \ref{l:A1}. 
		 Then as $n \to \infty$,
		 we have
		 \begin{align}
			 n \int_A \E[D_x^-R_{n,0,\rho}(\X_{n-1})]
			 \nu(dx)
			 = O( (nr_n^d)^{2-d} I_n).
			 \label{e:D-}
		 \end{align}
	 \end{lemma}
\begin{proof}
	For  $x \in A$, observe that $D_x^-R_{n,0,\rho}(\X_{n-1})$
		  is bounded above
		  by $N_{1,x}$, where $N_{1,x}$ denotes
		 the number of vertices $y \in \X_{n-1}$ such
		 that $\|y-x\| \leq 2r_n$
		 and $ 0 < \diam (\cC_{r_n}(y,\X_{n-1})) \leq \rho r_n$.
		 Therefore
		 \begin{align}
		 (D_x^-R_{n,0,\rho}(\X_{n-1}))^2 \leq N_{1,x}^2 = N_{1,x} + N_{1,x}(N_{1,x}
		 -1).
			 \label{e:DNN}
		 \end{align}

		 Let $N_{2,x}$ be the number of ordered pairs $(y,z)$ of
		 distinct points of $\X_{n-1} \cap B_{2r_n}(x)$ such that
		 $0 < \diam (\cC_{r_n}(y,\X_{n-1})) \leq \rho r_n$,
		 and $y \prec u$
		 for all $u \in \cC_{r_n}(y,\X_{n-1}) \setminus \{y\}$,
		 and $z$ is
		 the point in $\cC_{r_n}(z,\X_{n-1})$ furthest from $y$.
		 Let $N_{3,x}$ be the number of ordered triples $(z,u,y)$
		 of distinct points of $\X_{n-1} \cap B_{2r_n}(x)$ such
		 that $0 < \diam(\cC_{r_n}(z,\X_{n-1})) \leq \rho r_n$, and
		 $z \prec v$ for all $v \in \cC_{r_n}(z,\X_{n-1}) \setminus \{z\}$,
		 and $u$ is the point of $\cC_{r_n}(z,\X_{n-1})$ furthest
		 from $z$, and $y$ is another point of $\cC_{r_n}(z,
		 \X_{n-1})$.

		 Then $N_{1,x} \leq  2 N_{2,x} + N_{3,x} $.
		 For $n$ large we have
		 \begin{align*}
			 \E[N_{2,x}] & \leq n^2
			 \int_{A \cap B_{2r_n}(x)} \int_{A_y \cap 
			 B_{\rho r_n}(y)}
			 (1- \nu[(B_{r_n}(y) \cup B_{r_n}(z)) \setminus
			 B_{\|z-y\|}(y)])^{n-3} \nu(dz) \nu(dy)
			 \\
			 & \leq 2 n^2 \int_{A \cap B_{2r_n}(x)}
			 \int_{A_y \cap B_{\rho r_n}(y)}
			 e^{-n \nu(B_{r_n}(y)) - \delta_1 f_0 nr_n^{d-1}\|z-y\|}
			 \nu(dz) \nu(dy).
		 \end{align*}
		 Therefore using Fubini's theorem we obtain that
		 \begin{align}
			 n \int_A \E[N_{2,x}] \nu (dx)
			 & \leq 2 n^3 \int_A e^{-n \nu(B_{r_n}(y))} 
			 \int_{A_y \cap B_{\rho r_n}(y)}
			 e^{-\delta_1 f_0 nr_n^{d-1}
			 \|z-y\|}  \int_{B_{2r_n}(y)} \nu(dx) \nu(dz) \nu(dy)
			 \nonumber \\
			 & \leq 2^{d+1} \theta_d \fmax^2 n^2 r_n^d I_{n} \int_{\R^d}
			 e^{-\delta_1 f_0 nr_n^{d-1} \|u\|} du
			 \nonumber \\
			 & 
			 = O( (nr_n^d)^{2-d} I_n).
			 \label{e:N2}
		 \end{align}

		 Next, we have 
		 that for $n$ large
		 \begin{align*}
			 \E[N_{3,x}] \leq n^3 \int_{B_{2r_n}(x)}
			 \int_{B_{\rho r_n}(z) \cap A_z} \int_{B_{\|u-z\|}(z)}
			 (1- \nu[(B_{r_n}(z) \cup B_{r_n}(u)) \setminus B_{\|u-z\|}(z)]
			 )^{n-4} \\
			 \nu(dy) \nu(du) \nu(dz) 
			 \\
			 \leq 2 \theta_d \fmax
			 n^3 \int_{B_{2r_n}(x)} \int_{B_{\rho r_n}(z)
			 \cap A_z} \|u-z\|^d
			 e^{-n\nu(B_{r_n}(z)) - \delta_1 f_0 n r_n^{d-1} \|u-z\|} 
			 \nu(du) \nu(dz).
		 \end{align*}
		 Then using Fubini's theorem and a change of
		 variable $v=u-z$ we obtain that
		 \begin{align}
			 n \int_A \E[N_{3,x}] \nu(dx) 
			 & \leq 2^{d+1} \theta_d^2 \fmax^2
			 n^4 r_n^d \int_A e^{-n \nu(B_{r_n}(z))}
			 \nu(dz)
			 \int_{\R^d} e^{-\delta_1 f_0 nr_n^{d-1}\|v\|}
			 \|v\|^d dv 
			 \nonumber \\
			 & = O \big( n^3 r_n^d I_n (nr_n^{d-1})^{-2d} \big)
			 = O \big( (nr_n^d)^{3-2d} I_n \big).
			 \label{e:N3}
		 \end{align}
		 Combined with \eqref{e:N2} this shows that
		 \begin{align}
			 n \int_A \E[N_{1,x}] \nu(dx) 
			 = O \big((nr_n^d)^{2-d}I_n \big).
			 \label{e:N1x}
		 \end{align}

		 Next consider $N_{1,x}(N_{1,x}-1)$, which equals
		 the number of ordered pairs $(y,z)$ of distinct points of
		 $\X_{n-1} \cap B_{2r_n}(x)$ such that both
		 $\cC_{r_n}(y,\X_{n-1})$ and 
		 $ \cC_{r_n}(z,\X_{n-1})$  have Euclidean  diameter  in the
		 range $(0,\rho r_n]$. 
		 For such $(y,z)$ we cannot have 
		 $\rho r_n < \|y-z\| \leq r_n$; 
		 we distinguish between the cases
		 where $\|y -z\| \leq \rho r_n$ and where $\|y-z\| > r_n$.

		 Let $N_{4,x}$ be
		 the number of ordered pairs $(y,z)$ of distinct points of
		 $\X_{n-1} \cap B_{2r_n}(x)$ such that
		 $\|y-z\| \leq \rho r_n$ and
		 $\diam(\cC_{r_n}(y,\X_{n-1})) \leq \rho r_n$.

		 Let $N_{5,x}$ be the number of
		 ordered quadruples $(u,v,y,z)$  of distinct points of
		 $\X_{n-1} \cap B_{2r_n}(x)$ such that $u \prec w$
		 for all $w \in \cC_{r_n}(u,\X_{n-1})$, and $v$ is
		 the furthest point from $u$ in $\cC_{r_n}(u,\X_{n-1})$
		 and $y,z$ are two further points in $\cC_{r_n}(u,\X_{n-1})$
		 and $\diam (\cC_{r_n}(u,\X_{n-1})) \leq \rho r_n$. Then
		 $$
		 N_{4,x} \leq 2 N_{2,x} + 4 N_{3,x} + N_{5,x}.
		 $$

		 For $n \geq 4$ we have that
		 \begin{align*}
			 \E[N_{5,x}] \leq n^4 \int_{A \cap B_{2r_n}(x)}
			 \int_{A_u \cap B_{\rho r_n}(u)} 
			 & (\nu(B_{\|v-u\|}(u)))^2 
			 \\
			  \times & (1- \nu[(B_{r_n}(u) 
			 \cup B_{r_n}(v)) \setminus B_{\|v-u\|}(u)])^{n-4}
			 \nu(dv) \nu(du),
		 \end{align*}
		 and hence by Fubini's theorem,   for $n$ large
		 \begin{align*}
			 n \int_A \E[N_{5,x}] \nu(dx) & \leq
			 \theta_d^3 2^{1+d} \fmax^3
			 n^5 r_n^d \int_A e^{-n \nu(B_{r_n}(u))} \nu(du)
			 \int_{\R^d} e^{-f_0 \delta_1  n r_n^{d-1} \|w\|}
			 \|w\|^{2d} dw
			 \\
			 & = O(n^4 r_n^d I_n (nr_n^{d-1})^{-3d}) 
			  = O( (nr_n^{d})^{4-3d}I_n). 
		 \end{align*}
		 Combined with \eqref{e:N2} and \eqref{e:N3} this shows that
		 \begin{align}
			 n \int_A \E[N_{4,x} ] \nu(dx) =
			 O((nr_n^d)^{2-d} I_n ) .
			 \label{e:N4x}
		 \end{align}

		 Let $N_{6,x}$ be the number of ordered pairs $(y,z)$
		 of distinct points of $\X_{n-1} \cap B_{2r_n}(x)$
		 such that $\|y-z\| > r_n$, $y \prec z$ and
		 both $\diam(\cC_{r_n}(y,\X_{n-1}))$ and
		  $\diam(\cC_{r_n}(z,\X_{n-1}))$ lie in the range $(0,\rho r_n]$.
		  Then $N_{1,x}(N_{1,x}-1) = N_{4,x} + N_{6,x}$ and
		 \begin{align*}
			 \E[N_{6,x}] \leq n^2 \int_{A \cap B_{2r_n}(x)}
			 \int_{A_y \cap B_{2r_n}(x) \setminus B_{r_n}(y)} 
			 (1- 
			 \nu[(B_{r_n}(y) \cup B_{r_n}(z)) \setminus (B_{\rho r_n}(y) 
			 \cup B_{\rho r_n}(z)) ])^{n-3}
			 \\
			 \nu(dz) \nu(dy).
		 \end{align*}
		 By our choice of $\rho$ we have
		 $2 \fmax \theta_d \rho^d \leq \delta_1 f_0$.
		 Then by Lemma \ref{l:A1}, for $n$ large
			 and $y \in A$, $z \in A_y$ with $\|z-y\| > r_n$,
			 \begin{align*}
				 \nu[(B_{r_n}(y) \cup B_{r_n}(z)) \setminus (B_{\rho r_n}(y) 
				 \cup B_{\rho r_n}(z)) ]
			 & \geq \nu(B_{r_n}(y)) + 2 \delta_1 f_0 r_n^d - 2
			 \fmax \theta_d (\rho r_n)^d
			 \\
			 & \geq \nu(B_{r_n}(y)) + \delta_1 f_0 r_n^d.
		 \end{align*}
		 Hence for $n$ large,
		 \begin{align*}
			 \E[N_{6,x}] \leq 2 n^2 \int_{A \cap B_{2r_n}(x)}
			 e^{-n \nu(B_{r_n}(y))- \delta_1 f_0 nr_n^d}
			 \fmax \theta_d (2r_n)^d \nu(dy),
		 \end{align*}
		 so by Fubini's theorem, for $n$ large
		 \begin{align*}
			 n \int_A \E[N_{6,x}] \nu(dx) & \leq
			 2^{1+2d} \fmax^2 \theta_d^2
			 n^3 r_n^{2d} \int_{A} e^{-n \nu(B_{r_n}(y))- \delta_1
			 f_0 nr_n^d }
			 \nu(dy)
			 \\
			 & = O((nr_n^d)^2e^{- \delta_1 f_0 nr_n^d} I_n )
			  = O(e^{-(\delta_1 f_0/2)nr_n^d}I_n ).
		 \end{align*}
		 Combined with \eqref{e:N4x} this shows that
		 \begin{align*}
			 n \int_A \E[N_{1,x}(N_{1,x}-1)] \nu(dx)
			 = O( (nr_n^d)^{2-d} I_n)
		 \end{align*}
		 and combined with \eqref{e:N1x} and \eqref{e:DNN}
		 this gives us \eqref{e:D-}.
\end{proof}

\subsection{Variance estimates for medium components}

We now consider  the `medium-size' component count, denoted
$K_{n,\eps,\rho}$ or
$K'_{n,\eps,\rho}$ (as defined at \eqref{e:Knrdef})
with 
$0< \eps < \rho < \infty$. We also consider the
number of vertices in medium-sized components,
denoted $R_{n,\eps,\rho}$ or $R'_{n,\eps,\rho}$.
	We shall bound
	the variances  of all four of these quantities
	using Lemma \ref{l:Poinc.ES}, i.e. using
	the Poincar\'e or Efron-Stein inequality.

\begin{proposition}[Variance estimates for medium-sized components]
	\label{p:varRmed}
	Let $0 < \eps < 1 < \rho < \infty$, and let
	$\delta_2 = \delta_2(d,A, \eps, 2\rho)$ be as in 
	Lemma \ref{l:A2}.
	Let $\xi_n$ stand for any of $R_{n,\eps,\rho}$,
	$R'_{n,\eps,\rho}$, $K_{n,\eps,\rho}$ or
	$K'_{n,\eps,\rho}$.
	Then 
	$\Var(\xi_{n}) = O(e^{- (\delta_{2}/2) nr_n^d} I_n)$ 
	as $n \to \infty$.
\end{proposition}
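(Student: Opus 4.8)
The plan is to bound each of the four variances by the Poincar\'e inequality \eqref{e:poincare} in the Poisson cases and by the Efron--Stein inequality \eqref{e:p4.2} in the binomial cases, so that it suffices to prove
\[
n \int_A \E\big[|D_x \xi_n|^2\big]\, \nu(dx) = O(e^{-c n r_n^d} I_n),
\]
where $\xi_n$ denotes the relevant functional evaluated at $\eta_n$ or at $\X_{n-1}$, respectively. The first step is to control the add-one cost. Adding a point $x$ to a finite configuration $\X$ merges precisely the components of $G(\X,r_n)$ meeting $B_{r_n}(x)$, together with $\{x\}$, into the single component $\cC_{r_n}(x,\X^x)$, and leaves all other components untouched; since medium components have diameter at most $\rho r_n$, every component affected by this merge and counted in $R_{n,\eps,\rho}$ or $K_{n,\eps,\rho}$ lies in $B_{(\rho+1)r_n}(x)$. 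Consequently $D_x\xi_n$ can be bounded, in absolute value, by $C$ times the number of ordered pairs $(z_1,z_2)$ of (not necessarily distinct) points of $\X\cup\{x\}$ in $B_{(\rho+2)r_n}(x)$ such that for $i=1,2$ the point $z_i$ is the $\prec$-first vertex of a component of $G(\X,r_n)$ or of $G(\X^x,r_n)$ whose diameter lies in $(\eps r_n,\rho r_n]$, together with (for the vertex counts $R$) a further vertex $y_i$ of that same component; this records $|D_x\xi_n|^2$ as a sum over tuples of at most four points of $\X$ near $x$, each flagged by a $\scr F_n\cap\scr M_{n,\eps,\rho}$ event as in \eqref{e:medevent1}. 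The component-count costs $K_{n,\eps,\rho}$, $K'_{n,\eps,\rho}$ are handled the same way but more easily, since no interior vertices $y_i$ need be recorded.

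The second step is to expand the square, apply the (multivariate) Mecke formula to pull the recording points out of $\eta_n$ or $\X_{n-1}$, and invoke Lemma \ref{l:medK}. After Mecke, the event attached to a recording point $z_1$ has the form $\scr F_n(z_1,\cdot)\cap\scr M_{n,\eps,\rho}(z_1,\cdot)$ in a configuration obtained from $\eta_n$ or $\X_{n-5}$ by inserting a bounded number of extra points ($x$, the other recording point $z_2$, and the interior points $y_1,y_2$), so Lemma \ref{l:medK} — in its one- and two-added-point forms \eqref{e:0825a}--\eqref{e:0825e}, and in the straightforward higher-added-point variants obtained by the same argument — bounds this probability by $e^{-n\nu(B_{r_n}(z_1))-\delta n r_n^d}$, uniformly. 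Carrying out the $\nu$-integrations in the order $z_1$ last, and using that the other recording and interior points are confined to balls of radius $O(r_n)$ about $z_1$, each of those integrations contributes a factor $O(r_n^d)$, while $n\int_A e^{-n\nu(B_{r_n}(z_1))}\nu(dz_1)=I_n$. Collecting the powers of $n$ (one from Poincar\'e/Efron--Stein, and up to four from Mecke) against the $O(r_n^d)$ factors yields a bound $O\big((nr_n^d)^{O(1)}e^{-\delta nr_n^d}I_n\big)$; the degenerate cases with coincidences among $z_1,z_2,y_1,y_2$ involve fewer points and only improve this. Finally, since $\liminf_n I_n>0$, Proposition \ref{p:bc} gives $nr_n^d=O(\log n)$, while $nr_n^d\to\infty$; hence every polynomial factor $(nr_n^d)^{O(1)}$ is absorbed into $e^{(\delta/2)nr_n^d}$, and the variances are $O(e^{-(\delta/2)nr_n^d}I_n)$, which is the assertion with $c=\delta/2$.

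The main obstacle is purely organisational: carrying out this programme requires a careful case analysis of the tuples of recording points — according to which of them coincide, according to whether the pairwise distances fall in the regime $\le\rho r_n$ or $>r_n$ (the intermediate regime being excluded, as in Lemmas \ref{l:P0} and the proof of Proposition \ref{p:varRsmall}), and, in the binomial case, according to how many points the Mecke formula has removed so that the correct added-point version of Lemma \ref{l:medK} is applied in each branch. This is exactly the kind of bookkeeping that makes the corresponding proofs for small components (Propositions \ref{p:varT_n} and \ref{p:varRsmall}) long, and is the reason all four medium-component quantities are bundled into a single statement here.
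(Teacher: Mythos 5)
Your proposal is correct and takes essentially the same route as the paper: the paper likewise reduces the claim, via the Poincar\'e and Efron--Stein inequalities and the bounds $D^\pm K_{n,\eps,\rho}\le D^\pm R_{n,\eps,\rho}$, to estimating $n\int_A\E[(D_x^\pm R_{n,\eps,\rho})^2]\,\nu(dx)$ (Lemmas \ref{l:DF} and \ref{l:DFplus}), bounding the add-one cost by counts of nearby $\prec$-first vertices of medium components and their companions, and then producing the same key bound $e^{-n\nu(B_{r_n}(y))-\delta n r_n^d}$ before absorbing the polynomial factors in $nr_n^d$. The only cosmetic difference is that the paper re-derives that exponential estimate inline by the cube-discretization argument together with Lemma \ref{l:A2} (the same mechanism underlying Lemma \ref{l:medK}), rather than invoking added-point variants of Lemma \ref{l:medK} as you do.
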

\begin{proof}
	Note that
	$D_x^+K_{n,\eps,\rho}(\X) \leq D_x^+R_{n,\eps,\rho}(\cX)$
	and
	$D_x^-K_{n,\eps,\rho}(\X) \leq D_x^-R_{n,\eps,\rho}(\cX)$,
	for all $x,\X$.
	Analogously to the proof of Proposition \ref{p:varRsmall},
	 but using the Poincar\'e inequality instead of
	 the Efron-Stein inequality in the case of
	 the results for $R'_{n,\eps,\rho}$ and $K'_{n,\eps,\rho}$,
	 we can obtain
	the result from the next two lemmas.
\end{proof}
\begin{lemma}
	\label{l:DF}
	Let $0 < \eps  < 1 < \rho < \infty$, and 
	$\delta_2 = \delta_2(d,A,\eps,2\rho)$ as in Lemma \ref{l:A2}.
	Then as $n \to \infty$,
	\begin{align}
	n \int_A \E[(D_x^- R_{n,\eps ,\rho}(\X_{n-1}))^2] \nu(dx) = O(
		e^{- (\delta_{2}/2) nr_n^d} I_n); 
		\label{e:ED-}
		\\
	n \int_A \E[(D_x^- R_{n,\eps ,\rho}(\eta_{n}))^2] \nu(dx) = O(
		e^{- (\delta_{2}/2) nr_n^d} I_n). 
		\label{e:PoED-}
	\end{align}
\end{lemma}
\begin{proof}
	Observe that
	$D_x^- R_{n,\eps ,\rho}(\X_{n-1})$  
	is bounded above by the number of vertices $y \in \X_{n-1} \cap
	B_{2 \rho r_n}(x)$ such that $\diam(\cC_{r_n}(y,\X_{n-1}) ) \in
	(\eps  r,\rho r_n]$. We denote this quantity by $N_{7,x}$.

	Let $N_{8,x}$ be the number of ordered pairs $(y,z)$ of distinct
	points of $\X_{n-1} \cap B_{3 \rho r_n}(x)$ such
	that $\diam(\cC_{r_n}(y,\X_{n-1}))\in
	(\eps r,\rho r_n]$ and $y \prec u$ for all $u \in \cC_{r_n}(y,\X_{n-1})
	\setminus \{y\}$. Then
	$
	N_{7,x} \leq 2 N_{8,x} 
	$

	Let $\delta_2 = \delta_2(d,A,\eps, 2\rho)$ be as in
	Lemma \ref{l:A2}.
	Fix $\delta >0$
	small,
	and
	discretize $\R^d$ into cubes  of side $\delta r_n$ as in that proof.
	Assume $4 \delta d^{3/2} < \min(\delta_2/\theta_d, 1)$,
	and also $\frac54(1- \delta) > \frac98$, and
	$2 \delta \theta_d \fmax < \delta_2 f_0/8$.
	Then
	$$
	\E[N_{8,x} ] \leq n^2 \int_{B_{3 \rho r_n}(x)}
	\int_{B_{3 \rho r_n}(x)} \sum_\sigma (1- \nu(
	[(\sigma \cap A_y) \oplus B_{r(1- 
	\sqrt{d} \delta)}(o)] \setminus (\sigma \cap A_y) 
	))^{n-3}  \nu(dz) \nu(dy),
	$$
	where the sum is over a finite (and uniformly bounded)
	number of possible shapes $\sigma$ that could arise as
	the union of those cubes in the discretization containing
	points of $\cC_{r_n}(y,\X_{n-1})$.

	Using Lemma \ref{l:A2},
	\eqref{0703c}
	and the bound $(1-u)^d \geq 1-du$, we
	have  for $n$ large  that
	\begin{align*}
		& \nu([(\sigma \cap A_y) \oplus B_{(1-\sqrt{d} \delta)r_n} (o) ]
	\setminus (\sigma \cap A_y) ) \\
		& \geq
		(1-\delta) f(y) [ \lambda(B_{(1- \sqrt{d}\delta)r_n}
		(y) \cap A) + 2 \delta_2 (1- \sqrt{d} \delta )^d r_n^d ]
		\\
		& 	\geq
		(1-\delta )f(y) [
			\lambda(B_{r_n}(y) \cap A)
			- (1- (1- \sqrt{d} \delta)^d) \theta_d r_n^d
			+ (3/2) \delta_2 r_n^d]
		\\
		& 	\geq 
		(1- 2 \delta) \nu(B_{r_n}(y) )
		+ (5/4)(1- \delta) f(y) \delta_2 r_n^d
		\\
		& 	\geq 
		\nu(B_{r_n}(y) )
		- 2 \theta_d \delta \fmax r_n^d + (9/8) \delta_2 f(y) r_n^d  
		\\
		& 	\geq \nu(B_{r_n}(y)) +
	\delta_2 f_0 r_n^d,
	\end{align*}
	and thus there exists a constant $c' >0$ such that  for $n$ large
	\begin{align}
	\E[N_{8,x}] \leq c' n^2
	\int_{B_{3 \rho r_n}(x)} \int_{B_{3 \rho r_n}(x)}
		e^{-n \nu(B_{r_n}(y)) - \delta_2 f_0 n r_n^d} \nu(dz) \nu (dy).
		\label{e:EN8}
	\end{align}
	Hence by Fubini's theorem there is a constant $c''$ such that
	for $n$ large
	\begin{align*}
		n \int_A \E[N_{8,x}] \nu(dx)  & \leq c'' n^3 r_n^{2d}
		\int_A e^{-n \nu(B_{r_n}(y)) - \delta_2 f_0 nr_n^d}  \nu(dy) 
		\\
		& = O((nr_n^d)^2 e^{-\delta_2 f_0 nr_n^d} I_n).
	\end{align*}

	Next, let $N_{9,x}$
	denote the number of ordered triples $(y,z,u)$ of distinct points
	of $\X_{n-1} \cap B_{3 \rho r_n}(x)$
	such that $\diam (\cC_{r_n}(y,\X_{n-1})) \in (\eps  r_n,\rho r_n]$
	and $y \prec v$ for all $v \in \cC_{r_n}(y,\X_{n-1}) \setminus \{y\}$.
	Then
	$$
	N_{7,x}(N_{7,x}-1) \leq 
	2 N_{8,x} + N_{9,x}.
	$$
	Using Lemma \ref{l:A2} again, we can find a new constant $c'>0$
	such that for $n$ large
	\begin{align*}
		\E[N_{9,x}] & \leq n^3 \int_{B_{3 \rho r_n}(x)}
		\sum_{\sigma} (1- \nu([(\sigma \cap A_y) \oplus 
		B_{(1- \sqrt{d} 
		\delta)r_n}(o) ] \setminus (\sigma \cap A_y)))^{n-4}
		(\nu(B_{3 \rho r_n}(x)))^2 \nu(dy)
		\\
		& \leq c' n^3 r_n^{2d} \int_{B_{3  \rho r_n}(x)}
		e^{- n \nu(B_{r_n}(y)) - \delta_2 f_0 nr_n^d} \nu(dy),
	\end{align*}
and hence by Fubini's theorem there is a further new constant $c''$ such that
	\begin{align*}
		n \int_A \E[N_{9,x}] \nu(dx) & \leq
		c'' n^4 r_n^{3d} \int_A e^{-n \nu(B_{r_n}(y))
		- \delta_2 f_0 nr_n^d} \nu(dy)
		\\
		& = O((nr_n^d)^3 e^{-\delta_2 f_0 nr_n^d} I_n).
	\end{align*}
	Combined with \eqref{e:EN8} this shows that
	\begin{align*}
		n \int_A \E[(D^-_x R_{n,\eps ,\rho}(\X_{n-1}))^2 ] \nu (dx)
		& \leq n \int_A \E[N_{7,x} + N_{7,x}(N_{7,x}-1)] \nu(dx)
		\\
		& = O((nr_n^d)^3 e^{-\delta_2 f_0 nr_n^d} I_n),
	\end{align*}
	and \eqref{e:ED-} follows.
	The proof of
	 \eqref{e:PoED-} is similar, using the Mecke formula.
\end{proof}
\begin{lemma}
	\label{l:DFplus}
	Let $0 < \eps  < 1 < \rho < \infty$, and let
	$\delta_2 = \delta_2(d,A,\eps,2\rho)$ be as in Lemma \ref{l:A2}.
	Then as $n \to \infty$,
	\begin{align}
	n \int_A \E[(D_x^+ R_{n,\eps ,\rho}(\X_{n-1}))^2] \nu(dx) = O(
		e^{- (\delta_{2}/2) nr_n^d} I_n); 
		\label{e:EDF+}
		\\
	n \int_A \E[(D_x^+ R_{n,\eps ,\rho}(\eta_n))^2] \nu(dx) = O(
		(^{- (\delta_{2}/2) nr_n^d} I_n). 
		\label{e:PoEDF+}
	\end{align}
\end{lemma}
\begin{proof}
	Let $\delta_2$ and $\delta$ be as in the previous proof.
	If $D_x^+R_{n,\eps ,\rho}(\X_{n-1}) >0$ then
	$\diam(\cC_{r_n}(x,\X_{n-1}^x) ) \in (\eps r_n,\rho r_n]$.
	We discretize $\R^d$ into cubes of side $\delta r_n$ as before.
	For each possible
	shape $\sigma$ (i.e., a union of cubes of side $\delta r_n$), 
let	$E_{x,\sigma}$ be the event that $\sigma $ is the
	shape induced by $\cC_{r_n}(x,\X_{n-1}^x)$, i.e. the union
	of those cubes in the discretization which contain at least one point
	of $\cC_{r_n}(x,\X_{n-1}^x)$. 
	Given $\cX, D \subset \R^d$ with $\cX$ finite,
	let $\cX(D):= \#(\X \cap D)$.
	Then
	\begin{align*}
		(D_x^+R_{n,\eps ,\rho}(\X_{n-1}))^2 
		\leq \sum_{\sigma} 
		\1_{E_{x,\sigma}} (1+\X_{n-1}(\sigma))^2,
	\end{align*}
	and hence
\begin{align}
	n \int_A \E[(D^+_x R_{n,\eps ,\rho}(\X_{n-1}))^2] \nu(dx)
	\leq n \int_A \sum_{\sigma:x \in \sigma}
	(\Pr[E_{x,\sigma}] + 2 \E[\X_{n-1}(\sigma) \1_{E_{x,\sigma}} ]
	\nonumber \\
	+ \E[\X_{n-1}(\sigma)^2\1_{E_{x,\sigma}}])
	\nu(dx).
		\label{e:EDD}
\end{align}
	If $E_{x,\sigma}$ occurs there is a point $y$
	of $\X_{n-1} \cap \sigma$ with
	$y \prec z$ for all $z \in \X_{n-1} \cap \sigma \setminus \{y\}$, so
	using Lemma \ref{l:A2} as in the preceding proof,
	we obtain for $n$ large that
	\begin{align*}
		\Pr[E_{x,\sigma}] & \leq (n-1) \int_{\sigma }
		(1- \nu([(\sigma \cap A_y)
		\oplus B_{(1-\sqrt{d} \delta)r_n}(o)]
		\setminus (\sigma \cap A_y) ))^{n-2}
		\nu(dy)
		\\
		& \leq 2 n \int_{\sigma} 
		e^{-n\nu(B_{r_n}(y)) - \delta_2 f_0 n r_n^d} \nu(dy),
	\end{align*}
	and hence by Fubini's theorem there exist  constants $c', c''$
	such that
	\begin{align}
		n \int_A \sum_{\sigma: x \in \sigma } \Pr[E_{x,\sigma}] \nu(dx)
		& \leq 2 n^2 \int_A  \sum_{\sigma: x \in \sigma} \int_\sigma
		e^{-n\nu(B_{r_n}(y)) - \delta_2 f_0 n r_n^d} \nu(dy) \nu(dx)
		\nonumber \\
		& = 2 n^2 \int_A 
		\sum_{\sigma:y \in \sigma}
		\int_\sigma 
		e^{-n\nu(B_{r_n}(y)) - \delta_2 f_0 n r_n^d} \nu(dx) \nu(dy)
		\nonumber \\
		& \leq c' n^2 r_n^d \int_A \sum_{\sigma: y \in \sigma}
		e^{-n\nu(B_{r_n}(y)) - \delta_2 f_0 n r_n^d}  \nu(dy)
		\nonumber \\
		& \leq c'' n r_n^d I_n e^{- \delta_2 f_0 nr_n^d},
		\label{e:XE}
	\end{align}
	where for the third line we used the fact that $\lambda(\sigma)$
	is bounded by a constant times $r_n^d$, and in the fourth line
	we used the fact that there are a bounded number of shapes
	$\sigma$ that contain $y$ and are consistent with the diameter 
	condition.

	Next, let
	$N_1(\sigma)$ denote the number of ordered
	pairs $(y,z)$ of distinct points of $\X_{n-1} \cap \sigma$
	such that $y \prec u$ for all points of $\X_{n-1} \cap \sigma
	\setminus \{y\}$. 
	Then $\X_{n-1}(\sigma) \leq 1 +N_1(\sigma)$.
	Therefore
	\begin{align*}
		& \E[(\X_{n-1}(\sigma ) -1)
		\1_{E_{x,\sigma}} ]
		 \leq \E[N_1(\sigma )
		\1_{E_{x,\sigma}} ]
		\\
		& ~~~~~~~~~~  \leq n (n-1) \int_\sigma \int_\sigma
		(1- \nu([(\sigma \cap A_y)
		\oplus B_{(1-\sqrt{d} \delta)r_n}(o)]
		\setminus (\sigma \cap A_y) ))^{n-3} 
		\nu(dz) \nu(dy).
	\end{align*}
	The $z$-integral is bounded by a constant times $r_n^d$, and
	by a similar application of Fubini's theorem to the one at
	\eqref{e:XE} we obtain that
\begin{align}
	n \int_A \sum_{\sigma:x \in \sigma} \E[(\X_{n-1}(\sigma) -1) 
	\1_{E_{x,\sigma}}] \nu(dx)
	= O((nr_n^d)^2 e^{-\delta_2 f_0 nr_n^d} I_n ).
	\label{e:XXE}
\end{align}

Next, let $N_2(\sigma)$ denote the number of ordered triples $(y,z,u)$
of distinct points of $\X_{n-1} \cap \sigma$ such that $y \prec v$
for all $v \in \X_{n-1} \cap \sigma \setminus \{y\}$.

Then provided $\X_{n-1}(\sigma) \neq 0$,
$(\X_{n-1}(\sigma) -1)(\X_{n-1}(\sigma)-2)$ is the number of ordered
pairs of vertices of $\X_{n-1} \cap \sigma$, other than the
first one in the $\prec $ order, and equals $N_2(\sigma)$.
If $E_{x,\sigma}$ occurs the $\X_{n-1}(\sigma) \neq 0$.
Hence
	\begin{align*}
		& \E[(\X_{n-1}(\sigma ) -1) (\X_{n-1}(\sigma) -2)
		\1_{E_{x,\sigma}} ]
		 = \E[N_2(x,\sigma )
		\1_{E_{x,\sigma}} ]
		\\
		& ~~~~~~~~~~  \leq n^3\int_\sigma \int_\sigma \int_\sigma
		(1- \nu([(\sigma \cap A_y)
		\oplus B_{(1-\sqrt{d} \delta)r_n}(o)]
		\setminus (\sigma \cap A_y) ))^{n-4} 
		\nu(du) \nu(dz) \nu(dy).
	\end{align*}
	The $(z,u)$-integral is bounded by a constant times $r_n^{2d}$, and
	by a similar application of Fubini's theorem to the one at
	\eqref{e:XE} we obtain that
\begin{align*}
	n \int_A \sum_{\sigma:x \in \sigma} \E[(\X_{n-1}(\sigma) -1)
	(\X_{n-1}(\sigma) -2)
	\1_{E_{x,\sigma}}] = O((nr_n^d)^3 I_n e^{-\delta_2 f_0 nr_n^d}).
\end{align*}
Combining this with \eqref{e:EDD}, \eqref{e:XE} and \eqref{e:XXE}
we obtain \eqref{e:EDF+}.

The proof of  
\eqref{e:PoEDF+} is similar, using the Mecke formula.
\end{proof}

\subsection{Variance estimates for large components}

\begin{proposition}[Variance estimates for moderately large components]
	\label{p:varRmod}
	There exists $\rho \in (4,\infty)$ 
	such that if $\xi_n$ stands for any
	of $R_{n,\rho,(\log n)^2}$, $R'_{n,\rho,(\log n)^2}$,
	 $K_{n,\rho,(\log n)^2}$, or $K'_{n,\rho,(\log n)^2}$,
	then $\Var(\xi_n) = O(e^{-  nr_n^d} I_n)$ 
	as $n \to \infty$.
\end{proposition}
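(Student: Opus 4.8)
The plan is to bound all four variances via the Poincar\'e inequality \eqref{e:poincare} (for the two Poisson functionals) and the Efron--Stein inequality \eqref{e:p4.2} (for the two binomial functionals), exactly as in the proofs of Propositions \ref{p:varRsmall} and \ref{p:varRmed}. Writing $F_n$ for the relevant functional, it then suffices to show that, for $\rho$ chosen to be a sufficiently large constant (in particular $\rho>4$),
\begin{align*}
	n\int_A \E\bigl[(D_x F_n)^2\bigr]\,\nu(dx)=O(e^{-nr_n^d}I_n)\qquad\text{as }n\to\infty,
\end{align*}
where the add-one cost $D_x$ is evaluated at $\eta_n$, respectively $\cX_{n-1}$. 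First I would record the deterministic fact that $D_x F_n\neq0$ forces the event $\scr M_{n,\rho,(\log n)^2}(x,\cX)\cup\scr M^*_{n,\rho,(\log n)^2}(x,\cX)$: a change in $F_n$ upon inserting $x$ requires that $\cC_{r_n}(x,\cX^x)$ have diameter in $(\rho r_n,(\log n)^2 r_n]$, or that some component of $G(\cX,r_n)$ meeting $B_{r_n}(x)$ have diameter in that range and thereby lose its moderately-large status upon being absorbed into a larger component. By Lemma \ref{l:E9} this event has probability at most $2e^{-\beta\rho nr_n^d}$ (which applies because $\scr M_{n,\rho,(\log n)^2}\subseteq\scr M_{n,\rho,n^{1/(2d)}}$ and likewise for $\scr M^*$ once $(\log n)^2\le n^{1/(2d)}$, and because the hypotheses $nr_n^d\to\infty$, $n^{1/(2d)}r_n\to0$ of that lemma hold, the latter since $nr_n^d=O(\log n)$ by \eqref{c:upperbound} and Proposition \ref{p:bc}).

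On that event, $|D_x F_n|$ is bounded by a local count: for $K_{n,\rho,(\log n)^2}$, by one plus the number of moderately large components of $G(\cX,r_n)$ meeting $B_{r_n}(x)$; for $R_{n,\rho,(\log n)^2}$, by one plus the number of vertices of $\cX$ either in small components absorbed by $x$ (which lie in the constant-radius ball $B_{2\rho r_n}(x)$) or in moderately large components absorbed by $x$ (which lie in $B_{2(\log n)^2 r_n}(x)$). Applying the Cauchy--Schwarz inequality to separate the indicator of the triggering event from the square of this count, and using the Mecke formula together with Lemma \ref{l:E9} to bound the moments of the count, one obtains $\E[(D_x F_n)^2]=O\bigl((nr_n^d)^{O(1)}e^{-\beta'\rho nr_n^d}\bigr)$ for some fixed $\beta'>0$. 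Since Lemma \ref{l:Ilower} gives $I_n=\Omega(ne^{-n\theta f_0^+r_n^d})$ for any $f_0^+>f_0$, we have $e^{-nr_n^d}I_n=\Omega(ne^{-(1+\theta f_0^+)nr_n^d})$, and since $h^Ce^{-\kappa h}\to0$ as $h\to\infty$ for any fixed $C,\kappa>0$, choosing $\rho$ large enough that $\beta'\rho>1+\theta f_0^+$ makes $n(nr_n^d)^{O(1)}e^{-\beta'\rho nr_n^d}=o(e^{-nr_n^d}I_n)$, as required; the Poisson cases are handled identically, with the Mecke formula used in place of exchangeability.

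The step I expect to be the main obstacle is controlling the negative part of the add-one cost for the vertex count $R_{n,\rho,(\log n)^2}$, since a single insertion can merge several moderately large components --- of diameter as large as $(\log n)^2 r_n$ --- into a much larger (typically the giant) component, and one must control the total number of vertices in those absorbed components. Bounding this naively by the number of points of $\cX$ in $B_{2(\log n)^2r_n}(x)$ loses a genuine power of $\log n$, which $e^{-\beta\rho nr_n^d}$ does not absorb when $nr_n^d$ tends to infinity arbitrarily slowly; the remedy is to note that a component carrying many vertices must contain either a cell that is anomalously dense (controlled by the Chernoff bound, Lemma \ref{lemChern}) or a large empty moat (controlled by the discretization argument behind Lemma \ref{l:E9}), each of which is exponentially costly in $nr_n^d$, so that the effective count entering the Cauchy--Schwarz bound has only $(nr_n^d)^{O(1)}$ moments. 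Carrying out this bookkeeping uniformly over the point processes appearing in Lemma \ref{l:E9} is the technical core; the remainder is a routine repetition of the scheme already used for medium-sized components.
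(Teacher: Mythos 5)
Your overall architecture is the same as the paper's: Efron--Stein \eqref{e:p4.2} for the binomial functionals and Poincar\'e \eqref{e:poincare} for the Poisson ones, a triggering event of the form $\scr M_{n,\rho,(\log n)^2}(x,\cdot)\cup\scr M^*_{n,\rho/4,(\log n)^2}(x,\cdot)$ controlled by Lemma \ref{l:E9}, and a choice of $\rho$ large enough that $\beta\rho$ beats $1+\theta f_0$ so that Lemma \ref{l:Ilower} converts the bound into $O(e^{-nr_n^d}I_n)$; your treatment of the positive part (count restricted to $B_{2\rho r_n}(x)$, Cauchy--Schwarz against the fourth moment) is essentially the paper's Lemma \ref{l:EDml+}, and the $K$-variants follow as in the paper from $D_x^{\pm}K_{n,\rho,(\log n)^2}\le D_x^{\pm}R_{n,\rho,(\log n)^2}$.

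However, at the step you yourself flag as the crux --- the negative add-one cost of $R_{n,\rho,(\log n)^2}$, where the absorbed vertices can lie anywhere in $B_{2(\log n)^2 r_n}(x)$ --- your proposal has a genuine gap. You correctly observe that Cauchy--Schwarz against the count of all points of the sample in that ball produces $(\log n)^{O(1)}$ factors that $e^{-\beta\rho nr_n^d}$ cannot absorb when $nr_n^d\to\infty$ arbitrarily slowly, but the remedy you sketch (a dichotomy between an ``anomalously dense cell'' and a ``large empty moat'', claimed to reduce the effective count to one with $(nr_n^d)^{O(1)}$ moments) is not a worked-out argument and is not clearly correct: conditionally on a moderately large component existing, it can perfectly well contain $\Theta((\log n)^{2})$ or more vertices at completely typical density, so no local anomaly is forced, and the conditioning does not decouple from the count in the way your Cauchy--Schwarz scheme requires. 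The paper's Lemma \ref{l:EDml-} resolves this differently and more directly: it bounds $D_x^-R_{n,\rho,(\log n)^2}$ by $N_x$, the number of vertices $y\in B_{2(\log n)^2 r_n}(x)$ whose component meets $B_{r_n}(x)$ and has diameter in $(\rho r_n,(\log n)^2 r_n]$, and then computes $\E[N_x]$ and $\E[N_x(N_x-1)]$ via Mecke/Fubini integrals, splitting according to whether $\|y-x\|\le 3\rho r_n$ (Lemma \ref{l:E9} at scale $\rho$ gives $e^{-\beta\rho nr_n^d}$) or $\|y-x\|>3\rho r_n$, in which case the component containing $y$ must have diameter at least $\|y-x\|/2$, so Lemma \ref{l:E9} gives a bound $\exp(-\beta\|y-x\|\,nr_n^{d-1}/2)$ that decays exponentially in the distance; integrating this over the annulus up to radius $2(\log n)^2 r_n$ produces no logarithmic factors at all, only $e^{-\rho\beta nr_n^d}$ times powers of $nr_n^d$. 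In short, the distance-resolved moment computation (rather than a global Cauchy--Schwarz) is the missing ingredient, and without it your argument for $R_{n,\rho,(\log n)^2}$ and $R'_{n,\rho,(\log n)^2}$ does not close.
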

\begin{proof}
	Analogously to Proposition \ref{p:varRmed}
	the result follows from the next two
	lemmas.
\end{proof}
\begin{lemma}
	\label{l:EDml+}
	There exists $\rho_0 >1$ 
	such that for any fixed $\rho \geq \rho_0$ we have
	as $n \to \infty$ that
	\begin{align}
		n \int_A \E[ (D_x^+ R_{n,\rho,(\log n)^2}(\X_{n-1}))^2 ] \nu(dx) & 
		= O(e^{- nr_n^d} I_n);
		\label{e:mod+1}
		\\
		n \int_A \E[ (D_x^+ R_{n,\rho,(\log n)^2}(\eta_{n}))^2 ]
		\nu(dx) 
		& = O(e^{- nr_n^d} I_n).
		\label{e:Pomod+1}
	\end{align}
\end{lemma}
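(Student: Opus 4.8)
The plan is to deduce \eqref{e:mod+1} and \eqref{e:Pomod+1} from the non-existence estimate for moderately large components (Lemma~\ref{l:E9}) together with a pointwise bound on the positive add-one cost. The crucial first step is the observation that, writing $\cC:=\cC_{r_n}(x,\X_{n-1}^x)$, the quantity $D_x^+ R_{n,\rho,(\log n)^2}(\X_{n-1})$ vanishes unless $\diam(\cC)\in(\rho r_n,(\log n)^2 r_n]$, that is, unless the event $\scr M_{n,\rho,(\log n)^2}(x,\X_{n-1})$ of \eqref{e:medevent1} occurs: inserting $x$ changes $R_{n,\rho,(\log n)^2}$ by $\#(\cC)\,\1\{\diam(\cC)\in(\rho r_n,(\log n)^2 r_n]\}$ minus the total order of those components of $G(\X_{n-1},r_n)$ meeting $B_{r_n}(x)$ whose diameter lies in $(\rho r_n,(\log n)^2 r_n]$, and this is nonpositive when $\diam(\cC)\notin(\rho r_n,(\log n)^2 r_n]$. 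Moreover, on $\scr M_{n,\rho,(\log n)^2}(x,\X_{n-1})$ every component $C$ of $G(\X_{n-1},r_n)$ meeting $B_{r_n}(x)$ has $\diam(C)\le\diam(\cC)\le(\log n)^2 r_n$, so it fails to contribute to $R_{n,\rho,(\log n)^2}(\X_{n-1})$ precisely when $\diam(C)\le\rho r_n$, and such a $C$ lies in $B_{(\rho+1)r_n}(x)$. Since $\#(\cC)$ is $1$ plus the sum of the orders of all components of $G(\X_{n-1},r_n)$ meeting $B_{r_n}(x)$, on this event $D_x^+ R_{n,\rho,(\log n)^2}(\X_{n-1})=1+\sum_{C:\,\diam(C)\le\rho r_n}\#(C)\le 1+M_x$ with $M_x:=\#(\X_{n-1}\cap B_{(\rho+1)r_n}(x))$. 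Hence $\bigl(D_x^+ R_{n,\rho,(\log n)^2}(\X_{n-1})\bigr)^2\le(1+M_x)^2\,\1_{\scr M_{n,\rho,(\log n)^2}(x,\X_{n-1})}$ pointwise.

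Next I would take expectations and apply Cauchy--Schwarz, bounding $\E[(1+M_x)^2\,\1_{\scr M_{n,\rho,(\log n)^2}(x,\X_{n-1})}]$ by $\bigl(\E[(1+M_x)^4]\bigr)^{1/2}\bigl(\PP[\scr M_{n,\rho,(\log n)^2}(x,\X_{n-1})]\bigr)^{1/2}$. The count $M_x$ is binomial with mean at most $\fmax\theta(\rho+1)^d nr_n^d$, so $\E[(1+M_x)^4]=O((nr_n^d)^4)$ uniformly in $x$ (using $nr_n^d\to\infty$, the constant depending only on $\rho,\fmax,\theta,d$). For the probability, recall that $\liminf I_n>0$ forces $nr_n^d=O(\log n)$ by Proposition~\ref{p:bc}, so $n^{1/(2d)}r_n\to0$ and Lemma~\ref{l:E9} applies; since $(\log n)^2\le n^{1/(2d)}$ and $\rho\le n^{1/(2d)}$ for all large $n$, we have $\scr M_{n,\rho,(\log n)^2}(x,\X_{n-1})\subset\scr M_{n,\rho,n^{1/(2d)}}(x,\X_{n-1})$ and hence $\PP[\scr M_{n,\rho,(\log n)^2}(x,\X_{n-1})]\le e^{-\beta\rho nr_n^d}$, with $\beta$ the constant of Lemma~\ref{l:E9}. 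Combining gives $\E\bigl[(D_x^+ R_{n,\rho,(\log n)^2}(\X_{n-1}))^2\bigr]=O\bigl((nr_n^d)^2 e^{-\beta\rho nr_n^d/2}\bigr)$ uniformly in $x\in A$, and since $t\mapsto t^2 e^{-\beta\rho t/4}$ is bounded on $[0,\infty)$ this is $O\bigl(e^{-(\beta\rho/4)nr_n^d}\bigr)$.

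Finally, integrating over $x$ gives $n\int_A\E\bigl[(D_x^+ R_{n,\rho,(\log n)^2}(\X_{n-1}))^2\bigr]\nu(dx)=O\bigl(n\,e^{-(\beta\rho/4)nr_n^d}\bigr)$. I would then fix $\rho_0>1$ large enough that $\beta\rho_0>4(1+\theta f_0)$; for any $\rho\ge\rho_0$ pick $f_0^+\in(f_0,(\beta\rho/4-1)/\theta]$, so that $n\,e^{-(\beta\rho/4)nr_n^d}=e^{-nr_n^d}\bigl(n\,e^{-(\beta\rho/4-1)nr_n^d}\bigr)\le e^{-nr_n^d}\bigl(n\,e^{-n\theta f_0^+ r_n^d}\bigr)$, and the bracketed factor is $o(I_n)$ by \eqref{e:Ilower} of Lemma~\ref{l:Ilower}; this yields \eqref{e:mod+1}. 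The Poisson estimate \eqref{e:Pomod+1} is proved the same way: the pointwise bound $\bigl(D_x^+ R_{n,\rho,(\log n)^2}(\eta_n)\bigr)^2\le(1+\eta_n(B_{(\rho+1)r_n}(x)))^2\,\1_{\scr M_{n,\rho,(\log n)^2}(x,\eta_n)}$ holds verbatim, $\eta_n(B_{(\rho+1)r_n}(x))$ is Poisson with mean $O(nr_n^d)$, and Lemma~\ref{l:E9} (taking $\xi_n=\eta_n$) controls $\PP[\scr M_{n,\rho,(\log n)^2}(x,\eta_n)]$. The step I expect to be the main obstacle is precisely the pointwise bound of the first paragraph: one must resist estimating $\#(\cC)$ by the number of points in the ball $B_{(\log n)^2 r_n}(x)$ of radius equal to the upper diameter cutoff — that count has mean of polylogarithmic order and its moments would swamp the exponential factor $e^{-\beta\rho nr_n^d}$ when $nr_n^d\to\infty$ only slowly — and instead observe that the positive part of the add-one cost is fed only by the sub-$\rho r_n$ diameter components absorbed into the cluster of $x$, which all lie in the fixed-radius ball $B_{(\rho+1)r_n}(x)$.
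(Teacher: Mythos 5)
Your proof is correct and follows essentially the same route as the paper: bound the positive add-one cost by a local point count times the indicator of a medium-component event, apply Cauchy--Schwarz together with the exponential bound of Lemma~\ref{l:E9}, and absorb the factor $n$ via Lemma~\ref{l:Ilower} after choosing $\rho_0$ large enough. The only (harmless) difference is that you condition on $\scr M_{n,\rho,(\log n)^2}(x,\cdot)$ for the inserted point and use \eqref{e:E2}, while the paper conditions on $\scr M^*_{n,\rho/4,(\log n)^2}(x,\cdot)$ (requiring $\rho>4$) and uses \eqref{e:E9}, with the ball $B_{2\rho r_n}(x)$ in place of your $B_{(\rho+1)r_n}(x)$.
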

\begin{proof}
Let $\rho > 4$.
For $y \in \X_{n-1}$,
 adding a point at $x$ can only increase the
 diameter of the component containing $y$.
 Therefore if adding a point at $x$
 causes $y$ to  be in a component
of diameter in the range $(\rho r_n,(\log n)^2r_n]$ when it was not
	before, then
$y$ must previously have been in a component of
	diameter at most $\rho r_n$, and since also the added
point at $x$ affects this component we must have
	$\| y-x\| \leq (\rho +1 )r_n \leq 2 \rho r_n$.
	Also
	event $\scr M^*_{n,\rho/4,(\log n)^2}(x,\cX_{n-1})$,
	defined at \eqref{e:medevent2},
	must occur.  Therefore defining
	$N_x := \#(\cX_{n-1} \cap B_{2 \rho r_n}(x))$, we have
	$D_x^+R_{n,\rho,(\log n)^2}(\cX_{n-1}) \leq
	N_x {\bf 1}_{
		\scr M^*_{n,\rho/4,(\log n)^2}(x,\cX_{n-1})}$.
	Hence  by the Cauchy-Schwarz inequality,
	Lemma \ref{l:E9}
	and a standard moment estimate on the Binomial distribution,
	\begin{align*}
		\E[
			(D_x^+R_{n,\rho, (\log n)^2}(\cX_{n-1}))^2 ] \leq
		(\E[N_x^4])^{1/2}
		(\Pr [\scr M^*_{n,\rho/4,(\log n)^2}(x,\X_{n-1})])^{1/2}
		\\
		= O( n^2 r_n^{2d} \exp(- (\delta_4 \rho/8) nr_n^d)),
	\end{align*}
	where $\delta_4$ is as in Lemma \ref{l:E9}.  Choosing $\rho
	$ so that $\delta_4 \rho > 8(\theta_d f_0 +3)$, and 
using Lemma \ref{l:Ilower}, we obtain that 
	$$
	n \int_A \E[ (D_x^+ R_{n,\rho, (\log n)^2}(\X_{n-1}))^2 ] \nu(dx) = 
	O(n e^{-( \theta_d f_0 +2)  nr_n^d} ) = O(e^{- nr_n^d} I_n),
	$$
	as required for \eqref{e:mod+1}. The proof of \eqref{e:Pomod+1}
	is similar.
	 \end{proof}
\begin{lemma}
	\label{l:EDml-}
	There exists $\rho_0 >1$ such that
	if $\rho \geq \rho_0$ then as $n \to \infty$,
	\begin{align}
		n \int_A \E[ (D_x^- R_{n,\rho,(\log n)^2}(\X_{n-1}))^2 ] \nu(dx) = O(e^{- nr_n^d} I_n); \label{e:EDF'-}
		\\
		n \int_A \E[ (D_x^- R_{n,\rho,(\log n)^2}(\eta_{n}))^2 ] \nu(dx) = O(e^{- nr_n^d} I_n).
		\label{e:PoEDF'-}
	\end{align}
\end{lemma}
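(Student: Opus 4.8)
The plan is to reduce the estimate of $D_x^-R_{n,\rho,(\log n)^2}$ to the Chernoff-type bound of Lemma \ref{l:E9}, exactly as Lemma \ref{l:EDml+} did for the positive part. The key point is a deterministic observation. Fix a finite $\X\subset\R^d$ and $x\notin\X$, and let $\cC_1,\dots,\cC_k$ be the components of $G(\X,r_n)$ having a vertex in $B_{r_n}(x)$; adding $x$ merges precisely these into one component $\cC^+=\{x\}\cup\bigcup_i\cC_i$ and leaves all other components unchanged. Distinguishing the three cases for $\diam(\cC^+)$ relative to the thresholds $\rho r_n$ and $(\log n)^2r_n$, one sees that $R_{n,\rho,(\log n)^2}$ can \emph{decrease} only when $\diam(\cC^+)>(\log n)^2r_n$ and at least one $\cC_i$ is moderately large, i.e.\ $\diam(\cC_i)\in(\rho r_n,(\log n)^2r_n]$, and that in that case $D_x^-R_{n,\rho,(\log n)^2}(\X)=\sum_{i:\,\cC_i\ \mathrm{mod.\ large}}\#(\cC_i)$. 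Each such $\cC_i$ has a vertex in $B_{r_n}(x)$, so the event $\scr M^*_{n,\rho,(\log n)^2}(x,\X)$ of \eqref{e:medevent2} occurs; and since $\diam(\cC_i)\le(\log n)^2r_n$ while $\cC_i$ meets $B_{r_n}(x)$, each such $\cC_i$ lies in $B_{2(\log n)^2r_n}(x)$ for $n$ large. Hence, with $N_x:=\#\big(\X\cap B_{2(\log n)^2r_n}(x)\big)$,
\begin{align*}
\big(D_x^-R_{n,\rho,(\log n)^2}(\X)\big)^2\le N_x^2\,\1_{\scr M^*_{n,\rho,(\log n)^2}(x,\X)} .
\end{align*}

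Next I would take expectations at $\X=\X_{n-1}$ and at $\X=\eta_n$ and apply the Cauchy--Schwarz inequality. Since $nr_n^d=O(\log n)$ by Proposition \ref{p:bc}, we have $\E[N_x]=O((\log n)^{2d+1})$ uniformly in $x\in A$, hence $\E[N_x^4]=O((\log n)^{8d+4})$ by the usual fourth-moment bound for the binomial (resp.\ Poisson) law. Because $(\log n)^2\le n^{1/(2d)}$ and $n^{1/(2d)}r_n\to0$ for $n$ large, we have $\scr M^*_{n,\rho,(\log n)^2}(x,\xi_n)\subset\scr M^*_{n,\rho,n^{1/(2d)}}(x,\xi_n)$, so Lemma \ref{l:E9} applies with $\rho$ in the allowed range and gives $\Pr[\scr M^*_{n,\rho,(\log n)^2}(x,\xi_n)]\le e^{-\beta\rho nr_n^d}$ uniformly in $x\in A$, for $\xi_n=\X_{n-1}$ and for $\xi_n=\eta_n$, with $\beta$ as in that lemma. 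Combining these facts,
\begin{align*}
n\int_A\E\big[(D_x^-R_{n,\rho,(\log n)^2}(\X_{n-1}))^2\big]\,\nu(dx)=O\big(n(\log n)^{4d+2}e^{-\beta\rho nr_n^d/2}\big),
\end{align*}
and the same bound holds for $\eta_n$. Finally, by Lemma \ref{l:Ilower} we have $ne^{-(\theta f_0+2)nr_n^d}=o(I_n)$, so $e^{-nr_n^d}I_n\ge ne^{-(\theta f_0+2)nr_n^d}$ for $n$ large; choosing $\rho_0:=\max(5,\,3(\theta f_0+2)/\beta)$ guarantees $\beta\rho/2>\theta f_0+2$ for every $\rho\ge\rho_0$, so the polylogarithmic factor is absorbed and the bound above becomes $O(ne^{-(\theta f_0+2)nr_n^d})=O(e^{-nr_n^d}I_n)$, which is \eqref{e:EDF'-} and \eqref{e:PoEDF'-}.

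The only delicate step is the deterministic reduction in the first paragraph: the point is that a genuine decrease of the moderately-large-vertex count forces the \emph{swallowed} component to be itself moderately large and to meet $B_{r_n}(x)$, which is exactly what places us within the scope of Lemma \ref{l:E9}; everything afterwards is routine moment bookkeeping, kept harmless by the fact that $nr_n^d=O(\log n)$ makes the local point counts $N_x$ only polylogarithmically large. In contrast with the treatment of the very large components elsewhere in the paper, this argument does not need the uniqueness estimate of Lemma \ref{l:uniqueness}.
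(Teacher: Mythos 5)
Your deterministic reduction is correct, and it in fact identifies exactly the quantity the paper's proof works with: when adding $x$ strictly decreases $R_{n,\rho,(\log n)^2}$, the decrease equals the number of vertices lying in moderately large components that meet $B_{r_n}(x)$, all contained in $B_{2(\log n)^2 r_n}(x)$, and the event $\scr M^*_{n,\rho,(\log n)^2}(x,\X)$ occurs; the verification that Lemma \ref{l:E9} is applicable (via $nr_n^d=O(\log n)$, hence $n^{1/(2d)}r_n\to0$) is also fine. The gap is in the last step. After Cauchy--Schwarz your estimate is $n(\log n)^{4d+2}e^{-\beta\rho nr_n^d/2}$, and you assert that the polylogarithmic factor is ``absorbed'' as soon as $\beta\rho/2>\theta f_0+2$. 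That absorption requires $(\log n)^{4d+2}e^{-c\,nr_n^d}=O(1)$ for a fixed constant $c$, i.e.\ $nr_n^d\gtrsim c^{-1}(4d+2)\log\log n$. But the standing hypotheses of this lemma are only \eqref{c:lowerbound} and \eqref{c:upperbound}: $nr_n^d\to\infty$, possibly arbitrarily slowly, and $\liminf I_n>0$. For instance $d=2$, $A=[0,1]^2$, uniform $f$, and $nr_n^2=\sqrt{\log\log n}$ is admissible (then $I_n\sim ne^{-\pi nr_n^2}\to\infty$), and in that regime $(\log n)^{4d+2}e^{-c\,nr_n^d}\to\infty$ for every constant $c$; comparing with $e^{-nr_n^d}I_n=\Theta(ne^{-(1+\pi)nr_n^2})$, the ratio of your bound to the target tends to infinity for any fixed $\rho$. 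So no fixed choice of $\rho_0$ (certainly not $\max(5,3(\theta f_0+2)/\beta)$) closes the argument; as written, your proof establishes the lemma only under an extra lower bound such as $nr_n^d=\Omega(\log n)$ or at least $nr_n^d\geq C\log\log n$, not in the full regime the lemma covers.

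The loss comes precisely from replacing the restricted count by the raw count $N_x=\#(\X\cap B_{2(\log n)^2r_n}(x))$, whose moments carry powers of $\log n$, and then decoupling it from the rare event by Cauchy--Schwarz. The paper keeps the restriction inside the count: with $N_x$ defined as the number of $y\in\X_{n-1}\cap B_{2(\log n)^2r_n}(x)$ whose component meets $B_{r_n}(x)$ and has diameter in $(\rho r_n,(\log n)^2r_n]$, it bounds $\E[N_x]$ and $\E[N_x(N_x-1)]$ directly, applying Lemma \ref{l:E9} pointwise in $y$: for $\|y-x\|>3\rho r_n$ the component of $y$ must have diameter at least $\|y-x\|/2$, and integrating the resulting factor $\exp(-\beta\|y-x\|nr_n^{d-1}/2)$ over the annulus replaces the $(\log n)^{2d}$ volume factor by a constant. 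The resulting moment bounds are of the form (power of $nr_n^d$)$\times e^{-\beta\rho nr_n^d}$, and a factor $(nr_n^d)^k$ \emph{is} absorbed by $e^{-nr_n^d}$ because $nr_n^d\to\infty$, whereas a factor $(\log n)^k$ need not be. Adopting that moment computation (or any bound whose prefactor is polynomial in $nr_n^d$ rather than in $\log n$) repairs your argument; the deterministic part you proved can be kept verbatim.
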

\begin{proof}
	Let $\rho > 1$.
	For this proof, given $n$ and given $x \in A$ let $N_x$ denote
	the number of vertices $y \in \X_{n-1} \cap B_{2(\log n)^2r_n}(x)$
	such that $\cC_{r_n}(y,\X_{n-1}) \cap B_{r_n}(x) \neq \emptyset$
	and $\diam \cC_{r_n}(y,\X_{n-1}) \in (\rho r_n,(\log n)^2 r_n]$.
	Then 
	$ D_x^- R_{n,\rho,(\log n)^2}(\X_{n-1}) \leq N_x$.

	We have that $\E[N_x] \leq J_{1,x} + J_{2,x}$, where we set
	\begin{align*}
		J_{1,x} :=  &	 \int_{B_{3 \rho r_n}(x)} n
		\Pr[ \diam (\cC_{r_n}(y,\X_{n-2}^y))
		\in (\rho r_n, (\log n)^2 r_n]] \nu(dy)
		\\
		J_{2,x} := &   \int_{B_{2(\log n)^2r_n}(x)
		\setminus B_{3 \rho r_n}(x)} n \Pr[ \diam (\cC_{r_n}(y,\X_{n-2}^y))
		\in (\|y-x\|/2, (\log n)^2 r_n]] \nu(dy).
	\end{align*}
	Let $\delta_4 $ be as in Lemma \ref{l:E9}.
	By that result, 
	\begin{align}
		J_{1,x} \leq n \fmax \theta_d (3 \rho r_n)^d \exp (- \delta_4 \rho
		n r_n^d ).
		\label{e:J1}
	\end{align}
	Also by Lemma \ref{l:E9},
	\begin{align*}
		J_{2,x} & \leq n \int_{A \setminus B_{3 \rho r_n}(x)}
		\exp(-\delta_4 (\|y-x\|/2) nr_n^{d-1})
		\nu(dy)
		\\
		& \leq n \fmax \int_{\R^d \setminus B_{3 \rho r_n}(o)}
		\exp(-\delta_4 (\|u\|/2) n r_n^{d-1}) du
		\\
		& = n\fmax \int_{\{v: \|v\| > 3 \rho r_n  (\delta_4/2) nr_n^{d-1} \}} 
		e^{-\|v\|}
		(\delta_4 nr_n^{d-1}/2)^{-d}
		dv
		\\
		& = (2/\delta_4)^d \fmax (nr_n^d)^{1-d} \int_{\rho \delta_4
		nr_n^d} ^\infty e^{-t} d \theta_d t^{d-1} dt
		\\
		& \leq c \delta_4^{-1} \fmax  \rho^{d-1} e^{-\rho \delta_4 nr_n^d},
	\end{align*}
	where the constant $c$ depends only on $d$. Combined with
	\eqref{e:J1}, this
	shows that if we take $\rho \geq (\theta_d f_0 +3)/ \delta_4$
	then for $n$ large 
	$\E[N_x] \leq \exp(- (  \theta_d f_0 +2) nr_n^d)$ for all $x \in A $,
	and then using Lemma \ref{l:Ilower} we obtain that
	\begin{align}
	n \int_A \E[ N_x ] \nu (dx) = O(e^{- nr_n^d} I_n).
		\label{e:ENx}
	\end{align}

	Next, observe that
	$\E[N_x(N_x-1)] \leq J_{3,x} + 2 J_{4,x}$ where we set
	\begin{align*}
		J_{3,x} :=  &	 \int_{B_{3 \rho r_n}(x)}
		\int_{B_{3 \rho r_n}(x)} n^2 \Pr[ \diam (\cC_{r_n}(y,\X_{n-3}^{y,z}))
		\in (\rho r_n, (\log n)^2 r_n]] \nu(dz) \nu(dy);
		\\
		J_{4,x} := &   \int_{B_{(\log n)^2r_n}(x)
		\setminus B_{3 \rho r_n}(x)} 
		\int_{B_{\|y-x\|}(x)}
		n^2 \Pr[
			\diam (\cC_{r_n}(y,\X_{n-3}^{y,z}))
		\in (\|y-x\|/2, (\log n)^2 r_n]] \\ 
	& ~~~~~~~~~~~~~~~~~~ 
	 ~~~~~~~~~~~~~~~~~~ 
	 ~~~~~~~~~~~~~~~~~~ 
	 ~~~~~~~~~~~~~~~~~~ 
		 \nu (dz) \nu(dy).
	\end{align*}
By Lemma \ref{l:E9},
	\begin{align}
		J_{3,x} \leq n^2  (\fmax   \theta_d (3 \rho r_n)^d)^2
		e^{-\delta_4 \rho nr_n^d}.
		\label{e:J3}
	\end{align}
	 Also by Lemma \ref{l:E9},
	\begin{align*}
		J_{4,x} & \leq n^2 \int_{A \setminus B_{3 \rho r_n}(x)}
		\exp(-\delta_4 (\|y-x\|/2) nr_n^{d-1})
		(\fmax \theta_d \|y-x\|^d) \nu(dy)
		\\
		& \leq n^2 \fmax^2 \theta_d \int_{\R^d \setminus B_{3 \rho r_n}(o)}
		\exp(-\delta_4 (\|u\|/2) n r_n^{d-1}) \|u\|^d du
		\\
		& = n^2\fmax^2 \theta_d \int_{\{v: \|v\| > 3 \rho r_n  (\delta_4/2)
		nr_n^{d-1} \}} 
		e^{-\|v\|} \|v\|^{d}
		(\delta_4 nr_n^{d-1}/2)^{-2d}
		dv
		\\
		& = (2/\delta_4)^{2d} \fmax^2 (nr_n^d)^{2-2d} \int_{\rho \delta_4
		nr_n^d} ^\infty e^{-t} d \theta_d t^{2d-1} dt
		\\
		& \leq c \delta_4^{-1} \fmax^2 \rho^{2d-1} nr_n^d e^{-\rho \delta_4 nr_n^d},
	\end{align*}
	where the constant $c$ depends only on $d$. Combined with
	\eqref{e:J3}, this
	shows that if we take $\rho \geq ( \theta_d f_0 +3)/ \delta_4$
	then for $n$ large 
	$\E[N_x(N_x-1)] \leq \exp(- (\theta_d f_0 +2) nr_n^d)$ for all $x \in A $,
	and then using Lemma \ref{l:Ilower} we obtain that
	$$
	n \int_A \E[ N_x (N_x-1) ] \nu (dx) = O(e^{- nr_n^d} I_n).
	$$
	Combined with \eqref{e:ENx} this shows that \eqref{e:EDF'-} holds.
	The proof of \eqref{e:PoEDF'-} is similar. 
	\end{proof}



\subsection{Variance estimates: conclusion}

Putting together the preceding estimates, we obtain the asymptotic
variance for $K'_n$ and (when $d \geq 3$) for  $K_n$:

\begin{proposition}
	\label{p:varK}
	Assume that $nr_n^d \to \infty$ and $\liminf(I_n) >0$
as $n \to \infty$. 
Then
\begin{align}
 &	\Var[K'_n] = I_n 
	(1+O((nr_n^d)^{(1-d)/2})); 
	\label{0804a} 
	\\
	{\rm if} ~ d \geq 3~ {\rm then}~
	&\Var[K_n] =
	I_n (1+O((nr_n^d)^{1-d/2})).
	\label{0804b}
\end{align}
\end{proposition}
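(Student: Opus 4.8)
The strategy is to decompose $K'_n - 1$ (and $K_n - 1$) into the singleton count plus a sum of contributions from non-singleton components of small, medium, and moderately-large Euclidean diameter, then to control each of these via the variance estimates already assembled. Concretely, fix $\rho_0$ as in Proposition \ref{p:varRmod} and pick $\rho \in (4,\infty)$ at least this large, then pick $\rho' \in (0,1)$ small enough that both Proposition \ref{p:varT_n} and (for the binomial case) Proposition \ref{p:varRsmall} apply with $\rho'$ in place of their respective thresholds. With $K \geq d$ in Lemma \ref{l:RvsR0an}, write
\begin{align*}
K'_n - 1 &= S'_n + K'_{n,0,\rho'} + K'_{n,\rho',\rho} + K'_{n,\rho,(\log n)^2} + \big(K'_{n,(\log n)^2,\infty} - 1 + \1\{K'_n \neq S'_n + K'_{n,0,(\log n)^2} + 1\}(\cdots)\big),
\end{align*}
where the last bracket is a correction term supported on an event of probability $O(n^{-K} I_n e^{-nr_n^d})$ by Lemma \ref{l:RvsR0an}; since on this event the correction is at most $Z_n$ in absolute value, its contribution to the variance is $O(n^{2-K} I_n e^{-nr_n^d}) = o(I_n)$ by the moment bounds on $Z_n$ and the Cauchy--Schwarz inequality. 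Thus
\begin{align*}
K'_n - 1 = S'_n + K'_{n,0,\rho'} + K'_{n,\rho',\rho} + K'_{n,\rho,(\log n)^2} + \text{(negligible)}.
\end{align*}

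Now apply the elementary inequality $\Var[\sum_{i=1}^m Y_i] \leq m \sum_{i=1}^m \Var[Y_i]$, or more precisely $|\mathrm{sd}[\sum Y_i] - \mathrm{sd}[Y_1]| \leq \sum_{i \geq 2} \mathrm{sd}[Y_i]$ via the triangle inequality in $L^2$, taking $Y_1 = S'_n$ and the $Y_i$ for $i \geq 2$ the other terms. By Proposition \ref{p:Sn}, $\Var[S'_n] = I_n(1 + O(e^{-cnr_n^d}))$, so $\mathrm{sd}[S'_n] = I_n^{1/2}(1 + O(e^{-cnr_n^d}))$. By Proposition \ref{p:varT_n}, $\Var[K'_{n,0,\rho'}] = O((nr_n^d)^{1-d} I_n)$, so its standard deviation is $O((nr_n^d)^{(1-d)/2} I_n^{1/2})$. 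By Propositions \ref{p:varRmed} and \ref{p:varRmod}, the medium and moderately-large terms have standard deviation $O(e^{-c' nr_n^d} I_n^{1/2})$ for some $c' > 0$, which is $o((nr_n^d)^{(1-d)/2} I_n^{1/2})$ since $nr_n^d = O(\log n)$ by Proposition \ref{p:bc} and the assumption $\liminf I_n > 0$. Combining, $\mathrm{sd}[K'_n - 1] = I_n^{1/2}(1 + O((nr_n^d)^{(1-d)/2}))$, and squaring gives \eqref{0804a} (noting $\Var[K'_n] = \Var[K'_n - 1]$). For the binomial case with $d \geq 3$, the argument is identical except that we use Proposition \ref{p:varRsmall}, which gives $\Var[K_{n,0,\rho'}] = O((nr_n^d)^{2-d} I_n)$ — hence standard deviation $O((nr_n^d)^{(2-d)/2} I_n^{1/2}) = O((nr_n^d)^{1-d/2} I_n^{1/2})$ — and the binomial versions of the medium and moderately-large variance bounds, together with $\Var[S_n] = I_n(1 + O(e^{-\delta nr_n^d}))$ from Proposition \ref{p:var_iso_bin}. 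Squaring the resulting standard-deviation estimate yields \eqref{0804b}.

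The main obstacle is purely bookkeeping: one must ensure that a single choice of the thresholds $\rho'$ and $\rho$ simultaneously satisfies the smallness/largeness hypotheses of all four variance propositions (Propositions \ref{p:varT_n}, \ref{p:varRsmall}, \ref{p:varRmed}, \ref{p:varRmod}) and the hypotheses of Lemmas \ref{l:V0eps}, \ref{l:ERmod}; since each proposition only requires $\rho'$ below some positive constant and $\rho$ above some finite constant, such a choice exists, but the dependence of the error exponents on $d$ (the $(1-d)/2$ for Poisson versus $1-d/2$ for binomial, the latter being why $d \geq 3$ is needed to beat the singleton-variance error $e^{-\delta nr_n^d}$ only in the sense of being $o(1)$) must be tracked carefully. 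A secondary point is that the $L^2$ triangle inequality is what converts variance bounds into a clean multiplicative error on $\Var[K'_n]$; one should not attempt to expand the variance of the sum directly into covariances, as the cross terms would require separate (and unnecessary) estimation.
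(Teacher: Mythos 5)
Your proposal is correct and follows essentially the same route as the paper: the identical decomposition of $K'_n$ (resp.\ $K_n$) into the singleton count plus small, medium, moderately large and very large non-singleton components, controlled by Propositions \ref{p:Sn}/\ref{p:var_iso_bin}, \ref{p:varT_n}/\ref{p:varRsmall}, \ref{p:varRmed}, \ref{p:varRmod} and Lemma \ref{l:RvsR0an}, with the exponents $(1-d)/2$ and $1-d/2$ arising exactly as in the paper. Your use of the $L^2$ triangle inequality on standard deviations is merely a repackaging of the paper's step of expanding the variance and bounding the single cross term $\Cov(S'_n,K'_{n,0,\infty})$ by Cauchy--Schwarz, so the two arguments coincide in substance.
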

\begin{proof}
   Note  $K'_n = S'_n + K'_{n,0,\infty}$, where $S'_n$
	and $K'_{n,\eps ,\rho}$ were defined at
	\eqref{e:def_Sn}, \eqref{e:Knrdef}.

	Let $\rho \in (4,\infty)$ be as in Proposition 
	\ref{p:varRmod}.  Let $\rho_0$
	be as in Proposition \ref{p:varT_n}.  Let $\eps  = \rho_0$.

	Let $W_n := K'_{n,(\log n)^2,\infty} $.
	Since $|W_n -1| $ is bounded by
	$ Z_n+ 1$ (where $Z_n = \#(\eta_n)$),
	the Cauchy-Schwarz inequality and Lemma \ref{l:RvsR0an}
	yield that
	\begin{align}
		\Var[W_n] = \Var[W_n-1] \leq \E[(W_n-1)^2]
		& \leq (\E[(Z_n+1)^4])^{1/2}  
		(\Pr[W_{n} \neq 1])^{1/2}
		\nonumber
		\\
		& = O( e^{-\frac12 n r_n^d} I_n).
		\label{e:varW}
	\end{align}
	Then $ K'_{n,0,\infty} = 
		K'_{n,0,\eps } + K'_{n,\eps ,\rho}  
		+ K'_{n,\rho,(\log n)^2}
		 + W_n  $.
	By the estimate $(u+v+w+x)^2 \leq 4 (u^2+v^2+w^2 +x^2)$
	(a consequence of Jensen's inequality), Propositions
	\ref{p:varT_n},
\ref{p:varRmed}
	and
	\ref{p:varRmod}, along with
	 \eqref{e:varW},
	\begin{align}
		\Var[K'_{n,0,\infty}]
		 & \leq 4( \Var[K'_{n,0,\eps }] + \Var[ K'_{n,\eps ,\rho} ]
		+ \Var[ K'_{n,\rho,(\log n)^2}] + \Var[W_n]) 
		\nonumber \\
		& = O((nr_n^d)^{1-d} I_n).
		\label{0805a}
	\end{align}
	By Proposition \ref{p:Sn},
	$\Var[S'_n] = I_n(1+ e^{- \Omega(nr_n^d)})$.
	Hence by the Cauchy-Schwarz inequality,
	$\Cov(S'_n,K'_{n,0,\infty}) = O( (nr_n^d)^{(1-d)/2} I_n)$, and thus
	$$
	\Var(K'_n) = \Var(S'_n) + \Var(K'_{n,0,\infty})
	+ 2 \Cov(S'_n,K'_{n,0,\infty}) =
	I_n + O( (nr_n^d)^{(1-d)/2} I_n),
	$$
	which is (\ref{0804a}). The proof of (\ref{0804b})
	is similar, but now using Proposition 
	\ref{p:varRsmall}
	instead of Proposition
	\ref{p:varT_n},
	which accounts for the different power
	of $nr_n^d$ in (\ref{0804b}). 
\end{proof}

We can now also determine the  asymptotic variance for $R'_n$ 
and (if $d \geq 3$) for  $R_n$.
\begin{proposition} 
	\label{p:varR'}
	Under assumptions \eqref{c:lowerbound} and \eqref{c:upperbound},
	as $n \to \infty$ we have
\begin{align}
	& \Var [R'_n]  = I_n(1+ O((nr_n^d)^{(1-d)/2}));
	\label{e:varR'}
	\\
	{\rm if} ~ d \geq 3, ~~~~ &
	\Var [R_n]  = I_n(1+ O((nr_n^d)^{1-d/2})).
	\label{e:varR}
\end{align}
\end{proposition}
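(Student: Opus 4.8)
The plan is to mirror the proof of Proposition \ref{p:varK}, replacing the component counts by the corresponding vertex counts and invoking the $R$-versions of the variance estimates established above. Write $T'_n := S'_n + R'_{n,0,(\log n)^2}$ and $T_n := S_n + R_{n,0,(\log n)^2}$. By Lemma \ref{l:RvsR0an}, for every $K>0$ we have $R'_n = T'_n$ (resp.\ $R_n = T_n$) outside an event of probability $O(n^{-K} I_n e^{-nr_n^d})$, while on the whole space $|R'_n - T'_n| \le 3Z_n$ and $|R_n - T_n| \le 3n$ (all three summands being bounded by the total number of vertices). Hence, by the Cauchy--Schwarz inequality together with a crude moment bound on $Z_n$ in the Poisson case, the contribution of $R'_n - T'_n$ (resp.\ $R_n - T_n$) to its own variance, and to its covariance with $T'_n$ (resp.\ $T_n$), is $O(n^{-K'} I_n)$ for a $K'$ that can be made as large as we like by choosing $K$ large. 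In particular it is negligible against the claimed error terms. So it suffices to compute $\Var[T'_n]$, and, when $d \ge 3$, $\Var[T_n]$.

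Next I would split $R'_{n,0,(\log n)^2} = R'_{n,0,\eps} + R'_{n,\eps,\rho} + R'_{n,\rho,(\log n)^2}$, and similarly for the binomial version, choosing a fixed $\eps>0$ small enough to satisfy the hypotheses of Propositions \ref{p:varT_n} and \ref{p:varRsmall}, and a fixed $\rho > 4$ large enough for Propositions \ref{p:varRmed} and \ref{p:varRmod}, with $\eps < 1 < \rho$. Using the bound $\Var(A+B+C) \le 3(\Var A + \Var B + \Var C)$, Proposition \ref{p:varT_n} gives $\Var[R'_{n,0,\eps}] = O((nr_n^d)^{1-d} I_n)$, while Propositions \ref{p:varRmed} and \ref{p:varRmod} contribute only $O(e^{-cnr_n^d} I_n)$ from the medium and large ranges; hence $\Var[R'_{n,0,(\log n)^2}] = O((nr_n^d)^{1-d} I_n)$. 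In the binomial case with $d \ge 3$, Proposition \ref{p:varRsmall} instead gives $\Var[R_{n,0,\eps}] = O((nr_n^d)^{2-d} I_n)$, and with the same medium/large bounds we get $\Var[R_{n,0,(\log n)^2}] = O((nr_n^d)^{2-d} I_n)$.

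Finally I would combine these with the singleton variance asymptotics $\Var[S'_n] = I_n(1 + O(e^{-cnr_n^d}))$ from Proposition \ref{p:Sn} (and $\Var[S_n] = I_n(1 + O(e^{-\delta nr_n^d}))$ from Proposition \ref{p:var_iso_bin}), bounding the cross term by Cauchy--Schwarz: $\Cov(S'_n, R'_{n,0,(\log n)^2}) = O\big(\sqrt{I_n \cdot (nr_n^d)^{1-d} I_n}\big) = O((nr_n^d)^{(1-d)/2} I_n)$, and likewise $\Cov(S_n, R_{n,0,(\log n)^2}) = O((nr_n^d)^{(2-d)/2} I_n) = O((nr_n^d)^{1-d/2} I_n)$. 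Adding the three pieces gives $\Var[T'_n] = I_n(1 + O((nr_n^d)^{(1-d)/2}))$ and $\Var[T_n] = I_n(1 + O((nr_n^d)^{1-d/2}))$, whence \eqref{e:varR'} and \eqref{e:varR}.

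Since all the analytically demanding estimates---the second-moment bounds for small, medium and large components, including the Efron--Stein argument that forces $d \ge 3$ for binomial input---are already in hand, the only real care needed here is bookkeeping: choosing the cut-off constants $\eps,\rho$ consistently across the cited propositions, and checking that the correction $E_n := R_n - T_n$ (resp.\ $R'_n - T'_n$) is negligible against the \emph{claimed error terms} and not merely against $I_n$ itself. This last point---that the power of $nr_n^d$ in the error is preserved under the reduction from $R_n$ to $T_n$---is the one place a crude bound would not do, and it is precisely what taking $K$ large in Lemma \ref{l:RvsR0an} is for. I expect no other obstacle.
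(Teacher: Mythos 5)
Your proposal is correct and follows essentially the same route as the paper: reduce $R'_n$ (resp.\ $R_n$) to $S'_n + R'_{n,0,(\log n)^2}$ via Lemma \ref{l:RvsR0an} with a Cauchy--Schwarz moment bound on the exceptional event, split the remainder into small/medium/large ranges handled by Propositions \ref{p:varT_n} (or \ref{p:varRsmall} for binomial input, $d\geq 3$), \ref{p:varRmed} and \ref{p:varRmod}, and then add the singleton variance from Proposition \ref{p:Sn} (or \ref{p:var_iso_bin}) with the cross term controlled by Cauchy--Schwarz, which is exactly where the error exponents $(nr_n^d)^{(1-d)/2}$ and $(nr_n^d)^{1-d/2}$ arise. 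The only cosmetic difference is bookkeeping: the paper bounds $\Var[R'_n - S'_n]$ directly rather than introducing $T'_n$, but the two decompositions are equivalent.
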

\begin{proof}
	Let $ 0 < \eps < \rho$ with $\eps < \rho_0$ and
	$\rho_0$ as in Proposition \ref{p:varT_n}.
	By Jensen's inequality and Propositions  
	\ref{p:varT_n}, 
	\ref{p:varRmed} and \ref{p:varRmod},
	\begin{align}
		\Var [R'_{n,0,(\log n)^2}]
		& \leq  3( \Var [R'_{n,0,\eps} ] +
		\Var [R'_{n,\eps,\rho} ] + \Var [ R'_{n,\rho,(\log n)^2}])
		\nonumber
		\\
		& = O( (nr_n^d)^{1-d} I_n).
		\label{e:0903a}
	\end{align}

	Since
	 $|R'_n- S'_n- R'_{n,0, (\log n)^2}| \leq Z_n$,
	 by the Cauchy-Schwarz inequality
	 and Lemma \ref{l:RvsR0an},
	 \begin{align*}
                 \E[|R'_n - S'_n - R'_{n,0,(\log n)^2}|^2]
		 & \leq (\E[Z_n^2])^{1/2}
		 (\Pr[R'_n \neq S'_n + R'_{n,0,(\log n)^2}])^{1/2}
		 \\
		 & 
		 = O(e^{-  nr_n^d/2}I_n).
         \end{align*}
	 Then using \eqref{e:0903a} and Jensen's inequality again yields
	 \begin{align}
		 \Var[R'_n-S'_n ] \leq 2(\Var[R'_n - S'_n -
		 R'_{n,0,(\log n)^2} ]  + \Var[ R'_{n,0, ( \log n)^2}
		 ]) = O( (nr_n^d)^{1-d} I_n).
		 \label{e:0903b}
	 \end{align}
	 By Proposition \ref{p:Sn}, $\Var[S'_n]= I_n (1+ e^{-\Omega(nr_n^d)})$.
	 Using this along with \eqref{e:0903b} and
	 the Cauchy-Schwarz inequality gives us \eqref{e:varR'}.

	 The proof of \eqref{e:varR} is similar. We use Proposition
	 \ref{p:varRsmall} instead of Proposition
	\ref{p:varT_n}, 
	and Proposition \ref{p:var_iso_bin}
	instead of Proposition \ref{p:Sn}.
\end{proof}

\subsection{Proof of convergence in distribution results}

\begin{proof}[Proof of Theorem \ref{t:momCLT}]
	By Proposition \ref{p:varK}
	we have $\Var[K'_n] = I_n (1+ (nr_n^d)^{(1-d)/2})$.
	By Proposition \ref{p:varR'}
	we have $ \Var[R'_n]  = I_n(1+ (nr_n^d)^{(1-d)/2})$.
	Thus we have \eqref{e:VarPo}.
	If $d \geq 3$ then
	by Proposition \ref{p:varK} we have
	$\Var[K_n] =  I_n(1+O(nr_n^d)^{1-d/2})$,
	and by Proposition \ref{p:varR'}
	we have $\Var[R_n]= I_n(1+ O((nr_n^d)^{1-d/2}))$.
	Thus we have
	\eqref{e:EVarsim}.


	By \eqref{0805a}
	in the proof of Proposition \ref{p:varK}
	if $\xi'_n = K'_n-1$,
	or \eqref{e:0903b}
	in the proof of Proposition
	\ref{p:varR'} if $\xi'_n = R'_n$,
$$
\Var [\xi'_n-S'_n] = O((nr_n^d)^{1-d}I_n).
$$
	 Hence
	 $\Var(I_n^{-1/2}(\xi'_n - S'_n - \E[\xi'_n-S'_n])) = O((nr_n^d)^{1-d})$.
	 Hence by Lemma
	 \ref{l:QSlut},
	  \begin{align*}
		  \dk(I_n^{-1/2}(\xi'_n -  \E[\xi'_n]) , N(0,1))
		  = O(\dk(I_n^{-1/2} (S'_n - I_n),N(0,1)) + 
		  (nr_n^d)^{(1-d)/3}),
	  \end{align*}
	  and \eqref{e:CLTPo} then follows by
	  \eqref{e:CLTS'}. 

	  When $d \geq 3$
	  we prove \eqref{e:CLTbin} similarly. In 
	  the binomial setting we get
	  $(nr_n^d)^{2-d}$ instead of $(nr_n^d)^{1-d}$ in
	 \eqref{0805a}
	or \eqref{e:0903b}, and therefore 
	$\Var(I_n^{-1/2} (\xi_n -S_n -\E[\xi_n -S_n])) = O((nr_n^d)^{2-d})$.
	Therefore using Lemma  \ref{l:QSlut}
	and \eqref{e:CLTS} we have
	  \begin{align*}
		  \dk(\tilde{I}_n^{-1/2}(\xi_n -  \E[\xi_n]) , N(0,1))
		  & = O(\dk(\tilde{I}_n^{-1/2} (S_n - \tilde{I}_n),N(0,1)) + 
		  (nr_n^d)^{(2-d)/3})
		  \\
		  & = O( (nr_n^d)^{(2-d)/3} + I_n^{-1/2}).
	  \end{align*}
	Using the fact that
	$\tilde{I}_n = I_n(1+ O(e^{-c' nr_n^d}))$ for some further 
	constant $c'$ by Lemma \ref{l:diff_I_Itilde},
	and using Lemma  \ref{l:QSlut} again
	we obtain \eqref{e:CLTbin}.
%
%
%
\end{proof}

\begin{proof}[Proof of Theorem \ref{t:momCLTunif}]
	We assume \eqref{e:supcri},  \eqref{e:supcriupper}
	and that $\nu$ is uniform on $A$. 
	For $n \geq 1$ define $\gamma_n$ as at \eqref{e:supcriupper}
	and set $a_n:= - \gamma_n$, so
$
a_n:= (2-2/d) (\log n - {\bf 1}\{d \geq 3\}\log \log n)
- n \theta_d f_0 r_n^d.
$
	By 
	\eqref{e:supcriupper},
	$a_n \to \infty$
as $n \to \infty$.
We claim $I_n \to \infty$. Indeed, if $d=2$ then
$$
ne^{-n \pi f_0 r^2} = ne^{a_n - \log n} \to \infty,
$$
so that $I_n \to \infty$ by Proposition \ref{p:average2d}.
	If instead $d \geq 3$ then
\begin{align*}
e^{-n \theta_d f_0 r_n^d/2} r_n^{1-d}
	= e^{a_n/2} \Big( \frac{\log n}{n} \Big)^{1-1/d}r_n^{1-d}
	= e^{a_n/2} \Big( \frac{nr_n^d}{\log n} \Big)^{(1/d)-1}
\end{align*}
which tends to infinity because,
by \eqref{e:supcriupper}, for $n$ large we have
$n \theta_d f_0 r_n^d \leq 2 \log n $. 
Therefore by Proposition 
\ref{p:average3d+},
we have $I_n \to \infty$
in this case too, justifying our claim.

Suppose $d=2$.
	By Proposition \ref{p:average2d} and \eqref{e:defI'},
	as $n \to \infty$
	we have $I_n = \mu_n(1+ O((nr_n^2)^{-1/2})) $  and
	then \eqref{e:VPoUn2} follows from \eqref{e:VarPo}.
	Also by Lemma \ref{l:QSlut}, \eqref{e:VarPo}
		and \eqref{e:CLTPo},
		\begin{align}
			\dk \Big(\frac{\xi'_n - \E[\xi'_n]}{\mu_n^{1/2}},
			N(0,1) \Big)
			&	\leq 
			\dk \Big( \frac{\xi'_n - \E[\xi'_n]}{
				I_n^{1/2}}	,N(0,1)\Big)
			+ \Big(  
			\Var (( \mu_n^{-1/2} - I_n^{-1/2}) (\xi'_n - 
			\E[\xi'_n])) \Big)^{1/3} \nonumber \\
			& = O( (nr_n^2)^{-1/3} + I_n^{-1/2}) + O \Big(
			\Big( \big( \frac{I_n}{\mu_n} \big)^{1/2} -1 \Big)^{2/3} \Big)
			\nonumber \\ &
			 = O( (nr_n^2)^{-1/3} + \mu_n^{-1/2}), 
			 \label{e:dkC}
		\end{align}
		and hence \eqref{e:CLTPoUn2}.

		Now suppose $d \geq 3$. 
		By Proposition \ref{p:average3d+}, as $n \to \infty$ we have
	 $I_n = \mu_n \Big(1+ O\Big( \big( \frac{\log(nr_n^d)}{nr_n^d}
	\big)^2 \Big) \Big)$.
	Hence 
	 using \eqref{e:VarPo}
	we have \eqref{e:VarPoUn3+}, and using 
	\eqref{e:EVarsim}
	we have \eqref{e:VarBiUn3+}.

	Also using Lemma \ref{l:QSlut},
	we can obtain 
		\eqref{e:CLTPoUn3+}		 
	from \eqref{e:CLTPo}
	and 
		\eqref{e:CLTbiUn3+}
	from 
	\eqref{e:CLTbin}, in both cases by similar steps to those used
	at \eqref{e:dkC} to derive \eqref{e:CLTPoUn2}.
\end{proof}

\appendix

\section{Index of notation}

In Section \ref{secintro} we introduced 
the following notations:  $G(\cX,r)$, $K(G)$, $R(G)$, $\cX_n$, $\Po_n$ 
$K_n, K'_n$, $R_n$, $R'_n$, $A$ , $\theta_d$ and $\gamma_n$ and
$\lambda$. Also
$\mu_n$, $\partial A$, $D^o$, $\overline{D}$, $N(0,1)$, $C^2$, $Z_t$
and $\sigma_A$.

In Section
\ref{s:stateresults} before Subsection \ref{ss:Resultsgenf}
we introduced the notation $f$, $\fmax$, $f_0$, $\nu$, and
$O(\cdot)$, $o(\cdot)$, $\Theta(\cdot)$ and $\sim$; also 
$\dk$ and $\dtv$.

In Subsection \ref{ss:Resultsgenf} we introduced notation 
$I_n$, $b^+$, $b^-$, $b_c$, $b'_c$ and $C^{1,1}$.
In Subsection \ref{ss:ResUnif} we introduced notation $c_{d,A}$

In Section \ref{s:prelims} before Subsection
\ref{ss:geom} we introduced notation $\oplus$ and $\|\cdot\|$,
$D^{(a)}$,
$aD$ and $[n]$, $\prec$, $A_x$ $\diam(\cdot)$ and $\#(\cdot)$.

In Subsection \ref{ss:geom} we introduced notation $\hat{n}_x$,
$\tau(A)$, $a(\cdot)$,  $g(\cdot)$ and $\mathbb H$.

In Subsection \ref{ss:ProbTools}
we introduced the notation $\bN(\mathbb R^d)$, ${\cal S}(\mathbb R^d)$,
$D_xF$, $D_x^+F$,  $D_x^-F$, $\LL(\cdot)$ and $\LL(\cdot|E)$.
In Subsection \ref{ss:perc} we introduced notation ${\cal C}_s(x, \cX)$,
$\scr U_n$, $\tilde{\scr U})_n$, $\cK_{n,k,\alpha}$,
$\scr G_{n,k}$ and $\tilde{\scr G}_{n,k}$.
Also $\cX^x$, $\cX^{x,y}$,
$\cX^{x,y,z}$,
 $\scr M_{n,\eps,K}(\cdot)$
and $\scr M^*_{n,\eps,K}(\cdot)$.
Also ${\cal L}_n(\cdot)$ and ${\cal L}_{n,2}(\cdot)$.

In Subsection \ref{ss:IsoMom} we introduce notation $\tilde{I}_n$
$I_n(\cdot)$, $\mathsf{Cor}_n$ and $p_n(\cdot)$.
Also $J_{1,n}$, $J_{2,n}$

In Section \ref{s:moments} before Subsection \ref{ss:means},
we introduced notation $\scr F_n(\cdot)$, $K_{n,\eps,\rho}(\cdot)$
and $ R_{n,\eps,\rho}(\cdot)$. Also 
$K_{n,\eps,\rho}$, 
$K'_{n,\eps,\rho}$, 
$R_{n,\eps,\rho}$ 
and $R'_{n,\eps,\rho}$. 

In Subsection \ref{ss:varsmallPo} we introduce notation 
$\scr T_x$, $\scr T_{x,y}$, $\scr E_x$, $\scr E_{x,y}$
and $\scr N_{x,y}$.

\end{document}